\newcommand\supp{\operatorname{supp}}
\newcommand\rank{\operatorname{rank}}
\newcommand\gsifp{{G'_{F,\Si}}}
\newcommand\gsif{{G_{F,\Si}}}
\newcommand\gsip{{G'_\Si}}
\newcommand\gsi{{G_\Si}}
\newcommand\gsin{{G^\N_\Si}}
\newcommand\gsinz{{G^{0,\N}_\Si}}
\newcommand\gsinf{{G^{\N}_{\Si,F}}}
\newcommand\pa{{\textsc{pa}}}
\newcommand\CH{{\mathcal{C}\mathcal{H}}}
\newcommand\bs{{\beta_{\Si}}}
\newcommand\bsp{{\beta'_{\Si}}}
\newcommand\bns{{\beta_{\N\times\Si}}}
\newcommand\bnsy{{Y^\N_\Si}}
\newcommand\bnsyn{{Y^\N_{n,\Si}}}
\newcommand\bnsynm{{Y^\N_{n-1,\Si}}}
\newcommand\bnsyp{{{Y'_\Si}^\N}}
\newcommand\bnsz{{\beta^0_{\N\times\Si}}}
\renewcommand\lq{\leqslant}
\newcommand\gq{\geqslant}
\newcommand\E{\mathcal{E}}
\newcommand\II{\mathcal{I}}
\newcommand\X{\mathcal{X}}
\newcommand\XS{{\mathcal{X}_\Si}}
\newcommand\CC{\mathcal{C}}
\renewcommand\SS{\mathcal{S}}
\newcommand\A{\mathcal{A}}
\newcommand\Ap{{\mathcal{A}^\oplus}}
\newcommand\sta{{\mathcal{P}\!\mathcal{A}}}
\newcommand\ac{{\mathcal{A}_{C_0(\Si)}}}
\newcommand\aci{{\mathcal{A}^\infty_{C_0(\Si)}}}
\newcommand\bci{{\mathcal{B}^\infty_{C_0(\Si)}}}
\newcommand\intdt{\stackrel{\scriptscriptstyle{o}}{\Delta}} 
\newcommand\diag{\operatorname{diag}}
\newcommand\Ind{\operatorname{I}}
\newcommand\I{\operatorname{I}_F^\Ga}
\newcommand\ds{\displaystyle}
\newcommand\C{\mathbb C}
\newcommand\K{\mathcal{K}}
\newcommand\F{\mathcal{F}}
\newcommand\Si{\Sigma}
\renewcommand\l{\ell}
\newcommand\JR{\mathcal{J}_{\Ga}^{red}}
\newcommand\JJ{\mathcal{J}}
\renewcommand\H{\mathcal{H}}
\newcommand\de{\delta}
\newcommand\N{\mathbb N}
\newcommand\Id{\mathcal{I}d}
\newcommand\B{\mathcal B}
\newcommand\R{\mathbb R}
\newcommand\st{\text{ such that }}
\newcommand\Z{\mathbb Z}
\newcommand\T{\mathcal{T}}
\newcommand\TT{\mathcal{T}}
\renewcommand\O{\mathcal{O}}
\newcommand\MM{\mathcal{M}}
\renewcommand\L{\mathcal{L}}
\newcommand\D{\mathcal D}
\newcommand\DD{\mathcal D}
\newcommand\G{\mathcal{G}}
\newcommand\ts{{\otimes}}
\newcommand\rt{{\rtimes}}
\newcommand\rtr{{\rtimes_{red}}}
\newcommand\si{\sigma}
\newcommand\la{\lambda}
\newcommand{\aeq}{\stackrel{(\lambda,h)}{\sim}}
\newcommand\w{\omega}
\newcommand\Ga{{\Gamma}}
\newcommand\ga{{\gamma}}
\newcommand\lto{{\longrightarrow}}
\newcommand\defi{{\stackrel{\text{def}}{=\!=}}}
\newcommand\AG{{A\rtimes_{red}\Ga}}
\newcommand\M{{M}}
\newcommand\U{\operatorname{U}}
\renewcommand\P{\operatorname{P}}
\newcommand\ka{\kappa}
\newcommand\ue{\operatorname{U}_n^{\varepsilon,r}}
\newcommand\pe{\operatorname{P}_n^{\varepsilon,r}}
\newcommand\eps{\varepsilon}
\newcommand\erp{$\eps$-$r$-projection }
\theoremstyle{plain}
\newtheorem{theorem}{Theorem}[section]
\newtheorem{proposition}[theorem]{Proposition}
\newtheorem{corollary}[theorem]{Corollary}
\newtheorem{lemma}[theorem]{Lemma}
\newtheorem{definition}[theorem]{Definition}
\theoremstyle{definition}
\newtheorem{example}[theorem]{Example}
\newtheorem{notation}[theorem]{Notation}
\begin{document}
\title[Persistence approximation property and controlled operator $K$-th.]{Persistence approximation property and controlled operator $K$-theory}

 \author[H. Oyono-Oyono]{Herv\'e Oyono-Oyono}
 \address{Universit\'e de Lorraine, Metz , France}
 \email{herve.oyono@math.cnrs.fr}
\author[G. Yu]{Guoliang Yu }
 \address{Texas A\&M University, USA}
 \email{guoliangyu@math.tamu.edu}
\thanks{Oyono-Oyono is partially supported by the ANR ``Kind'' and Yu is partially supported by  a grant from the
U.S. National Science Foundation.}

%In this paper, we develop a  quantitative K-theory for filtered C*-algebras.
%Particularly interesting examples of filtered C*-algebras include group C*-algebras, crossed product C*-algebras and Roe algebras. 
%We prove a  quantitative version of the six term exact sequence and a quantitative Bott periodicity. We apply the quantitative K-theory to formulate 
%a quantitative version of the Baum-Connes conjecture and prove that the  quantitative Baum-Connes conjecture holds for a large class of groups.

\begin{abstract}
In this paper, we introduce and study the persistent approximation property for quantitative $K$-theory of filtered $C^*$-algebras. In the case of crossed product $C^*$-algebras, the persistent approximation property follows from the Baum-Connes conjecture with coefficients. We also discuss some applications  of the quantitative $K$-theory to the Novikov conjecture. 
\end{abstract}
\maketitle

\begin{flushleft}{\it Keywords: Baum-Connes Conjecture, Coarse Geometry, 
  Novikov Conjecture, Operator Algebra $K$-theory, Roe Algebras}

\medskip

{\it 2000 Mathematics Subject Classification: 19K35,46L80,58J22}
\end{flushleft}

\tableofcontents
\section{Introduction}
The idea of quantitative operator $K$-theory was first introduced in \cite{y2}  to study the Novikov conjecture for groups with finite asymptotic dimension.
In \cite{oy2}, the authors introduced a  general quantitative $K$-theory for filtered $C^*$-algebras. Examples of filtered  $C^*$-algebras include group $C^*$-algebras, crossed product $C^*$-algebras, Roe algebras, foliation $C^*$-algebras and finitely generated $C^*$-algebras.  
For a $C^*$-algebra $A$ with a filtration,  the $K$-theory of A, $K_*(A)$ is the limit of the quantitative $K$-theory groups $K_*^{\epsilon, r}(A)$ when $r$ goes to infinity. The crucial point is that  quantitative $K$-theory is often more computable using certain controlled exact sequences (e.g. see \cite{y2}  and \cite{oy2}).
The study of  $K$-theory for the Roe algebra  can be reduced to  that of quantitative $K$-theory for the Roe algebra associated to finite metric spaces, which in essence is a finite dimensional linear algebra problem.

\medskip

The main purpose of this paper is to introduce and study the persistent approximation property for quantitative $K$-theory of filtered $C^*$-algebras. Roughly  speaking, the persistent approximation property means that the convergence of $K_*^{\epsilon, r}(A)$ to $K_*(A)$  is uniform. More precisely, we say that the filtered $C^*$-algebra $A$ has persistent approximation property if for each $\eps$ in $(0,1/4)$  and  $r>0$, there exists $r'\gq r$ and $\eps'$ in $[\eps,1.4)$ such that an element from $K_*^{\epsilon, r}(A)$ is zero in $K_* (A)$ if and only if  it is zero in  $K_*^{\epsilon', r'}(A)$. The main motivation to study the persistent approximation property is that it provides an effective way of approximating $K$-theory with quantitative $K$-theory. In the case of crossed  product $C^*$-algebras, the Baum-Connes conjecture with coefficients provides many examples that satisfy the persistent approximation property.     It turns out that this property provides geometrical obstruction for the Baum-Connes conjecture. In order to study this obstruction in full generality, we consider  the persistence approximation property for  filtered $C^*$-algebra $A\ts \K(\ell^2(\Si))$, where $A$ is a $C^*$-algebra and $\Si$ is a discrete metric space with bounded geometry. For this purpose, we introduce a bunch of  quantitative local assembly maps valued in the quantitative $K$-theory for $A\ts \K(\ell^2(\Si))$ and we set quantitative statements, analogue in this geometric setting to the quantitative statements  of \cite[Section 6.2]{oy2} for the quantitative Baum-Connes assembly maps. We also show that  if  these statements hold uniformly for the family of  finite subsets of a discrete metric space $\Si$ with bounded geometry,  the coarse Baum-Connes conjecture for $\Si$ is satisfies. In particular, in the case of a finitely generated group $\Ga$ provided with the metric arising from any word length, then these  uniform statements for finite metric subsets of $\Ga$ implies the Novikov conjecture for $\Ga$ on homotopy invariance of higher signatures.  We point out  that in this case, these statements  reduce to finite dimension problems in linear algebra and analysis. 

\medskip

The paper is organized  as follows. In section \ref{sec-survey}, we review the main results of \cite{oy2} concerning quantitative $K$-theory. In section \ref{section-persistence}, we introduce the persistence approximation property. We prove that if $\Ga$ is a finitely generated group that satisfies the Baum-Connes conjecture with coefficients and  which admits a cocompact universal example for proper actions, then for any $\Ga$-$C^*$-algebra $A$, the reduced crossed product $A\rtimes_r\Ga$ satisfies the persistence approximation property. In the special case of the action of the group $\Ga$ on $C_0(\Ga)$ by translation, we get a canonical   identification between $C_0(\Ga)\rtimes\Ga$ and $\K(\ell^2(\Si))$ that preserves the filtration structure. Hence, the persistence approximation property can be stated in a completely geometrical way. This leads us to consider this property for the  algebra $A\ts \K(\ell^2(\Si))$, where $A$ is a $C^*$-algebra and $\Si$ is a proper discrete metric space, with filtration structure induced  by the metric of $\Si$.  In section \ref{section-coarse-geometry}, following the idea  of the Baum-Connes conjecture, we construct in order  to compute the quantitative $K$-theory groups for
$A\ts \K(\ell^2(\Si))$ a bunch of quantitative assembly maps $\nu_{\Si,A,*}^{\eps,r,d}$. If view of the  proof of the persistence approximation property in the crossed product algebras  case,   we introduce  a geometrical assembly map $\nu_{\Si,A,*}^{\infty}$ (which plays   the role of the Baum-Connes assembly map with relevant coefficients). Following the route of \cite{sty}, we show that the target of these geometric assembly maps is indeed the $K$-theory of the crossed product algebra of an appropriate $C^*$-algebra $\ac$  by the groupoid $G_\Si$ associated in \cite{sty} to the coarse structure of $\Si$. In section \ref{section-BC-for-GSI}, we study the Baum-Connes assembly map for the pair $(G_\Si,\ac)$ and we show that  the bijectivity of the  geometric assembly maps $\nu_{\Si,A,*}^{\infty}$ is equivalent to the Baum-Connes conjecture for  $(G_\Si,\ac)$. We set in the geometric setting the analogue of the quantitative statements of \cite[Section 6.2]{oy2} for the quantitative Baum-Connes assembly maps and we prove that these statements holds when $\Si$ coarsely embeds into a Hilbert space. We then apply this results to the  persistent approximation property for $A\ts \K(\ell^2(\Si))$. In particular, we prove it when $\Si$ coarsely embeds into a Hilbert space, under an assumption of coarse uniform contractibility. This condition is the analogue in the geometric setting of the existence of a cocompact universal example for proper actions and is satisfied for instance  for Gromov hyperbolic discrete metric spaces.
In section \ref{section-Novikov}, we show that for a discrete metric space with bounded geometry, if the quantitative statements of section \ref{section-BC-for-GSI} for $\nu_{F,A,*}^{\infty}$ holds uniformly when $F$ runs through finite subsets of $\Si$, then $\Si$ satisfies the coarse Baum-Connes conjecture.

%%%%%%%%%%%%%%%%%%%%%%%%%%%%%%%
%%%%%%%%%%%%%%%%%%%%%%%%%%%%%%%
\section{Survey on quantitative $K$-theory}\label{sec-survey}%
%%%%%%%%%%%%%%%%%%%%%%%%%%%%%%%%
%%%%%%%%%%%%%%%%%%%%%%%%%%%%%%%

We gather this section with  the main results of \cite{oy2} concerning quantitative $K$-theory and that we shall use throughout  this paper. Quantitative $K$-theory was introduced to describe  propagation phenomena in higher  index theory for non-compact spaces. 
More generally,  we use the framework of filtered $C^*$-algebras to model the concept of propagation. 
\begin{definition}
A filtered $C^*$-algebra $A$ is a $C^*$-algebra equipped with a family
$(A_r)_{r>0}$ of  closed linear subspaces   indexed by positive numbers such that:
\begin{itemize}
\item $A_r\subset A_{r'}$ if $r\lq r'$;
\item $A_r$ is stable by involution;
\item $A_r\cdot A_{r'}\subset A_{r+r'}$;
\item the subalgebra $\ds\bigcup_{r>0}A_r$ is dense in $A$.
\end{itemize}
If $A$ is unital, we also require that the identity  $1$ is an element of $ A_r$ for every positive
number $r$. The elements of $A_r$ are said to have {\bf propagation $r$}.
\end{definition}
 Let $A$ and $A'$ be respectively  $C^*$-algebras filtered by
$(A_r)_{r>0}$ and  $(A'_r)_{r>0}$. A homomorphism of $C^*$
-algebras $\phi:A\lto A'$
is a {\bf filtered homomorphism} (or a {\bf homomorphism of  filtered $C^*$-algebras}) if  $\phi(A_r)\subset A'_{r}$ for any
positive number $r$.

\medskip

If $A$ is not unital, let us denote by $A^+$ its unitarization, i.e 
$$A^+=\{(x,\lambda);\,x\in A\,,\lambda\in \C\}$$  with the product $$(x,\lambda)(x',\lambda')=(xx'+\lambda x'+\lambda' x)$$ for all $(x,\lambda)$ and $(x',\lambda')$ in $A^+$. Then ${A^+}$ is filtered with
$${A^+_r}=\{(x,\lambda);\,x\in A^+_{r}\,,\lambda\in \C\}.$$ 
We also define $\rho_A:A^+\to\C;\, (x,\lambda)\mapsto \lambda$.

\subsection{Definition of quantitative $K$-theory}

Let $A$ be a unital filtered $C^*$-algebra. For any  positive
numbers $r$ and $\eps$, we call
\begin{itemize}
\item an element $u$ in $A$  a $\eps$-$r$-unitary if $u$
  belongs to $A_r$,  $\|u^*\cdot
  u-1\|<\eps$
and  $\|u\cdot u^*-1\|<\eps$. The set of $\eps$-$r$-unitaries on $A$ will be denoted by $\operatorname{U}^{\varepsilon,r}(A)$.
\item an element $p$ in $A$   a $\eps$-$r$-projection    if $p$
  belongs to $A_r$,
  $p=p^*$ and  $\|p^2-p\|<\eps$. The set of $\eps$-$r$-projections on $A$ will be denoted by $\operatorname{P}^{\varepsilon,r}(A)$.
\end{itemize} 
Notice that a $\eps$-$r$-unitary is invertible, and that if $p$ is an \erp in $A$, then it has a spectral gap around $1/2$ and then gives rise by functional calculus to a  projection $\ka_{0}(p)$  in  $A$ such that
 $\|p-\ka_{0}(p)\|< 2\eps$.
 
% \begin{definition} Let $A$ be a  $C^*$-algebra filtered by $(A_r)_{r>0}$.
% \begin{itemize}
%  \item Let $p_0$ and $p_1$ be $\eps$-$r$-projections. We say that $p_0$ and $p_1$ are homotopic
%$\eps$-$r$-projections if there exists a $\eps$-$r$-projection $q$  in $A[0,1]$ such that $q(0)=p_0$ and $q(1)=p_1$.
%In this case, $q$ is called a homotopy of $\eps$-$r$-projections  in $A$ and will be denoted by $(q_t)_{t\in[0,1]}$.
% \item If $A$ is unital, let $u_0$ and $u_1$ be $\eps$-$r$-unitaries. We say that $u_0$ and $u_1$ are homotopic
%$\eps$-$r$-unitaries  if there exists  an $\eps$-$r$-unitary $v$ in $A[0,1]$ such that $v(0)=u_0$ and $v(1)=u_1$.
%In this case, $v$ is  called a homotopy of $\eps$-$r$-unitaries  in $A$ and will be denoted by $(v_t)_{t\in[0,1]}$.
%\end{itemize}
%\end{definition}
 
 For any  $n$ integer, we set  $\ue(A)=\operatorname{U}^{\varepsilon,r}(M_n(A))$ and
$\pe(A)=\operatorname{P}^{\varepsilon,r}(M_n(A))$.
For any  unital filtered $C^*$-algebra $A$, any
 positive  numbers $\eps$ and $r$ and  any positive integer $n$, we consider inclusions
$$\P_n^{\eps,r}(A)\hookrightarrow \P_{n+1}^{\eps,r}(A);\,p\mapsto
\begin{pmatrix}p&0\\0&0\end{pmatrix}$$ and
$$\U_n^{\eps,r}(A)\hookrightarrow \U_{n+1}^{\eps,r}(A);\,u\mapsto
\begin{pmatrix}u&0\\0&1\end{pmatrix}.$$ This allows us to  define
 $$\U_{\infty}^{\eps,r}(A)=\bigcup_{n\in\N}\ue(A)$$ and
$$\P_{\infty}^{\eps,r}(A)=\bigcup_{n\in\N}\pe(A).$$

For a unital filtered $C^*$-algebra $A$, we define the
following
equivalence relations on $\P_\infty^{\eps,r}(A)\times\N$ and on  $\U_\infty^{\eps,r}(A)$:
\begin{itemize}
\item if $p$ and $q$ are elements of $\P_\infty^{\eps,r}(A)$, $l$ and
  $l'$ are positive integers, $(p,l)\sim(q,l')$ if there exists a
  positive integer $k$ and an element $h$ of
  $\P_\infty^{\eps,r}(A[0,1])$ such that $h(0)=\diag(p,I_{k+l'})$
and $h(1)=\diag(q,I_{k+l})$.
\item if $u$ and $v$ are elements of $\U_\infty^{\eps,r}(A)$, $u\sim v$ if
  there exists an element $h$ of
  $\U_\infty^{3\eps,2r}(A[0,1])$ such that $h(0)=u$
and $h(1)=v$. Notice that we have changed slightly the definition of \cite{oy2}, in order to make 
$K_1{\eps,r}(A)$ into group (see \cite[Remark 1.15]{oy2}).
\end{itemize}

If $p$ is an  element of $\P_\infty^{\eps,r}(A)$ and  $l$ is an integer, we
denote by $[p,l]_{\eps,r}$ the equivalence class of $(p,l)$ modulo  $\sim$
and if $u$ is an element of $\U_\infty^{\eps,r}(A)$ we denote by
$[u]_{\eps,r}$ its  equivalence class  modulo  $\sim$.
\begin{definition} Let $r$ and $\eps$ be positive numbers with
  $\eps<1/4$.
We define:
\begin{enumerate}
\item $K_0^{\eps,r}(A)=\P_\infty^{\eps,r}(A)\times\N/\sim$ for $A$ unital and
$$K_0^{\eps,r}(A)=\{[p,l]_{\eps,r}\in \P^{\eps,r}({A^+})\times\N/\sim \st
\rank \kappa_0(\rho_{A}(p))=l\}$$ for $A$ non unital ($\kappa_0(\rho_{A}(p))$ being the spectral projection associated to $\rho_A(p)$);
\item $K_1^{\eps,r}(A)=\U_\infty^{\eps,r}({A^+})/\sim$, with $A=A^+$ if $A$ is already unital.
\end{enumerate}
\end{definition}

 Then $K_0^{\eps,r}(A)$ turns to be an abelian group  \cite[Lemma 1.15]{oy2} where 
 $$[p,l]_{\eps,r}+[p',l']_{\eps,r}=[\diag(p,p'),l+l']_{\eps,r}$$  for any  $[p,l]_{\eps,r}$ and $[p',l']_{\eps,r}$ in $K_0^{\eps,r}(A)$. According to \cite[Remark 1.15]{oy2},  $K_1^{\eps,r}(A)$ is 
 equipped with a structure of abelian group such that 
$$[u]_{\eps,r}+[u']_{\eps,r}=[\diag(u,v)]_{\eps,r},$$ for 
any  $[u]_{\eps,r}$ and $[u']_{\eps,r}$ in $K_1^{\eps,r}(A)$.
%Nevertheless,  for any $\eps$-$r$-unitary $u$ in $M_n({A^+})$
% (with $A^+={A}$ if $A$ is already unital), we have
%  $[u]_{3\eps,2r}+[u^*]_{3\eps,2r}=0$ in $K_1^{3\eps,2r}(A)$.
  
  \medskip
  
  Recall from \cite[corollaries 1.20 and 1.21]{oy2} that 
for any positive numbers  $r$ and $\eps$ with $\eps<1/4$, then
$$K_0^{\eps,r}(\C)\to\Z;\,[p,l]_{\eps,r}\mapsto \rank\kappa_0(p)-l$$
is an isomorphism and 
$K_1^{\eps,r}(\C)=\{0\}$.

 \medskip  
  
 We have for any filtered $C^*$-algebra $A$ and any  positive numbers
$r$, $r'$, $\eps$ and $\eps'$  with
  $\eps\lq\eps'<1/4$ and $r\lq r'$  natural group homomorphisms
\begin{itemize}
\item $\iota_0^{\eps,r}:K_0^{\eps,r}(A)\lto K_0(A);\,
[p,l]_{\eps,r}\mapsto [\kappa_0(p)]-[I_l]$ (where  $\kappa_0(p)$ is the spectral projection associated to $p$);
\item $\iota_1^{\eps,r}:K_1^{\eps,r}(A)\lto K_1(A);\,
  [u]_{\eps,r}\mapsto [u]$  ;
\item $\iota_*^{\eps,r}=\iota_0^{\eps,r}\oplus \iota_1^{\eps,r}$;
\item $\iota_0^{\eps,\eps',r,r'}:K_0^{\eps,r}(A)\lto K_0^{\eps',r'}(A);\,
[p,l]_{\eps,r}\mapsto [p,l]_{\eps',r'};$
\item $\iota_1^{\eps,\eps',r,r'}:K_1^{\eps,r}(A)\lto K_1^{\eps',r'}(A);\,
  [u]_{\eps,r}\mapsto [u]_{\eps',r'}$.
\item $\iota_*^{\eps,\eps',r,r'}=\iota_0^{\eps,\eps',r,r'}\oplus\iota_1^{\eps,\eps',r,r'}$
\end{itemize}
If some of the indices $r,r'$ or $\eps,\eps'$ are equal, we shall not
repeat it in $\iota_*^{\eps,\eps',r,r'}$.
The following result is a consequence of \cite[Remark 1.4]{oy2}.
\begin{proposition}\label{proposition-approximation}
Let $A=(A_r)_{r>0}$ be a filtered $C^*$-algebra.
\begin{enumerate}
\item For any $\eps$ in  $(0,1/4)$ and any $y$ in $K_*(A)$, there exist a positive number $r$ and an element $x$ in
$K_*^{\eps,r}(A)$ such that $\iota_{*}^{\eps,r}(x)=y$;
\item There exists a positive number $\lambda>1$ independent on $A$ such  that the following is satisfies:

\medskip

let $\eps$ be in $(0,1/4)$, let $r$ be a positive number and let $x$ and $x'$ be elements in $K_*^{\eps,r}(A)$ such that
$\iota_*^{\eps,r}(x)=\iota_*^{\eps,r}(x')$ in $K_*(A)$. Then there
exists a positive number  $r'$ with $r'>r$ such that
$\iota_*^{\eps,\lambda\eps,r,r'}(x)=\iota_*^{\eps,\lambda\eps,r,r'}(x')$ in
$K_*^{\lambda\eps,r'}(A)$.
\end{enumerate}
\end{proposition}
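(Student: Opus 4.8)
The plan is to derive both assertions from a single source, namely the density of the filtration subalgebra $\bigcup_{r>0}A_r$ in $A$ combined with the controlled perturbation estimates recorded in \cite[Remark 1.4]{oy2}; the $K_0$ and $K_1$ cases run in parallel, and I would reduce at once to the unital situation by replacing $A$ with its filtered unitarization $A^+$, the rank condition on $\rho_A$ being preserved under all the approximations below. The two elementary facts I would invoke repeatedly are that a self-adjoint finite-propagation element $a\in A_r$ with $\|a^2-a\|<\eps$ yields a genuine projection $\ka_0(a)$ with $\|a-\ka_0(a)\|<2\eps$, and dually that a genuine projection (resp.\ genuine unitary) lying norm-close to an element of $A_r$ is norm-close to an $\eps$-$r$-projection (resp.\ $\eps$-$r$-unitary) of propagation $r$.

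For part (i) I would start from a concrete representative of $y$. In the $K_0$ case write $y=[p]-[I_l]$ with $p$ a genuine projection in some $M_n(A^+)$; by density choose a self-adjoint $a\in M_n(A^+_r)$ with $\|a-p\|$ as small as desired, so that for suitable $r$ the element $a$ is an $\eps$-$r$-projection with $\ka_0(a)=p$, whence $x=[a,l]_{\eps,r}$ satisfies $\iota_0^{\eps,r}(x)=y$. In the $K_1$ case write $y=[u]$ with $u$ a genuine unitary in $M_n(A^+)$ and approximate $u$ in propagation $r$ by an $\eps$-$r$-unitary $v$; since $v$ is invertible and norm-close to a genuine unitary, $\iota_1^{\eps,r}([v]_{\eps,r})=[u]=y$.

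For part (ii) the content is that an equality that holds only after passing to $K_*(A)$ is already witnessed at finite scale. In the $K_0$ case, $\iota_0^{\eps,r}(x)=\iota_0^{\eps,r}(x')$ says that the genuine projections $\ka_0$ extracts from $x$ and $x'$ become homotopic in $M_N(A^+[0,1])$ after adjoining identity blocks; fixing such a genuine projection-valued homotopy $H$ and using compactness of $[0,1]$, I would approximate $H$ uniformly in $t$ by a single self-adjoint path $\tilde H$ of finite propagation $r'$, which is then a path in $\P_\infty^{\eps',r'}(A^+[0,1])$. Its endpoints $\tilde H(0),\tilde H(1)$ are finite-propagation elements lying within $2\eps+\eps'$ of the stabilized representatives $\diag(p,\ldots)$ and $\diag(q,\ldots)$, so the straight-line paths between these finite-propagation elements stay among $\la\eps$-$r'$-projections and splice $\tilde H$ into a homotopy witnessing $\iota_0^{\eps,\la\eps,r,r'}(x)=\iota_0^{\eps,\la\eps,r,r'}(x')$. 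The $K_1$ case is identical with $\eps$-$r$-unitaries in place of projections, using the relation defined through $\U_\infty^{3\eps,2r}(A^+[0,1])$, which already builds in the slack needed to absorb the straight-line corrections.

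The main obstacle is the \emph{uniformity} demanded in part (ii): the compactness argument readily produces some admissible $r'$, but the whole force of the statement is that the loss in the tolerance is a fixed multiple $\la\eps$ with $\la>1$ \emph{independent of $A$}. This forces one to track explicitly, at each step---the spectral correction ($2\eps$), the density approximation, and the stabilizing straight-line homotopies---that every tolerance degrades only by a universal multiplicative constant, so that their composite is bounded by a single $\la$. This bookkeeping, rather than any individual analytic estimate, is the delicate point, and it is precisely what \cite[Remark 1.4]{oy2} is designed to furnish.
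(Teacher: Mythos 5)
Your proposal is correct and follows essentially the same route as the paper: the paper's entire proof is the citation of \cite[Remark 1.4]{oy2}, and your density-plus-controlled-perturbation argument (approximating genuine projections, unitaries and homotopies by finite-propagation elements, splicing in straight-line corrections, and checking that every tolerance degrades only by a universal multiplicative constant) is precisely the standard argument that this remark encapsulates. One cosmetic slip: in part (i) one cannot arrange $\ka_0(a)=p$ exactly, only $\|\ka_0(a)-p\|<1$, which still yields $[\ka_0(a)]=[p]$ in $K_0(A)$ and hence $\iota_0^{\eps,r}(x)=y$.
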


 If  $\phi:A\to B$ is a  homomorphism  filtered $C^*$-algebras, then since $\phi$ preserve $\eps$-$r$-projections
 and $\eps$-$r$-unitaries, it obviously induces  for any positive number $r$ and any
$\eps\in(0,1/4)$ a
group homomorphism $$\phi_*^{\eps,r}:K_*^{\eps,r}(A)\longrightarrow
K_*^{\eps,r}(B).$$  Moreover quantitative $K$-theory is homotopy invariant with respect to homotopies that preserves propagation 
\cite[Lemma 1.27]{oy2}.
There is also a quantitative version of Morita equivalence \cite[Proposition 1.29]{oy2}.
\begin{proposition}\label{prop-morita}
If $A$ is a filtered algebra and $\H$ is a separable Hilbert space, then the homomorphism
$$A\to \K(\H)\otimes A;\,a\mapsto \begin{pmatrix}a&&\\&0&\\&&\ddots\end{pmatrix}$$
induces a ($\Z_2$-graded) group isomorphism (the Morita equivalence)
$$\MM_A^{\eps,r}:K_*^{\eps,r}(A)\to K_*^{\eps,r}(\K(\H)\otimes A)$$
for any positive number $r$ and any
$\eps\in(0,1/4)$.
\end{proposition}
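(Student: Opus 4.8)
The plan is to realize $\MM_A^{\eps,r}$ through the inductive system of finite matrix corners and to reduce the isomorphism statement to a truncation/approximation argument, using crucially that the defining inequalities $\|p^2-p\|<\eps$ and $\|u^*\cdot u-1\|<\eps$ are \emph{strict}. First I would fix the filtration on $\K(\H)\ts A$ by $(\K(\H)\ts A)_r=\overline{\K(\H)\ts A_r}$, so that the corner embedding $a\mapsto\diag(a,0,\dots)$ is a filtered homomorphism and indeed induces $\MM_A^{\eps,r}=\phi_*^{\eps,r}$. Fixing an increasing sequence of finite-rank projections on $\H$ with strong limit $1$, the algebra $\K(\H)\ts A$ is the inductive limit (as a filtered algebra with dense union of propagation-$r$ parts) of its corners $M_n(A)$, and for each $n$ the embedding $M_n(A)\hookrightarrow M_{n+1}(A)$ is precisely compatible with the definition of $\P_\infty^{\eps,r}$ and $\U_\infty^{\eps,r}$ as increasing unions over matrix size. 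After identifying each $K_*^{\eps,r}(M_n(A))$ with $K_*^{\eps,r}(A)$ through these unions, all transition maps become the identity, so the whole problem reduces to the continuity of $K_*^{\eps,r}$ along $\K(\H)\ts A=\varinjlim_n M_n(A)$. Note also that any matrix amplification $M_N(\K(\H)\ts A)$ is again isomorphic to $\K(\H)\ts A$ as a filtered algebra (since $\H^{\oplus N}\cong\H$ and the filtration ignores the $\K(\H)$-variable), so representatives may be taken in $(\K(\H)\ts A)^+$ itself.

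For surjectivity I would argue by truncation. Given a class in $K_0^{\eps,r}(\K(\H)\ts A)$, represent it by $p\in\P_\infty^{\eps,r}((\K(\H)\ts A)^+)$ together with an integer $l$, and write $p=\rho(p)+p_c$ with $p_c\in(\K(\H)\ts A)_r$ compact-valued. Compressing by $Q=q\ts 1$ for a finite-rank $q\in\K(\H)$, I set $p'=\rho(p)+Qp_cQ$; since $Q$ has propagation $0$, the element $p'$ still has propagation $r$. Because $\|p^2-p\|<\eps$ is strict, choosing $q$ with $\|p-p'\|$ small enough keeps $\|p'^2-p'\|<\eps$ and makes the linear path $t\mapsto(1-t)p+tp'$ a genuine element of $\P_\infty^{\eps,r}((\K(\H)\ts A)[0,1])$, whence $[p,l]_{\eps,r}=[p',l]_{\eps,r}$. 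As $\rho(p)+Qp_cQ$ is the image of $(Qp_cQ,\rho(p))$ under the unital corner embedding $M_k(A)^+\to(\K(\H)\ts A)^+$, this exhibits the class as $\MM_A^{\eps,r}$ of an element of $K_0^{\eps,r}(A)$. The same truncation, now using the extra room of $\U_\infty^{3\eps,2r}$ allowed in the $K_1$-relation, handles $\eps$-$r$-unitaries and gives surjectivity on $K_1^{\eps,r}$.

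For injectivity, suppose an element of $K_*^{\eps,r}(A)$ maps to $0$. A witnessing homotopy $h$ then lives in $\P_\infty^{\eps,r}((\K(\H)\ts A)[0,1])$ (respectively in $\U_\infty^{3\eps,2r}((\K(\H)\ts A)[0,1])$). Here I would use compactness of $[0,1]$ to choose a single finite-rank $Q$ compressing $h$ uniformly in $t$, and the same strictness argument produces a homotopy inside a fixed corner $M_k(A)$ with the required endpoints, proving triviality already in $K_*^{\eps,r}(A)$. Together with surjectivity this gives that $\MM_A^{\eps,r}$ is a ($\Z_2$-graded) isomorphism.

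I expect the main obstacle to be exactly this last step combined with the non-unital bookkeeping: one must compress an entire one-parameter family by one projection while keeping the defining inequalities strict uniformly in $t$, and one must track the scalar parts $\rho(\cdot)$ and the rank condition $\rank\kappa_0(\rho_A(p))=l$ defining $K_0^{\eps,r}$ so that the integer labels agree on both sides. The two features that make the estimates go through are the compactness of $[0,1]$, which yields a single uniform truncation, and the strictness of the inequalities $\|p^2-p\|<\eps$ and $\|u^*\cdot u-1\|<\eps$, which leaves room for the linear homotopies while preserving propagation.
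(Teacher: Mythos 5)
Your proposal is correct and follows essentially the same route as the actual proof: the paper itself does not prove this proposition but cites \cite[Proposition 1.29]{oy2}, and the argument there is precisely the one you describe --- compression of $\eps$-$r$-projections and $\eps$-$r$-unitaries by finite-rank projections $q\ts 1$ (which cannot increase propagation because the filtration of $\K(\H)\ts A$ ignores the $\K(\H)$-factor), with strictness of the defining inequalities permitting the linear homotopies and compactness of $[0,1]$ permitting a single uniform compression of a witnessing homotopy. The one step you state breezily, the identification $K_*^{\eps,r}(M_k(A))\cong K_*^{\eps,r}(A)$ for non-unital $A$ with matching rank labels, is exactly the routine-but-necessary unitization bookkeeping you flag, and it does go through by normalizing the scalar parts to standard form via propagation-zero homotopies.
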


\subsection{Quantitative objects}
In order to study the functorial properties of quantitative $K$-theory, we introduce the concept of quantitative object.   
\begin{definition} A control pair  is a pair $(\lambda,h)$, where
\begin{itemize}
 \item $\lambda >1$;
\item  $h:(0,\frac{1}{4\lambda})\to (1,+\infty);\, \eps\mapsto h_\eps$  is a map such that there exists a non-increasing map
$g:(0,\frac{1}{4\lambda})\to (0,+\infty)$, with $h\lq g$.
\end{itemize}\end{definition}
 The set of control pairs is equipped with a partial order:
 $(\lambda,h)\lq (\lambda',h')$ if $\lambda\lq\lambda'$ and $h_\eps\lq h'_\eps$
for all $\eps\in (0,\frac{1}{4\lambda'})$.

\begin{definition}
 A quantitative object is a family   $\O=(O^{\eps,r})_{0<\eps<1/4,r>0}$ of abelian groups,
together with a family of 
group homomorphisms  $$\iota_{\O}^{\eps,\eps',r,r'}:O^{\eps,r}\to O^{\eps',r'}$$ for $0<\eps\lq \eps'<1/4$ and $0<r\leq r'$ such that
\begin{itemize}
 \item $\iota_{\O}^{\eps,\eps,r,r}=Id_{O^{\eps,r}}$ for any  $0<\eps<1/4$ and $r>0$;
 \item $\iota_{\O}^{\eps',\eps'',r',r''}\circ\iota_{\O}^{\eps,\eps',r,r'}=\iota_{\O}^{\eps,\eps'',r,r''}$ for any $0<\eps\lq \eps'\lq \eps''<1/4$ and $0<r\lq r'\lq r''$;
\item there exists a control pair $(\alpha,k)$ such that the following holds: 
for any $0<\eps<\frac{1}{4\alpha}$ and $r>0$ and any $x$ in 
$O^{\eps,r}$, there exists $x'$ in $O^{\alpha\eps,k_\eps r}$ satisfying $\iota_{\O}^{\eps,\alpha\eps,r,k_\eps r}(x)+x'=0$.
\end{itemize}
\end{definition}
\begin{example}\
\begin{enumerate}
 \item Our prominent example will be of course quantitative $K$-theory $\K_*(A)=(K^{\eps,r}_*(A))_{0<\eps<1/4,r>0}$ of a filtered $C^*$-algebras $A=(A_r)_{r>0}$  with structure maps $\iota^{\eps,\eps',r,r'}:K^{\eps,r}_*(A)\lto K^{\eps',r'}_*(A)$ and $\iota^{\eps,\,r}:K^{\eps,r}_*(A)\lto K_*(A)$ such that $\iota_*^{\eps,\eps',r,r'}
=\iota^{\eps',r'}_*\circ \iota_*^{\eps,\eps',r,r'}$ for $0<\eps\lq \eps'<1/4$ and $0<r\lq r'$;
\item If  $(\O_i)_{i\in\N}$ is a family of quantitative object with $\O_i=(O_i^{\eps,r})_{0<\eps<1/4,r>0}$ for any integer $i$. 
Define $\prod_{i\in\N} \O_i=(\prod_{i\in\N} O_i^{\eps,r})_{0<\eps<1/4,r>0}$.
Then $\prod_{i\in\N} \O_i$ is also a quantitative object. In the case of a constant family $(\O_i)_{i\in\N}$ with $\O_i=\O$ a 
quantitative object, then we set  $\O^\N$ for $\prod_{i\in\N} \O_i$.

\end{enumerate}
\end{example}

\subsection{Controlled morphisms}

Obviously, the definition of controlled morphism \cite[Section 2]{oy2} can be then extended to quantitative objects.

\begin{definition}
 Let $(\lambda,h)$ be a control pair and let  $\O=(O^{\eps,r})_{0<\eps<1/4,r>0}$ and $\O'=(O'^{\eps,r})_{0<\eps<1/4,r>0}$ be 
quantitative objects. A $(\lambda,h)$-controlled morphism
$$\F:\O\to\O'$$ is a family $\F=(F^{\eps,r})_{0<\eps< \frac{1}{4\lambda},r>0}$ of semigroups homomorphisms
 $$F^{\eps,r}:\O^{\eps,r} \to \O'^{\lambda\eps,h_\eps r}$$ such that for any positive
numbers $\eps,\,\eps',\,r$ and $r'$ with
$0<\eps\lq\eps'< \frac{1}{4\lambda}$ and $h_\eps r\lq h_{\eps'}r'$, we have
$$F^{\eps',r'}\circ \iota_\O^{\eps,\eps',r,r'}=\iota_{\O'}^{\lambda\eps,\lambda\eps', h_\eps r,h_{\eps'}r'}\circ F^{\eps,r}.$$
\end{definition}
When  it is not necessary  to specify the control pair, we will just say that $\F$ is a controlled morphism. 
If $\O=(O^{\eps,r})_{0<\eps<1/4,r>0}$ is a quantitative object, let us define the identity $(1,1)$-controlled morphism
$\Id_\O=(Id_{O^{\eps,r}})_{0<\eps< 1/4,r>0}:\O\to\O$. Recall that if $A$ and $B$ are filtered
$C^*$-algebra and if $\F:\K_*(A)\to\K_*(B)$ is a $(\lambda,h)$-controlled morphism, then $\F$ induces
 a  morphism $F:K_*(A)\to K_*(B)$ unically defined
by $\iota^{\eps,r}_*\circ F^{\eps,r}=F\circ\iota^{\eps,r}_*$.

If $(\lambda,h)$ and $(\lambda',h')$ are two control pairs, define
$$h*h': (0,\frac{1}{4\lambda\lambda'})\to (0,+\infty);\, \eps\mapsto h_{\lambda'\eps}h'_\eps.$$
Then $(\lambda\lambda',h*h')$  is again  a control pair. 
 Let   $\O=(O^{\eps,r})_{0<\eps<1/4,r}$, $\O'=(O'^{\eps,r})_{0<\eps<1/4,r}$ and $\O''=(O''^{\eps,r})_{0<\eps<1/4,r}$ 
be quantitative objects, let $$\F=(F^{\eps,r})_{0<\eps<\frac{1}{4\alpha_\F},r>0}:\O\to\O'$$ 
 be
a $(\alpha_\F,k_\F)$-controlled morphism, and let
 $$\G=(G^{\eps,r})_{0<\eps<\frac{1}{4\alpha_\G},r>0}:\O'\to\O''$$ 
be a $(\alpha_\G,k_\G)$-controlled morphism. Then $\G\circ\F:\O\to \O''$ is the $(\alpha_\G\alpha_F,k_\G*k_\F)$-controlled
morphism defined by the family 
$$(G^{\alpha_\F\eps,k_{\F,\eps}r}\circ F^{\eps,r}:O^{\eps,r}\to 
{O''}^{\alpha_\G\alpha_\F\eps,k_{\F,\eps}k_{\G,\alpha_\F,\eps}r})_{0<\eps<\frac{1}{4\alpha_\F\alpha_\G},r>0:}.$$

\begin{notation}
  Let   $\O=(O^{\eps,r})_{0<\eps<1/4,r>0}$ and $\O'=(O'^{\eps,r>0})_{0<\eps<1/4,r>0}$be  quantitative objects and
let $\F=(F^{\eps,r})_{0<\eps<1/4,r>0}:\O\to\O'$ (resp. $\G=(G^{\eps,r})_{0<\eps<1/4,r>0}:\O\to\O'$) be a
$(\alpha_\F,k_\F)$-controlled morphism (resp. a $(\alpha_\G,k_\G)$-controlled morphism). Then we write
$\F\aeq\G$ if
\begin{itemize}
 \item $(\alpha_\F,k_\F)\lq (\lambda,h)$ and $(\alpha_\G,k_\G)\lq (\lambda,h)$.
\item for every $\eps$ in $(0,\frac{1}{4\lambda})$ and $r>0$, then
$$\iota^{\alpha_\F\eps,\lambda\eps,k_{\F,\eps}r,h_\eps r}_j\circ F^{\eps,r}=\iota^{\alpha_\G\eps,\lambda\eps,k_{\G,\eps}r,h_\eps r}_j\circ G^{\eps,r}.$$
\end{itemize}
\end{notation}

%%%%%%%%%%%%%%%%%%%%%%%%%%%%%%%%%%%%%%%%%%%%%%%%%%%%%%%%%%%%%%%%%%%%%%%%%%%%%%%%%%%%%%%%%
%%%%%%%%%%%%%%%%%%%%%%%%%%%%%%%%%%%%%%%%%%%%%%%%%%%%%%%%%%%%%%%%%%%%%%%%%%%%%%%%%%%%%%%%%%%%

 \begin{definition}
  Let $(\lambda,h)$ be a control pair, and let $\F:\O\to\O'$  be a $(\alpha_\F,k_\F)$-controlled morphism with
$(\alpha_\F,k_\F)\lq (\lambda,h)$.
 $ \F$ is called  $(\lambda,h)$-invertible or a $(\lambda,h)$-isomorphism if there exists a controlled morphism 
$\G:\O'\to\O$ such that
$\G\circ\F\aeq \Id_{\O}$ and 
 $\F\circ\G\aeq \Id_{\O'}$. The controlled morphism $\G$ is called a $(\lambda,h)$-inverse for $\G$.
 \end{definition}

In particular, if $A$ and $B$ are filtered
$C^*$-algebras and if $\G:\K_*(A)\to\K_*(B)$ is a $(\lambda,h)$-isomorphism, then the induced  morphism $G:K_*(A)\to K_*(B)$  is an isomorphism
and its inverse is induced by  a controlled morphism (indeed induced by any $(\lambda,h)$-inverse for $\F$).

If $\A=(A_i)_{i\in\N}$ is  any  family of filtered $C^*$-algebras and if $\H$  a separable
Hilbert space. Set $\A^\infty_{c,r}=\prod_{i\in\N}\K(\H)\ts A_{i,r}$ for any $r>0$ and define  the $C^*$-algebra
$\A_c^\infty$ as the closure of $\bigcup_{r>0}A^\infty_{c,r}$ in $\prod_{i\in\N}\K(\H)\ts A_i$.

\begin{lemma}\label{lem-prod-filtered}
Let   $\A=(A_i)_{i\in\N}$ be a family of filtered $C^*$-algebras
and let $$\F_{\A,*}=(F_{\A,\eps,r})_{0<\eps, 1/4,r>0}:\K_*(\A_c^\infty)\longrightarrow \prod \K_*(A_i),$$
where $$F^{\eps,r}_{\A,*}: K_*^{\eps,r}(\A_c^\infty)\longrightarrow \prod_{i\in\N} K_*^{\eps,r}(A_i)$$ is  the map induced on
the $j$ th factor and up to the Morita  equivalence by the restriction to  $\A_c^{\infty}$  of the  evaluation 
$\prod_{i\in\N} \K(\H)\ts A_{i}\to \K(\H)\ts A_{j}$  at $j\in\N$. Then, $\F_{\A,*}$ is a
 $(\alpha,h)$-controlled isomorphism for a control pair   
$(\alpha,h)$ independent on the family  $\A$.
\end{lemma}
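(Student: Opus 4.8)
The plan is to produce an explicit controlled inverse for $\F_{\A,*}$ and to verify that all the control data can be fixed once and for all, independently of $\A$. First, I would record that $\F_{\A,*}$ is a $(1,1)$-controlled morphism. Indeed, for each $j\in\N$ the evaluation $\A_c^\infty\to\K(\H)\ts A_j$ is a homomorphism of filtered $C^*$-algebras, so it induces maps $K_*^{\eps,r}(\A_c^\infty)\to K_*^{\eps,r}(\K(\H)\ts A_j)$ that preserve both $\eps$ and $r$; composing with the inverse of the Morita isomorphism $\MM_{A_j}^{\eps,r}$ of Proposition~\ref{prop-morita}, which also preserves $\eps$ and $r$, and taking the product over $j$ yields $F^{\eps,r}_{\A,*}$ with no loss of control.

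The heart of the proof is the construction of a controlled inverse $\G_{\A,*}$. It factors as the uniform Morita isomorphism $\prod_i\MM_{A_i}^{\eps,r}\colon\prod_iK_*^{\eps,r}(A_i)\to\prod_iK_*^{\eps,r}(\K(\H)\ts A_i)$ followed by an \emph{assembly} map into $K_*^{c\eps,r}(\A_c^\infty)$, for a fixed constant $c>1$. To build the latter, given $(y_i)_i$ with $y_i\in K_*^{\eps,r}(\K(\H)\ts A_i)$, I would exploit the stability of $\K(\H)\ts A_i$ to represent each $y_i$ by an $\eps$-$r$-projection (in the $K_0$ case) or $\eps$-$r$-unitary (in the $K_1$ case) $q_i$ over $(\K(\H)\ts A_i)^+$ of propagation at most $r$, the matrix amplifications being absorbed into $\K(\H)$. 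The family $(q_i)_i$ then has propagation at most $r$, but it is a priori not an $\eps$-$r$-element, since in an infinite product the quantities $\|q_i^*q_i-1\|$, each strictly below $\eps$, have supremum only $\lq\eps$. Enlarging the tolerance by the factor $c>1$ repairs this defect, so that $(q_i)_i$ lies in $\operatorname{P}^{c\eps,r}((\A_c^\infty)^+)$ (resp. $\operatorname{U}^{c\eps,r}$); I set $G^{\eps,r}_{\A,*}$ to send $(y_i)_i$ to its class.

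Next I would check that this is well defined and compatible with the structure maps. If $q_i,q_i'$ are two admissible representatives of the same $y_i$, they are fiberwise equivalent, hence joined by a homotopy of $\eps$-$r$-projections (resp. of $3\eps$-$2r$-unitaries); these homotopies again have uniformly bounded propagation and tolerance, so the same supremum-versus-strict-inequality device assembles them into one homotopy over $(\A_c^\infty)^+[0,1]$, showing that the two classes coincide after a structure map governed by a universal control pair. Additivity holds because block-diagonal sums in $\prod_i\K(\H)\ts A_i$ are computed fiberwise. The essential point is that every constant entering here — the factor $c$, the factors $3$ and $2$ from the $K_1$ relation, and the $(1,1)$ control pair of Proposition~\ref{prop-morita} — is independent of $\A$, since the Morita isomorphism is uniform in the algebra; thus $\G_{\A,*}$ is $(\lambda,k)$-controlled for a control pair not depending on $\A$.

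Finally I would verify that $\F_{\A,*}$ and $\G_{\A,*}$ are mutually inverse up to control. The composite $\F_{\A,*}\circ\G_{\A,*}$ is immediate: evaluating $(q_i)_i$ at $j$ returns $q_j$ and $(\MM_{A_j}^{\eps,r})^{-1}$ returns $y_j$, so it is the identity up to a structure map. For $\G_{\A,*}\circ\F_{\A,*}$, a class $z\in K_*^{\eps,r}(\A_c^\infty)$ is represented by an $\eps$-$r$-element $(P_i)_i$ of uniformly bounded propagation; since $\mathrm{ev}_{i,*}(z)$ is then represented by $P_i$, one may choose $P_i$ as the representative in the assembly step, so that $\G_{\A,*}\circ\F_{\A,*}(z)$ equals $z$ up to a structure map, the well-definedness above ensuring independence of this choice. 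I expect the main obstacle to be precisely this infinite-product bookkeeping: one must arrange that fiberwise representatives, the integers and matrix sizes occurring in the $K_0$ classes, the scalar parts appearing in the unitalizations, and the connecting homotopies can all be chosen with bounds uniform in $i$, so that they genuinely assemble into $\eps'$-$r'$-objects over $\A_c^\infty$, while confirming that the only analytically relevant data — propagation and tolerance — remain uniformly controlled and that the resulting control pair $(\alpha,h)$ inherits its $\A$-independence solely from the uniformity of the Morita equivalence.
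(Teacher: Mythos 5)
You have identified the right strategy in outline (this is essentially how such product statements are proved), but your argument has a genuine gap at its crucial point: the well-definedness of the assembly map $\G_{\A,*}$, and hence the verification that $\G_{\A,*}\circ\F_{\A,*}$ is controlled-equivalent to the identity. You claim that the fiberwise homotopies of $\eps$-$r$-projections (resp.\ $3\eps$-$2r$-unitaries), having uniformly bounded propagation and tolerance, ``assemble into one homotopy over $(\A_c^\infty)^+[0,1]$'' after enlarging the tolerance by a factor $c$. This is false as stated: $C([0,1],\prod_i B_i)$ is strictly smaller than $\prod_i C([0,1],B_i)$, and a family of continuous paths, one in each factor, defines a continuous path into the product only if the family is \emph{equicontinuous}. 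Nothing in the definition of the equivalence relations provides a uniform modulus of continuity for the fiberwise homotopies $h_i$, and enlarging the tolerance does not help — your factor $c$ only repairs the strict-versus-weak inequality in the supremum, not continuity of the assembled path. This is not bookkeeping: it is exactly the reason why ordinary $K$-theory fails to commute with infinite products even for stable algebras, and overcoming it is the real content of the statement. One must show that, at the cost of a universal rescaling $(\eps,r)\mapsto(\lambda\eps,h_\eps r)$ and of stabilization, homotopic quantitative projections/unitaries can be joined by homotopies with a \emph{universal Lipschitz bound} (for instance by converting a homotopy into a controlled conjugation and then using a rotation-type path, whose Lipschitz constant and propagation are universal); only such uniformly Lipschitz families can be assembled over the product. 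A second, related gap occurs in the $K_0$ case: the representatives $[q_i,l_i]$ carry integers $l_i$ and stabilizing identities $I_{k_i+l_i}$ that are unbounded in $i$; identity matrices are not compact, so they cannot be ``absorbed into $\K(\H)$,'' while a class in $K_0^{\eps,r}(\A_c^\infty)$ carries a \emph{single} integer $L$ and a \emph{single} scalar part, since the unitalization of the product adjoins only one copy of $\C$. Normalizing all fiberwise representatives to a common scalar part and a common $L$ (which forces representatives of the form ``scalar projection minus compact'' when the $l_i$ are unbounded) is a genuine step that your proposal defers rather than performs.

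By contrast, the paper does not redo any of this analysis. For unital $A_i$ it simply invokes \cite[Proposition 3.1]{oy2}, where the uniformization just described is carried out once and for all; and it reduces the non-unital case to the unital one structurally, by unitalizing each $A_i$, observing that $0\to\A_c^\infty\to\widetilde{\A}_c^\infty\to\mathcal{C}_c\to 0$ is a split extension of filtered $C^*$-algebras, and then applying the controlled six-term exact sequence of Theorem~\ref{thm-six term} together with a controlled five-lemma argument — which is also how the scalar-part difficulty of the unitalization is handled without any hands-on normalization. If you want a self-contained proof along your lines, you must first establish the uniform-Lipschitz-homotopy lemma; otherwise the efficient correct route is the paper's reduction to the cited unital case.
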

We postpone the  proof of this lemma until  the end the next subsection.

\subsection{Control exact sequences}\label{subsection-control-exact-sequence}\begin{definition}
Let $(\lambda,h)$ be a control pair,
\begin{itemize}
 \item
Let $\O=(O_{\eps,r})_{0<\eps< \frac{1}{4},r>0},\, \O'=(O'_{\eps,r})_{0<\eps< \frac{1}{4},r>0}$  and
$\O''=(O''_{\eps,r})_{0<\eps< \frac{1}{4},r>0}$ be quantitative objects and let 
$$\F=(F^{\eps,r})_{0<\eps< \frac{1}{4\lambda},r>0}:\O\to\O'$$ be a  $(\alpha_\F,k_\F)$-controlled morphism and 
  let   $$\G=(G^{\eps,r})_{0<\eps<\frac{1}{4\alpha_\G},r>0}:\O'\to \O''$$
be a $(\alpha_\G,k_\G)$-controlled morphism.
Then the composition
$$\O\stackrel{\F}{\to}O'\stackrel{\G}{\to}\O''$$ is said to be $(\lambda,h)$-exact at $\O'$ if
 $\G\circ\F=0$ and if
 for any $0<\eps<\frac{1}{4\max\{\lambda\alpha_\F,\alpha_\G\}}$, any $r>0$  and  any $y$ in
$O'^{\eps,r}$  such that  $G^{\eps,r}(y)=0$ in $O''_{\eps,r}$, there exists an
  element $x$ in $O^{\lambda\eps,h_\eps r}$
such that $$F^{\lambda\eps,h_{\lambda\eps}r}(x)=\iota_{\O'}^{\eps,\alpha_\F\lambda\eps,r,k_{\F,\lambda\eps}h_\eps r}(y)$$ in
$O'^{\alpha_\F\lambda\eps,k_{\F,\lambda\eps}h_\eps r}$.
\item  A sequence of controlled morphisms
$$\cdots\O_{k-1}\stackrel{\F_{k-1}}{\lto}\O_{{k}}\stackrel{\F_{k}}{\lto}
\O_{{k+1}}\stackrel{\F_{k+1}}{\lto}\O_{{k+2}}\cdots$$ is called $(\lambda,h)$-exact if for every $k$,
the composition   $$\O_{{k-1}}\stackrel{\F_{k-1}}{\lto}\O_{{k}} \stackrel{\F_{k}}{\lto}
\O_{{k+1}}$$ is $(\lambda,h)$-exact at $\O_{{k}}$.
\end{itemize}
\end{definition}

\begin{definition}
Let $A$ be a $C^*$-algebra filtered by $(A_r)_{r>0}$ and let $J$ be an
ideal of $A$. The  extension of $C^*$-algebras
$$0\to J\to A\to A/J\to 0$$ is said to be filtered and semi-split (or a semi-split extension of filtered $C^*$-algebras) if there
exists a completely positive cross-section  $$s:A/J\to A$$ such that
$$s((A/ J)_r))\subset A_r$$  for any
number $r>0$. Such a cross-section is said to be semi-split and filtered.
\end{definition}
Notice that  in this case, the ideal $J$ is then filtered by $(A_r\cap J)_{r>0}$.
For any extension of $C^*$-algebras $$0\to J\to A\to A/J\to 0$$  we denote by $\partial_{J,A}:K_{*}(A/J)\to K_{*}(J)$ the associated
(odd degree) boundary map.
\begin{proposition}\label{prop-bound}
There exists a control pair $(\alpha_\DD,k_\DD)$  such that for any    semi-split  extension of filtered
$C^*$-algebras
 $$0\longrightarrow J \longrightarrow A \stackrel{q}{\longrightarrow}
A/J\longrightarrow 0,$$ there exists a $(\alpha_\DD,k_\DD)$-controlled  morphism of odd degree
 $$\DD_{J,A}=(\partial_ {J,A}^{\eps,r})_{0<\eps<\frac{1}{4\alpha_\DD},r>0}: \K_*(A/J)\to
  \K_*(J)$$
 which induces in $K$-theory   $\partial_{J,A}:K_{*}(A/J)\to K_{*}(J)$.
\end{proposition}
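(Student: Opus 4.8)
The plan is to construct $\DD_{J,A}$ directly on the generators of quantitative $K$-theory, separately in each parity, imitating the classical index and exponential maps while tracking both the defect from unitarity/idempotency and the propagation. Throughout I fix a completely positive filtered cross-section $s:A/J\to A$ with $s((A/J)_r)\subset A_r$, furnished by the semi-split hypothesis. The essential point for obtaining a control pair $(\alpha_\DD,k_\DD)$ that does not depend on the particular extension is that every constant appearing below will come from universal data---rotation homotopies and polynomial approximations of the exponential---and never from $J$ or $A$.

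For the index component $K_1^{\eps,r}(A/J)\to K_0^{\alpha\eps,k_\eps r}(J)$, I start with $u\in\ue((A/J)^+)$ and lift it entrywise through $s$ to some $\tilde u$ with entries in $A^+_r$, so that $q(\tilde u)=u$ and $\tilde u$ is approximately unitary modulo $J$. Applying $s$ to the standard rotation homotopy joining $\diag(1,1)$ to $\diag(u,u^*)$ yields an approximate unitary $V$ over $A^+$ of propagation bounded by a fixed multiple of $r$ whose image $q(V)$ is within $O(\eps)$ of $\diag(u,u^*)$. Then $V\diag(I_n,0)V^*$ is an $\alpha\eps$-$k_\eps r$-projection over $A^+$ whose image modulo $J$ is within $O(\eps)$ of the scalar projection $\diag(I_n,0)$; after subtracting this scalar part it becomes an honest approximate projection over $J^+$, and I set $\partial^{\eps,r}_{J,A}[u]_{\eps,r}=[V\diag(I_n,0)V^*,n]_{\alpha\eps,k_\eps r}$, which lands in $K_0^{\alpha\eps,k_\eps r}(J)$ precisely because $\rank\kappa_0(\rho_J(V\diag(I_n,0)V^*))=n$.

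For the exponential component $K_0^{\eps,r}(A/J)\to K_1^{\alpha\eps,k_\eps r}(J)$, given an \erp $p$ over $(A/J)^+$ I form the self-adjoint lift $a=s(p)\in A^+_r$ and consider $\exp(2\pi i a)$, whose image modulo $J$ is $\exp(2\pi i p)$, hence within $O(\eps)$ of $\exp(2\pi i\kappa_0(p))=1$. The one genuine difficulty is that $\exp(2\pi i a)$ has unbounded propagation, so I replace it by a polynomial $P_N(a)$ with $\|P_N(a)-\exp(2\pi i a)\|<\eps/10$; since $\|a\|\lq 1+\eps$, the degree $N=N(\eps)$ may be chosen as a function of $\eps$ alone. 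Then $P_N(a)\in A^+_{Nr}$ is an $\alpha\eps$-$Nr$-unitary reducing to approximately $1$ modulo $J$, so it defines a class in $K_1^{\alpha\eps,k_\eps r}(J)$ with $k_\eps=N(\eps)$; this $k_\eps$ grows as $\eps\to0$, which is permitted because $N(\cdot)$ is itself non-increasing and thus serves as the dominating function in the control pair. Taking the maxima of the index and exponential constants gives a single pair $(\alpha_\DD,k_\DD)$.

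It then remains to verify three things: that both assignments descend to the equivalence relations defining $K_*^{\eps,r}$, which I obtain by lifting homotopies of approximate unitaries and projections through $s$ at the cost of a further controlled degradation; that they commute with the structure maps $\iota_*^{\eps,\eps',r,r'}$ in the sense required of a controlled morphism; and that applying $\iota_*^{\eps,r}$ and letting $r\to\infty$ recovers the classical $\partial_{J,A}$---this last point being immediate, since the formulas above are exactly the finite-propagation avatars of the usual index and exponential cocycles. I expect the exponential component to be the main obstacle: one must truncate the exponential to finite propagation while guaranteeing that the truncation degree, and hence the propagation-scaling $k_\eps$, depends only on $\eps$ and never on $r$ or on the extension. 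This uniformity is what allows one control pair to work simultaneously for every semi-split filtered extension, which is the content of the proposition.
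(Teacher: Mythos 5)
Your overall architecture — an index component built from a lift of $\diag(u,u^*)$ and an exponential component built from a truncated exponential of the self-adjoint lift $s(p)$, with every constant coming from universal data — is the same as that of the construction this proposition is quoted from (the paper under review does not prove it; it recalls it from \cite{oy2}). The exponential half of your proposal is essentially sound: since $\|s(p)\|\le 1+\eps$, the truncation degree $N(\eps)$ is indeed a function of $\eps$ alone, and $k_\eps=N(\eps)$ is admissible in a control pair because it is dominated by a non-increasing function of $\eps$. You still owe the (routine) correction pushing $P_N(s(p))$ into $M_n(J^+)$ exactly — e.g. replace it by $P_N(s(p))-s\bigl(q(P_N(s(p)))-1\bigr)$, which changes it by $O(\eps)$ since $q(P_N(s(p)))=P_N(p)$ is within a universal multiple of $\eps$ of $1$ — and the analogous correction in the index case, which you only gesture at.

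The genuine gap is in the index component, at the sentence asserting that applying $s$ to the rotation homotopy joining $\diag(1,1)$ to $\diag(u,u^*)$ ``yields an approximate unitary $V$ over $A^+$.'' This is false: a completely positive section is not multiplicative, and its failure of multiplicativity is of order $1$, not of order $\eps$, so $s$ does not carry (almost-)unitaries to almost-unitaries. Concretely, take $A=C([0,1],M_2)$ with the trivial filtration, $J=\{f\in A: f(0)=0\}$, $A/J=M_2$, the filtered cp section $s(x)(t)=(1-t)x+t\,\tau(x)1$ ($\tau$ the normalized trace), and the exact unitary $u=\diag(1,-1)$: then $s(u)(1)=0$, so the entrywise $s$-lift of $\diag(u,u^*)$ vanishes at the fibre $t=1$; it is not close to any unitary (it is not even invertible), and $V\diag(I_n,0)V^*$ is nowhere near an almost-projection, so the formula $\partial^{\eps,r}_{J,A}[u]_{\eps,r}=[V\diag(I_n,0)V^*,n]$ produces nothing. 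This lifting problem is precisely the technical heart of the lemma underlying the construction in \cite{oy2}: one must manufacture an almost-unitary lift of $\diag(u,u^*)$ with controlled propagation by other means, e.g. lift a factorization of $\diag(u,u^*)$ into triangular unipotent matrices (whose $s$-lifts are \emph{exactly} invertible, with universally bounded inverses) and then restore almost-unitarity by a controlled polar decomposition — a polynomial approximation of $t\mapsto t^{-1/2}$ on a universal spectral interval — or dilate the contraction $s(u)$ in the Halmos fashion, replacing the square roots $(1-s(u)s(u)^*)^{1/2}$ by polynomials of universal degree. Ironically, this is exactly the device (polynomial functional calculus with $\eps$-only degree) that you deploy for the exponential component, which you predicted would be the main obstacle; in fact the exponential half is the easy one, and the missing idea sits in the unitary-lifting step of the index half.
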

Moreover  the controlled boundary map enjoys the usual naturally properties with respect to extensions.
If the extension $$0\to J\to A\to A/J\to 0$$ is split by a filtered homomorphism, i.e there exists a homomorphism of filtered $C^*$-algebras $s:A/J\to A$ such that $q\circ s=Id_{A/J}$, then we have
$\DD_{J,A}=0$.
\begin{theorem}\label{thm-six term}  There exists a control  pair $(\lambda,h)$  such that for any  semi-split
extension of filtered    $C^*$-algebras $$0  \longrightarrow J
\stackrel{\jmath}{\longrightarrow} A \stackrel{q}{\longrightarrow}
A/J\longrightarrow 0,$$ then the following six-term sequence is $(\lambda,h)$-exact
$$\begin{CD}
\K_0(J) @>\jmath_*>> \K_0(A)  @>q_*>>\K_0(A/J)\\
    @A\DD_{J,A} AA @.     @V\DD_{J,A} VV\\
\K_1(A/J) @<q_*<< \K_1(A)@<\jmath_*<< \K_1(J)
\end{CD}
$$
\end{theorem}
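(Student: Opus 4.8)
The plan is to transport the classical proof of the six-term exact sequence into the controlled setting, carrying along at every step the multiplicative deterioration of the tolerance $\eps$ and the growth of the propagation $r$. The six exactness conditions fall into two families. The two at the nodes $\K_0(A)$ and $\K_1(A)$ are controlled half-exactness statements, asserting that the kernel of $q_*^{\eps,r}$ agrees, up to the structure maps $\iota$, with the image of $\jmath_*^{\eps,r}$; the four at the nodes $\K_*(J)$ and $\K_*(A/J)$ involve the controlled boundary map $\DD_{J,A}$ of Proposition \ref{prop-bound}. First I would establish controlled half-exactness at $\K_0(A)$ for an arbitrary semi-split filtered extension, and then derive the remaining statements by means of the mapping cone, which manufactures the connecting maps while reducing boundary-exactness to half-exactness.

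For half-exactness at $\K_0(A)$ the relation $q_*\circ\jmath_*=0$ is immediate from $q\circ\jmath=0$. For the reverse inclusion I would take $x=[p,l]_{\eps,r}$ with $p$ an \erp over $A^+$ such that $q_*^{\eps,r}(x)$ becomes zero in $K_0^{\eps',r'}(A/J)$; by definition of the relation $\sim$ this produces a controlled homotopy in $\P_\infty^{\eps',r'}((A/J)[0,1])$ between a stabilization of $q(p)$ and a trivial projection. The key move is to lift this homotopy along the completely positive filtered section $s$. Since $s((A/J)_r)\subset A_r$ the lift does not increase propagation beyond a fixed multiple of $r'$, and since $s$ is contractive the lifted path consists of elements of $\P_\infty^{\eps'',r''}(A[0,1])$ lying over the given homotopy; the non-multiplicativity defect $s(ab)-s(a)s(b)\in J$ is norm-controlled, so the endpoint of the lift differs from an element of $\P_\infty^{\eps'',r''}(J^+)$ by a controlled error. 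This exhibits $x$, up to the structure maps $\iota$, in the image of $\jmath_*$, with a control pair depending only on absolute constants.

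To treat the boundary map I would pass to the mapping cone $C_q=\{(a,f)\in A\oplus C_0([0,1),A/J)\ :\ f(0)=q(a)\}$, filtered by $(C_q)_r=\{(a,f)\ :\ a\in A_r,\ f(t)\in(A/J)_r\}$. The inclusion $j\mapsto(\jmath(j),0)$ induces, via the semi-split hypothesis, a controlled excision isomorphism $\K_*(J)\to\K_*(C_q)$, and the evident semi-split cone extension $0\to S(A/J)\to C_q\to A\to 0$ carries a controlled boundary that, after composition with the controlled suspension isomorphism and with this excision isomorphism, agrees up to control with $\DD_{J,A}$. Feeding the already-established half-exactness into the cone extension then yields controlled exactness at $\K_*(A/J)$ and $\K_*(J)$. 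The vanishing relations $\DD_{J,A}\circ q_*=0$ and $\jmath_*\circ\DD_{J,A}=0$ follow directly from the explicit lifting formula for $\DD_{J,A}$ and its vanishing on split extensions recorded after Proposition \ref{prop-bound}: the boundary of a class already lifted to $A$ is controlled-trivial, and the boundary projection is joined to a trivial one by a propagation-preserving homotopy over $A$.

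The main obstacle is uniformity. The theorem demands a single control pair $(\lambda,h)$ valid for every semi-split filtered extension at once, so each constituent lemma — the lifting of $\eps$-$r$-projections and $\eps$-$r$-unitaries along $s$, the controlled excision for $C_q$, and the controlled suspension isomorphism — must be supplied with a control pair depending only on absolute constants and never on the particular pair $J\subset A$. The delicate bookkeeping concerns the non-multiplicativity of $s$: the corrections $s(a)s(b)-s(ab)\in J$ must be kept inside a tolerance that degrades $\eps$ by a fixed factor and inflates $r$ by a fixed function of $\eps$, uniformly in the extension, which is exactly what the completely positive, filtered nature of $s$ secures. Once every ingredient carries an extension-independent control pair, the final $(\lambda,h)$ is obtained by taking the supremum, in the partial order on control pairs, of the finitely many control pairs involved.
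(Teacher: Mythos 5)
A preliminary remark: the paper gives no proof of Theorem \ref{thm-six term} to compare against --- Section \ref{sec-survey} is a survey, and the theorem is quoted from \cite{oy2}. The proof there is quite different from what you propose: the controlled boundary maps are constructed directly from the filtered completely positive section (only elements close to the identity, or explicit rotation-type homotopies with universal Lipschitz bounds, are ever lifted, which is what keeps the control pair independent of the extension and of the number of steps in any hypothesis homotopy), and controlled exactness is then verified at each of the six nodes by quantitative versions of the classical arguments; no mapping cone or excision statement is used. This difference matters, because the two structural ingredients your route leans on are not available here, and your half-exactness argument contains an error.

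Concretely: (1) Your claim that the non-multiplicativity defect $s(ab)-s(a)s(b)\in J$ is ``norm-controlled'' is false; a completely positive section is in general far from multiplicative (consider the Calkin extension), and the defect, while lying in $J$, has no smallness in norm. Hence $s(h(t))$, for $h$ a homotopy of $\eps$-$r$-projections in matrices over $(A/J)^+$, is a path of self-adjoint elements with controlled propagation but \emph{not} a path of almost-projections ($s(h)^2-s(h)$ is small only modulo $J$), and its endpoint, $s$ applied to a stabilization of $q(p)$, differs from the corresponding stabilization of $p$ by an element of $J$ of uncontrolled norm; the step ``exhibits $x$ in the image of $\jmath_*$'' therefore does not go through. (2) The ``controlled excision isomorphism'' $\K_*(J)\to\K_*(C_q)$ is asserted, not proven; already classically this is a theorem of essentially the same depth as the six-term sequence (half-exactness applied to $0\to J\to C_q\to C(A/J)\to 0$, whose quotient is a contractible cone, yields only surjectivity, and injectivity requires a separate rotation argument), and here it would additionally need an extension-independent control pair. (3) The ``controlled suspension isomorphism'' you compose with is, in this development, a \emph{consequence} of Theorem \ref{thm-six term} itself applied to the Bott extension $0\to SA\to CA\to A\to 0$ together with $K^{\eps,r}_*(CA)=0$ (see the paragraph following the theorem), so invoking it is circular unless you supply an independent proof of controlled Bott periodicity, which is a substantial result nowhere assumed. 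Repairing (1) by lifting conjugating almost-unitaries instead of projection homotopies (assembling the corrections block-diagonally so that propagation does not accumulate), and replacing (2) and (3) by direct constructions, essentially forces you back onto the node-by-node proof of \cite{oy2}.
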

If $A$ is  a filtered $C^*$-algebra, let us denote its suspension and its cone respectively by $SA$ and $CA$, i.e $SA=C_0((0,1))$ and $CA=C_0((0,1])$. We endow $SA$ and $CA$ with the obvious structure of filtered $C^*$-algebras arising from $A$. The algebra $CA$ being contractible as a filtered $C^*$ algebra, the we have $K^{\eps,r}_*(CA)=\{0\}$ for every positive number $\eps$ and $r$ such that $\eps<1/4$ \cite[Lemma 1.27]{oy2}. If we consider the Bott extension
$$0\lto SA\to CA\stackrel{ev_1}{\lto}A\lto 0,$$ where $ev_1:CA\to A$ is the evaluation at $1$ with  corresponding  controlled boundary morphisms
$\DD_A=\DD_{SA,CA}$. Then $$\DD_A=(\partial_A^{\eps,r})_{0<\eps\frac{1}{4\alpha_\DD},r>0}:\K_0(A)\to\K_1(SA)$$ and 
$$\DD_A=(\partial_A^{\eps,r})_{0<\eps\frac{1}{4\alpha_\DD},r>0}:\K_1(A)\to\K_0(SA)$$ are controlled isomorphisms that induce the Bott isomorphisms
$\partial_A:K_0(A)\to K_1(SA)$ and $\partial_A:K_1(A)\to K_0(SA)$.

\medskip

In the particular case of   a filtered extension of $C^*$-algebras  $$0  \to J
\stackrel{\jmath}{\to} A \stackrel{q}{\to}
A/J\to 0$$  that splits  by a filtered morphism, then the following sequence is $(\lambda,h)$-exact
$$0\lto \K_0(J) \stackrel{\jmath}{\lto}  \K_0(A)  \stackrel{q}{\lto}\K_0(A/J)\lto 0.$$

{\it {Proof of lemma \ref{lem-prod-filtered}}}. Assume  first that all the $A_i$ are unital. Then the result is a consequence of \cite[Proposition 3.1]{oy2}. If $A_i$ is not unital for some $i$, then 
for every integer $i$, let us provide   $$\widetilde{A_i}=\{(x,\lambda);\,x\in A_i\,,\lambda\in \C\}$$ with the product $$(x,\lambda)(x',\lambda')=(xx'+\lambda x'+\lambda' x)$$ for all $(x,\lambda)$ and $(x',\lambda')$ in $A_i$. Then $\widetilde{A_i}$ is filtered with
$$\widetilde{A}_{i,r}=\{(x,\lambda);\,x\in A_{i,r}\,,\lambda\in \C\}.$$ Set then 
$\widetilde{\A}=(\widetilde{A}_i)_{i\in\N}$.
Let us denote by  $\mathcal{C}$ the constant family of the $C^*$-algebra $\C$. Then $$0\lto A_c^\infty\lto \widetilde{A}_c^\infty\lto \mathcal{C}_c\lto 0$$ is  a split extension of filtered $C^*$-algebra. Then   we have a commutative diagram

 $$\begin{CD}
0@>>> \K_*({A}_c^\infty)@>>> \K_*(\widetilde{A}_c^\infty)@>>>\K_*(\mathcal{C}_c)@>>>0 \\
   @.     @V \F_{{\A},*} VV
         @VV\F_{\widetilde{\A},*}V     @VV\F_{\mathcal{C},*}V \\
0@>>> \prod_{i\in\N} \K_*({A_i})@>>> \prod_{i\in\N} \K_*({\widetilde{A}_i})@>>> \K^\N_*(\C)@>>>0\end{CD},$$
with  $(\lambda,h)$-exact rows  for the control pair $(\lambda,h)$ of theorem \ref{thm-six term}. The result is now a consequence of a five lemma type argument.
\qed

 \subsection{$KK$-theory and controlled morphisms}
 Let $A$ be a $C^*$-algebra and let $B$ be a filtered $C^*$-algebra filtered by $(B_r)_{r>0}$.  Let us define $A\ts B_r$ as the closure in the spatial tensor
product $A\ts B$ of the algebraic tensor product of $A$ and $B_r$. Then the $C^*$-algebra $A\ts B$ is filtered by $(A\ts B_r)_{r>0}$. 
 If $f:A_1\to A_2$ is a homomorphism of $C^*$-algebras, let us set 
 $$f_B:A_1\ts B\to A_2\ts B;\, a\ts b\mapsto f(a)\ts b.$$ Recall from \cite{kas} that for $C^*$-algebras  $A_1$, $A_2$ and $B$, G. Kasparov defined a tensorization map
$$\tau_B:KK_*(A_1,A_2)\to KK_*(A_1\ts B,A_2\ts B).$$ If $B$ is a filtered $C^*$-algebra, then for any $z$ in $KK_*(A_1,A_2)$  the morphism
$$K_*(A_1\ts B)\lto K_*(A_2\ts B);\, x\mapsto x\ts_{A_1\ts B}\tau_B(z)$$  is induced by a control morphism  which enjoys compatibity properties with Kasparov product \cite[Theorem 4.4]{oy2}.
  \begin{theorem}\label{thm-tensor}
  There exists a control pair $(\alpha_\TT,k_\TT)$ such that 
  \begin{itemize}
  \item for any filtered   $C^*$-algebra $B$; 
  \item for any $C^*$-algebras $A_1$ and $A_2$;
  \item for any element $z$ in $KK_*(A_1,A_2)$,
  \end{itemize}
  There exists a $(\alpha_\TT,k_\TT)$-controlled morphism  $\TT_B(z):\K_*(A_1\ts B)\to \K_*(A_2\ts B)$  with same degree as $z$ that satisfies the following:
 \begin{enumerate}
 \item $\TT_B(z):\K_*(A_1\ts B)\to \K_*(A_2\ts B)$ induces in $K$-theory the right multiplication by $\tau_B(z)$;
\item  For any  elements $z$ and $z'$ in $KK_*(A_1,A_2)$ then
$$\TT_B(z+z')=\TT_B(z)+\TT_B(z').$$
\item Let $A'_1$ be a filtered $C^*$-algebras and  let $f:A_1\to A'_1$ be a homomorphism of  $C^*$-algebras, then
$\T_B(f^*(z))=\T_B(z)\circ f_{B,*}$  for all $z$ in $KK_* (A'_1,A_2)$.
\item Let $
A'_2$ be a $C^*$-algebra and  let $g:A'_2\to A_2$ be a homomorphism of $C^*$-algebras then
$\TT_B(g_{*}(z))=g_{B,*}\circ \TT_B(z)$
for any $z$ in $KK_*(A_1,A'_2)$.
\item $\TT_B([Id_{A_1}])\stackrel{(\alpha_\TT,k_\TT)}{\sim} \Id _{\K_*({A_1}\ts B)}$.
\item For any $C^*$-algebra $D$ and any element $z$ in $KK_*(A_1,A_2)$, we have  $\TT_B(\tau_D(z))=\TT_{B\ts D}(z)$.
\item For any  semi-split extension of  $C^*$-algebras $0\to J\to A\to A/J\to 0$    with corresponding   element   $[\partial_{J,A}]$  of
$KK_1(A/J,J)$
 that implements the boundary map, then we have
$$\TT_B([\partial_{J,A}])=\DD_{J\ts B,A\ts B}.$$\end{enumerate}
\end{theorem}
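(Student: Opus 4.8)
The plan is to construct $\TT_B(z)$ by reducing an arbitrary $z$ to the two kinds of $KK$-classes for which a controlled morphism is already available: classes induced by $\ast$-homomorphisms, handled by the functorial maps, and boundary classes of semisplit extensions, handled by the controlled boundary $\DD$ of Proposition \ref{prop-bound}. The bridge is the description of $KK$-theory by semisplit extensions (the Ext picture, assuming the usual separability hypotheses). Every $z\in KK_1(A_1,A_2)$ is the class $[\partial_{\K\ts A_2,E}]$ of a semisplit extension $0\to\K\ts A_2\to E\to A_1\to 0$, and, by Bott periodicity, every $z\in KK_0(A_1,A_2)$ arises the same way after one suspension, i.e.\ from a semisplit extension $0\to\K\ts SA_2\to E\to A_1\to 0$. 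Since tensoring a filtered semisplit extension by $B$ (with cross-section $s\ts\mathrm{id}_B$) is again filtered and semisplit, and since $\K\ts SA_2\ts B\cong S(\K\ts A_2\ts B)$ as filtered algebras, all the ingredients survive tensoring by $B$.

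Concretely, for $z\in KK_1(A_1,A_2)$ represented by $0\to\K\ts A_2\to E\to A_1\to 0$ I would set
$$\TT_B(z):=\MM^{-1}\circ\DD_{\K\ts A_2\ts B,\,E\ts B},$$
where $\MM$ is the controlled Morita equivalence of Proposition \ref{prop-morita} identifying $\K_*(A_2\ts B)$ with $\K_*(\K\ts A_2\ts B)$; for $z\in KK_0(A_1,A_2)$ represented after suspension I would compose $\DD_{\K\ts SA_2\ts B,\,E\ts B}$ with the controlled inverse of the Bott isomorphism $\DD_{A_2\ts B}$ and with $\MM^{-1}$. As the Morita equivalence has control pair $(1,1)$, the boundary map the fixed control pair $(\alpha_\DD,k_\DD)$, and the Bott map a fixed control pair, the composite $\TT_B(z)$ is a controlled morphism whose control pair $(\alpha_\TT,k_\TT)$ is assembled from these and is therefore independent of $A_1$, $A_2$, $B$ and $z$, as required. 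Moreover it has the same degree as $z$.

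Granting the construction, the seven properties would be verified as follows. Property (i) holds because $\DD$ induces the ordinary boundary in $K$-theory and $\tau_B$ of an extension class is the class of the $B$-tensored extension, so $\TT_B(z)$ induces right multiplication by $\tau_B(z)$; property (vii) is then essentially the definition. Properties (iii) and (iv) follow from the naturality of $\DD$ with respect to morphisms of semisplit extensions, property (ii) from the compatibility of $\DD$ with the Baer sum of extensions, and property (v) from the explicit computation that the extension representing $[\mathrm{Id}_{A_1}]$ yields, after the Bott shift, the identity up to the equivalence $\stackrel{(\alpha_\TT,k_\TT)}{\sim}$. Property (vi) is almost definitional, since $\tau_D$ sends the representing extension to its $D$-tensored extension and $(E\ts D)\ts B=E\ts(B\ts D)$ by associativity of the tensor product.

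The main obstacle is \emph{well-definedness}: I must show that the controlled morphism $\TT_B(z)$ depends only on the class $z$ and not on the chosen representing extension (and, in degree $0$, not on the chosen suspension), with a single control pair $(\alpha_\TT,k_\TT)$ uniform in all the data. This requires controlled refinements of the elementary moves generating the equivalence relation on semisplit extensions---homotopy, addition of a degenerate extension, conjugation by a unitary, and stabilization---each of which must be realized by a controlled homotopy of $\eps$-$r$-projections and $\eps$-$r$-unitaries with control independent of the algebras, exactly in the spirit of the proof that $\DD$ is well defined in Proposition \ref{prop-bound} and Theorem \ref{thm-six term}. A secondary difficulty is the interaction with the Kasparov product needed for the full force of (iii), (iv) and (vi): the product must be realized at the controlled level as composition of controlled morphisms with the correct control-pair bookkeeping, and the associativity of this composition must be checked up to $\stackrel{(\alpha_\TT,k_\TT)}{\sim}$. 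I expect these uniform-control verifications, rather than the construction itself, to be the technical heart of the proof.
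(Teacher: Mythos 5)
Your construction---representing an odd class by a semisplit extension $0\to \K\ts A_2\to E\to A_1\to 0$, applying the controlled boundary morphism of Proposition \ref{prop-bound} to the $B$-tensored extension, correcting by the controlled Morita equivalence of Proposition \ref{prop-morita}, and reducing the even case to the odd one via the controlled Bott isomorphism $\DD_{A_2\ts B}$---is essentially the proof of this statement, which the present paper does not reprove but quotes from \cite[Theorem 4.4]{oy2}; your verification scheme for properties (i)--(vii) and your identification of uniform-control well-definedness as the technical heart also match that source. The only caveat is that the step you defer (controlled invariance of the boundary morphism under addition of degenerate extensions, Cuntz sums and unitary conjugation, with control pairs independent of $A_1$, $A_2$, $B$ and the representative) is precisely where the bulk of the work lies in the cited proof, so your proposal is a correct plan rather than a complete argument.
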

Moreover, $\TT_B$ is compatible with Kasparov products.
\begin{theorem}\label{thm-product-tensor} There exists a control pair $(\lambda,h)$ such that the following holds :

  let  $A_1,\,A_2$ and $A_3$  be separable $C^*$-algebras and let $B$ be a  filtered $C^*$-algebra. Then for any
$z$ in $KK_*(A_1,A_2)$ and  any  $z'$ in
$KK_*(A_2,A_3)$, we have
$$\TT_B(z\ts_{A_2} z')\aeq  \TT_B(z') \circ\TT_B(z).$$
\end{theorem}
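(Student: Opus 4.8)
The plan is to separate the classical part of the statement from the genuinely controlled part. Since Kasparov's tensorization map is multiplicative for the internal product, $\tau_B(z\ts_{A_2}z')=\tau_B(z)\ts_{A_2\ts B}\tau_B(z')$, item (1) of Theorem \ref{thm-tensor} shows at once that both $\TT_B(z\ts_{A_2}z')$ and $\TT_B(z')\circ\TT_B(z)$ induce the same map on ordinary $K$-theory, namely right multiplication by $\tau_B(z\ts_{A_2}z')$. The whole content therefore lies at the controlled level: I must produce a single control pair $(\lambda,h)$, independent of $A_1,A_2,A_3,B,z$ and $z'$, for which the two controlled morphisms are $(\lambda,h)$-equivalent. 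Throughout I will use that $\aeq$ is stable under pre- and post-composition with a fixed controlled morphism, and that composing controlled morphisms multiplies their control pairs through the operation $*$, so that finitely many compositions of universally controlled morphisms stay universally controlled.

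I would first dispose of the cases where one of the two factors comes from a homomorphism, which are immediate from the functoriality in Theorem \ref{thm-tensor}. If $g\colon A_2\to A_3$ is a homomorphism, then $z\ts_{A_2}[g]=g_*(z)$, so item (4) gives $\TT_B(z\ts_{A_2}[g])=g_{B,*}\circ\TT_B(z)$ for an \emph{arbitrary} $z$; since items (4) and (5) yield $\TT_B([g])\aeq g_{B,*}$, this is exactly $\TT_B(z\ts_{A_2}[g])\aeq\TT_B([g])\circ\TT_B(z)$. Symmetrically, for a homomorphism $f\colon A_1\to A_2$ and an arbitrary right factor $z'$ one has $[f]\ts_{A_2}z'=f^*(z')$, and item (3) gives the formula. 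In both cases the control pair is the universal $(\alpha_\TT,k_\TT)$.

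To handle a general right factor $z'$, I would reduce it to a homomorphism up to one fixed structural equivalence. Using the controlled Bott isomorphism $\DD_{A}$ to pass between degrees, it suffices to treat $z'\in KK_0(A_2,A_3)$, for which the Cuntz picture gives $z'=\mu_{A_2}^{-1}\ts_{qA_2}[\phi]$, where $\mu_{A_2}\in KK(qA_2,A_2)$ is the canonical $KK$-equivalence and $\phi\colon qA_2\to A_3\ts\K$ is a genuine homomorphism (the factor $\K$ being absorbed by the quantitative Morita equivalence of Proposition \ref{prop-morita}). Two applications of the homomorphism case (4), one to strip $[\phi]$ off the product $z\ts_{A_2}\mu_{A_2}^{-1}\ts_{qA_2}[\phi]$ and one to recombine $\mu_{A_2}^{-1}\ts_{qA_2}[\phi]=z'$, then reduce the entire statement to the single instance
$$\TT_B(z\ts_{A_2}\mu_{A_2}^{-1})\aeq\TT_B(\mu_{A_2}^{-1})\circ\TT_B(z)$$
of an arbitrary $z$ with the fixed structural element $\mu_{A_2}^{-1}$.

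This last instance is where the real work lies. I would attack it through the split extension $0\to qA_2\to A_2*A_2\to A_2\to 0$ defining $qA_2$: its splitting makes the associated controlled boundary vanish, so the controlled six-term exact sequence of Theorem \ref{thm-six term} identifies $\K_*(A_2*A_2)$ controlled-isomorphically with $\K_*(qA_2)\oplus\K_*(A_2)$, and tracking $\mu_{A_2}$ through this splitting, together with item (7) which expresses $\TT_B$ of a boundary class as a controlled boundary morphism, pins down $\TT_B(\mu_{A_2}^{\pm1})$ and establishes the formula for the structural factor. The main obstacle is to run this entire reduction with control pairs that do not depend on the data. Each step composes controlled morphisms and brings in auxiliary separable algebras --- suspensions, cones, the Cuntz algebra $qA_2$, and stabilizations --- which a priori worsens the control. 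What rescues the argument is that Theorems \ref{thm-tensor} and \ref{thm-six term} provide universal control pairs valid simultaneously for all $C^*$-algebras, and that any internal Kasparov product is realized by a fixed and bounded number of such elementary steps; checking that this number is genuinely independent of $z$ and $z'$, so that the finitely many compositions assemble into one control pair $(\lambda,h)$, is the technical heart of the proof.
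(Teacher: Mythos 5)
Your global strategy is viable and, as far as the decomposition goes, genuinely different from the authors': the paper does not actually prove Theorem \ref{thm-product-tensor} (it is quoted from \cite{oy2}), but in the two analogous multiplicativity statements it does prove (Propositions \ref{prop-funct-tau} and \ref{prop-prod-coarse}) the decomposition of a general class comes from Lafforgue's theorem \cite[Theorem 1.6.11]{laff-inv}, writing $z'=\nu_*([\theta]^{-1})$ with $\theta,\nu$ homomorphisms and $[\theta]$ a $KK$-equivalence, rather than from the Cuntz picture $z'=\mu_{A_2}^{-1}\ts_{qA_2}[\phi]$ that you propose. Either decomposition reduces the theorem, through items (3)--(5) of Theorem \ref{thm-tensor} exactly as you say, to the product formula against the inverse of a single homomorphism-induced $KK$-equivalence; your reduction to the instance $\TT_B(z\ts_{A_2}\mu_{A_2}^{-1})\aeq\TT_B(\mu_{A_2}^{-1})\circ\TT_B(z)$ is correct, as is your treatment of the homomorphism cases.

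The genuine gap is in the step you yourself call the heart of the proof. First, item (7) of Theorem \ref{thm-tensor} cannot ``pin down'' $\TT_B(\mu_{A_2}^{\pm1})$: it concerns boundary classes of semi-split extensions, whereas the extension $0\to qA_2\to A_2*A_2\to A_2\to 0$ is split by a filtered homomorphism, so its controlled boundary vanishes (as remarked after Proposition \ref{prop-bound}), and $\mu_{A_2}=[\pi]$ is induced by the homomorphism $\pi:qA_2\to A_2$, not by a boundary class; the six-term sequence of Theorem \ref{thm-six term} applied to this extension therefore carries no information about $\mu_{A_2}$. Second, and more fundamentally, even a complete identification of the morphisms $\TT_B(\mu_{A_2}^{\pm1})$ would not prove the formula: what is needed is a mechanism intertwining $\TT_B(\mu_{A_2}^{-1})$ with an \emph{arbitrary} $z$, and your sketch supplies none. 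The missing argument is purely functorial and needs no exact sequence. By items (3), (4) and (5) of Theorem \ref{thm-tensor}, one has $\pi_{B,*}\circ\TT_B(\mu_{A_2}^{-1})=\TT_B(\pi_*(\mu_{A_2}^{-1}))=\TT_B([Id_{A_2}])\aeq\Id_{\K_*(A_2\ts B)}$ and $\TT_B(\mu_{A_2}^{-1})\circ\pi_{B,*}=\TT_B(\pi^*(\mu_{A_2}^{-1}))=\TT_B([Id_{qA_2}])\aeq\Id_{\K_*(qA_2\ts B)}$, so $\TT_B(\mu_{A_2}^{-1})$ is a two-sided controlled inverse of $\pi_{B,*}$ with universal control; then, for arbitrary $z$, item (4) gives $\pi_{B,*}\circ\TT_B(z\ts_{A_2}\mu_{A_2}^{-1})=\TT_B(z\ts_{A_2}\mu_{A_2}^{-1}\ts_{qA_2}[\pi])=\TT_B(z)$, hence $\pi_{B,*}\circ\TT_B(z\ts_{A_2}\mu_{A_2}^{-1})\aeq\pi_{B,*}\circ\TT_B(\mu_{A_2}^{-1})\circ\TT_B(z)$, and composing on the left with the controlled inverse $\TT_B(\mu_{A_2}^{-1})$ yields the desired equivalence, the control pair being assembled from finitely many universal ones. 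This ``inverse trick'' is precisely how the paper argues in Proposition \ref{prop-prod-coarse}, where $\SS_\Si([\theta]^{-1})$ is identified as the inverse of $\theta_{\Si,*}$ before the two sides are compared. With this repair (and with the odd case handled, as you suggest, via the controlled Bott elements --- which itself requires naturality of the controlled boundary morphisms, cf. Lemma \ref{lem-bott-coarse}, rather than a formal degree shift), your Cuntz-picture route closes up into a correct alternative proof.
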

We also have in the case of finitely generated group a controlled version of the Kasparov transformation.
Let $\Ga$ be a finitely generated group. Recall that a length on $\Gamma$ is a map $\ell:\Gamma\to\R^+$ such
that
\begin{itemize}
\item $\ell(\gamma)=0$ if and only if $\gamma$ is the identity element  $e$  of $\Gamma$;
\item  $\ell(\gamma\gamma')\lq\ell(\gamma)+\ell(\gamma')$ for all
  element $\gamma$ and $\gamma'$ of $\Gamma$.
\item $\ell(\gamma)=\ell(\gamma^{-1})$.
\end{itemize}
In what follows, we will assume that $\ell$ is a word length arising from a finite generating symmetric  set $S$, i.e
$\ell(\gamma)=\inf\{d\text{ such that }\ga=\ga_1\cdots\ga_d\text{ with } \ga_1,\ldots,\ga_d\text{ in }S\}$.
Let us denote by $B(e,r)$ the
ball centered at the neutral  element of $\Ga$ with  radius $r$, i.e
$B(e,r)=\{\gamma\in\Gamma\text{ such that }\ell(\gamma)\lq r\}$. Let $A$ be a separable $\Ga$-$C^*$-algebra, i.e a separable $C^*$-algebra provided with an action of $\Ga$ by automorphisms.  For any positive number $r$, we set
$$(A\rtr \Gamma)_r\defi\{f\in C_c(\Ga,A)\text{ with support in
}B(e,r)\}.$$ Then the $C^*$-algebra $A\rtr \Gamma$ is filtered by
$((A\rtr \Gamma)_r)_{r>0}$.
Moreover if $f:A\to B$ is a $\Ga$-equivariant morphism of $C^*$-algebras, then the induced homomorphism
$f_\Ga:A\rtr\Ga\to B\rtr\Ga$ is a filtered homomorphism. In \cite{kas} was constructed
for any $\Ga$-$C^*$-algebras $A$ and $B$ a natural transformation
$J_\Ga:KK_*^\Ga(A, B)\to KK_*(A\rtr\Ga,B\rtr\Ga)$ that preserves Kasparov products.
\begin{theorem}\label{thm-kas}
There exists a control pair $(\alpha_\JJ,k_\JJ)$ such that
\begin{itemize}
\item for any  separable  $\Gamma$-$C^*$-algebras  $A$ and $B$;
\item For any $z$ in $KK^\Ga_*(A,B)$,
\end{itemize}
there exists a $(\alpha_\JJ,k_\JJ)$-controlled morphism
$$\JR(z): \K_*(\AG)\to
\K_*(B\rtr\Gamma)$$
 of same degree as $z$ that satisfies the following:
\begin{enumerate}
\item For any element  $z$  of
$KK^\Ga_*(A,B)$, then
$\JR(z): \K_*(\AG)\to
\K_*(B\rtr\Gamma)$ induces in $K$-theory right multiplication by
$J_\Ga^{red}(z)$.
\item  For any $z$ and
  $z'$ in $KK_*^\Gamma(A,B)$, then
  $$\JR(z+z')=\JR(z)+\JR(z').$$

\item For any $\Gamma$-$C^*$-algebra $A'$, any homomorphism  $f:A\to A'$  of $\Gamma$-$C^*$-algebras and  any $z$ in $KK_*^\Gamma(A',B)$, then
$\JR(f^*(z))=
\JR(z)\circ f_{\Ga,*}$.
\item For any $\Gamma$-$C^*$-algebra $B'$, any homomorphism $g:B\to B'$ of
  $\Gamma$-$C^*$-algebras and  any
  $z$ in $KK_*^\Gamma(A,B)$, then $\JR(g_*(z))=g_{\Ga,*}\circ
\JR(z)$.
\item If $$0\to J\to A\to A/J\to 0$$ is a semi-split exact sequence of
  $\Ga$-$C^*$-algebras, let $[\partial_{J,A}]$ be the element of
  $KK^\Ga_1(A/J,J)$ that implements the boundary map
  $\partial_{J,A}$. Then we have
$$\JR([\partial_{J,A}])=\D_{J\rtr\Ga,A\rtr\Ga}.$$\end{enumerate}
\end{theorem}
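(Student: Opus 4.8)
The plan is to construct $\JR$ by following the same template that produces the controlled tensorization $\TT_B$ in Theorem~\ref{thm-tensor}, with the Kasparov tensorization $\tau_B$ systematically replaced by the Kasparov descent $J_\Ga$. Both are additive natural transformations on equivariant $KK$-theory that are compatible with boundary maps, so the formal machinery is identical. The key input is that $J_\Ga$ is determined on a set of elementary generators of $KK^\Ga_*(A,B)$, namely classes $[f]$ of $\Ga$-equivariant homomorphisms and boundary classes $[\partial_{J,A}]$ of semi-split $\Ga$-equivariant extensions. Concretely, the Cuntz picture realizes any $z$ in $KK^\Ga_0(A,B)$ by a single $\Ga$-equivariant homomorphism $\psi\colon qA\to\K(\H)\ts B$ together with the canonical $KK^\Ga$-equivalence between $qA$ and $A$, whose inverse is assembled from the boundary classes of the associated equivariant semi-split Cuntz extensions.

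First I would define $\JR$ on the building blocks. For a $\Ga$-equivariant homomorphism $f$, the induced map $f_\Ga$ on reduced crossed products is filtered, so $f_{\Ga,*}$ is a $(1,1)$-controlled morphism, and I set $\JR([f])=f_{\Ga,*}$; this is engineered to yield the naturality properties~(iii) and~(iv). For a boundary class I set $\JR([\partial_{J,A}])=\D_{J\rtr\Ga,A\rtr\Ga}$, which is $(\alpha_\DD,k_\DD)$-controlled by Proposition~\ref{prop-bound}, giving property~(v) directly; the passage through the stabilization $\K(\H)\ts B$ is absorbed by the quantitative Morita equivalence of Proposition~\ref{prop-morita}. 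For a general $z$, decomposed through the Cuntz picture, $\JR(z)$ is then the composition of the controlled morphism realizing the inverse equivalence (built from $\D$) with the controlled morphism $\psi_{\Ga,*}$.

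Next I would verify properties~(i)--(v). Property~(i) holds because each building block induces in ordinary $K$-theory precisely right multiplication by the descent of the corresponding $KK$-class, and because composition of controlled morphisms induces composition of the underlying $K$-theory maps; functoriality of $J_\Ga^{red}$ and compatibility of the descent with the Kasparov product then give right multiplication by $J_\Ga^{red}(z)$. Additivity~(ii) follows from additivity of the descent together with the block-diagonal sum on quantitative $K$-theory. Properties~(iii)--(v) are immediate on the building blocks and propagate through the composition by the naturality of the controlled boundary morphism with respect to morphisms of extensions, recalled after Proposition~\ref{prop-bound}.

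The hard part will be showing that a single control pair $(\alpha_\JJ,k_\JJ)$ works uniformly, independently of $A$, $B$ and $z$, and that the assignment $z\mapsto\JR(z)$ is well defined up to controlled equality. Uniformity is secured by normalizing the Cuntz picture so that every $z$ is realized by a bounded pattern of moves---one fixed equivalence followed by one homomorphism---so that $(\alpha_\JJ,k_\JJ)$ is a fixed $*$-product of $(\alpha_\DD,k_\DD)$ with the trivial and Morita control pairs, depending only on the universal constants of Proposition~\ref{prop-bound}. The genuine difficulty is lifting the $KK^\Ga$-relations (homotopy of homomorphisms and the splitting relations for extensions) from ordinary $K$-theory to the controlled level: one must produce explicit homotopies of $\eps$-$r$-projections and $\eps$-$r$-unitaries of controlled propagation witnessing $\JR(z)\aeq\JR(z')$ whenever $z=z'$, exactly as in the construction of $\TT_B$. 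These controlled homotopies are built from the homotopy invariance of quantitative $K$-theory and the controlled six-term exact sequence of Theorem~\ref{thm-six term}, which is where the control pair $(\alpha_\JJ,k_\JJ)$ ultimately acquires its uniform form.
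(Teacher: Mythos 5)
You should know at the outset that the paper itself contains no proof of Theorem \ref{thm-kas}: it is quoted verbatim from \cite{oy2}, so your proposal must be measured against the construction given there (which the present paper mirrors internally, see below). Your overall template---define $\JR$ on equivariant homomorphisms and on boundary classes of semi-split extensions, extend to arbitrary classes by a decomposition theorem, and extract a universal control pair because only a bounded pattern of universal building blocks is composed---is indeed the right one. But the specific decomposition you chose, the Cuntz picture, breaks at the crucial step. You need a controlled morphism realizing the \emph{inverse} of the canonical equivalence $[\pi_0|_{qA}]\in KK_0^\Ga(qA,A)$, and you propose to assemble it ``from the boundary classes of the associated equivariant semi-split Cuntz extensions.'' The Cuntz extension $0\to qA\to QA\to A\to 0$ is, however, split by the equivariant homomorphism $\iota_0:A\to QA$; its descent to crossed products is split by a filtered homomorphism, so by the remark following Proposition \ref{prop-bound} the controlled boundary morphism $\DD_{qA\rtr\Ga,QA\rtr\Ga}$ vanishes, as does the $KK_1^\Ga$-class of this extension. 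The inverse equivalence in $KK_0^\Ga(A,qA)$ is represented by the quasi-homomorphism $(\iota_0,\iota_1)$, not by any boundary class, so your declared building blocks cannot produce it; realizing it as a controlled morphism with a universal control pair is exactly the difficulty the theorem asks you to solve, and the proposal contains no mechanism for it. A related defect: with a choice-dependent representative $\psi$, the exact equalities in (ii)--(iv) would come out only up to controlled equivalence, not as stated; you flag well-definedness but defer it to unspecified ``controlled homotopies.''

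The construction of \cite{oy2} avoids this obstacle by a different decomposition, which is also the one used inside the present paper in the proofs of Proposition \ref{prop-funct-tau} and Lemma \ref{lemma-preliminary}. An odd class $z\in KK_1^\Ga(A,B)$ represented by a cycle $(\H\ts B,\pi,T)$ is realized, via the dilation $P=\frac{1+T}{2}$ and the algebra $E_{\pi,T}=\{(a,x):\,P\pi(a)P-x\in\K(\H)\ts B\}$, as the boundary class of a canonical semi-split $\Ga$-extension $0\to\K(\H)\ts B\to E_{\pi,T}\to A\to 0$; one then sets $\JR(z)$ to be the composition of $\DD_{(\K(\H)\ts B)\rtr\Ga,\,E_{\pi,T}\rtr\Ga}$ (Proposition \ref{prop-bound}) with the controlled Morita equivalence of Proposition \ref{prop-morita}, and the universal control pair is inherited from these. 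An even class is treated either by suspension, composing with the controlled Bott isomorphism recalled in Subsection \ref{subsection-control-exact-sequence} (a controlled isomorphism, hence invertible at the controlled level), or, as in the proof of Proposition \ref{prop-funct-tau}, by Lafforgue's decomposition $z=\nu_*([\theta]^{-1})$ where $\theta$ is an honest homomorphism inducing a $KK^\Ga$-equivalence, so that inverting $\theta_{\Ga,*}$ is legitimate. Exact additivity and naturality then follow because direct sums of cycles yield direct sums of extensions and the dilation construction is functorial, rather than from an a posteriori well-definedness argument. If you insist on the Cuntz picture, you would first have to develop a controlled difference construction for quasi-homomorphisms of filtered $C^*$-algebras; that is plausible but is an extra piece of machinery your proposal neither states nor proves.
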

The controlled Kasparov transformation is compatible with Kasparov products.
\begin{theorem}\label{thm-product} There exists a control pair $(\lambda,h)$ such that the following holds:
for every  separable $\Ga$-$C^*$-algebras $A,\,B$ and $D$, any elements $z$ in
$KK_*^\Gamma(A,B)$ and   $z'$ in
$KK_*^\Gamma(B,D)$, then
$$\JR(z\otimes_B z')\aeq \JR(z')\circ \JR(z).$$
\end{theorem}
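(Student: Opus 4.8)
The plan is to follow the same scheme as the proof of Theorem~\ref{thm-product-tensor}, exploiting that the controlled descent $\JR$ enjoys (by Theorem~\ref{thm-kas}) the exact analogues of the properties of $\TT_B$ used there: additivity, functoriality in both variables, and compatibility with the controlled boundary morphism $\DD$. Since there is no controlled analogue of Kasparov's construction of the product itself, the strategy is to \emph{reduce} the computation of $z\ts_B z'$ to the two situations in which the product is governed entirely by operations that $\JR$ provably respects, namely composition with a morphism and the boundary map of a semi-split extension.

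First I would normalize the degrees. Using the controlled Bott isomorphisms $\DD_A$ attached to the Bott extension (see the discussion after Theorem~\ref{thm-six term}) together with Theorem~\ref{thm-kas}(v), I can shift the parity of $z$ and $z'$ at will, so it suffices to treat one distinguished bidegree. I would then invoke the classical Cuntz--Skandalis pictures: up to the quantitative Morita equivalence of Proposition~\ref{prop-morita} and controlled Bott periodicity, one factor, say $z$, is represented by an equivariant semi-split extension $0\to B\ts\K\to E\to A\to 0$, so that $z=[\partial_{B\ts\K,E}]$, while the other factor $z'$ is represented by a $\Ga$-equivariant $*$-homomorphism. Because the reduced crossed product carries equivariant filtered semi-split extensions to filtered semi-split extensions and equivariant morphisms to filtered morphisms, Theorem~\ref{thm-kas}(v) identifies $\JR(z)$ with the controlled boundary $\DD_{(B\ts\K)\rtr\Ga,\,E\rtr\Ga}$, and Theorem~\ref{thm-kas}(iii)--(iv) identifies $\JR(z')$ with the induced filtered morphism.

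Next I would compute the product. By associativity of the Kasparov product and the fact that right multiplication by a boundary class is the $KK$-theoretic boundary map, $z\ts_B z'$ is again a boundary class---of the extension obtained by pushing $0\to B\ts\K\to E\to A\to 0$ forward along the morphism representing $z'$. The pushout yields a commuting morphism of semi-split extensions, so the naturality of the controlled boundary morphism (stated after Proposition~\ref{prop-bound}) and the $(\lambda,h)$-exactness of Theorem~\ref{thm-six term} give, after descent, a controlled commuting square relating $\DD_{(B\ts\K)\rtr\Ga,\,E\rtr\Ga}$, the morphism induced by $z'$, and the boundary morphism attached to the pushed-forward extension. Chaining these identifications, together with Theorem~\ref{thm-kas}(v) applied to the latter extension, yields $\JR(z\ts_B z')\aeq \JR(z')\circ\JR(z)$, with every control pair built by composing the fixed pairs $(\alpha_\JJ,k_\JJ)$, $(\alpha_\DD,k_\DD)$ and the six-term pair of Theorem~\ref{thm-six term}.

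The main obstacle is the uniform bookkeeping of control pairs: each reduction step (degree shift, extension representation, product-as-boundary) composes controlled morphisms and therefore rescales $\eps$ and composes the functions $h$, and one must verify that these finitely many compositions collapse to a single control pair $(\lambda,h)$ valid simultaneously for all $A$, $B$, $D$, $z$ and $z'$. A secondary but genuine technical point is that the extension representing $z$ must be taken equivariantly semi-split, so that its descent is a filtered semi-split extension to which Proposition~\ref{prop-bound} applies; this is available in the separable setting but has to be arranged compatibly with the stabilizations and suspensions introduced during the degree normalization.
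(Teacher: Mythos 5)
Your argument has a fatal gap at the step where you claim that, after Morita stabilization and Bott periodicity, the factor $z'$ ``is represented by a $\Ga$-equivariant $*$-homomorphism''. No such representation exists in general: already for the trivial group, the class $-1\in KK_0(\C,\C)\cong\Z$ is not of the form $[\phi]$ for any homomorphism $\phi\colon\C\to\K(\H)$ (homomorphism classes are ranks of projections, hence non-negative), and neither stabilization nor a parity shift repairs this; a general $KK$-class is a formal difference --- a quasi-homomorphism, or a homomorphism out of Cuntz's $qB$ rather than out of $B$ itself. Since your computation of $z\ts_B z'$ consists precisely of pushing the extension representing $z$ forward along the homomorphism representing $z'$, there is nothing to push along, and pairs of the form (boundary class, morphism class) exhaust only a thin set of products, not the general case. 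There is also a circularity upstream: the preliminary degree normalization multiplies $z$ and $z'$ by Bott classes, which is itself a Kasparov product, so it can only be invoked after the corresponding special cases of the theorem are established; that defect is repairable, but the homomorphism claim is not.

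The missing ingredient --- used in \cite{oy2}, where Theorem \ref{thm-product} is actually proved (the present paper only quotes it), and visible in this paper's own proofs of the analogues, Proposition \ref{prop-funct-tau} and Proposition \ref{prop-prod-coarse} --- is Lafforgue's factorization theorem \cite[Lemma 1.6.9 and Theorem 1.6.11]{laff-inv}: if $z'$ has even degree, there exist a $\Ga$-algebra $B_1$ and equivariant homomorphisms $\theta\colon B_1\to B$ and $\nu\colon B_1\to D$ with $[\theta]$ $KK^\Ga$-invertible and $z'=[\theta]^{-1}\ts_{B_1}[\nu]$. Setting $w=z\ts_B[\theta]^{-1}$, one has the $KK$-identities $z=\theta_*(w)$, $z\ts_B z'=\nu_*(w)$ and $\theta^*(z')=[\theta]\ts_B z'=[\nu]$; applying Theorem \ref{thm-kas} (iii) and (iv) together with $\JR([Id_{B_1}])\sim\Id$ (established in \cite{oy2}) then yields, up to a universal control pair,
$\JR(z')\circ\JR(z)=\JR(z')\circ\theta_{\Ga,*}\circ\JR(w)=\JR(\theta^*(z'))\circ\JR(w)\sim\nu_{\Ga,*}\circ\JR(w)=\JR(z\ts_B z')$,
with no circular use of the product. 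The case where $z$ and $z'$ are both odd is then reduced to the even cases by inserting $[\partial_B]\ts_{SB}[\partial_B]^{-1}$ between them, together with a controlled analogue of Lemma \ref{lem-bott-coarse} (that $\JR([\partial_B]^{-1})$ left-inverts $\JR([\partial_B])$, proved from the Toeplitz extension and the naturality of the controlled boundary morphism of Proposition \ref{prop-bound}). Your extension/pushforward picture does correctly handle the special cases where one factor is an honest morphism class or a boundary class --- these are exactly Theorem \ref{thm-kas} (iii)--(v) and the naturality of $\DD$ --- but without Lafforgue's theorem those special cases cannot be assembled into the bilinear statement.
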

We have similar result for maximal crossed products.
\subsection{Quantitative  assembly maps}\label{subsection-quantitative-assembly-map}
Let $\Ga$ be a finitely generated group and let $B$ be a $\Ga$-$C^*$-algebra $B$. We equip $\Ga$ with any word metric. Recall that if $d$ is  a positive number, then the Rips complex of degree $d$ is the set $P_d(\Ga)$ of probability measure with support of diameter less than $d$. Then $P_d(\Ga)$ is a locally finite simplicial complex and  provided  with the simplicial topology, $P_d(\Ga)$ is endowed  with a proper and cocompact action of $\Ga$ by left translation.
In \cite{oy2} was constructed for any $\Ga$-$C^*$-algebra $B$ a bunch of quantitative assembly maps
$$\mu_{\Gamma,B,*}^{\eps,r,d}: KK_*^\Gamma(C_0(P_d(\Ga)),B)\to
K_*^{\eps,r}(B\rtr\Ga),$$ with $d>0,\,\eps\in(0,1/4)$ and $r\gq r_{d,\eps}$,  where 
$$r: [0,+\infty)\times (0,1/4)\to (0,+\infty):\, (d,\eps)\mapsto r_{d,\eps}$$ is a function  independent on 
$B$, non decreasing in $d$ and 
non increasing  in $\eps$.  Moreover, the maps $\mu_{\Gamma,B,*}^{\eps,r,d}$ induced the usual assembly maps 
$$\mu_{\Gamma,B,*}^{d}: KK_*^\Gamma(C_0(P_s(\Ga)),B)\to
K_*(B\rtr\Ga),$$ i.e  $\mu_{\Gamma,B,*}^{d}=\iota_*^{\eps,r}\circ\mu_{\Gamma,B,*}^{\eps,r,d}$.
Let us recall now the definition of the quantitative assembly maps.
Observe first that  any $x$ in $P_d(\Ga)$ can be written down  in a unique way as a finite convex combination
$$x=\sum_{\ga\in\Ga}\lambda_\ga(x)\de_\ga,$$ where $\de_\ga$ is the Dirac probability measure at $\ga$ in $\Ga$.
The functions $$\lambda_\ga:P_d(\Ga)\to [0,1]$$ are continuous and 
$\ga(\lambda_{\ga'})=\lambda_{\ga\ga'}$ for all $\ga$ and $\ga'$ in $\Ga$.
 The
function
$$p_{\Ga,d}:\Gamma\to C_0(P_d(\Ga));\, \gamma\mapsto\sum_{\gamma\in\Gamma}\lambda_e^{1/2}\lambda_\ga^{1/2}$$
 is a projection of
 $C_0(P_d(\Ga))\rtr\Gamma$ with  propagation  less than $d$. Let us set then $r_{d,\eps}=k_{\JJ,\eps/\alpha_J}d$.
Recall that $k_\JJ$ can be chosen non increasing and in this case,  $r_{d,\eps}$ is non decreasing in
 $d$ and non increasing in $\eps$.
\begin{definition}\label{def-quantitative-assembly-map}
For any $\Gamma$-$C^*$-algebra $A$ and any positive numbers $\eps$, $r$ and
$d$ with $\eps<1/4$ and
$r\gq r_{d,\eps}$, we define the quantitative assembly map
\begin{eqnarray*}
\mu_{\Gamma,A,*}^{\eps,r,d}: KK_*^\Gamma(C_0(P_d(\Ga)),A)&\to&
K_*^{\eps,r}(\AG)\\
z&\mapsto&\big(J_\Gamma^{red,\frac{\eps}{\alpha_J},\frac{r}{k_{J,{\eps}/{\alpha_J}}}}(z)\big)\left([p_{\Ga,d},0]_{\frac{\eps}{\alpha_J},\frac{r}{k_{J,{\eps}/{\alpha_J}}}}\right).
\end{eqnarray*}
\end{definition}
Then according to  theorem \ref{thm-kas}, the map  $\mu_{\Gamma,A}^{\eps,r,d}$
is a  homomorphism of   groups  (resp. groups) in even (resp. odd) degree. For any positive numbers
 $d$ and $d'$ such that $d\lq d'$, we denote by
$q_{d,d'}:C_0(P_{d'}(\Ga))\to C_0(P_d(\Ga))$ the homomorphism induced by the restriction
  from $P_{d'}(\Ga)$ to $P_d(\Ga)$. It is straightforward to check that if
  $d$, $d'$ and  $r$  are positive numbers such that $d\lq d'$ and
  $r\gq r_{d',\eps}$, then
$\mu_{\Gamma,A}^{\eps,r,d}=\mu_{\Gamma,A}^{\eps,r,d'}\circ
q_{d,d',*}$. Moreover, for every positive numbers
$\eps,\,\eps',\,d,\,r$ and $r'$ such that $\eps\lq\eps'\lq
1/4$, $r_{d,\eps}\lq r$, $r_{d,\eps'}\lq r'$, and
$r<r'$, we get by definition of a controlled morphism that
\begin{equation*}
\iota^{\eps,\eps',r,r'}_*\circ \mu_{\Ga,A,*}^{\eps,r,d}=\mu_{\Ga,A,*}^{\eps',r',d}.
\end{equation*}
%Furthermore, the quantitative assembly maps are natural in the $\Ga$-$C^*$-algebra, i.e.
%if $A$ and $B$ are $\Ga$-$C^*$-algebras and if $\phi:A\to B$ is a
%$\Ga$-equivariant homomorphism, then
%$$\phi_{\Ga,red,*,\eps,r}\circ\mu_{\Gamma,A,*}^{\eps,r,d}=\mu_{\Gamma,B,*}^{\eps,r,d}\circ\phi_*$$
% for every positive numbers $r$ and $\eps$ with $r\gq
%r_{\Gamma,d,\eps}$ and $\eps<1/4$.
%These quantitative assembly maps are related to the usual assembly
%maps in the following way: recall from \cite{BCH} that there is  a bunch of assembly maps  with
%coefficients in a $\Ga$-$C^*$-algebra $A$   defined by
%\begin{eqnarray*}
%\mu_{\Gamma,A,*}^{d}: KK_*^\Gamma(C_0(P_d(\Ga)),A)&\to&
%K_*(\AG)\\
%z&\mapsto&[e_\phi]\otimes_{C_0(P_d(\Ga))\rtimes\Gamma}J_\Gamma(z).
%\end{eqnarray*}
%For every positive numbers $r$ and $\eps$ with $r\gq
%r_{\Gamma,d,\eps}$ and $\eps<1/4$, we have
%\begin{equation}\label{equ-compatibity-assembly-maps} \iota^{\eps,r}_*\circ
%\mu_{\Gamma,A,*}^{\eps,r,d}=\mu_{\Gamma,A,*}^{d}.\end{equation}
%
% Recall that since
%$\mu_{\Gamma,A,*}^{d'}\circ q_{d,d',*} =\mu_{\Gamma,A,*}^{d}$ for
%all positive numbers $d$ and $d'$ with $d\lq d'$, the family of assembly maps
%$( \mu_{\Gamma,A}^{d})_{d>0}$ gives rise to a homomorphism
%$$\mu_{\Ga,A,*}:\lim_{d>0}KK_*^\Gamma(C_0(P_d(\Ga)),A)\longrightarrow
%K_*(\AG)$$ called the Baum-Connes assembly map.
%

\section{Persistence approximation property}\label{section-persistence}

In this section, we introduce the persistence approximation property for filtered $C^*$-algebras. In the case of  a crossed product $C^*$-algebra by  a finitely generated group, we prove that  the persistence approximation property follows  from the Baum-Connes conjecture with  coefficients.

\smallskip

Let $B$ be a filtered $C^*$-algebra. As a consequence of proposition \ref{proposition-approximation}, we see that there exists for every  $\eps\in (0,1/4]$ a surjective map $$\lim_{r>0} K_*^{\eps,r}(B)\to K_*(B)$$ induced by
$(\iota_{*}^{\eps,r})_{r>0}$. Moreover, although this morphism is not a priori ono-to-one,  there exist for every   $\eps\in (0,1/4]$
and $r>0$, positive numbers    $\eps'$ in $[\eps,1/4)$
 (indeed independent on $x$ and $B$) and $r'>r$ such that
for any $x$ in  $K_*^{\eps,r}(B)$, then $\iota_{*}^{\eps,r}(x)=0$ implies that $\iota_{*}^{\eps,\eps',r,r'}(x)=0$ in $K_*^{\eps',r'}(B)$.
It is of revelance to ask whether this $r'$ depends or not on $x$, in other word whether 
 the family $(K_*^{\eps,r}(B))_{\eps\in(0,1/4),r>0}$ provides a persistent approximation for $K_*(B)$ in the following sense:
for any $\eps$ in $(0,1/4)$ and $r>0$, there exist $\eps'$ in $(\eps,1/4)$ and $r'\gq r$ such that for any $x$ in $K_*^{\eps,r}(B)$, then
$\iota_{*}^{\eps,\eps',r,r'}(x)\neq 0$ in $K_*^{\eps',r'}(B)$ implies that $\iota_{*}^{\eps,r}(x)\neq 0$ in $K_*(B)$.

Let us consider for a filtered $C^*$-algebra $B$ and positive numbers  $\eps,\,\eps',\,$ and $r'$ such that $0<\eps\lq\eps'<1/4$ and $0<r\lq r'$ 
 the following statement:

\smallskip

$\sta_*(B,\eps,\eps',r,r')$ : for any $x\in K_*^{\eps,r}(B)$, then $\iota^{\eps,r}_*(x)=0$ in $K_*(B)$ implies 
that $\iota^{\eps,\eps',r,r'}_*(x)=0$ in  $K_*^{\eps',r'}(B)$.

\smallskip
Notice that $\sta_*(B,\eps,\eps',r,r')$  can be rephrased as follows: 

\smallskip

the restriction of $\iota_*^{\eps',r'}:K_*^{\eps',r'}(B)\to K_*(B)$  to $\iota_*^{\eps,\eps',r,r'}(K_*^{\eps,r}(B))$ is one-to-one.

\smallskip

We  investigate in this section the following persistence approximation property: given $\eps$ small enought and $r$ positive numbers, 
is there exist positive numbers $\eps'$ and $r'$ with $0<\eps\lq\eps'<1/4$ and $r<r'$ such that $\sta_*(B,\eps,\eps',r,r')$  holds?

% for any $x\in K_*^{\eps,r}(B)$, then $\iota^{\eps,r}_*(x)=0$ in $K_*(B)$ implies 
% that $\iota^{\eps,\lambda\eps,r,R}_*(x)=0$ in  $K_*^{\lambda\eps,R}(B)$.
% 
% \smallskip
% 
% This can be rephrased in the following way:
%  given $\eps$ small enought and $r$ positive numbers, 
% is there positive numbers $\eps'$ and $R$ with $0<\eps\lq\eps'<1/4$ and $r<R$ such that
% the restriction of $\iota_*^{\eps',R}:K_*^{\eps',R}(B)\to K_*(B)$  to $\iota_*^{\eps,\eps',r,R}(K_*^{\eps',R}(B))$ is one-to-one.

\subsection{The case of crossed products}

\begin{theorem}\label{thm-persistence-crossed-products}
Let $\Ga$ be a finitely generated group. Assume  that 
\begin{itemize}
 \item $\Ga$ satisfies the Baum-Connes conjecture with coefficients.
\item  $\Ga$ admits a cocompact universal example for proper actions.
\end{itemize}
Then for some universal constant $\lambda_{\pa}\gq 1$,  any $\eps$ in $(0,\frac{1}{4\lambda_\pa})$, 
any $r>0$, and any  $\Ga$-$C^*$-algebra $A$ there exists  $r'\gq r$  such that
$\sta(A\rtr\Ga,\eps,\lambda_\pa\eps,r,r')$ holds.
%the restriction of $$\iota_*^{\lambda\eps,R}:K_*^{\lambda\eps,R}(A\rtr\Ga)\to K_*(A\rtr\Ga)$$ to 
%$\iota_*^{\eps,\lambda\eps,r,R}(K_*^{\eps,r}(A\rtr\Ga))$ is one-to-one.
% 
%  the following holds:
% 
% \smallskip
% 
% for any $x\in K_*^{\eps,r}(A\rtr\Ga)$, then $\iota^{\eps,r}_*(x)=0$ in $K_*(A\rtr\Ga)$ implies 
% that $$\iota^{\eps,\lambda\eps,r,R}_*(x)=0$$ in  $K_*^{\lambda\eps,R}(A\rtr\Ga)$.

\end{theorem}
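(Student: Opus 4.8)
The plan is to derive the statement from the quantitative surjectivity of the assembly maps together with the ordinary injectivity of the Baum--Connes map, the decisive point being that the radius $r'$ produced must be \emph{uniform in} $x$. Observe first that Proposition \ref{proposition-approximation}(2), applied with $x'=0$, already yields for each individual $x$ with $\iota_*^{\eps,r}(x)=0$ a radius $r'$ killing $\iota_*^{\eps,\lambda\eps,r,r'}(x)$; but that $r'$ depends on $x$, whereas $\sta_*(A\rtr\Ga,\eps,\lambda_\pa\eps,r,r')$ demands a single $r'$ valid for \emph{all} $x\in K_*^{\eps,r}(A\rtr\Ga)$. It is precisely this uniformity that the Baum--Connes hypothesis supplies.

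First I would use cocompactness to fix a single Rips degree. Since $\Ga$ admits a cocompact universal example for proper actions, $P_d(\Ga)$ is a model for $\underline{E}\Ga$ once $d\gq d_0$ for some $d_0$ depending only on $\Ga$; consequently the restriction maps $q_{d,d',*}$ are isomorphisms for $d_0\lq d\lq d'$, and the Baum--Connes conjecture with coefficients makes the assembly map
$$\mu_{\Ga,A,*}^{d_0}:KK_*^\Ga(C_0(P_{d_0}(\Ga)),A)\lto K_*(A\rtr\Ga)$$
an isomorphism for every $\Ga$-$C^*$-algebra $A$. In particular $\mu_{\Ga,A,*}^{d_0}$ is injective, and in this cocompact range the quantitative injectivity statements of \cite[Section 6.2]{oy2} collapse to this ordinary injectivity.

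Next, given $\eps$ and $r$, I would set $\widetilde r=\max(r,r_{d_0,\eps})$ so that $\mu_{\Ga,A,*}^{\eps,\widetilde r,d_0}$ is defined, and take $x\in K_*^{\eps,r}(A\rtr\Ga)$ with $\iota_*^{\eps,r}(x)=0$. The heart of the argument is to invoke the quantitative surjectivity statement of \cite[Section 6.2]{oy2}, available here because $\mu_{\Ga,A,*}^{d_0}$ is onto: with a universal control pair $(\lambda_\pa,k)$ built only from the controlled Kasparov transform and the controlled six-term sequence, there is $z\in KK_*^\Ga(C_0(P_{d_0}(\Ga)),A)$ with
$$\mu_{\Ga,A,*}^{\lambda_\pa\eps,k_\eps\widetilde r,d_0}(z)=\iota_*^{\eps,\lambda_\pa\eps,r,k_\eps\widetilde r}(x).$$
Applying $\iota_*^{\lambda_\pa\eps,k_\eps\widetilde r}$ and using $\mu_{\Ga,A,*}^{d_0}=\iota_*^{\lambda_\pa\eps,k_\eps\widetilde r}\circ\mu_{\Ga,A,*}^{\lambda_\pa\eps,k_\eps\widetilde r,d_0}$ gives $\mu_{\Ga,A,*}^{d_0}(z)=\iota_*^{\eps,r}(x)=0$, whence $z=0$ by injectivity. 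Feeding $z=0$ back into the displayed equation yields $\iota_*^{\eps,\lambda_\pa\eps,r,k_\eps\widetilde r}(x)=0$, so $\sta_*(A\rtr\Ga,\eps,\lambda_\pa\eps,r,r')$ holds with $r'=k_\eps\widetilde r$. Since $d_0$, $\widetilde r$ and $(\lambda_\pa,k)$ depend only on $\Ga$, $\eps$ and $r$ and never on $x$, this $r'$ is the desired uniform radius, and $\lambda_\pa$ is universal as required.

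The hard part will be the quantitative surjectivity input, i.e.\ upgrading plain surjectivity of $\mu_{\Ga,A,*}^{d_0}$ to a \emph{controlled} lift governed by a control pair that is independent of $A$ and yields the claimed universal $\lambda_\pa$. This is where the controlled six-term exact sequence (Theorem \ref{thm-six term}), the controlled Kasparov transform (Theorems \ref{thm-kas} and \ref{thm-product}) and the controlled boundary maps of \cite{oy2} carry the real content; the residual difficulty is mere bookkeeping, namely reconciling the constraint $r\gq r_{d_0,\eps}$ needed to define the assembly map at degree $d_0$ with the given $r$, which is absorbed into $r'$ by the harmless replacement of $r$ by $\widetilde r$.
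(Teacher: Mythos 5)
There is a genuine gap, and it sits exactly where you yourself located ``the hard part'': the claim that the controlled lift
$$\mu_{\Gamma,A,*}^{\lambda_\pa\eps,k_\eps\widetilde r,d_0}(z)=\iota_*^{\eps,\lambda_\pa\eps,r,k_\eps\widetilde r}(x),$$
with a control pair $(\lambda_\pa,k)$ independent of $x$ and of $A$, is ``available here because $\mu_{\Gamma,A,*}^{d_0}$ is onto.'' That implication is not valid: plain surjectivity of the assembly map with coefficients in $A$ does not yield a quantitative (uniform) surjectivity statement with the same coefficients. What surjectivity plus Proposition \ref{proposition-approximation}(2) actually gives you, for each fixed $x$, is an equality $\iota_*^{\,\cdot,R}\big(\mu_{\Gamma,A,*}^{\eps',r',d_0}(z)\big)=\iota_*^{\,\cdot,R}(x)$ at some radius $R$ depending on the pair $(x,z)$ --- which is precisely the non-uniformity you set out to eliminate, so the argument is circular at this step. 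The paper's own results make the obstruction explicit: the quantitative surjectivity statements are equivalent not to surjectivity with coefficients in $A$, but to surjectivity with coefficients in a product algebra of the type $\ell^\infty(\N,\K(\H)\ts A)$ (this is the content of the geometric analogue, Theorem \ref{thm-quantBC-surjectivity}, and of the proposition following the theorem in Section 2.1, whose hypotheses are exactly surjectivity with coefficients $\ell^\infty(\N,\K(\H)\otimes A)$ and injectivity with coefficients $A$).

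The paper closes this gap by a diagonal argument that your proposal never performs. Negating the conclusion produces $\eps$, $r$, an unbounded sequence $(r_i)$, algebras $(A_i)$ and classes $x_i\in K_*^{\eps,r}(A_i\rtr\Ga)$ with $\iota_*^{\eps,r}(x_i)=0$ but $\iota_*^{\eps,\lambda_\pa\eps,r,r_i}(x_i)\neq 0$. Lemma \ref{lem-prod-filtered} (the controlled isomorphism between $\K_*\big((\prod_i\K(\H)\ts A_i)\rtr\Ga\big)$ and $\prod_i\K_*(A_i\rtr\Ga)$, up to Morita equivalence) packages the whole sequence into a \emph{single} class $x$ in the quantitative $K$-theory of the crossed product with product coefficients; ordinary Baum--Connes surjectivity for this one coefficient algebra $\prod_i\K(\H)\ts A_i$ produces one $z$; Proposition \ref{proposition-approximation}(2), applied once to this single $x$, gives one radius $R$; and the injectivity input (in the weak $q_{d,d'}$ form supplied by the cocompact model), applied componentwise through the identification $KK_*^\Ga\big(C_0(P_d(\Ga)),\prod_i\K(\H)\ts A_i\big)\cong\prod_i KK_*^\Ga(C_0(P_d(\Ga)),A_i)$, forces $q_{d,d',*}(z)=0$ and hence $\iota_*^{\,\cdot,R}(x)=0$; any $i$ with $r_i\gq R$ then gives a contradiction. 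Thus the uniformity in $x$ (and in $A$) is bought by applying the hypothesis ``Baum--Connes with coefficients'' to the product algebra, not by upgrading surjectivity for $A$ itself. Your outline could be repaired by first deriving the QS statement from Baum--Connes with coefficients in $\ell^\infty(\N,\K(\H)\ts A)$ --- legitimate under the theorem's hypotheses --- but doing so is exactly the product-algebra argument above, i.e.\ the repair is the paper's proof.
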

\begin{proof}
Notice first that since $\Ga$ satisfies the Baum-Connes conjecture with  coefficients and admits  a cocompact universal example for proper action, there exist positive numbers $d$ end $d'$ with $d\lq d'$ such that
for any $\Ga$-$C^*$-algebra $B$, the following is satisfies:
\begin{itemize}
\item for any $z$ in $K_*(B\rtr\Ga)$, there exists $x$ in $KK_*^\Ga(C_0(P_d(\Ga)),B)$ such that $\mu_{\Ga,B,*}^d(x)=z$;
\item for any $x$ in $KK_*^\Ga(C_0(P_d(\Ga)),B)$ such that $\mu_{\Ga,B,*}^d(x)=0$, then $q_{d,d'}^*(x)=0$ in   $KK_*^\Ga(C_0(P_d(\Ga)),B)$, where 
$q_{d,d'}^*: KK_*^\Ga(C_0(P_d(\Ga)),B)\to KK_*^\Ga(C_0(P_{d'}(\Ga)),B)$ is induced by the inclusion $P_d(\Ga)\hookrightarrow P_{d'}(\Ga)$.
\end{itemize}
Let us fix such $d$ and $d'$, let  $\lambda$ be as in proposition \ref{proposition-approximation},  pick  $(\alpha,h)$ as in lemma \ref{lem-prod-filtered}
and set $\lambda_{\pa}=\alpha\lambda$. Assume that this statement does not hold. 
Then there exists:
\begin{itemize}
 \item $\eps$ in $(0,\frac{1}{4\lambda_{\pa}})$ and $r>0$;
\item an unbounded increasing sequence $(r_i)_{i\in\N}$ bounded below by $r$;
\item a sequence  of $\Ga$-$C^*$-algebras $(A_i)_{i\in\N}$;
\item a sequence of elements $(x_i)_{i\in\N}$ with $x_i$ in $K^{\eps,r}_*(A_i\rtr\Ga)$
\end{itemize}
such that 
$\iota^{\eps,r}_*(x_i)=0$ in $K_*(A_i\rtr\Ga)$ and $\iota^{\eps,\lambda_\pa\eps,r,r_i}_*(x_i)\neq 0$  
in  $K_*^{\lambda_\pa\eps,r_i}(A_i\rtr\Ga)$ for every integer $i$.
We can assume without loss of generality that $r\gq r_{d',\eps}$.

According to lemma   \ref{lem-prod-filtered}, there exists an element $x$ in 
 $K^{\alpha\eps,h_\eps r}_*\big(\big(\prod_{j\in\N}\K(H)\ts A_j\big)\rtr\Ga\big)$
that maps to $\iota_*^{\eps,\alpha\eps,r,h_\eps r}(x_i)$ for all integer $i$  under the composition
\begin{equation*}
 K^{\alpha\eps,h_\eps r}_*\big(\big(\prod_{j\in\N}\K(H)\ts A_j\big)\rtr\Ga\big)\longrightarrow
K^{\alpha\eps,h_\eps r}_*(\K(\H)\otimes A_i\rtr\Ga)\stackrel{\MM_{A_i}^{\alpha\eps,h_\eps r}}{\longrightarrow}
K^{\alpha\eps,h_\eps r}_*(\ A_i\rtr\Ga), \end{equation*} where the first map is induced by the
{\it i th} projection  $\prod_{j\in\N}\K(H)\ts A_j{\longrightarrow}
\K(\H)\otimes A_i$ and the map $\MM_{A_i}^{\alpha\eps,h_\eps r}$ is the Morita equivalence of proposition\ref{prop-morita}.
Let $z$ be an element in
$KK_*^\Ga\big(C_0(P_d(\Ga)),\prod_{j\in\N}\K(H)\ts A_j\big)$ such that 
$$\mu_{\Ga,\prod_{j\in\N}\K(H)\ts A_j,*}^d(z)=\iota_*^{\alpha\eps,h_\eps r}(x)$$ in $K_*\big(\big(\prod_{j\in\N}\K(H)\ts A_j\big)\rtr\Ga\big)$.
Recall from  \cite[Proposition 3.4]{oy1},
that we have an isomorphism
\begin{equation}\label{equ-morita-equiv-khom} 
KK^\Ga_*\big(C_0(P_{d}(\Ga)),\prod_{j\in\N}\K(H)\ts A_j\big)\stackrel{\cong}{\longrightarrow}
\prod_{j\in\N}KK^\Ga_*(C_0(P_{d}(\Ga)), A_j)
\end{equation}
induced on the $i$ {\it th} factor  and up to the Morita equivalence
$$KK^\Ga_*(C_0(P_d(\Ga)),A_j)\cong KK^\Ga_*(C_0(P_d(\Ga)),\K(\H)\otimes
A_j)$$ by the $i$ the projection  $\prod_{j\in\N}\K(H)\ts A_j\to\K(\H)\otimes A_i$.
Let  $(z_j)_{j\in\N}$  be the element of $\prod_{j\in\N}KK^\Ga_*(C_0(P_{d}(\Ga)), A_j)$
corresponding to $z$ under this identification. The quantitative Baum-Connes 
assembly maps  being compatible with  the usual one, we get that 
$$\mu_{\Ga,\prod_{j\in\N}\K(H)\ts A_j,*}^d(z)=
\iota_*^{\alpha\eps,h_\eps r}\circ\mu^{d,\alpha\eps,h_\eps r}_{\Ga,\prod_{j\in\N}\K(H)\ts A_j,*}(z).$$ But then, according to item (ii) of proposition \ref{proposition-approximation}, there exists $R\gq  h_\eps r$ such that 
\begin{eqnarray*}
 \iota^{\alpha\eps,\lambda_\pa\eps,h_\eps r,R}_*(x)&=
&\iota^{\alpha\eps,\lambda_\pa\eps,h_\eps r,R}_*\circ \iota_*^{\alpha\eps,h_\eps r}
\circ\mu^{d,\alpha\eps,h_\eps r}_{\Ga,\prod_{j\in\N}\K(H)\ts A_j,*}(z)\\
&=&\mu^{d,\lambda_\pa\eps,R}_{\Ga,\prod_{j\in\N}\K(H)\ts A_j,*}(z)
\end{eqnarray*}
Using once again the compatibility of  the quantitative assembly maps 
with  the usual ones, we obtain by naturality that 
$\mu^{d}_{\Ga, A_i,*,red}(z_i)=0$ for every integer $i$ and hence $q_{d,d',*}(z_i)=0$ in $KK^\Ga_*(C_0(P_{d'}(\Ga)),A_i)$. Using once more,  equation (\ref{equ-morita-equiv-khom}) we deduce that $q_{d,d',*}(z)=0$ in $KK^\Ga_*\big(C_0(P_{d'}(\Ga)),\prod_{j\in\N}\K(H)\ts A_j\big)$ and since $$\mu^{d,\lambda_\pa\eps,R}_{\Ga,\prod_{j\in\N}\K(H)\ts A_j,*}(z)=\mu^{d',\lambda_\pa\eps,R}_{\Ga,\prod_{j\in\N}\K(H)\ts A_j,*}\circ q_{d,d',*}(z)$$  that $\iota^{\alpha\eps,\lambda_\pa\eps,h_\eps r,R}_*(x)=0$ in 
$K^{\lambda_\pa \eps,R}_*\big(\big(\prod_{j\in\N}\K(H)\ts A_j\big)\rtr\Ga\big)$. By naturality, we see that
$\iota^{\eps,\lambda_\pa\eps,r,R}_*(x_i)=0$ in 
$K^{\lambda_\pa\eps,R}_*(A_i\rtr\Ga)$ for every integer $i$. Pick then an integer $i$ such that $r_i\gq R$, we have 
\begin{eqnarray*}
 \iota^{\eps,\lambda_\pa\eps,r,r_i}_*(x_i)&= &\iota^{\lambda_\pa\eps,R,r_i}\circ \iota^{\eps,\lambda_\pa\eps,r,R}_* (x_i)\\
&=&0\end{eqnarray*}
which contradicts our assumption.
\end{proof}
If we specifies the coefficients in the previous  proof, we get indeed
\begin{proposition}
Let $\Ga$ be a finitely generated  group and let $A$ be a $\Ga$-$C^*$-algebra. Assume  that 
\begin{itemize}
\item  $\Ga$ admits a cocompact universal example for proper actions;
\item the Baum-Connes assembly map for $\Ga$ with coefficients in $\ell^\infty(\N,\K(\H)\otimes A)$ is onto;
\item the Baum-Connes assembly map for $\Ga$ with coefficients in $A$ is one to one.
\end{itemize}
Then for some universal constant $\lambda_\pa\gq 1$,  any $\eps$ in $(0,\frac{1}{4\lambda_\pa})$
and any $r>0$ there exists  $r'\gq r$  such that $\sta(A\rtr\Ga,\eps,\lambda_\pa\eps,r,r')$ is satisfied.
\end{proposition}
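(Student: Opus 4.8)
The plan is to run the proof of Theorem~\ref{thm-persistence-crossed-products} again verbatim, but with the arbitrary sequence of coefficients $(A_i)_{i\in\N}$ replaced by the constant sequence $A_i=A$, and then to read off exactly which instances of the Baum--Connes conjecture are actually invoked. The key observation is that for a constant sequence the product algebra $\prod_{j\in\N}\K(\H)\ts A_j$ that governs the entire argument becomes precisely $\ell^\infty(\N,\K(\H)\ts A)$, so surjectivity of the assembly map is only needed for this one coefficient algebra, while injectivity is only needed for the single algebra $A$. First I would fix, as in the theorem, the constant $\lambda$ of Proposition~\ref{proposition-approximation} and the control pair $(\alpha,h)$ of Lemma~\ref{lem-prod-filtered}, set $\lambda_\pa=\alpha\lambda$, and argue by contradiction: if the conclusion fails for the given $A$, there are $\eps\in(0,\frac{1}{4\lambda_\pa})$, $r>0$, an unbounded increasing sequence $(r_i)_{i\in\N}$, and classes $x_i\in K_*^{\eps,r}(A\rtr\Ga)$ with $\iota_*^{\eps,r}(x_i)=0$ in $K_*(A\rtr\Ga)$ but $\iota_*^{\eps,\lambda_\pa\eps,r,r_i}(x_i)\neq 0$.

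Next I would assemble the $x_i$. By Lemma~\ref{lem-prod-filtered} applied to the constant family, there is a single class $x\in K_*^{\alpha\eps,h_\eps r}\big(\ell^\infty(\N,\K(\H)\ts A)\rtr\Ga\big)$ recovering $\iota_*^{\eps,\alpha\eps,r,h_\eps r}(x_i)$ at each index $i$ under the $i$-th evaluation composed with the Morita equivalence of Proposition~\ref{prop-morita}. The only use of surjectivity in the original proof was to lift $\iota_*^{\alpha\eps,h_\eps r}(x)$ along the assembly map, and here the relevant coefficient is the single algebra $\ell^\infty(\N,\K(\H)\ts A)$. Using the cocompact universal example to fix one Rips degree $d$ at which $\mu_{\Ga,\cdot,*}^d$ is already onto, the second hypothesis yields $z\in KK_*^\Ga\big(C_0(P_d(\Ga)),\ell^\infty(\N,\K(\H)\ts A)\big)$ with $\mu_{\Ga,\ell^\infty(\N,\K(\H)\ts A),*}^d(z)=\iota_*^{\alpha\eps,h_\eps r}(x)$.

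Then I would decompose $z$ through the isomorphism~(\ref{equ-morita-equiv-khom}), which for the constant family reads $KK_*^\Ga\big(C_0(P_d(\Ga)),\ell^\infty(\N,\K(\H)\ts A)\big)\cong\prod_{j\in\N}KK_*^\Ga(C_0(P_d(\Ga)),A)$, into components $z_j\in KK_*^\Ga(C_0(P_d(\Ga)),A)$. Naturality of the assembly map together with $\iota_*^{\eps,r}(x_i)=0$ forces $\mu_{\Ga,A,*}^d(z_i)=0$ for every $i$. This is the only place injectivity enters, and every component coefficient is now the single algebra $A$; so the third hypothesis (with the cocompact model fixing a pair $d\lq d'$) gives $q_{d,d',*}(z_i)=0$, hence $q_{d,d',*}(z)=0$ by~(\ref{equ-morita-equiv-khom}) again. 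Exactly as in the theorem, combining this with Proposition~\ref{proposition-approximation}(ii) to produce $R\gq h_\eps r$, together with the compatibility of the quantitative assembly maps at degrees $d$ and $d'$ with the restriction maps $q_{d,d',*}$, yields $\iota_*^{\alpha\eps,\lambda_\pa\eps,h_\eps r,R}(x)=0$; by naturality this gives $\iota_*^{\eps,\lambda_\pa\eps,r,R}(x_i)=0$, contradicting the choice of $x_i$ once $r_i\gq R$.

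The argument is structurally identical to Theorem~\ref{thm-persistence-crossed-products}, so I expect no serious analytic difficulty; the work is entirely in the bookkeeping. The one point I would check carefully is that the two roles played by the full Baum--Connes conjecture in the theorem genuinely separate, so that surjectivity is invoked only for $\ell^\infty(\N,\K(\H)\ts A)$ and injectivity only for $A$, with no other coefficient algebra ever appearing once the family is taken constant. This rests on verifying that the identification~(\ref{equ-morita-equiv-khom}) intertwines both $q_{d,d',*}$ and the assembly maps factorwise, and on choosing a single degree $d$ (and a single pair $d\lq d'$) compatible with both the surjectivity statement for $\ell^\infty(\N,\K(\H)\ts A)$ and the injectivity statement for $A$ — which is exactly where the cocompact universal example hypothesis is used, as it makes the Rips complexes stabilize and lets one take $d$ large enough for both purposes.
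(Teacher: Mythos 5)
Your proposal is correct and is exactly the paper's intended argument: the paper proves this proposition by the single remark that one ``specifies the coefficients in the previous proof,'' i.e.\ reruns the proof of Theorem~\ref{thm-persistence-crossed-products} with the constant family $A_i=A$, so that $\prod_{j\in\N}\K(\H)\ts A_j=\ell^\infty(\N,\K(\H)\ts A)$, with surjectivity invoked only for that coefficient algebra and injectivity only for $A$. Your bookkeeping of where each hypothesis enters (the lift of $x$ via surjectivity, the vanishing $q_{d,d',*}(z_i)=0$ via injectivity, and the cocompact universal example fixing the pair $d\lq d'$ uniformly) matches the paper's proof step for step.
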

Since for any $C^*$-algebra $B$, the Baum-Connes assembly map for $\Ga$ with coefficient in 
$C_0(\Ga,B)$ ($B$ being provided with the trivial action) is an isomorphism and since 
$C_0(\Ga,B)\rtr\Ga\cong B\otimes \K(\ell^2(\Ga))$,
 previous proposition leads to the following corollary
\begin{corollary}\label{cor-quant-emb}
Let $\Ga$ be a finitely generated group and let $B$ be a $C^*$-algebra. Assume  that 
\begin{itemize}
\item  $\Ga$ admits a cocompact universal example for proper actions;
\item the Baum-Connes assembly map for $\Ga$ with coefficients in $\ell^\infty(\N,C_0(\Ga,\K(\H)\otimes B))$ is onto;
\end{itemize}
Then for some universal constant $\lambda_\pa\gq 1$,  any $\eps$ in $(0,\frac{1}{4\lambda_\pa})$
and any $r>0$ there exists  $r'\gq r$  such that $\sta(B\otimes \K(\ell^2(\Ga)),\eps,\lambda_\pa\eps,r,r')$ is satisfied.
Moreover, if $\Ga$ satisfies the Baum-Connes conjecture with coefficients, then $r'$ does not depend on $B$.
\end{corollary}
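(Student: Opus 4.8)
The plan is to apply the preceding proposition to the $\Ga$-$C^*$-algebra $A=C_0(\Ga,B)$, where $\Ga$ acts by left translation and $B$ carries the trivial action, and then transport the conclusion across the canonical isomorphism $C_0(\Ga,B)\rtr\Ga\cong B\ts\K(\ell^2(\Ga))$. First I would check that the three hypotheses of the proposition hold for this $A$. The cocompactness hypothesis is inherited verbatim from the corollary. For the surjectivity hypothesis I would use the $\Ga$-equivariant identification $\K(\H)\ts C_0(\Ga,B)\cong C_0(\Ga,\K(\H)\ts B)$ (tensoring by $\K(\H)$, which carries the trivial action, commutes with $C_0(\Ga,-)$), which gives $\ell^\infty(\N,\K(\H)\ts C_0(\Ga,B))\cong\ell^\infty(\N,C_0(\Ga,\K(\H)\ts B))$ equivariantly; under this identification the onto hypothesis of the proposition becomes exactly the second hypothesis of the corollary. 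For the injectivity hypothesis I would invoke the fact recalled just above the statement, namely that the Baum-Connes assembly map with coefficients in the induced algebra $C_0(\Ga,B)$ is an isomorphism, hence in particular one-to-one.

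Granting these three points, the proposition produces a universal constant $\lambda_\pa\gq1$ such that for every $\eps\in(0,\frac{1}{4\lambda_\pa})$ and $r>0$ there is an $r'\gq r$ with $\sta(C_0(\Ga,B)\rtr\Ga,\eps,\lambda_\pa\eps,r,r')$ satisfied. The next step is to push this statement through the isomorphism $C_0(\Ga,B)\rtr\Ga\cong B\ts\K(\ell^2(\Ga))$. The key point I would verify here is that this is an isomorphism of \emph{filtered} $C^*$-algebras, intertwining the propagation filtration on the crossed product (coming from the word metric on $\Ga$) with the metric filtration on $B\ts\K(\ell^2(\Ga))$ described in the introduction. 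Since a filtered isomorphism induces isomorphisms of quantitative $K$-theory groups $K_*^{\eps,r}$ compatible with the structure maps $\iota_*^{\eps,\eps',r,r'}$ and with $\iota_*^{\eps,r}$, the validity of $\sta$ is preserved, yielding $\sta(B\ts\K(\ell^2(\Ga)),\eps,\lambda_\pa\eps,r,r')$.

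For the final assertion I would go back to the proof of Theorem \ref{thm-persistence-crossed-products} and trace the origin of $r'$. There the numbers $d\lq d'$ that control surjectivity and injectivity at the level of the Rips complexes determine $r'$ through $r_{d',\eps}$ together with the fixed control pairs; the only way $r'$ can depend on $B$ is through the dependence of $d,d'$ on the coefficient algebra. When $\Ga$ satisfies the Baum-Connes conjecture with coefficients (and is cocompact), the same $d,d'$ serve simultaneously for \emph{all} coefficient algebras, as is exploited at the start of that proof; applying the theorem directly to $A=C_0(\Ga,B)$ and transporting as above then gives an $r'$ independent of $B$.

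The main obstacle I anticipate is the bookkeeping in the second step: one must ensure that $C_0(\Ga,B)\rtr\Ga\cong B\ts\K(\ell^2(\Ga))$ matches the two filtrations \emph{on the nose} rather than merely up to a controlled distortion, since $\sta$ is a sharp statement for a fixed pair $(\eps,r)$ and any mismatch in propagation would introduce extra control constants and alter $r'$. By contrast, the equivariant identification of the coefficients and the injectivity of the assembly map for $C_0(\Ga,B)$ are routine.
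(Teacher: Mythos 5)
Your proposal is correct and is essentially the paper's own argument: the paper derives the corollary in one remark by applying the preceding proposition to $A=C_0(\Ga,B)$ (trivial action on $B$), using that the assembly map with coefficients in $C_0(\Ga,B)$ is an isomorphism and that the filtered identification $C_0(\Ga,B)\rtr\Ga\cong B\ts\K(\ell^2(\Ga))$ preserves filtrations, exactly as you do. Your worry about the filtrations matching on the nose is settled in the paper itself (the map $\Phi_{A,F,\Ga}$ of Section 2.2, with $F$ trivial, preserves the filtrations), and your reading of the uniformity of $r'$ under Baum--Connes with coefficients agrees with the contradiction argument in Theorem \ref{thm-persistence-crossed-products}, whose proof allows the coefficient algebras to vary along the sequence.
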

If we take $B=\C$ in the previous corollary, we obtain the following linear algebra statement:
\begin{proposition}\label{prop-quant-emb}
Let $\Ga$ be a finitely generated  and let $H$ be a separable Hilbert space. Assume  that 
\begin{itemize}
\item  $\Ga$ admits a cocompact universal example for proper actions;
\item the Baum-Connes assembly map for $\Ga$ with coefficients in $\ell^\infty(\N,C_0(\Ga,\K(\H)))$ is onto;
\end{itemize}
Then for some universal constant $\lambda\gq 1$,  any $\eps$ in $(0,\frac{1}{4\lambda})$
and any $r>0$ there exists  $R\gq r$  such that
\begin{itemize}
 \item If $u$ is an $\eps$-$r$-unitary of $\K(\ell^2(\Ga)\ts H)+\C Id_{\ell^2(\Ga)\ts H}$, then $u$ is connected
to $Id_{\ell^2(\Ga)\ts H}$ by a homotopy of $\lambda\eps$-$R$-unitaries.
\item If $q_0$ and $q_1$ are  $\eps$-$r$-projections of $\K(\ell^2(\Ga)\ts H)$ such that 
$\rank \kappa_0(q_0)=\rank \kappa_0(q_1)$. Then $q_0$ and $q_1$ are connected
 by a homotopy of $\lambda\eps$-$R$-projections.
\end{itemize}
\end{proposition}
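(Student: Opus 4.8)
The plan is to read the two assertions as the components ($*=1$ and $*=0$) of the persistence statement $\sta$ for the algebra $\K(\ell^2(\Ga)\ts H)$, which I first produce from Corollary \ref{cor-quant-emb}. Taking $B=\C$ there, the hypotheses of the proposition are exactly those of the corollary, so there is a universal $\lambda_\pa\gq 1$ and, for each $\eps\in(0,\frac{1}{4\lambda_\pa})$ and $r>0$, an $r'\gq r$ (depending only on $\eps,r$) with $\sta(\K(\ell^2(\Ga)),\eps,\lambda_\pa\eps,r,r')$. Since $\K(\ell^2(\Ga)\ts H)=\K(H)\ts\K(\ell^2(\Ga))$ with filtration carried entirely by the $\Ga$-variable, Proposition \ref{prop-morita} supplies a filtered Morita isomorphism $\MM^{\eps,r}\colon K_*^{\eps,r}(\K(\ell^2(\Ga)))\to K_*^{\eps,r}(\K(\ell^2(\Ga)\ts H))$ commuting with the maps $\iota_*^{\eps,\eps',r,r'}$ and with $\iota_*^{\eps,r}$; as $\sta$ is a statement about kernels of the latter and images of the former, it transfers, giving $\sta(\K(\ell^2(\Ga)\ts H),\eps,\lambda_\pa\eps,r,r')$.

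Next I would unwind this using $K_1(\K(\ell^2(\Ga)\ts H))=0$ and $K_0(\K(\ell^2(\Ga)\ts H))\cong\Z$ (via $\rank\kappa_0$). For a unitary $u\in\U^{\eps,r}(\K(\ell^2(\Ga)\ts H)^+)$, the class $[u]_{\eps,r}$ maps into $K_1=0$, so $\sta$ forces $[u]_{\lambda_\pa\eps,r'}=0$; by the definition of the relation on $\U_\infty^{\eps,r}$ this is a homotopy through $3\lambda_\pa\eps$-$2r'$-unitaries joining $\diag(u,1,\dots,1)$ to the identity. For projections, the hypothesis $\rank\kappa_0(q_0)=\rank\kappa_0(q_1)$ says precisely that $\iota_0^{\eps,r}(x)=0$ for $x=[q_0,0]_{\eps,r}-[q_1,0]_{\eps,r}$, whence $\sta$ gives $[q_0,0]_{\lambda_\pa\eps,r'}=[q_1,0]_{\lambda_\pa\eps,r'}$, i.e. a homotopy of $\lambda_\pa\eps$-$r'$-projections between $\diag(q_0,I_k)$ and $\diag(q_1,I_k)$ for some integer $k$.

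It remains to de-stabilize, and this is where the real work lies. The matrix amplification itself is harmless, since $M_N(\K(\ell^2(\Ga)\ts H))\cong\K(\ell^2(\Ga)\ts H)$ through an isomorphism acting only on the $H$-variable, hence of propagation $0$. For the unitary statement this already suffices: compressing the homotopy by a propagation-$0$ fibrewise isometry $s$ replaces $\diag(u,1,\dots,1)$ by $1+s(u-1)s^*$, which is a genuine $3\lambda_\pa\eps$-$2r'$-unitary of $\K(\ell^2(\Ga)\ts H)^+$ joined to the identity, and $1+s(u-1)s^*$ is in turn joined to $u$ by rotating $s$ to the inclusion through propagation-$0$ fibrewise isometries; one then sets $\lambda=3\lambda_\pa$ and $R=2r'$. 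The genuine difficulty is the common summand $I_k$ in the $K_0$ case: unlike the trailing scalar units for unitaries, $I_k$ is an infinite-rank projection whose scalar part is a homotopy invariant, so the same compression throws the homotopy out of $\K(\ell^2(\Ga)\ts H)^+$ and $I_k$ cannot merely be deleted. The hard part will be a controlled cancellation of $I_k$. I would carry it out in two steps: first normalize the (constant rank-$k$) scalar part $\rho(H(t))$ of the homotopy to be constant, which is possible because it is a based loop in the simply connected Grassmannian of rank-$k$ projections of $\C^{1+k}$ and the correcting moves are by propagation-$0$ scalar unitaries; then use the spectral gap at $1$ of the resulting $(2,2)$-block $I_k+(\text{compact})$ to decouple the off-diagonal entries by a rotation whose propagation is controlled by that of the homotopy, leaving in the $(1,1)$-corner an honest homotopy of $\lambda\eps$-$R$-projections from $q_0$ to $q_1$. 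Keeping $\eps$ and the propagation under control through this decoupling — and choosing $\lambda\gq 3\lambda_\pa$ and $R$ universally from $r'$ — is the technical crux of the proof.
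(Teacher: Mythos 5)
Your opening reduction is exactly the paper's own proof: the paper's entire argument for this proposition is the one-line specialization of Corollary \ref{cor-quant-emb} to $B=\C$ (one could equally take $B=\K(H)$ and skip the Morita step), and your transfer of $\sta$ across the controlled Morita isomorphism of Proposition \ref{prop-morita}, followed by the unwinding through $K_1(\K(\ell^2(\Ga)\ts H))=0$ and $K_0(\K(\ell^2(\Ga)\ts H))\cong\Z$, is correct. What this yields — and what your second paragraph accurately records — is the \emph{stabilized} statement: $\diag(u,1,\dots,1)$ is joined to $I_N$ by a homotopy of $3\lambda_\pa\eps$-$2r'$-unitaries, and $\diag(q_0,I_k)$ to $\diag(q_1,I_k)$ by $\lambda_\pa\eps$-$r'$-projections. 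The paper says nothing about removing these stabilizations (and never uses more than the stabilized form later), so everything beyond this point in your proposal is material the paper does not contain.

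It is in that extra material that the proposal breaks, specifically in the $K_0$ half: the claimed ``spectral gap at $1$ of the $(2,2)$-block'' is false, because a homotopy of (almost-)projections with \emph{constant} scalar part $\diag(0,I_k,0)$ can still transfer rank between the $q$-corner and the $I_k$-corner. Concretely, take $k=1$, $q_0=q_1=0$, fix $\gamma\in\Ga$, and choose rank-one projections $e,f$ and a partial isometry $v$ with $v^*v=e$, $vv^*=f$, all supported in the single fibre $\C\delta_\gamma\ts H$ (hence lying in $A_r$ for every $r>0$, $A=\K(\ell^2(\Ga)\ts H)$). Then
$$V_\theta=\begin{pmatrix}1-e+\cos\theta\, e&-\sin\theta\, v^*\\ \sin\theta\, v&1-f+\cos\theta\, f\end{pmatrix},\qquad
h_\theta=V_\theta\begin{pmatrix}0&0\\0&1\end{pmatrix}V_\theta^{*}
=\begin{pmatrix}\sin^2\theta\, e&-\cos\theta\sin\theta\, v^*\\ -\cos\theta\sin\theta\, v&1-\sin^2\theta\, f\end{pmatrix}$$
defines, for $\theta\in[0,\pi]$, a loop of genuine projections in $M_2(A^+)$ with constant scalar part $\diag(0,1)$, based at $\diag(q_0,I_1)$, along which the $(2,2)$-block reaches $1-f$ at $\theta=\pi/2$, i.e.\ at norm-distance $1$ from $I_1$. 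Your step must handle \emph{whatever} homotopy the group $K_0^{\lambda_\pa\eps,r'}$ hands you, and this admissible homotopy (which survives your scalar-part normalization unchanged) has neither small off-diagonal blocks nor an invertible $(2,2)$-block, so the decoupling rotation you invoke does not exist; controlling this flow of rank with propagation bounds is precisely the crux, and it is assumed away rather than proved. (A smaller, fixable, gap occurs in the unitary half: the homotopy given by $[u]_{\lambda_\pa\eps,r'}=0$ has non-constant scalar part, a path from $\diag(\rho_A(u),1,\dots,1)$ to $I_N$, so the cut-down $1+s(\cdot-1)s^*$ leaves $\K(\ell^2(\Ga)\ts H)^+$ until you first make the scalar part constant — the same normalization you describe for projections — and separately join $u$ to $\rho_A(u)^{-1}u$.) As written, then, your proposal proves exactly what the paper proves, namely the stabilized statement; the unstabilized $K_0$ assertion, which you rightly identify as the hard point, remains open both in your argument and in the paper.
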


\subsection{Induction and geometric setting}
The conclusions of corollary \ref{cor-quant-emb} and proposition
 \ref{prop-quant-emb} concern only the metric properties of $\Ga$ (indeed as we shall see latter up to  quasi-isometries). For the purpose 
of having 
statements  analogous to  corollary \ref{cor-quant-emb}  in a metric setting, we need to have a completly geometric
description of the
quantitative assembly maps 
\begin{equation*}
 \begin{split}
\mu^{d,\eps,r}
_{\Ga,\prod_{i\in\N}C_0(\Ga,\K(\H)\ts A_i),*}:KK^\Ga_*(C_0(P_d(\Ga)),
&\prod_{i\in\N}C_0(\Ga,\K(\H)\ts A_i))\\ &\lto
K_*((\prod_{i\in\N}C_0(\Ga,\K(\H)\ts A_i)\rtr\Ga) 
\end{split}
\end{equation*}(see the proof of theorem  \ref{thm-persistence-crossed-products}).
Namely, we study  in this subsection a slight generalisation of these maps to the case of induced algebras from the action of a finite subgroup
 of $\Ga$.

\smallskip

Let $\Ga$ be a discrete group equipped with a proper lenght $\ell$. Let $F$ be a finite subgroup of $\Ga$. 
For any $F$-$C^*$-algebra $A$, let us consider the induced $\Ga$-algebra
$$\Ind_F^\Ga(A)=\{f\in C_0(\Ga,A)\text{ such that } f(\ga )=kf(\ga k)\text{ for every }k\text{ in }F\}.$$
Then left translation on  $C_0(\Ga,A)$ provides a $\Ga$-$C^*$-algebra structure on $\Ind_F^\Ga(A)$. Moreover, there is a covariant
representation of $(\Ind_F^\Ga(A),\Ga)$ on the algebra of  adjointable operators of the right Hilbert $A$-module $A\ts \l^2(\Ga)$, 
where
\begin{itemize}
 \item if $f$ is in $\Ind_F^\Ga(A)$, then $f$ acts on $A\ts \l^2(\Ga)$  by pointwise multiplication by $\ga\mapsto\ga^{-1}(f(\ga))$;
\item $\Ga$ acts by left translations.
\end{itemize}
The induced representation then provides an identification between $\Ind_F^\Ga(A)\rtr \Ga$ and the 
algebra of $F$-invariant element of $A\ts \K(\l^2(\Ga))$ for the diagonal action of $F$, the action on  $\K(\l^2(\Ga)$ being by 
right translation. Let us denote by $A_{F,\Ga}$ the algebra of $F$-invariant element  of $A\ts \K(\l^2(\Ga))$  and by 
$$\Phi_{A,F,\Ga}:\Ind_F^\Ga(A)\rtr \Ga \to A_{F,\Ga},$$ the isomorphism induced by the above covariant representation.

The lenght $\l$ gives rise to a
filtration structure $(\Ind_F^\Ga(A)\rtr \Ga_r)_{r>0}$ on $\Ind_F^\Ga(A)\rtr \Ga$ (recall that 
$(\Ind_F^\Ga(A)\rtr \Ga_r)$ is the the set of functions of 
$C_c(\Ga,\Ind_F^\Ga(A)$ with support in the ball of radius $r$ centered at the neutral element). The right invariant metric associated to $\l$ also provides
a filtration structure on  $\K(\l^2(\Ga))$ and hence on $A\ts \K(\l^2(\Ga))$. This filtration is invariant under the action of $F$ and 
moreover the isomorphism $\Phi_{A,F,\Ga}:\Ind_F^\Ga(A)\rtr \Ga \to A_{F,\Ga}$ preserves the filtrations.
By using the induced algebra in the proof of  corollary  \ref{cor-quant-emb}, we get
\begin{proposition}\label{prop-Q-BC}
 Let $F$ be a finite subgroup of a finitely generated  group $\Ga$ and let $A$ be a $F$-$C^*$-algebra.
Assume  that 
\begin{itemize}
\item  $\Ga$ admits a cocompact universal example for proper actions;
\item the Baum-Connes assembly map for $\Ga$ with coefficient in $\ell^\infty(\N,C_0(\Ga,\K(\H)\otimes \Ind_F^\Ga(A)))$ 
is onto;
\end{itemize}Then for some universal constant $\lambda_\pa\gq 1$,  any $\eps$ in $(0,\frac{1}{4\lambda_\pa})$
and any $r>0$ there exists  $r'\gq r$  such that $\sta(A_{F,\Ga},\eps,\lambda_\pa\eps,r,r')$ is satisfied.
Moreover, if $\Ga$ satisfies the Baum-Connes conjecture with coefficients, then $r'$ does not depend on $F$ and $A$.
\end{proposition}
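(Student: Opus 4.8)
The plan is to deduce the statement from the argument already used for Theorem~\ref{thm-persistence-crossed-products} and Corollary~\ref{cor-quant-emb}, now run with the $\Ga$-$C^*$-algebra $\Ind_F^\Ga(A)$ as coefficients. Since the isomorphism $\Phi_{A,F,\Ga}:\Ind_F^\Ga(A)\rtr\Ga\to A_{F,\Ga}$ preserves the filtrations, it induces an isomorphism of quantitative objects $\K_*(\Ind_F^\Ga(A)\rtr\Ga)\iso\K_*(A_{F,\Ga})$ commuting with every structure map $\iota_*^{\eps,\eps',r,r'}$ and with $\iota_*^{\eps,r}$. Thus $\sta(A_{F,\Ga},\eps,\lambda_\pa\eps,r,r')$ is equivalent to $\sta(\Ind_F^\Ga(A)\rtr\Ga,\eps,\lambda_\pa\eps,r,r')$, and it suffices to prove the latter.

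First I would argue by contradiction as in the proof of Theorem~\ref{thm-persistence-crossed-products}. Fixing $\lambda$ from Proposition~\ref{proposition-approximation} and $(\alpha,h)$ from Lemma~\ref{lem-prod-filtered}, and setting $\lambda_\pa=\alpha\lambda$, a failure of the conclusion produces $\eps\in(0,\frac{1}{4\lambda_\pa})$, $r>0$, an unbounded increasing sequence $(r_i)$, and classes $x_i\in K_*^{\eps,r}(\Ind_F^\Ga(A)\rtr\Ga)$ with $\iota_*^{\eps,r}(x_i)=0$ in $K_*$ but $\iota_*^{\eps,\lambda_\pa\eps,r,r_i}(x_i)\neq 0$. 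By Lemma~\ref{lem-prod-filtered} and the Morita equivalence of Proposition~\ref{prop-morita}, the $x_i$ assemble into a single class $x$ over the product coefficient algebra $\prod_{i\in\N}\K(\H)\ts\Ind_F^\Ga(A)$, and the onto hypothesis furnishes a class $z$ in $KK_*^\Ga\big(C_0(P_d(\Ga)),\prod_{i\in\N}\K(\H)\ts\Ind_F^\Ga(A)\big)$ whose ordinary assembly equals $\iota_*^{\alpha\eps,h_\eps r}(x)$. Proposition~\ref{proposition-approximation}(ii) then promotes $z$ to a quantitative preimage at a controlled scale $R\gq h_\eps r$, exactly as in the crossed-product case.

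The one genuinely new ingredient, compared with the case $F=\{e\}$ of Corollary~\ref{cor-quant-emb} (where the coefficient algebra is $C_0(\Ga,B)=\Ind_{\{e\}}^\Ga(B)$), is the injectivity needed to kill $x$. Here I would invoke the induction isomorphism of equivariant $KK$-theory: since $F$ is finite, there is a natural isomorphism $KK_*^\Ga(C_0(P_d(\Ga)),\Ind_F^\Ga(A))\cong KK_*^F(C_0(P_d(\Ga)),A)$ (restricting $P_d(\Ga)$ to $F$) under which $\mu^d_{\Ga,\Ind_F^\Ga(A),*}$ corresponds to the assembly map for the finite group $F$ with coefficients $A$, which is an isomorphism. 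This is the exact analogue of the bijectivity of assembly with coefficients $C_0(\Ga,B)$ used in the corollary. The cocompact universal example hypothesis then supplies a pair $d\lq d'$ for which $\mu^d_{\Ga,\Ind_F^\Ga(A),*}(z_i)=0$ forces $q_{d,d',*}(z_i)=0$; combined with the product identification~(\ref{equ-morita-equiv-khom}) and the compatibility of the quantitative with the ordinary assembly maps, this gives $\iota_*^{\alpha\eps,\lambda_\pa\eps,h_\eps r,R}(x)=0$, whence $\iota_*^{\eps,\lambda_\pa\eps,r,r_i}(x_i)=0$ as soon as $r_i\gq R$, contradicting the choice of the $x_i$.

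For the final assertion, the control pair of Lemma~\ref{lem-prod-filtered} and the constant $\lambda$ of Proposition~\ref{proposition-approximation}, hence $\lambda_\pa$, are universal; and if $\Ga$ satisfies the Baum-Connes conjecture with coefficients then a single pair $d\lq d'$ (coming from the cocompact universal example together with bijectivity) works simultaneously for all coefficient algebras, so the resulting $R$, and thus $r'$, may be chosen independently of $F$ and $A$. I expect the main obstacle to be the injectivity step: one must check that the induction isomorphism is compatible with the controlled assembly maps and with~(\ref{equ-morita-equiv-khom}), so that finiteness of $F$ can be exploited in precisely the place where the $F=\{e\}$ argument used triviality of the action. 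Everything else is a verbatim transcription of the proofs of Theorem~\ref{thm-persistence-crossed-products} and Corollary~\ref{cor-quant-emb}.
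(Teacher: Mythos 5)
Your proposal is correct and follows essentially the same route as the paper: the paper's (one-line) proof of Proposition \ref{prop-Q-BC} is precisely to rerun the argument of Theorem \ref{thm-persistence-crossed-products} and Corollary \ref{cor-quant-emb} with coefficients $\Ind_F^\Ga(A)$, transported through the filtered isomorphism $\Phi_{A,F,\Ga}$, with the injectivity input supplied by the induction isomorphism and the Green--Julg theorem for the finite group $F$ (replacing the triviality-of-action argument used when $F=\{e\}$). The only cosmetic correction is that the induction isomorphism should be quoted in the form of equation (\ref{equ-induction}) from \cite{oyono}, namely $\lim_X KK_*^F(C(X),A)\cong KK_*^\Ga(C_0(P_d(\Ga)),\I(A))$ with $X$ running over $F$-invariant compact subsets of $P_d(\Ga)$, rather than as $KK_*^F(C_0(P_d(\Ga)),A)$, since $P_d(\Ga)$ is not $F$-cocompact.
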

% Let us now give for any family $(A_i)_{i\in\N}$ of $F$-algebra a geometric description 
% of the quantitative assembly maps 
% \begin{equation*}
%  \begin{split}
% \mu^{d,\eps,r}
% _{\Ga,\prod_{i\in\N} I_F^\Ga(\K(\H)\ts A_i),*}:KK^\Ga_*(C_0(P_d(\Ga)),
% &\prod_{i\in\N}I_F^\Ga(\K(\H)\ts A_i) \\ &\lto
% K_*((\prod_{i\in\N}I_F^\Ga(\K(\H)\ts A_i)  \rtr\Ga) 
% \end{split}\end{equation*}
%Let us denote for any $s$ by $P_s(\Ga)$ the Ripps complex of order $\Ga$, i.e the set of probabity measures on $\Ga$ with support 
%of diameter less than $s$.
In  \cite{oyono} was stated   for any $F$-$C^*$-algebra an isomorphism
\begin{equation}\label{equ-induction}
\Ind_F^{\Ga}(P_s(\Ga))_*:\lim_X KK_*^F(C(X),A)\stackrel{\cong}{\longrightarrow}KK_*^\Ga(C_0(P_s(\Ga)),\I (A)),
\end{equation}
where
$X$ runs through  $F$-invariant compact subsets of $P_s(\Ga)$.  In order to describe this isomorphism, let us first recall the definition
of induction for equivariant $KK$-theory. Let $A$ and $B$ be $F$-$C^*$-algebras and 
let $(\E,\rho,T)$ be a $K$-cycle for $KK_*^F(A,B)$ where,
%Let $X$ be a $F$-invariant subset of $P_s(\Ga)$ and let $(\E,\rho,T)$ be a $K$-cycle for $KK_*^F(C(X),A)$, where
\begin{itemize}
 \item $\E$ is a right $B$-Hilbert module provided with an equivariant action of $F$;
\item $\rho:A\to\L_B(\E)$ is an $F$-equivariant representation of $A$ into the algebra $\L_B(\E)$ of adjointable operators
of $\E$;
\item $T$ is a $F$-equivariant operator of $\L_B(\E)$ satisfying the $K$-cycle relations.
\end{itemize}
Let us define 
$$\Ind_F^\Ga(\E)=\{f\in C_0(\Ga,\E)\text{ such that } f(\ga)=kf(\ga k)\text{ for every }k\text{ in }F\}.$$
Then $\Ind_F^\Ga(\E)$ is a right $\Ind_F^\Ga(B)$-Hilbert module for the pointwise scalar product and 
multiplication and the representation
$\rho: A\to \L_B(\E)$  gives rise in the same way to a representation 
$$\Ind_F^\Ga\rho:\Ind_F^\Ga(A)\to \L_{\Ind_F^\Ga(B)}(\Ind_F^\Ga(\E)).$$
 Let $\I T$ be the operator of $\L_{\I(A)}(\I(\E))$ given 
by the pointwise multiplication by $T$, it is then plain to check that $(\I(\E),\I \rho,\I\,T)$ is a $K$-cycle for 
$KK_*^\Ga(\I A,\I B)$ and that moreover, $(\E,\rho,T)\to(\I(\E),\I \rho,\I\,T)$ gives rise to a well defined
morphism  $\I:KK_*^F(A,B){\longrightarrow}KK_*^\Ga(\I(A),\I(B))$.

\smallskip

Back to the definition of the isomorphism of equation (\ref{equ-induction}), let $F$ be a finite subgroup of a discrete group $\Ga$ 
and let $X$ be a $F$-invariant compact subset of $P_s(\Ga)$ for $s>0$.
 If we equipped $\Ga\times X$ with the diagonal 
action of $F$, where the action on 
$\Ga$ is by right
multiplication, then there is a natural identification between $\I(C(X))$ and $C_0((\Ga\times X)/{F})$. 
The map $$(\Ga\times X)/{F}\to P_s(\Ga);\, [(\ga,x)]\mapsto \ga x$$ then gives rises to a
$\Ga$-equivariant homomorphism  $$\Upsilon_{F,X}^\Ga:C_0(P_s(\Ga))\to \I(C(X)).$$ 
Then for any $F$-$C^*$-algebra $A$, the morphism 
 $$KK_*^F(C(X),A){\longrightarrow}KK_*^\Ga(C_0(P_s(\Ga)),\I(A));\, x\mapsto \Upsilon_{F,X}^{\Ga,*}(\I(x))$$
 is compatible
with the inductive limit over $F$-invariant compact subsets of $P_s(\Ga)$ and hence we eventually  obtain a natural homomorphism
$$\Ind_F^{\Ga}(P_s(\Ga))_*:\lim_X KK_*^F(C(X),A){\longrightarrow}KK_*^\Ga(C_0(P_s(\Ga),\I(A))$$
which turns out to be an isomorphism.
\medskip
Let us consider now the composition
\begin{equation}
 \label{equ-geom-setting}\Phi_{A,F,\Ga,*}\circ \mu_{\Gamma,\I A,*}^{\eps,r,s}\circ \Ind_F^{\Ga}(P_s(\Ga))_*:\lim_X KK_*^F(C(X),A){\longrightarrow}K_*^{\eps,r}(A_{F,\Ga}),\end{equation} 
where $X$ runs through $F$-invariant compact subsets 
of $P_s(\Ga)$. The two sides of these maps depend only on the metric structure of $\Ga$ (indeed only on the coarse structure), 
and our aim in next section is to provide a geometric definition for these bunch of assembly maps.
%%%%%%%%%%%%%%%%%%%%
%%%%%%%%%%%%%%%%%%%
\section{Coarse geometry}\label{section-coarse-geometry}%%%%%%
%%%%%%%%%%%%%%%%%%%%
%%%%%%%%%%%%%%%%%%%
Let $\Si$ be a proper metric space equipped with a free  action of a finite group $F$ by isometries and
 let $A$ be a $F$-$C^*$-algebra.
Define then $A_{F,\Si}$ as the set of invariant elements  of $A\ts\K(\l^2(\Si))$ for the diagonal action of $F$. For $F$ trivial, we set
 $A_{\{e\},\Si}=A_{\Si}$.
The filtration
  $(A\ts \K(\ell^2(\Si))_r)_{r>0}$ on $A\ts \K(\ell^2(\Si))$ is preserved by the action of the group $F$. Hence, if
$A_{F,\Si,r}$ stands for the set of $F$-invariant elements of $A\ts \K(\ell^2(\Si))_r$, then $(A_{F,\Si,r})_{r>0}$ provides
 $A_{F,\Si}$ with a structure of filtered $C^*$-algebra. Our aim in this section is to investigate the permanence approximation property for
$A_{F,\Si}$. Let us set $\sta_{F,\Si}(\eps,\eps',r,r')$ for the property  $\sta(A_{F,\Si},\eps,\eps',r,r')$, i.e the restriction of 
$$\iota_*^{\eps',r'}:K_*^{\eps',r'}(A_{F,\Si})
\longrightarrow K_*(A_{F,\Si})$$ to 
$\iota_*^{\eps,\eps',r,r'}(K_*^{\eps,r}(A_{F,\Si}))$ is one-to-one.

Following the route of the proof of theorem \ref{thm-persistence-crossed-products}, and in view of equation (\ref{equ-geom-setting}),
 let us set 
$$K_*^F(P_s(\Si),A)=\lim_X KK_*^F(C(X),A),$$where in the inductive limit,
$X$ runs through  $F$-invariant compact subsets of $P_s(\Si)$ for $s>0$.
Our purpose is to define a bunch of local quantitative coarse  assembly maps
$$\nu_{F,\Si,A,*}^{\eps,r,s}:K_*^F(P_s(\Si),A){\longrightarrow}K_*^{\eps,r}(A_{F,\Si}),$$ for 
 $s>0,\,\eps\in(0,1/4)$, $r\gq r_{s,\eps}$  and 
 $$[0,+\infty)\times (0,1/4)\to (0,+\infty):\, (s,\eps)\mapsto r_{s,\eps}$$  a function independant on $A$, non decreasing in $s$ and
non increasing  in $\eps$
such that, if $F$ is a subgroup of a discrete group $\Ga$ equipped with right invariant metric arising from  a proper lenght, then
$\nu_{F,\Ga,A,*}^{\eps,r,s}$ coincides with the composition of equation (\ref{equ-geom-setting}).
%%%%%%%%%%%%%%%%%%%%%%%%%%%%%%%%%
\subsection{A local coarse assembly map}\label{sec-indexmap}
%%%%%%%%%%%%%%%%%%%%%%%%%%%%%%%%
Let $\Si$ be a proper discrete metric space, with bounded geometry and equipped with a free  action of a finite group
 $F$ by isometries and let $A$ be a $F$-algebra.  Recall that $A_{F,\Si}$ is defined as the set of invariant elements  
of $A\ts\K(\l^2(\Si))$ for the diagonal action of $F$. Notice that since the action of $F$ on $\Si$ is free, the choice of an equivariant
identification between $\Si/F\times F$ and $\Si$ (i.e the choice of a fundamental domain) gives rise to a 
Morita   equivalence between  $A_{F,\Si}$ and $A\rtimes F$. 
Let us set for $s$ a positive number $$K_*^F(P_s(\Si),A)=\lim_X KK_*^F(C(X),A),$$where  $X$ runs through  
$F$-invariant compact subsets of the Rips complex $P_s(\Si)$ of degree $s$.

The  aim of this section is to construct for $s>0$ a bunch of local coarse assembly 
   maps
 $$\nu_{F,\Si,A,*}^{s}K_*^F(P_d(\Si),A){\longrightarrow}K_*(A_{F,\Si}).$$ \smallskip

Le us define first for any $F$-algebras $A$ and $B$  a map 
$$\tau_{F,\Si}:KK^F_*(A,B)\to KK_*(A_{F,\Si},B_{F,\Si})$$ analogous to the  Kasparov transformation.

Let $z$ be an element in $KK^F_*(A,B)$. Then $z$ can be represented by an equivariant $K$-cycle
$(\pi,T,\H\otimes\ell^2(F)\otimes B)$ where
\begin{itemize}
\item $\H$ is a separable Hilbert
space;
\item $F$ acts diagonally on $\H\otimes\ell^2(F)\otimes B$, trivially on $\H$ and by the right regular 
representation on $\ell^2(F)$.
\item  $\pi$ is a $F$-equivariant
representation of $A$ in the algebra $\L_B(\H\otimes\ell^2(F)\otimes B)$ of adjointable operators of
$\H\otimes B$;
\item $T$ is a $F$-equivariant self-adjoint operator of $\L_B(\H\otimes\ell^2(F)\otimes B)$ 
satisfying  the $K$-cycle conditions, i.e. $[T,\pi(a)]$ and 
  $\pi(a)(T^2-\Id_{\H\otimes B})$  belongs to
  $\K(\H\otimes \ell^2(F))\otimes B$, for every $a$ in $A$.
\end{itemize}
Let $\H_{B,F,\Si}$ be the set of invariant elements in $\H\otimes\ell^2(F)\otimes B\otimes\K(\ell^2(\Si))$. 
Then $\H_{B,F,\Si}$ is obviously a 
right $B_{F,\Si}$-Hilbert module, and $\pi$ induces a representation $\pi_{F,\Si}$ of $A_{F,\Si}$ on the 
algebra $\L_{B_{F,\Si}}(\H_{B,F,\Si})$ of adjointable operators of
$\H_{B,F,\Si}$ and $T$ gives rise also a self-adjoint element  $T_{B,F,\Si}$ of $\L_{B_{F,\Si}}(\H_{B,F,\Si})$. Moreover, 
by choosing an equivariant identification
between $\Si/F\times F$ and $\Si$, we can check that the algebra of $F$-equivariant compact operators on
 $\H\otimes\ell^2(F)\otimes \ell^2(\Si)\otimes B$ coincides with the algebra of compact operators on
the right $B_{F,\Si}$-Hilbert module $\H_{B,F,\Si}$. Hence, $(\pi_{F,\Si},T_{B,F,\Si},\H_{B,F,\Si})$ is a 
 $K$-cycle for $KK_*(A_{F,\Si},B_{F,\Si})$. Furthermore, its class in $KK_*(A_{F,\Si},B_{F,\Si})$ only depends on $z$ and thus we end up with 
a morphism \begin{equation}\label{equ-tau-F-Sigma}\tau_{F,\Si}:KK^F_*(A,B)\to KK_*(A_{F,\Si},B_{F,\Si}).\end{equation}  It also quite easy to see that
 $\tau_{F,\Si}$ is functorial in both variables. Namely, for any $F$-equivariant homomorphism $f:A\to B$ of $F$-algebras, let us 
set $f_{F,\Si}:A_{F,\Si}\to B_{F,\Si}$ for the induced homomorphism. Then for any $F$-algebras $A_1$, $A_2$, $B_1$ and $B_2$ 
and any homomorphism of $F$-algebra $f:A_1\to A_2$ and $g:B_1\to B_2$, we have
$$\tau_{F,\Si}(f^*(z))=f^*_{F,\Si}(\tau_{F,\Si}(z))$$ and $$\tau_{F,\Si}(g_*(z))=g_{F,\Si,*}(\tau_{F,\Si}(z))$$ 
for any $z$ in $KK^F_*(A_2,B_1)$.

We are now in position to define the index map.
Observe that  any $x$ in $P_s(\Si)$ can be written as a finite convex combination
$$x=\sum_{\si\in\Si}\lambda_\si(x)\de_\si$$ where
\begin{itemize}
 \item $\delta_\si$ is the Dirac probability measure  at $\si$ in $\Si$.
\item for every $\si$ in $\Si$, the coordinate function $\lambda_\si: P_s(\Si)\to[0,1]$ is continuous with support in the ball centered at $\si$ 
and with 
radius $1$ for the simplicial distance.
\end{itemize}Moreover, for any $\si$ in $\Si$ and $k$ in $F$, then we have $\lambda_{k\si}(kx)=\lambda_\si(x)$.
Let $X$ be a compact $F$-invariant subset of $P_d(\Si)$.
Let us define $$P_X:C(X)\ts\ell^2(\Si)\to C(X)\ts\ell^2(\Si)$$  by
\begin{equation}\label{equ-def-PX}(P_X\cdot h)(x,\si)=\lambda^{1/2}_\si(x)\sum_{\si'\in\Si}h(x,\si')\lambda^{1/2}_{\si'}(x),\end{equation} for any $h$ in $C(X)\ts\ell^2(\Si)$.
Since $\sum_{\si\in\Si}\lambda_\si=1$, it is straightforward to check that $P_X$ is a $F$-equivariant projection 
in $C(X)\ts \K(\ell^2(\Si))$ with  propagation less than $s$ 
and hence gives rise in particular to a class $[P_X]$ in  $K_0(C(X)_{F,\Si})$.
For any $F$-$C^*$-algebra $A$, the map
$$KK_*^F(C(X),A)\longrightarrow K_*(A_{F,\Si});\, 
x\mapsto [P_X]\otimes_{C(X)_{F,\Si}}\tau_{F,\Si}(x)$$ is compatible with inductive limit over $F$-invariant 
compact subset of $P_s(\Si)$ and hence gives rise to a local coarse assembly  map
$$\nu_{F,\Si,A,*}^{s} :K_*^F (P_s(\Si),A){\longrightarrow}K_*(A_{F,\Si}).$$ 
This local coarse assembly map  is  natural in the $F$-algebra. 
Furthermore, let us denote for   any positive number $s$ and $s'$ such that $s\lq s'$  by
$$q_{s,s',*}:K_* (P_s(\Si),A){\longrightarrow}K_* (P_{s'}(\Si),A)$$  the homomorphism induced by the inclusion
  $P_{s}(\Si)\hookrightarrow P_{s'}(\Si)$, then it is straightforward to check that
$$\nu_{F,\Si,A,*}^{s}=\nu_{F,\Si,A}^{s'}\circ  q_{s,s',*}.$$

%Let us set  then $r_{\Si,\eps,s}=2sk_{\TT,\eps/\TT}$.

%%%%%%%%%%%%%%%%%%%%%%%%%%%
\subsection{Quantitative local coarse assembly maps}
%%%%%%%%%%%%%%%%%%%%%%%%%%%
With notation of  section \ref{sec-indexmap}, if $\Si$ is proper discrete metric space equipped with an 
 action  of  a finite group $F$ by isometries, then since the action of $F$ preserves the filtration of $A\ts\K(\ell^2(\Si))$, then 
$A_{F,\Si}$ inherits from  $A\ts\K(\ell^2(\Si))$ a  structure of filtered $C^*$-algebra. Our aim is to define a quantitative version of 
the geometrical $\mu_{F,\Si,A,*}^{s}$.
The argument of the proof of theorem \ref{thm-tensor}, can be  easily adapted to prove
\begin{theorem}\label{thm-tensor-F}
There exists a control pair $(\alpha_\TT,k_\TT)$ such that 
\begin{itemize}
 \item 
for any  proper discrete metric space $\Si$ equipped with a free  action  of  a finite group $F$ by isometries;
 \item for any $F$-$C^*$-algebras $A$ and $B$;
\item  any $z$ in $KK^F_*(A,B)$, \end{itemize}
there 
exists a $(\alpha_\TT,k_\TT)$-controlled morphism
 $$\TT_{F,\Si}(z)=(\tau_{F,\Si}^{\eps,r})_{0<\eps<\frac{1}{4\alpha_\TT}}:\K_*(A_{F,\Si})\to \K_*(B_{F,\Si})$$ that satisfies the following:
\begin{enumerate}
\item$\TT_{F,\Si}(z):\K_*(A_{F,\Si})\to \K_*(B_{F,\Si})$  induces  in 
$K$-theory the right  multiplication by  the element $\tau_{F,\Si}(z)\in KK_*(A_{F,\Si},B_{F,\Si})$ defined by equation (\ref{equ-tau-F-Sigma});
\item  For any  elements $z$ and $z'$ in $KK_*^F(A,B)$ then
$$\TT_{F,\Si}(z+z')=\TT_{F,\Si}(z)+\TT_{F,\Si}(z').$$
\item Let $A'$ be a $F$-$C^*$-algebras and  let $f:A\to A'$ be a $F$-equivariant homomorphism of  $C^*$-algebras, then
$\T_{F,\Si}(f^*(z))=\T_{F,\Si}(z)\circ f_{F,\Si,*}$  for all $z$ in $KK_* (A',B)$.
\item Let $
B'$ be a  $F$-$C^*$-algebra and  let $g:B'\to B$ be a homomorphism of $C^*$-algebras then
$\TT_{F,\Si}(g_{*}(z))=g_{F,\Si,*}\circ \TT_{F,\Si}(z)$
for any $z$ in $KK_*^F(A,B')$.
\item $\TT_{F,\Si}([Id_{A}])\stackrel{(\alpha_\TT,k_\TT)}{\sim} \Id _{\K_*(A_{F,\Si})}$.
\item For any  semi-split extension of  $F$-$C^*$-algebras $0\to J\to A\to A/J\to 0$    with corresponding   element   $[\partial_{J,A}]$  of
$KK_1(A/J,J)$
 that implements the boundary map, then we have
$$\TT_{F,\Si}([\partial_{J,A}])=\DD_{J_{F,\Si},A_{F,\Si}}.$$%\item For any $C^*$-algebra $D$ and any element $z$ in $KK_*(A_1,A_2)$ , we have  $\TT_{F,\Si}(\tau_D(z))=\TT_{B\ts D}(z)$.
\end{enumerate}
\end{theorem}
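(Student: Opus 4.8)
The plan is to transcribe the construction of $\TT_B$ from Theorem \ref{thm-tensor} (that is, \cite[Theorem 4.4]{oy2}), replacing the tensorisation by a filtered algebra $B$ with the transformation $\tau_{F,\Si}$ of equation (\ref{equ-tau-F-Sigma}). Concretely, given $z$ in $KK^F_*(A,B)$, I would represent it by the equivariant Kasparov cycle $(\pi,T,\H\ts\ell^2(F)\ts B)$ already used to build $\tau_{F,\Si}(z)$, and pass to the induced cycle $(\pi_{F,\Si},T_{B,F,\Si},\H_{B,F,\Si})$ over the pair $(A_{F,\Si},B_{F,\Si})$. The explicit formulas of \cite{oy2}, namely functional calculus in the self-adjoint operator $T_{B,F,\Si}$ together with the associated graph-type projections and exponentials, then define for each admissible $\eps$ and $r$ the maps $\tau_{F,\Si}^{\eps,r}$ on $\eps$-$r$-projections and $\eps$-$r$-unitaries over $A_{F,\Si}$, out of which the candidate controlled morphism $\TT_{F,\Si}(z)$ is assembled.

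The crux is to check that these formulas respect the filtration of $A_{F,\Si}$ up to the \emph{same} universal control pair $(\alpha_\TT,k_\TT)$ as in Theorem \ref{thm-tensor}. Here the only filtration in play is the one along $\Si$, inherited from $\K(\ell^2(\Si))$, while the variables $A$ and $B$ carry no filtration. The operator $T_{B,F,\Si}$ is $T\ts\Id_{\K(\ell^2(\Si))}$ restricted to $F$-invariants, so it is diagonal in the $\Si$-direction and has zero $\Si$-propagation; likewise every continuous function of $T_{B,F,\Si}$ has zero $\Si$-propagation. By contrast $\pi_{F,\Si}(a)$ carries exactly the $\Si$-propagation of $a\in A_{F,\Si}$. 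Hence the commutators $[T_{B,F,\Si},\pi_{F,\Si}(a)]$ and the elements $\pi_{F,\Si}(a)(T_{B,F,\Si}^2-\Id)$ lie in the compacts of $\H_{B,F,\Si}$ with $\Si$-propagation controlled by that of $a$, and the norm and propagation bookkeeping is identical to the tensorial case of \cite{oy2}. Since every operation is $F$-equivariant, $F$-invariance is preserved throughout, and the same control pair $(\alpha_\TT,k_\TT)$ works.

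Granting this, properties (1)--(6) follow by the arguments of \cite{oy2}. Property (1) is the statement that applying $\iota_*^{\eps,r}$ to the output of the formulas recovers the Kasparov product with $\tau_{F,\Si}(z)$, which holds by the same computation as for $\TT_B$. Additivity (2) comes from the additivity of the construction on direct sums of cycles. The functoriality statements (3) and (4) are inherited from the functoriality of $\tau_{F,\Si}$ in both variables, established just after (\ref{equ-tau-F-Sigma}), combined with the naturality of the formulas. Property (5) holds because the degenerate cycle representing the identity class produces, up to $(\alpha_\TT,k_\TT)$, the identity controlled morphism, exactly as for $\TT_B$ on the identity class. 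For the boundary relation (6) one represents $[\partial_{J,A}]$ by the Kasparov cycle of the semi-split $F$-extension $0\to J\to A\to A/J\to 0$; since the functor $(\,\cdot\,)_{F,\Si}$ sends this to a semi-split filtered extension, the construction matches $\TT_{F,\Si}([\partial_{J,A}])$ with the controlled boundary $\DD_{J_{F,\Si},A_{F,\Si}}$ of Proposition \ref{prop-bound}.

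The main obstacle is the careful verification promised in the second paragraph: one must confirm that \emph{every} operator entering the explicit construction of \cite{oy2}, including the smooth cutoffs of $T$, the rescalings controlling operator norms, and the homotopies proving well-definedness and compatibility with the equivalence relation used to define $K_*^{\eps,r}$, still has finite and controlled $\Si$-propagation after tensoring with $\Id_{\K(\ell^2(\Si))}$ and restricting to $F$-invariants. This is exactly where the geometric structure is used, and it works for the simple reason that $T$ is tensored with the identity on the $\K(\ell^2(\Si))$ factor, so that all functional calculus in $T_{B,F,\Si}$ remains diagonal along $\Si$; only the representation of $A_{F,\Si}$ then contributes propagation, precisely as the filtration by $B$ did in the tensorial setting. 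Once this is in place, the remainder is a line-by-line transcription of the proof of Theorem \ref{thm-tensor}.
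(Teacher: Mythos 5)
Your proposal is correct and coincides with the paper's own approach: the paper offers no detailed argument, stating only that ``the argument of the proof of theorem \ref{thm-tensor} can be easily adapted,'' and your write-up is precisely that adaptation, built on the cycle $(\pi_{F,\Si},T_{B,F,\Si},\H_{B,F,\Si})$ defining $\tau_{F,\Si}(z)$ in equation (\ref{equ-tau-F-Sigma}). In particular, you correctly isolate the point that makes the transcription work, namely that $T_{B,F,\Si}$ acts diagonally along $\Si$ and hence has zero propagation, so that only $\pi_{F,\Si}(a)$ contributes propagation, exactly as the filtration of $B$ did in the tensorial setting of \cite[Theorem 4.4]{oy2}.
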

%We can choose indeed with notations of section \ref{sec-survey} $(\alpha_\TT,k_\TT)=(\alpha_\JJ,k_\JJ)$.
We can proceed as in the proof of  theorem \ref{thm-product-tensor}  to get the compatibility of  
$\TT_{F,\Si}$ with Kasparov product.
\begin{theorem}\label{thm-product-tensor-F} There exists a control pair $(\lambda,h)$ such that the following holds :

\smallskip

 let  $F$ be a  finite group  acting freely  by isometries on a discrete metric space $\Si$  and
  let  $A_1,\,A_2$ and $A_3$  be $F$-$C^*$-algebras. Then for any
$z$ in $KK_*(A_1,A_2)$ and  any  $z'$ in
$KK_*(A_2,A_3)$, we have
$$\TT_{F,\Si}(z\ts_{A_2} z')\aeq  \TT_{F,\Si}(z') \circ\TT_{F,\Si}(z).$$
\end{theorem}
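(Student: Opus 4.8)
The plan is to follow the scheme of the proof of Theorem \ref{thm-product-tensor} essentially verbatim, with the tensorization $\tau_B$ replaced by the transformation $\tau_{F,\Si}$ of equation (\ref{equ-tau-F-Sigma}) and its controlled lift $\TT_{F,\Si}$ from Theorem \ref{thm-tensor-F}. The reason this transfer works is that $\TT_{F,\Si}$ enjoys exactly the formal properties that drive the argument there: additivity (item (ii)), functoriality in each variable (items (iii)--(iv)), the normalization $\TT_{F,\Si}([\Id_A])\aeq\Id_{\K_*(A_{F,\Si})}$ up to the control pair $(\alpha_\TT,k_\TT)$ (item (v)), and the identification of $\TT_{F,\Si}([\partial_{J,A}])$ with the controlled boundary morphism $\DD_{J_{F,\Si},A_{F,\Si}}$ (item (vi)).

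First I would reduce the multiplicativity statement to a few elementary cases by decomposing an arbitrary Kasparov class into building blocks for which the formula is transparent. Using suspension together with the controlled Bott isomorphisms $\DD_A$ of Section \ref{subsection-control-exact-sequence} and the quantitative Morita equivalence $\MM_A^{\eps,r}$ of Proposition \ref{prop-morita}, any $z\in KK_*^F(A_1,A_2)$ can be realized, after the standard Cuntz--Skandalis manipulations, as the boundary class $[\partial_{J,A}]$ of a semi-split extension of $F$-$C^*$-algebras, and likewise for $z'$. In ordinary equivariant $KK$-theory the Kasparov product is compatible with these operations: composing a boundary class with $z'$, or precomposing $z$ with the class of a morphism, again yields a boundary class, respectively a pullback, and this is precisely where items (iii), (iv) and (vi) enter.

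Concretely, when $z=[f]$ for an $F$-equivariant homomorphism $f$ we have $\TT_{F,\Si}(z\ts_{A_2}z')=\TT_{F,\Si}(f^*(z'))=\TT_{F,\Si}(z')\circ f_{F,\Si,*}$ by item (iii), while $\TT_{F,\Si}([f])\aeq f_{F,\Si,*}$ follows from items (iii) and (v); the symmetric computation using item (iv) covers the case where $z'$ comes from a homomorphism. When $z=[\partial_{J,A}]$ is a boundary class, item (vi) replaces $\TT_{F,\Si}(z)$ by the controlled boundary $\DD_{J_{F,\Si},A_{F,\Si}}$, and the product formula then follows from the naturality of the controlled boundary maps with respect to morphisms of semi-split extensions (stated after Proposition \ref{prop-bound}) together with the controlled six-term exactness of Theorem \ref{thm-six term}. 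Combining the elementary cases via associativity of the Kasparov product, and using that $\aeq$ is stable under composition of controlled morphisms with the control pair rescaled by the composition rule $h*h'$, yields the formula for general $z$ and $z'$.

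The main obstacle is the uniformity of the control pair $(\lambda,h)$: it must not depend on $\Si$, $F$, the algebras $A_1,A_2,A_3$, or the classes $z,z'$. This is guaranteed because each Kasparov class decomposes into a \emph{bounded} number of elementary pieces (morphisms and boundary maps) in the Cuntz--Skandalis picture, so that the accumulated control pair arises from composing finitely many controlled morphisms whose control pairs are drawn from the fixed collection $\{(\alpha_\TT,k_\TT),(\alpha_\DD,k_\DD),(1,1)\}$; the composition rule $(\lambda,h),(\lambda',h')\mapsto(\lambda\lambda',h*h')$ then produces a single universal $(\lambda,h)$. The one delicate bookkeeping point is to verify that the boundary-map case composes correctly under $\aeq$ after the suspension and Morita reductions, i.e.\ that the control losses incurred by $\DD_A$ and $\MM_A^{\eps,r}$ stay within one universal control pair; this is handled exactly as in the proof of Theorem \ref{thm-product-tensor}.
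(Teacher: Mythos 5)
Your opening move --- transport the proof of Theorem \ref{thm-product-tensor} to the geometric setting using the formal properties of $\TT_{F,\Si}$ from Theorem \ref{thm-tensor-F} --- is exactly the paper's proof, which consists of the single sentence that one ``can proceed as in the proof of theorem \ref{thm-product-tensor}''. The problem is that your reconstruction of that argument misidentifies its key mechanism. The argument being referenced (and which the paper itself spells out twice for the closely analogous transformations, in the proofs of Propositions \ref{prop-funct-tau} and \ref{prop-prod-coarse}) rests on Lafforgue's factorization theorem \cite[Th\'eor\`eme 1.6.9 and 1.6.11]{laff-inv}: an \emph{even} class $z$ in $KK^F_*(A_1,A_2)$ factors as $z=\nu_*([\theta]^{-1})$ with $\theta:Z\to A_1$ and $\nu:Z\to A_2$ equivariant homomorphisms and $[\theta]$ $KK$-invertible. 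Multiplicativity against such a $z$ is then pure functoriality plus a cancellation: item (iii) gives $\TT_{F,\Si}(z\ts_{A_2}z')\circ\theta_{F,\Si,*}=\TT_{F,\Si}(\theta^*(z\ts_{A_2}z'))=\TT_{F,\Si}(\nu^*(z'))=\TT_{F,\Si}(z')\circ\nu_{F,\Si,*}$, while items (iv) and (v) give $\TT_{F,\Si}(z)=\nu_{F,\Si,*}\circ\TT_{F,\Si}([\theta]^{-1})$ and $\theta_{F,\Si,*}\circ\TT_{F,\Si}([\theta]^{-1})\aeq\Id$, so the factor $\theta_{F,\Si,*}$ cancels up to a universal control pair. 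The odd--odd case is then reduced to the even one by inserting $[\partial_{A_2}]\ts_{SA_2}[\partial_{A_2}]^{-1}$, which requires the controlled analogue of Lemma \ref{lem-bott-coarse} (Toeplitz extension plus item (vi)), not the controlled six-term sequence.

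Your substitute for this --- realize every class as a boundary class $[\partial_{J,A}]$ via Cuntz--Skandalis and handle products by ``naturality of the controlled boundary maps plus controlled six-term exactness'' --- has a genuine hole. Naturality of $\DD_{J_{F,\Si},A_{F,\Si}}$ applies only when $z'$ is induced by a morphism of semi-split extensions; it says nothing about $[\partial_{J,A}]\ts_J z'$ for a general class $z'$, and once you decompose $z'$ as well you are left with products of \emph{two} boundary classes, which none of your elementary cases covers, so the ``combination via associativity of the Kasparov product'' never closes. (Controlled six-term exactness is also the wrong kind of tool here: it asserts exactness of a sequence, not identities between compositions of controlled morphisms.) The circularity surfaces in your final paragraph, where the remaining ``delicate bookkeeping point'' is deferred to being ``handled exactly as in the proof of Theorem \ref{thm-product-tensor}'' --- that is, to the very proof you are supposed to be reconstructing. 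The repair is to put Lafforgue's factorization at the center as above; with that in place, your uniformity argument for $(\lambda,h)$ (a universally bounded number of elementary pieces, with control pairs composed by the rule $(\lambda,h),(\lambda',h')\mapsto(\lambda\lambda',h*h')$) does go through.
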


Let us set $r_{s,\eps}=sk_{\TT,\eps/{\alpha_\TT}}$ for  any $\eps$ in $(0,1/4)$ and $s>0$.
Then for any $F$-$C^*$-algebra $A$ and any $r\gq r_{\eps,s}$, the map
\begin{eqnarray}\label{eq-quant-ind}
 KK_*^F(C(X),A)&\longrightarrow& K_*^{\eps,r}(A_{F,\Si})\\
\nonumber x&\mapsto &\left(\tau_{F,\Si}^{\eps/\alpha_\TT,r/k_{\TT,\eps/\alpha_\TT}}(x)\right)
([P_X,0]_{\eps/\alpha_\TT,r/k_{\TT,\eps/\alpha_\TT}})
\end{eqnarray}
is compatible with inductive limit over $F$-invariant 
compact subset of $P_s(\Si)$ and hence gives rise to a quantitative local coarse assembly   map
$$\nu_{F,\Si,A,*}^{\eps,r,s}: K_*^F(P_s(\Si),A){\longrightarrow}K_*^{\eps,r}(A_{F,\Si}).$$ 
The quantitative local coarse assembly  maps are natural in the $F$-algebras. 
It is straightforward  to check that
\begin{itemize}
 \item $\iota^*_{\eps,\eps',r,r'}\circ \nu_{F,\Si,A,*}^{\eps,r,s}=\nu_{F,\Si,A,*}^{\eps',r',s}$ for any
positive numbers $\eps,\,\eps',\,r,\,,r'$ and $s$ such that $\eps\lq\eps'<1/4,\,r_{s,\eps}\lq r,\,r_{s,\eps'}\lq r'$ and
$r\lq r'$;
\item $\nu_{F,\Si,A,*}^{\eps,r,s'}\circ q_{s,s',*}=\nu_{F,\Si,A,*}^{\eps,r,s}$ for any
positive numbers $\eps,\,r,\,s$ and $s'$ such that $\eps<1/4,\,s\lq s'$ and $r_{s'\eps}\lq r$;
\item $\nu_{F,\Si,A,*}^{s}=\iota_*^{\eps,r}\circ \nu_{F,\Si,A,*}^{\eps,r,s}$  for any
positive numbers $\eps,\,r$ and $s$ such that $\eps<1/4$ and $r_{s,\eps}\lq r$;
\end{itemize}
Let $F$ be a finite subgroup of a finitely generated  group $\Ga$ equipped with a right invariant metric. Let us show that
$$\nu_{F,\Ga,A,*}^{\eps,r,s}:K_*^F(P_s(\Ga),A){\longrightarrow}K_*^{\eps,r}(A_{F,\Ga})$$ coincides with the composition of equation
 (\ref{equ-geom-setting}).
 Using the naturality of the map 
$\Phi_{\bullet,F,\Ga}:I_F^\Ga(\bullet)\rtr\Ga\to\bullet_{F,\Ga}$ and by construction
of $\TT_{F,\Ga}$ and $\JJ_{\Ga}^{red}$ \cite[Section 5.2]{oy2}, we get the following:
\begin{lemma}\label{lem-comp-tau-J}
 Let $F$ be a finite subgroup of  a finitely generated discrete group $\Ga$. Then for any $F$-algebras $A$ and $B$ and any 
$x$ in $KK^F_*(A,B)$, we have
$$\Phi_{B,F,\Ga,*}\circ \JJ_{\Ga}^{red}(I_F^\Ga(x))=\TT_{F,\Ga}(x)\circ \Phi_{A,F,\Ga,*}.$$
\end{lemma}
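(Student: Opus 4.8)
The plan is to prove the identity at the level of Kasparov cycles and then to observe that both controlled morphisms are manufactured from such a cycle by one and the same functional-calculus prescription, so that a cycle-level identification forces equality of the controlled morphisms (and not merely a $\aeq$-equivalence). First I would fix $x\in KK_*^F(A,B)$ and represent it by an $F$-equivariant $K$-cycle $(\pi,T,\H\otimes\l^2(F)\otimes B)$ of the form used in the construction of $\TT_{F,\Ga}$ (Theorem \ref{thm-tensor-F}). Using the induction on $K$-cycles recalled in the discussion around equation (\ref{equ-induction}), the class $\I(x)\in KK_*^\Ga(\I(A),\I(B))$ is represented by $(\I\pi,\I T,\I(\H\otimes\l^2(F)\otimes B))$, and the reduced Kasparov descent of \cite[Section 5.2]{oy2} then represents $\JR(\I(x))$ by an explicit cycle over $(\I(A)\rtr\Ga,\I(B)\rtr\Ga)$ built on the right $\I(B)\rtr\Ga$-Hilbert module obtained from $\I(\H\otimes\l^2(F)\otimes B)$.

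The core computation is to transport this descent cycle through the isomorphisms $\Phi_{A,F,\Ga}$ and $\Phi_{B,F,\Ga}$ and to check that it coincides, on the nose, with the geometric cycle $(\pi_{F,\Ga},T_{B,F,\Ga},\H_{B,F,\Ga})$ defining $\tau_{F,\Ga}(x)$ in equation (\ref{equ-tau-F-Sigma}). Concretely, I would use the covariant representation on $A\ts\l^2(\Ga)$ that defines $\Phi_{A,F,\Ga}$ (and the corresponding one for $B$) to identify $\I(B)\rtr\Ga$ with $B_{F,\Ga}$, and then identify the descended Hilbert module with the $F$-invariant module $\H_{B,F,\Ga}$ of Section \ref{sec-indexmap}; under this identification the induced representation $\I\pi$ descends to $\pi_{F,\Ga}$ and the pointwise operator $\I T$ descends to $T_{B,F,\Ga}$. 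This is exactly the module-level refinement of the algebra isomorphism $\I(A)\rtr\Ga\cong A_{F,\Ga}$, and it is here that the equivariant identification of $\Ga/F\times F$ with $\Ga$ (the choice of a fundamental domain) is used, so that the $F$-equivariant compact operators on $\H\otimes\l^2(F)\otimes\l^2(\Ga)\otimes B$ match the compact operators of the module $\H_{B,F,\Ga}$.

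Granting this cycle-level identification, I would invoke the naturality of $\Phi_{\bullet,F,\Ga}$ — which is a filtered isomorphism by the discussion preceding Proposition \ref{prop-Q-BC} — together with the functoriality of both $\JR$ (Theorem \ref{thm-kas}) and $\TT_{F,\Ga}$ (Theorem \ref{thm-tensor-F}), in order to commute $\Phi$ past the transformations. Since the passage from a $K$-cycle to a controlled morphism in \cite[Section 5.2]{oy2} and in the proof of Theorem \ref{thm-tensor} is given by the same spectral prescription applied to the (now identified) operator $T$, and since $\Phi_{\bullet,F,\Ga}$ preserves propagation, the component maps of $\Phi_{B,F,\Ga,*}\circ\JR(\I(x))$ and of $\TT_{F,\Ga}(x)\circ\Phi_{A,F,\Ga,*}$ agree for every $\eps$ and $r$, which is precisely the asserted equality.

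The step I expect to be the main obstacle is the explicit Hilbert-module identification in the second paragraph: matching the descended module with $\H_{B,F,\Ga}$ while simultaneously tracking that the propagation bounds coming from the descent are carried to the geometric propagation on $A\ts\K(\l^2(\Ga))$. All the surrounding steps are formal once this bookkeeping is done, but arranging the fundamental-domain identification so that it intertwines the $\Ga$-action arising from the descent with the right-translation $F$-action defining $\H_{B,F,\Ga}$, and preserves filtrations throughout, requires care.
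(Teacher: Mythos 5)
Your proposal is correct and is essentially the paper's own argument: the paper disposes of this lemma in a single sentence, appealing to the naturality of $\Phi_{\bullet,F,\Ga}$ and to the fact that $\TT_{F,\Ga}$ and $\JJ_{\Ga}^{red}$ are constructed by the same prescription from a Kasparov cycle, which is precisely the cycle-level identification (induce, descend, transport through the filtration-preserving isomorphism $\Phi$) and propagation bookkeeping that you spell out. No gap to report.
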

\begin{proposition}
Let $\Ga$ be a finitely generated group, let $F$ be a finite subgroup of $\Ga$ and let $A$ be a $F$-$C^*$-algebra.
Then for any $\eps\in (0,1/4)$, any $s>0$ and any $r\gq r_{\eps,s}$ the following diagram is commutative
$$\begin{CD}
 KK_*^\Gamma(C_0(P_s(\Ga)),\I(A))@>\mu_{\Ga,\I(A),*}^{s,\eps,r}>> K_*^{\eps,r}( \I(A)\rtr\Ga)\\
         @A \Ind_F^{\Ga}(P_s(\Ga))_* AA
         @VV  \Phi^{\eps,r}_{A,F,\Ga,*}V\\
K_*^F(P_s(\Ga),A)@>\nu_{F,\Ga,A,*}^{\eps,r,s}>> K_*^{\eps,r}(A_{F,\Ga})
\end{CD},$$for  $s>0,\,\eps\in(0,1/4)$ and $r\gq r_{s,\eps}$.
\end{proposition}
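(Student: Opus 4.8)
The plan is to prove commutativity of the square by unwinding both composites down to the level of $K$-cycles and the controlled Kasparov transformations, and then invoking the compatibility Lemma \ref{lem-comp-tau-J} together with the naturality of $\Phi_{A,F,\Ga}$. Everything reduces to an identity on a single generator, so first I would reduce to a fixed $F$-invariant compact subset $X\subset P_s(\Ga)$ and an element $x\in KK_*^F(C(X),A)$, since both quantitative assembly maps are compatible with the inductive limit over such $X$ (this is built into the definition of $\nu_{F,\Ga,A,*}^{\eps,r,s}$ in \eqref{eq-quant-ind} and into $\Ind_F^{\Ga}(P_s(\Ga))_*$ via \eqref{equ-induction}). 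So it suffices to check the two ways of sending $x$ around the square agree in $K_*^{\eps,r}(\I(A)\rtr\Ga)\cong K_*^{\eps,r}(A_{F,\Ga})$.

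Second, I would compute each composite explicitly on $x$. Going up-then-right, $\Ind_F^{\Ga}(P_s(\Ga))_*$ sends $x$ to $\Upsilon_{F,X}^{\Ga,*}(\I(x))\in KK_*^\Ga(C_0(P_s(\Ga)),\I(A))$, and then $\mu_{\Ga,\I(A),*}^{s,\eps,r}$ applies the controlled Kasparov transform $\JR$ to this class, evaluated on the quantitative class of the Rips projection $[p_{\Ga,s},0]$ (up to the rescaling by the control pair $(\alpha_\JJ,k_\JJ)$). Going down-then-right, $\nu_{F,\Ga,A,*}^{\eps,r,s}$ applies the controlled transform $\TT_{F,\Ga}(x)$ to the quantitative class $[P_X,0]$ of the coarse projection $P_X$ from \eqref{equ-def-PX}, with its own rescaling by $(\alpha_\TT,k_\TT)$. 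The key geometric input I would verify is that, under the covariant representation inducing $\Phi_{C(X),F,\Ga}$, the Rips projection $p_{\Ga,s}$ pulled back along $\Upsilon_{F,X}^\Ga$ is carried to exactly the coarse projection $P_X$ — both are built from the same partition-of-unity functions $\lambda_\si^{1/2}$, so this is a direct comparison of formulas rather than a deep argument. Concretely I expect $\Phi_{C(X),F,\Ga,*}^{\eps,r}$ applied to the quantitative class of $\Upsilon_{F,X}^{\Ga}(p_{\Ga,s})$ to equal the quantitative class $[P_X,0]$.

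Third, I would combine this with Lemma \ref{lem-comp-tau-J}, which gives precisely the controlled-morphism identity
$$\Phi_{A,F,\Ga,*}\circ \JR(\I(x))=\TT_{F,\Ga}(x)\circ \Phi_{C(X),F,\Ga,*}$$
(after substituting $B=A$ and using that $\I=\Ind_F^\Ga$). Chasing the generator through, the left-hand route yields $\Phi_{A,F,\Ga,*}^{\eps,r}\big(\JR(\I(x))([\Upsilon_{F,X}^{\Ga}(p_{\Ga,s})])\big)$, and applying Lemma \ref{lem-comp-tau-J} rewrites this as $\TT_{F,\Ga}(x)\big(\Phi_{C(X),F,\Ga,*}([\Upsilon_{F,X}^{\Ga}(p_{\Ga,s})])\big)$, which by the projection comparison of the previous step is $\TT_{F,\Ga}(x)([P_X,0])$ — exactly the down-then-right route. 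One must check that the functoriality identity $\tau_{F,\Ga}(\Upsilon_{F,X}^{\Ga,*}(\cdot))$ versus $f_{F,\Ga}^*$ matches the $f^*$-naturality in item (iii) of Theorem \ref{thm-tensor-F}, so that pulling back the coefficient class along $\Upsilon_{F,X}^\Ga$ on one side corresponds to precomposing with the induced filtered homomorphism on the other.

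The main obstacle will be bookkeeping the two distinct control pairs: $\mu_{\Ga,\I(A),*}^{s,\eps,r}$ internally rescales $\eps\mapsto \eps/\alpha_\JJ$ and $r\mapsto r/k_{\JJ,\eps/\alpha_\JJ}$ before applying $\JR$, whereas $\nu_{F,\Ga,A,*}^{\eps,r,s}$ rescales by $(\alpha_\TT,k_\TT)$ before applying $\TT_{F,\Ga}$. For the square to commute on the nose (not merely up to a control pair), I would need $r_{s,\eps}$ as defined for $\nu$ to be compatible with $r_{d,\eps}=k_{\JJ,\eps/\alpha_\JJ}d$ for $\mu$, and I would need the construction of $\TT_{F,\Ga}$ to use the \emph{same} controlled transform as $\JR$ under $\Phi$. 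This is exactly what Lemma \ref{lem-comp-tau-J} guarantees — it is stated with matching control pairs because $\TT_{F,\Ga}$ is defined via $\JR\circ\I$ transported by $\Phi$ — so the rescalings on both sides are forced to agree, and the commutativity holds verbatim rather than up to $\aeq$. Verifying that the indices line up precisely is the one genuinely delicate point; the rest is formula-matching and an appeal to the naturality already recorded in Theorems \ref{thm-kas} and \ref{thm-tensor-F}.
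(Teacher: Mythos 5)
Your proposal is correct and follows essentially the same route as the paper's proof: reduce to a generator $x\in KK_*^F(C(X),A)$, use the pullback naturality of $\JR$ (Theorem \ref{thm-kas}) to move $\Upsilon^{\Ga,*}_{F,X}$ onto the projection class, apply Lemma \ref{lem-comp-tau-J} to intertwine $\JR(\I(x))$ with $\TT_{F,\Ga}(x)$ via $\Phi$, and conclude from the identity $\Phi_{C(X),F,\Ga}\circ\Upsilon^\Ga_{F,X,\Ga}(p_{\Ga,s})=P_X$. The paper likewise handles your control-pair concern simply by taking $(\alpha_\JJ,k_\JJ)=(\alpha_\TT,k_\TT)$, which is legitimate for exactly the reason you give.
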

\begin{proof}Let us set $(\alpha,k)=(\alpha_\JJ,k_\JJ)=(\alpha_\T,k_\T)$.
Let $X$ be a $F$-invariant compact subset of $P_s(\Ga)$ and let $x$ be an element of $KK_*^F(C(X),A)$.
The definition of the quantitative assembly maps was recalled in section \ref{subsection-quantitative-assembly-map}.   We have set $$p_{\Ga,s}:\Ga\to C_0( P_s(\Ga));\, \ga\mapsto \lambda_e^{1/2}\lambda_\ga^{1/2}.$$ 
 Then    $z_{\Ga,s}=[p_{\Ga,s},0]_{\frac{\eps}{\alpha},\frac{r}{k_{{\eps}/{\alpha}}}}$ defines an element in 
$K_0^{\frac{\eps}{\alpha},\frac{r}{k_{{\eps}/{\alpha}}}}(C_0( P_s(\Ga))\rtimes\Ga)$.  Moreover, we have the equalities
\begin{eqnarray}
\nonumber   \Phi_{A,F,\Ga,*}^{\eps,r}\circ \mu_{\Ga,\I(A),*}^{s,\eps,r}\circ\Ind_F^{\Ga}(P_s(\Ga))_*(x)&=&
\Phi_{A,F,\Ga,*}^{\eps,r}\circ\big(J_\Gamma^{red,\frac{\eps}{\alpha},\frac{r}{k_{{\eps}/{\alpha}}}}(\Upsilon^{\Ga,*}_{F,X}\I(x))\big)(z_{\Ga,s})\\
\label{equ-comp-coarse1}&=&
\Phi_{A,F,\Ga,*}^{\eps,r}\circ\big(J_\Gamma^{red,\frac{\eps}{\alpha},\frac{r}{k_{{\eps}/{\alpha}}}}(\I(x))\big)\circ
\Upsilon^{\Ga,\frac{\eps}{\alpha},\frac{r}{k_{{\eps}/{\alpha}}}}_{F,X,\Ga,*}(z_{\Ga,s})\\
\label{equ-comp-coarse2}&=&\TT_{F,\Ga}^{\frac{\eps}{\alpha},\frac{r}{k_{{\eps}/{\alpha}}}}(x)
\circ \Phi_{C(X),F,\Ga,*}^{\frac{\eps}{\alpha},\frac{r}{k_{{\eps}/{\alpha}}}}\circ\Upsilon^{\Ga,\frac{\eps}{\alpha},\frac{r}{k_{{\eps}/{\alpha}}}}_{F,X,\Ga,*}(z_{\Ga,s})
\end{eqnarray}
where
\begin{itemize}
\item $\Upsilon^{\Ga}_{F,X,\Ga}:C_0(P_s)\rt\Ga\to I_F^\Ga(C(X))\rt\Ga$ is the morphism induced by $\Upsilon^{\Ga}_{F,X}$;
 \item equation (\ref{equ-comp-coarse1}) is a consequence of naturality of $\JJ_{\Ga}^{red}$ (see section \ref{sec-survey});
\item equation (\ref{equ-comp-coarse2})  is a consequence of  lemma \ref{lem-comp-tau-J}.
\end{itemize}
Since $$\Phi_{C(X),F,\Ga,*}^{\frac{\eps}{\alpha},\frac{r}{k_{{\eps}/{\alpha}}}}\circ\Upsilon^{\Ga,\frac{\eps}{\alpha},\frac{r}{k_{{\eps}/{\alpha}}}}_{F,X,\Ga,*}(z_{\Ga,s})=
[\Phi_{C(X),F,\Ga}\circ\Upsilon^\Ga_{F,X,\Ga}(p_{\Ga,s}),0]_{\frac{\eps}{\alpha},\frac{r}{k_{{\eps}/{\alpha}}}},$$
the proposition is then a consequence of the  equality
$$\Phi_{C(X),F,\Ga}\circ \Upsilon^\Ga_{F,X,\Ga}(p_{\Ga,s})=P_X.$$
\end{proof}

\subsection{A geometric  assembly map}\label{subsection-another-assembly-map}
%%%%%%%%%%%%%%%%%%%%%%%%%%%%%%%%

In order to generalize proposition \ref{prop-Q-BC} to the setting of  proper discrete metric spaces equipped with an isometric action of a finite 
group
$F$, we need
\begin{itemize}
 \item an analogue in this setting of the algebra $\ell^\infty(\N,\K(\H)\ts \Ind_F^\Ga(A))\rtr\Ga$ for 
an action on a  $C^*$-algebra $A$ of a finite subgroup $F$ of a finitely generated  $\Ga$;
\item an assembly map that computes its $K$-theory.
\end{itemize}
For a family  $\A=(A_i)_{\in\in\N}$  of $F$-$C^*$-algebras, let us define $\A_{F,\Si,r}=\prod_{i\in\N}  A_{i,F,\Si,r}$ and let
$\A_{F,\Si}$ be the closure of $\cup_{r>0}\A_{F,\Si,r}$ in $\prod_{i\in\N} A_{i,F,\Si}$. Then $\A_{F,\Si}$ is obviously a filtered $C^*$-algebra. 
We set for the trivial group 
$\A_{\{e\},\Si}=\A_{\Si}$ and thus,  if $\Si$ is acted upon by a finite group $F$ by isometries, $F$ acts on 
$\A_{\Si}$ and preserves the filtration. Clearly, $\A_{F,\Si}$ is the $F$-fixed points algebra of $\A_{\Si}$.
If  $\A=(A_i)_{\in\in\N}$  is a family of $F$-$C^*$-algebras, we set
$\A^\infty=(\K(\H)\ts A_i)_{\in\in\N}$, where $\K(\H)$ is equipped with the trivial action of $F$. We can then define  $\A^\infty_{F,\Si}$
from  $\A^\infty$ as above.
For a 
$F$-$C^*$-algebra $A$, we set $A^\N=(A_i)_{\in\in\N}$ for the constant family of $F$-$C^*$-algebras $A=A_i$ for all integer $i$ 
and define from this
$A^\N_{F,\Si}$ and $A^{\N,\infty}_{F,\Si}$ as above.
For any family  $\A=(A_i)_{\in\in\N}$  of $F$-$C^*$-algebras, let   us consider the following controlled morphism
$$\mathcal{G}_{F,\Si,\A,*}=(G^{\eps,r}_{F,\Si,\A})_{0<\eps<1/4,r>0}:\K_*(\A^{\infty}_{F,\Si})\to\prod_{i\in\N}\K_*(A_{i,F,\Si}),$$
where 
\begin{equation*}G^{\eps,r}_{F,\Si,\A,*} :  K^{\eps,r}_*(\A^{\infty}_{F,\Si}){\longrightarrow}\prod_{i\in\N} K^{\eps,r}_*(A_{i,F,\Si})
\end{equation*} is  the map induced on
the $j$ th factor and up to the Morita  equivalence by the restriction to  $\A^{\infty}_{F,\Si}$  of the  evaluation 
$\prod_{i\in\N} \K(\H)\ts A_{i,F,\Si}\to \K(\H)\ts A_{j,F,\Si}$  at $j\in\N$. 
 As a consequence of lemma \ref{lem-prod-filtered}, we have the following.
\begin{lemma}\label{lemma-prod}There exists a control pair $(\alpha,h)$ such that
\begin{itemize}
 \item for any  finite group $F$;
 \item for any proper discrete metric space $\Si$  provided with an action of $F$ by isometries;
\item for any families $\A=(A_i)_{i\in\N}$ of $F$-algebras,
\end{itemize}
then $\mathcal{G}_{F,\Si,\A,*}:\K_*(\A^{\infty}_{F,\Si})\to\prod_{i\in\N}\K_*(A_{i,F,\Si})$
is  a  $(\alpha,h)$-controlled isomorphism.
\end{lemma}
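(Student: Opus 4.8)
The plan is to reduce the statement to Lemma \ref{lem-prod-filtered} by identifying $\A^\infty_{F,\Si}$ with the filtered $C^*$-algebra $\B_c^\infty$ associated, as in the paragraph preceding Lemma \ref{lem-prod-filtered}, to the family $\B=(A_{i,F,\Si})_{i\in\N}$ of \emph{ordinary} filtered $C^*$-algebras. The only point requiring a hands-on verification is that passing to $F$-invariants commutes with the trivial tensor factor $\K(\H)$, for both the algebras and their filtrations; once this is granted, the controlled isomorphism and the uniformity of its control pair are immediate.

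First I would establish the algebraic identification. Since $F$ acts trivially on $\K(\H)$ and diagonally on $A_i\ts\K(\l^2(\Si))$, the action on $\K(\H)\ts A_i\ts\K(\l^2(\Si))$ is of the form $1\ts\alpha$ for the diagonal action $\alpha$ on $M=A_i\ts\K(\l^2(\Si))$. Expanding an element of $\K(\H)\ts M$ in matrix units of $\K(\H)$ with coefficients in $M$, it is fixed by $1\ts\alpha$ exactly when all of its coefficients lie in $M^F$; since $1\ts\alpha$ is isometric, the fixed-point set is closed and therefore equals $\K(\H)\ts M^F$. Thus $(\K(\H)\ts A_i)_{F,\Si}=\K(\H)\ts A_{i,F,\Si}$. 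Running the same computation with $M_r=A_i\ts\K(\l^2(\Si))_r$ in place of $M$ shows $(\K(\H)\ts A_i)_{F,\Si,r}=\K(\H)\ts A_{i,F,\Si,r}$ for every $r>0$, so the filtrations agree as well. Taking products over $i\in\N$ and closing the union over $r$ yields a filtered isomorphism $\A^\infty_{F,\Si}\cong\B_c^\infty$.

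Next I would observe that, under this identification, the controlled morphism $\mathcal{G}_{F,\Si,\A,*}$ coincides with the morphism $\F_{\B,*}$ produced by Lemma \ref{lem-prod-filtered} for the family $\B$. Indeed both are defined, on the $j$th factor, as the restriction of the evaluation $\prod_{i\in\N}\K(\H)\ts A_{i,F,\Si}\to\K(\H)\ts A_{j,F,\Si}$ followed by the Morita equivalence $\MM_{A_{j,F,\Si}}^{\eps,r}$ of Proposition \ref{prop-morita}, so they are literally the same family of maps after the previous step.

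Finally, Lemma \ref{lem-prod-filtered} applied to $\B$ gives that $\F_{\B,*}$ is a $(\alpha,h)$-controlled isomorphism for a control pair $(\alpha,h)$ that is independent of the family to which the lemma is applied. Since $\B=(A_{i,F,\Si})_{i\in\N}$ is the only place where $F$, $\Si$ and $\A$ enter, and the control pair does not see $\B$, the same $(\alpha,h)$ works uniformly in $F$, $\Si$ and $\A$, which is exactly the uniform statement claimed. I expect the only delicate bookkeeping to be the filtration half of the first step: one must keep track of the fact that the filtration on $A_{F,\Si}$ is induced from $A\ts\K(\l^2(\Si))_r$ and that the extra, propagation-insensitive factor $\K(\H)$ leaves it undisturbed. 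No new analytic ingredient beyond Lemma \ref{lem-prod-filtered} is needed.
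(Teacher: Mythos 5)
Your proposal is correct and is essentially the paper's own argument: the paper states this lemma as a direct consequence of Lemma \ref{lem-prod-filtered}, implicitly using the identification $(\K(\H)\ts A_i)_{F,\Si}\cong\K(\H)\ts A_{i,F,\Si}$ (with filtrations) that you verify, so that $\A^\infty_{F,\Si}$ becomes the algebra $\B_c^\infty$ attached to the family $\B=(A_{i,F,\Si})_{i\in\N}$ and $\mathcal{G}_{F,\Si,\A,*}$ becomes $\F_{\B,*}$. Your filling-in of the fixed-point/filtration bookkeeping (which is most cleanly done via the conditional expectation $\tfrac{1}{|F|}\sum_{g\in F}(1\ts\alpha_g)$) and the observation that the control pair of Lemma \ref{lem-prod-filtered} is independent of the input family give exactly the uniformity claimed.
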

For any families of $F$-$C^*$-algebras $\A=(A_i)_{i\in\N}$ and $\B=(B_i)_{i\in\N}$ of $F$-$C^*$-algebras and any
family  $f=(f_i:A_i\to B_i)_{i\in\N}$  of 
$F$-equivariant homomorphisms, let us set $$f_{\Si,F}=\prod_{i\in\N}f_{i,\Si,F}: \A_{F,\Si}\longrightarrow \B_{F,\Si}$$
and $$f^\infty_{\Si,F}=\prod_{i\in\N} Id_{\K(\H)}\ts f_{i,\Si,F}: \A^\infty_{F,\Si}\longrightarrow \B^\infty_{F,\Si}.$$
Then together with   theorem \ref{thm-tensor-F}, lemma  \ref{lemma-prod} yields to
\begin{corollary}\label{cor-tensor-infty}
There exists a control pair $(\alpha,h)$ such that 
\begin{itemize}
 \item 
for any  proper discrete metric space $\Si$ equipped with a free  action  of  a finite group $F$ by isometries;
 \item for any families of $F$-$C^*$-algebras $\A=(A_i)_{i\in\N}$ and $\B=(B_i)_{i\in\N}$;
\item  for any $z=(z_i)_{i\in\N}$ in $\prod_{i\in\N}KK^F_*(A_i,B_i)$, \end{itemize}
there 
exists a $(\alpha,h)$-controlled morphism
 $$\TT^\infty_{F,\Si}(z)=(\tau_{F,\Si}^{\infty,\eps,r}(z))_{0<\eps<\frac{1}{4\alpha},r>0}:\K_*(\A^\infty_{F,\Si})\to 
\K_*(\B^\infty_{F,\Si})$$ that satisfies the following:
\begin{enumerate}
\item  For any  elements  $z=(z_i)_{i\in\N}$ and  $z'=(z'_i)_{i\in\N}$ in $\prod_{i\in\N}KK^F_*(A_i,B_i)$, then 
$$\TT^\infty_{F,\Si}(z+z')=\TT^\infty_{F,\Si}(z)+\TT^\infty_{F,\Si}(z')$$ for $z+z'=(z_i+z'_i)_{i\in\N}$
\item Let $\A'=(A'_i)_{i\in\N}$ be a family of $F$-$C^*$-algebras and  let $f=(f_i:A'_i\to A_i)_{i\in\N}$  be a family  of 
$F$-equivariant homomorphisms of  $C^*$-algebras. Then
$\T^\infty_{F,\Si}(f^*(z))=\T^\infty_{F,\Si}(z)\circ f^\infty_{F,\Si,*}$  for all  
$z=(z_i)_{i\in\N}$ in $\prod_{i\in\N}KK^F_*(A_i,B_i)$, where $f^*(z)=(f_i^ *(z_i))_{i\in\N}$.
\item Let $\B'=(B_i)_{i\in\N}$  be a  family of $F$-$C^*$-algebras  and  
let $g=(g_i:B_i\to B'_i)_{i\in\N}$  be a family of 
$F$-equivariant homomorphism of  $C^*$-algebras. 
Then
$\T^\infty_{F,\Si}(g_*(z))=g^\infty_{F,\Si,*}\circ\T^\infty_{F,\Si}(z)$  for all  
$z=(z_i)_{i\in\N}$ in $\prod_{i\in\N}KK^F_*(A_i,B_i)$, where $g_*(z)=(g_{i,*}(z_i))_{i\in\N}$ 
\item If we set $Id_\A=(Id_{A_i})_{i\in\N}$, then $\TT^\infty_{F,\Si}([Id_\A])\stackrel{(\alpha,k)}{\sim} \Id _{\K_*(\A^\infty_{F,\Si})}$.
%\item For any $C^*$-algebra $D$ and any element $z$ in $KK_*(A_1,A_2)$ , we have  $\TT_{F,\Si}(\tau_D(z))=\TT_{B\ts D}(z)$.
\item For any  family of semi-split extensions of  $F$-$C^*$-algebras $$0\to J_i\to A_i\to A_i/J_i\to 0$$    with corresponding   element   $[\partial_{J_i,A_i}]$  of
$KK_1(A_i/J_i,J_i)$
 that implements the boundary maps,  let us  set $\mathcal{J}=(J_i)_{i\in\N},\,  \mathcal{A}=(A_i)_{i\in\N},\,\mathcal{A}/\mathcal{J}=(A_i/J_i)_{i\in\N}$   and $[\partial_{\mathcal{J},\A}]=([\partial_{J_i,A_i}])_{i\in\N}\in    \prod_{i\in\N}KK_1^\Ga(A_i/J_i,J_i)$. Then we we have
$$\TT_{F,\Si}^\infty([\partial_{\mathcal{J},\A}])=\DD_{\mathcal{J}^\infty_{F,\Si},\A^\infty_{F,\Si}}.$$\end{enumerate}
\end{corollary}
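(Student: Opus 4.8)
The statement is exactly the passage from the single-algebra transformation $\TT_{F,\Si}$ of Theorem \ref{thm-tensor-F} to families, and the plan is to combine that theorem with the controlled isomorphism $\G_{F,\Si,\A,*}$ of Lemma \ref{lemma-prod}. Since the control pair $(\alpha_\TT,k_\TT)$ furnished by Theorem \ref{thm-tensor-F} is universal, hence in particular independent of $i$, the componentwise homomorphisms $\tau_{F,\Si}^{\eps,r}(z_i)$ assemble into a single $(\alpha_\TT,k_\TT)$-controlled morphism
$$\prod_{i\in\N}\TT_{F,\Si}(z_i):\prod_{i\in\N}\K_*(A_{i,F,\Si})\longrightarrow\prod_{i\in\N}\K_*(B_{i,F,\Si})$$
between the product quantitative objects. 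I would then set
$$\TT^\infty_{F,\Si}(z)\defi\G_{F,\Si,\B,*}^{-1}\circ\Big(\prod_{i\in\N}\TT_{F,\Si}(z_i)\Big)\circ\G_{F,\Si,\A,*},$$
where $\G_{F,\Si,\B,*}^{-1}$ is a controlled inverse for the controlled isomorphism $\G_{F,\Si,\B,*}$ of Lemma \ref{lemma-prod}. Being a composite of three controlled morphisms whose control pairs are all universal, $\TT^\infty_{F,\Si}(z)$ is controlled for a control pair obtained from $(\alpha_\TT,k_\TT)$ and those governing $\G$ and its inverse by the $*$-operation, and this pair depends on none of $F$, $\Si$, $\A$, $\B$ or $z$.

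The verification of items (1)--(4) then reduces factorwise to the corresponding items of Theorem \ref{thm-tensor-F}, the single additional ingredient being the naturality of the isomorphisms of Lemma \ref{lemma-prod}. For a family $f=(f_i)_{i\in\N}$ of $F$-equivariant homomorphisms I would first record the naturality identity
$$\Big(\prod_{i\in\N}f_{i,F,\Si,*}\Big)\circ\G_{F,\Si,\A',*}=\G_{F,\Si,\A,*}\circ f^\infty_{F,\Si,*},$$
which is immediate from the definition of $\G$ as evaluation at $j$ followed by the Morita equivalence, together with the compatibility of $f^\infty_{F,\Si}$ with the evaluations $\prod_{i\in\N}\K(\H)\ts A_{i,F,\Si}\to\K(\H)\ts A_{j,F,\Si}$. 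Granting this, item (1) follows from the additivity in Theorem \ref{thm-tensor-F}(2) on each factor; items (2) and (3) follow from Theorem \ref{thm-tensor-F}(3),(4) on each factor together with the naturality identity above applied on the source and on the target; and item (4) follows since Theorem \ref{thm-tensor-F}(5) gives $\prod_{i\in\N}\TT_{F,\Si}([Id_{A_i}])\aeq\Id$ on the product, whence $\G_{F,\Si,\A,*}^{-1}\circ\Id\circ\G_{F,\Si,\A,*}\aeq\Id_{\K_*(\A^\infty_{F,\Si})}$.

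The step I expect to be the main obstacle is item (5), the compatibility with controlled boundary maps, as it is the only one that compares two genuinely distinct extensions rather than transporting a single morphism. With $z_i=[\partial_{J_i,A_i}]$, Theorem \ref{thm-tensor-F}(6) identifies each factor $\TT_{F,\Si}(z_i)$ with $\DD_{J_{i,F,\Si},A_{i,F,\Si}}$, so by construction item (5) amounts to the intertwining identity
$$\Big(\prod_{i\in\N}\DD_{J_{i,F,\Si},A_{i,F,\Si}}\Big)\circ\G_{F,\Si,\A/\J,*}=\G_{F,\Si,\J,*}\circ\DD_{\J^\infty_{F,\Si},\A^\infty_{F,\Si}}.$$
This says that the controlled isomorphisms of Lemma \ref{lemma-prod} carry the controlled boundary morphism of the product extension $0\to\J^\infty_{F,\Si}\to\A^\infty_{F,\Si}\to(\A/\J)^\infty_{F,\Si}\to 0$ to the product of the factorwise controlled boundary morphisms. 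I would derive it from the naturality of the controlled boundary map (Proposition \ref{prop-bound} and the naturality remark following it), applied to the evaluation-at-$j$ morphisms of extensions $\A^\infty_{F,\Si}\to\K(\H)\ts A_{j,F,\Si}$, combined with the $\K(\H)$-Morita invariance of $\DD$. The delicate point is to check that these evaluations do constitute a morphism of semi-split filtered extensions with splittings compatible across $j$, and that the product extension is itself semi-split and filtered; once this is in place, naturality of $\DD$ yields the identity and hence item (5).
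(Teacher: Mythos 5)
Your proposal is correct and takes essentially the same route as the paper: the paper gives no argument beyond asserting that Corollary \ref{cor-tensor-infty} follows from Theorem \ref{thm-tensor-F} together with Lemma \ref{lemma-prod}, and transporting the product $\prod_{i\in\N}\TT_{F,\Si}(z_i)$ through the controlled isomorphisms $\mathcal{G}_{F,\Si,\A,*}$ and $\mathcal{G}_{F,\Si,\B,*}$ is exactly that combination, with the uniformity of $(\alpha_\TT,k_\TT)$ and the naturality of $\mathcal{G}$ (including for the controlled boundary maps of Proposition \ref{prop-bound} in item (5)) supplying the verifications the paper leaves implicit.
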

As a consequence    theorem \ref{thm-product-tensor-F} and of  lemma \ref{lemma-prod} we get
\begin{proposition}\label{prop-product-tensor-infty} There exists a control pair $(\lambda,h)$ such that the following holds :

\smallskip

 let  $F$ be a  finite group  acting freely  by isometries on a discrete metric space $\Si$  and
  let  $\A=(A_i)_{i\in\N},\,\B=(B_i)_{i\in\N}$ and $\B'=(B'_i)_{i\in\N}$  be families of $F$-$C^*$-algebras. Let us set  
$z\ts_{\B} z'=(z_i\ts_{B_i} z'_i)_{i\in\N}$ for any
$z=(z_i)_{i\in\N}$ in $\prod_{i\in\N}KK^F_*(A_i,B_i)$ and  any  $z'=(z'_i)_{i\in\N}$ in $\prod_{i\in\N}KK^F_*(B_i,B'_i)$. Then we have
$$\TT^\infty_{F,\Si}(z\ts_{\B} z')\aeq  \TT^\infty_{F,\Si}(z') \circ\TT^\infty_{F,\Si}(z).$$
\end{proposition}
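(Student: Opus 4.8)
The strategy is to deduce the statement from its factorwise counterpart, Theorem~\ref{thm-product-tensor-F}, by transporting everything through the controlled isomorphism $\mathcal{G}_{F,\Si,\bullet,*}$ of Lemma~\ref{lemma-prod}. Recall that $\TT^\infty_{F,\Si}(z)$ was produced in Corollary~\ref{cor-tensor-infty} precisely so as to be compatible with $\mathcal{G}_{F,\Si,\bullet,*}$ and the coordinatewise transformations of Theorem~\ref{thm-tensor-F}: writing $z=(z_i)_{i\in\N}$, it satisfies
\[
\mathcal{G}_{F,\Si,\B,*}\circ\TT^\infty_{F,\Si}(z)\aeq\Big(\prod_{i\in\N}\TT_{F,\Si}(z_i)\Big)\circ\mathcal{G}_{F,\Si,\A,*},
\]
where $\prod_{i\in\N}\TT_{F,\Si}(z_i)$ is the coordinatewise controlled morphism $\prod_{i\in\N}\K_*(A_{i,F,\Si})\to\prod_{i\in\N}\K_*(B_{i,F,\Si})$. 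Since the control pair $(\alpha_\TT,k_\TT)$ of Theorem~\ref{thm-tensor-F} is independent of the data, this product is $(\alpha_\TT,k_\TT)$-controlled on the product quantitative objects, uniformly in the families $\A$ and $\B$. I would first record the elementary product principle that, because products of quantitative objects, composition and the relation $\aeq$ are all defined coordinatewise in $(\eps,r)$, any family of factorwise equivalences $\F_i\aeq\G_i$ sharing one control pair assembles to a single equivalence $\prod_i\F_i\aeq\prod_i\G_i$ with the same control pair, and composition of coordinatewise morphisms is computed coordinatewise.

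Next I would compose the claimed identity on the left with $\mathcal{G}_{F,\Si,\B',*}$. As this morphism is $(\alpha,h)$-invertible by Lemma~\ref{lemma-prod}, it is enough to establish the identity after this composition, at the cost of enlarging the control pair by an amount depending only on $(\alpha,h)$. Applying the characterization above to $z\ts_\B z'$ gives
\[
\mathcal{G}_{F,\Si,\B',*}\circ\TT^\infty_{F,\Si}(z\ts_\B z')\aeq\Big(\prod_{i\in\N}\TT_{F,\Si}(z_i\ts_{B_i}z'_i)\Big)\circ\mathcal{G}_{F,\Si,\A,*},
\]
whereas two successive applications of the same characterization, together with the standard fact that pre- and post-composing $\aeq$-equivalent controlled morphisms by a fixed controlled morphism preserves $\aeq$ (up to enlarging the control pair), yield
\[
\mathcal{G}_{F,\Si,\B',*}\circ\TT^\infty_{F,\Si}(z')\circ\TT^\infty_{F,\Si}(z)\aeq\Big(\prod_{i\in\N}\TT_{F,\Si}(z'_i)\Big)\circ\Big(\prod_{i\in\N}\TT_{F,\Si}(z_i)\Big)\circ\mathcal{G}_{F,\Si,\A,*}.
\]

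It then remains to compare the two right-hand sides, which I would do factorwise: Theorem~\ref{thm-product-tensor-F}, applied to each triple $(A_i,B_i,B'_i)$ with its control pair independent of $i$ and of the data, gives $\TT_{F,\Si}(z_i\ts_{B_i}z'_i)\aeq\TT_{F,\Si}(z'_i)\circ\TT_{F,\Si}(z_i)$ for every $i$; by the product principle these assemble into an equivalence between $\prod_{i\in\N}\TT_{F,\Si}(z_i\ts_{B_i}z'_i)$ and $\big(\prod_{i\in\N}\TT_{F,\Si}(z'_i)\big)\circ\big(\prod_{i\in\N}\TT_{F,\Si}(z_i)\big)$, which persists after composing on the right with $\mathcal{G}_{F,\Si,\A,*}$. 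Applying the controlled inverse of $\mathcal{G}_{F,\Si,\B',*}$ on the left then removes it and produces the desired $\TT^\infty_{F,\Si}(z\ts_\B z')\aeq\TT^\infty_{F,\Si}(z')\circ\TT^\infty_{F,\Si}(z)$, for a control pair $(\lambda,h)$ obtained by combining $(\alpha,h)$ of Lemma~\ref{lemma-prod} with the uniform pairs of Theorems~\ref{thm-tensor-F} and~\ref{thm-product-tensor-F}.

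The main obstacle is bookkeeping rather than conceptual: one must verify that the control pair produced at the end is genuinely uniform, that is, independent of $F$, $\Si$ and the three families $\A,\B,\B'$, and in particular independent of the index $i$. This rests entirely on the uniformity of the control pairs in Lemma~\ref{lemma-prod} and in Theorems~\ref{thm-tensor-F} and~\ref{thm-product-tensor-F}, together with the observation that the coordinatewise definition of products of quantitative objects lets factorwise control data pass to the product without degradation. No analytic input beyond these three results is needed.
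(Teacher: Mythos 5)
Your proposal is correct and follows essentially the same route as the paper, whose entire proof consists of the remark that the proposition is ``a consequence of theorem \ref{thm-product-tensor-F} and of lemma \ref{lemma-prod}''; you have simply made explicit the transport through the controlled isomorphism $\mathcal{G}_{F,\Si,\bullet,*}$, the coordinatewise application of Theorem \ref{thm-product-tensor-F}, and the uniformity of the control pairs that this derivation requires.
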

 If $F$ is a finite group   and if  $\A=(A_i)_{i\in\N}$ 
is   a family of of $F$-$C^*$-algebras, let us consider the family $\A\ts \K(\l^2(F))=(A_i\ts\K(\l^2(F)))_{i\in\N}$, 
provided by the diagonal action of
$F$ where the action on  $\K(\l^2(F))$ is induced with the right  regular representation. If moreover $F$ acts on $\Si$ by 
isometries, $\A_\Si^\infty$ is indeed a $F$-$C^*$-algebra and we have a natural identification of filtered $C^*$-algebras
\begin{equation}\label{equ-id-crossprod}\A_\Si^\infty\rtimes F\cong (\A\ts \K(\l^2(F)))_{F,\Si}^\infty,\end{equation}
where  $\A_\Si^\infty\rtimes F$ is filtrered by 
$(C(F,\A^\infty_{\Si,r}))_{r>0}$ .
Applying corollary \ref{prop-product-tensor-infty} to the family
$M_{\A,F}=(M_{A_i,F})_{i\in\N}\in\prod_{i\in\N}KK^F_*(A_i,A_i\ts \K(\l^2(F)))$ of $F$-equivariant Morita equivalences, we get

\begin{lemma}\label{lem-morita-inf}
 There exists a control pair $(\alpha,h)$ such that for any finite group $F$, any  family  $\A=(A_i)_{i\in\N}$ of $F$-$C^*$-algebras,
and any discrete metric space $\Si$ equipped with a free action of $F$ by isometries, then, under the 
identification of equation (\ref{equ-id-crossprod}),
$$\MM^\infty_{\A,F}\defi\T^\infty_{F,\Si}(M_{\A}):\K_*(\A^\infty_{F,\Si})\longrightarrow \K_*(\A^\infty_\Si\rtimes F)$$ is a $(\alpha,h)$-controlled isomorphism.
\end{lemma}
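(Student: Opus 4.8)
The plan is to produce a controlled two-sided inverse for $\MM^\infty_{\A,F}=\T^\infty_{F,\Si}(M_{\A,F})$ by inverting the Morita classes at the level of $KK^F$-theory, and then to read off invertibility purely formally from the multiplicativity and normalisation of the controlled transformation $\T^\infty_{F,\Si}$. The point is that no new analysis is needed beyond Proposition \ref{prop-product-tensor-infty} and Corollary \ref{cor-tensor-infty}; the argument is the controlled analogue of ``invertible in $KK$ implies invertible on $K$-theory''.

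First I would record the inverse Morita classes. For each $i$ the element $M_{A_i,F}\in KK^F_*(A_i,A_i\ts\K(\l^2(F)))$ is invertible, since stabilisation by $\K(\l^2(F))$ equipped with the right regular representation is an $F$-equivariant Morita equivalence. Let $M'_{A_i,F}\in KK^F_*(A_i\ts\K(\l^2(F)),A_i)$ be its inverse and set $M'_{\A,F}=(M'_{A_i,F})_{i\in\N}\in\prod_{i\in\N}KK^F_*(A_i\ts\K(\l^2(F)),A_i)$. Factorwise invertibility then reads
$$M_{\A,F}\ts_{\A\ts\K(\l^2(F))}M'_{\A,F}=[Id_\A]\qquad\text{and}\qquad M'_{\A,F}\ts_\A M_{\A,F}=[Id_{\A\ts\K(\l^2(F))}],$$
where the right-hand sides are the identity families in the relevant products of $KK^F$-groups.

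Next I would apply the compatibility of $\T^\infty_{F,\Si}$ with Kasparov products from Proposition \ref{prop-product-tensor-infty} to these two products, obtaining
$$\T^\infty_{F,\Si}(M'_{\A,F})\circ\T^\infty_{F,\Si}(M_{\A,F})\aeq\T^\infty_{F,\Si}([Id_\A])$$
and
$$\T^\infty_{F,\Si}(M_{\A,F})\circ\T^\infty_{F,\Si}(M'_{\A,F})\aeq\T^\infty_{F,\Si}([Id_{\A\ts\K(\l^2(F))}]).$$
By item (4) of Corollary \ref{cor-tensor-infty} the right-hand sides are controlled-equivalent to $\Id_{\K_*(\A^\infty_{F,\Si})}$ and to $\Id_{\K_*((\A\ts\K(\l^2(F)))^\infty_{F,\Si})}$ respectively. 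Composing these control equivalences (enlarging the control pair accordingly) shows that $\T^\infty_{F,\Si}(M'_{\A,F})$ is a controlled inverse of $\MM^\infty_{\A,F}$, i.e.\ that $\MM^\infty_{\A,F}$ is an $(\alpha,h)$-controlled isomorphism. Finally I would transport the target along the filtered identification \eqref{equ-id-crossprod} of $\A^\infty_\Si\rtimes F$ with $(\A\ts\K(\l^2(F)))^\infty_{F,\Si}$; being an isomorphism of filtered $C^*$-algebras, it induces a $(1,1)$-controlled isomorphism of quantitative objects and so does not affect the control pair.

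The only delicate point, and the one I expect to be the main obstacle, is the bookkeeping of constants: one must check that the control pair $(\alpha,h)$ emerging from the composition of the equivalences above depends only on the control pairs furnished by Proposition \ref{prop-product-tensor-infty} and Corollary \ref{cor-tensor-infty}, and hence can be chosen uniformly in $F$, $\Si$ and $\A$. This uniformity is guaranteed because both of those statements already assert universality of their control pairs, so the resulting $(\alpha,h)$ may indeed be fixed once and for all, exactly as required.
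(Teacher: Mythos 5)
Your proof is correct and follows essentially the same route as the paper: the paper obtains Lemma \ref{lem-morita-inf} precisely by applying Proposition \ref{prop-product-tensor-infty} (together with item (iv) of Corollary \ref{cor-tensor-infty}) to the family $M_{\A,F}=(M_{A_i,F})_{i\in\N}$ of $F$-equivariant Morita equivalences, so that controlled invertibility of $\T^\infty_{F,\Si}(M_{\A})$ follows formally from multiplicativity and normalisation of the controlled transformation, with the target transported along the filtered identification of equation (\ref{equ-id-crossprod}). The only difference is one of presentation: the paper states this as an immediate consequence, whereas you spell out the bookkeeping (the inverse classes $M'_{\A,F}$, the composition of control equivalences, and the uniformity of the resulting control pair) explicitly.
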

 Recall that to any $F$-invariant compact subset $X$ of $P_s(\Si)$ is associated a projection $P_X$ of $C(X)_{F,\Si}$. Indeed for every $x$ in $X$,
then $P_X(x)$ is the matrix with almost all vanishing entries indexed by $\Si\times\Si$ defined by
$P_X(x)_{\si,\si'}=\lambda_\si(x)^{1/2}\lambda_{\si'}(x)^{1/2}$ (recall that $(\la_\si)_{\si\in\Si}$ is the set of coordinate functions on $P_r(\Si)$).
For any  family 
$\X=(X_i)_{i\in\N}$ of compact $F$-invariant subsets of $P_s(\Si)$, let us set $\mathcal{C}_{\X}=(C(X_i))_{i\in\N}$ and consider
the projection $P^\infty_{\X}=(P_{X_i}\ts e)_{i\in\N}$ of $\mathcal{C}_{\X,F,\Si}^\infty$, where $e$ is a fixed rank one 
projection of $\K(\H)$.
 The  propagation of $P^\infty_{\X}$ is  less than $s$. Hence for the control pair $(\alpha,h)$ of corollary \ref{cor-tensor-infty}, 
any  family $\A=(A_i)_{i\in\N}$ of $F$-$C^*$-algebras, any $\eps\in(0,1/4)$, any $s>$ and any $r\gq r_{s,\eps}$, then 
% $\left(KK^F_*(C(X^s_i),A)\to K_*^{\eps,r}(A_{F,\Si})\right)_{i\in\N}$ defined by equation (\ref{eq-quant-ind}) gives rise
% together with the controlled isomorphism of equation (\ref{equ-cont-iso-prod}) to a map
 the map  
$$\prod_{i\in\N}KK^F_*(C(X_i),A_i)\to K^{\eps,r}_*(\A^{\infty}_{F,\Si}); z\mapsto 
\tau_{F,\Si}^{\infty,\eps/\alpha,r/h_{\eps/\alpha}}(z)(P^\infty_{\X})$$ is 
compatible with  inductive limit of 
 families
$\X=(X_i)_{i\in\N}$ of compact $F$-invariant subset of $P_s(\Si)$. By composition with the controlled isomorphism
$$\T^\infty_{F,\Si}(M_{\A}):\K_*(\A^\infty_{F,\Si})\longrightarrow \K_*(\A^\infty_\Si\rtimes F),$$ we get for  a function $(0,1/4)\times (0,\infty)\to (0,\infty);\,(\eps,s)\mapsto r_{s,\eps}$  non-decreasing in $s$,
non increasing in $\eps$  and  independant on $F,\,\Si$ and $\A$ and 
 for any  
$\eps$ in  $(0,1/4)$, any positive numbers $s$ and $r$  such that  $r\gq  r_{s,\eps}$
a    quantitative geometric assembly map
$$\nu_{F,\Si,\A,*}^{\infty,\eps,r,s}: \prod_{i\in\N}K_*^F(P_s(\Si),A_i){\longrightarrow}
K_*^{\eps,r}(\A^{\infty}_{\Si}\rtimes F).$$
% Moreover, we obviously have
% $$\mu_{F,\Si,A,*}^{\infty,\eps,r,s'}\circ q_{s,s',*}=\mu_{F,\Si,A,*}^{\infty,\eps,r,s}$$ for any
% positive numbers $\eps,\,r,\,,s$ and $s'$ such that $\eps<1/4,\,s\lq s'$ and $r_{\Si,\eps,s'}\lq r$. 
Therefore, for $s$ a fixed positive number, 
the bunch of
maps $(\nu_{F,\Si,\A,*}^{\infty,\eps,r,s})_{\eps>0,r\gq r_{s,\eps}}$ gives rise to a geometric  assembly map
$$\nu_{F,\Si,\A,*}^{\infty,s}: \prod_{i\in\N}K_*^F(P_s(\Si),A_i){\longrightarrow}
K_*(\A^{\infty}_{\Si}\rtimes F)$$   unically defined by
 $\nu_{F,\Si,\A,*}^{\infty,s}=\iota_*^{\eps,r}\circ \nu_{F,\Si,\A,*}^{\infty,\eps,r,s}$  for any
positive numbers $\eps,\,r$ and $s$ such that $\eps<1/4$ and $r\gq r_{s,\eps}$.

The quantitative  assembly maps $\nu_{F,\Si,\A,*}^{\infty,\eps,r,s}$ are compatible
with inclusions of Rips complexes: let
 \begin{equation}\label{equ-syst-induc}
  q_{s,s',*}^\infty: \prod_{i\in\N}K_*^F(P_s(\Si),A_i){\longrightarrow} \prod_{i\in\N}K_*^F(P_s(\Si),A_i)
 \end{equation}
be the map induced by the inclusion
 $P_s(\Si)\hookrightarrow  P_{s'}(\Si)$, then we have
 $$\nu_{F,\Si,\A,*}^{\infty,\eps,r,s'}\circ q_{s,s',*}^\infty=\nu_{F,\Si,\A,*}^{\infty,\eps,r,s}$$ for any
 positive numbers $\eps,\,s,\,s,'$ and $r$  such that 
 $\eps\in(0,1/4),\,s\lq s',\, r\gq r_{s',\eps}$, and thus
 $$\nu_{F,\Si,\A,*}^{\infty,s'}\circ q_{s,s',*}^\infty=\nu_{F,\Si,\A,*}^{\infty,s}$$ for any
 positive numbers $s$ and $s'$  such that 
$s\lq s'$. 

\smallskip

Eventually, we can take the inductive  limit over the degree of the Rips complex and  set
$$K_*^{{top},\infty}(F,\Si,\A)=\lim_{s>0,} \prod_{i\in\N}K_*^F(P_s(\Si),A_i)=\lim_{ s>0,(X^s_i)_{i\in\N}}\prod_{i\in\N} KK_*^F(C(X^s_i),A_i),$$ 
where in the inductive limit on the right hand   side,  $s$ runs through positive numbers and  $(X^s_i)_{i\in\N}$ runs through 
families of $F$-invariant compact subset of $P_s(\Si)$.
We get then  an assembly map
\begin{equation}\label{equ-Ainf}
 \nu_{F,\Si,\A,*}^{\infty}:K_*^{{top},\infty}(F,\Si,\A)\longrightarrow K_*(\A^{\infty}_{\Si}\rtimes F).
\end{equation}

\subsection{The groupoid approach}
In order to generalize the proof of   proposition \ref{prop-Q-BC} in the setting of dicrete metric space, our purpose in this
section is to follow  the route of \cite{sty} and to show that if  $\A=(A_i)_{i\in\N}$ is a family of $C^*$-algebras, then 
 $\A^{\infty}_{\Si}$ is the reduced crossed product of the algebra
 $\prod_{i\in\N}C_0(\Si,A_i\ts\K(H))$ by the diagonal action of the groupoid attached to the coarse structure 
of the discrete metric space $\Si$.

 In \cite{sty} was associated  to a discrete metric space $\Si$  with bounded geometry a groupoid
$G(\Si)$ with unit space the Stone-C\v ech compactification $\beta_\Si$ of $\Si$ and such that the Roe algebra of
$\Si$ is the reduced crossed product of $\ell^\infty(\Si,\K(H))$ by an action of $G(\Si)$. Let us describe the construction of 
this groupoid. If $(\Si,d)$ is a discrete metric space with bounded geometry. Then a subset $E$ of $\Si\times\Si$ 
is called an  entourage for $\Si$ if there exists $r>0$ such that $$E\subset\{(x,y)\in \Si\times\Si\text{ such that } d(x,y)<r\}.$$If 
$E$ is  an entourage for $\Si$, set $\bar{E}$ for its closure in  the Stone-C\v ech compactification $\beta_{\Si\times\Si}$ of 
$\Si\times\Si$.  Then there is a unique structure of groupoid on 
$G_\Si=\cup_{E\text{ entourage}}\bar{E}\subset \beta_{\Si\times\Si}$ with unit space the Stone-C\v ech compactification
$\beta_\Si$ of $\Si$ which extends the 
groupoid of pairs $\Si\times\Si$. 

 For 
a family $\A=(A_i)_{i\in\N}$ of $C^*$-algebras, let us set $\A_{C_0(\Si)}=\prod_{i\in\N} C_0(\Si,A_i)$. 
Then the diagonal action of  $\ell^\infty(\Si)$ by multiplication clearly provides $\ac$ with a   structure of $C(\beta_\Si)$-algebra.
Our aim is to show that $G_\Si$ acts diagonally on $\ac$ and that $\A_{C_0(\Si)}\rtr G_\Si$ is canonically isomorphic to $\A_{\Si}$.

\medskip
Let $C_0(G_\Si,\A)$ be the closure in $\prod_{i\in\N} C_0(\Si\times\Si,A_i)$ of $$\{(f_i)_{i\in \N};\,
\exists r>0;\, \forall i\in\N,\, \forall(\si,\si')\in\Si^2, d(\si,\si')>r\Rightarrow f_i(\si,\si')=0\}.$$
For an entourage $E$ and an element $f=(f_i)_{i\in \N}$ of $\ac$, let us define
$f_r^E=(f_{r,i}^E)_{i\in\N}$ and $f_s^E=(f_{s,i}^E)_{i\in\N}$ by 
$f_{r,i}^E(\si,\si')=\chi_E(\si,\si')f_i(\si)$ and $f_{s,i}^E(\si,\si')=\chi_E(\si,\si')f_i(\si')$
 for any integer $i$ and any $\si$ and $\si'$ in $\Si$.

\begin{lemma}
Let  $\A=(A_i)_{i\in\N}$ be a family of $C^*$-algebras. Then we have isomorphisms of $C(G_\Si)$-algebras
 $$\Psi_r:r^*\ac\to C_0(G_\Si,\A)$$ and  $$\Psi_s:s^*\ac\to C_0(G_\Si,\A)$$ 
only defined by $\Psi_r( \chi_E\ts_r f)= f_r^E$ and  $\Psi_s( \chi_E\ts_s f)= f_s^E$ for any $f$ in $\ac$ and any entourage $E$ for $\Si$.
\end{lemma}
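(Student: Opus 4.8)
\textit{Proof proposal.}
The plan is to define $\Psi_r$ on elementary tensors by the closed formula
$\Psi_r(\phi\ts_r f)=\big((\si,\si')\mapsto\phi(\si,\si')f_i(\si)\big)_{i\in\N}$
for $\phi\in C_0(G_\Si)$ and $f=(f_i)_{i\in\N}\in\ac$, and to check that on the generators this agrees with $\Psi_r(\chi_E\ts_r f)=f_r^E$ (take $\phi=\chi_E$). Since each $\bar E$ is clopen in $\beta_{\Si\times\Si}$, the indicator $\chi_E$ is a genuine continuous function on $G_\Si$, so these generators make sense, and $\Psi_s$ will be defined symmetrically by $\Psi_s(\phi\ts_s f)=\big((\si,\si')\mapsto\phi(\si,\si')f_i(\si')\big)_{i\in\N}$.

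First I would verify that this formula descends to the balanced tensor product $r^*\ac=C_0(G_\Si)\ts_{C(\beta_\Si)}\ac$ and defines a $*$-homomorphism. The only relation to check is the $C(\beta_\Si)=\ell^\infty(\Si)$-balancing: for $g\in\ell^\infty(\Si)$ one has $(g\circ r)(\si,\si')=g(\si)$ on $\Si\times\Si$, so both $\phi\,(g\circ r)\ts_r f$ and $\phi\ts_r(g\cdot f)$ are sent to $(\si,\si')\mapsto\phi(\si,\si')g(\si)f_i(\si)$; multiplicativity and $*$-compatibility are immediate from the pointwise operations, using $\chi_E\chi_{E'}=\chi_{E\cap E'}$. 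A short estimate shows the image lies in $C_0(G_\Si,\A)$: if $\phi$ is supported on an entourage $E$ then $\phi(\si,\si')f_i(\si)$ vanishes for $d(\si,\si')$ large, and along $E$ the condition $(\si,\si')\to\infty$ forces $\si\to\infty$, whence $\|f_i(\si)\|\to0$. Being a $*$-homomorphism, $\Psi_r$ is contractive and extends to all of $r^*\ac$; the computation for $\Psi_s$ is word for word the same, using $(g\circ s)(\si,\si')=g(\si')$.

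The heart of the argument is to show $\Psi_r$ is bijective, and here I would exploit the \'etale structure of $G_\Si$. By bounded geometry every entourage $E$ is a finite disjoint union $E=\bigsqcup_k E_{\theta_k}$ of graphs of partial translations $\theta_k\colon D_k\to\Si$, and the closure $\bar E_{\theta_k}$ is a clopen compact bisection on which the range map restricts to a homeomorphism $r\colon\bar E_{\theta_k}\iso\overline{\theta_k(D_k)}$ (and the source to $s\colon\bar E_{\theta_k}\iso\overline{D_k}$), these being the Stone--\v Cech extensions of the bijections $E_{\theta_k}\cong\theta_k(D_k)$ and $E_{\theta_k}\cong D_k$. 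Over such a bisection the pullback $r^*\ac|_{\bar E_{\theta_k}}$ is, by definition, the transport of $\ac|_{\overline{\theta_k(D_k)}}$ along $r$, and a direct inspection shows that $\Psi_r$ restricted to $\bar E_{\theta_k}$ is exactly this transport isomorphism onto $C_0(G_\Si,\A)|_{\bar E_{\theta_k}}$ (on $E_{\theta_k}$ one simply reads $f_r^E(\theta_k(\si'),\si')=f_i(\theta_k(\si'))$, i.e.\ $f$ carried along $r$). Hence $\Psi_r$ is an isomorphism of $C_0(\bar E_{\theta_k})$-algebras over each piece; the same computation against $s$ handles $\Psi_s$.

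Finally I would glue: since every element of either algebra is approximable by one supported on a single $\bar E$, and each $\bar E$ splits into finitely many clopen bisections $\bar E_{\theta_k}$ on which $\Psi_r$ is already known to be an isomorphism, $C_0(G_\Si)$-linearity of $\Psi_r$ forces it to be an isomorphism globally. I expect the main obstacle to be the control at the boundary $\bar E\setminus E$: it is tempting to argue only on the dense pair groupoid $\Si\times\Si$, where everything is transparent, but injectivity there does not by itself yield injectivity on $G_\Si$. This is precisely why the reduction to clopen bisections---on which $r$ and $s$ are honest homeomorphisms of the whole compact closure---is the essential step, and it is where the bounded geometry hypothesis is genuinely used.
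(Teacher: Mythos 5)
Your proposal is correct and follows essentially the same route as the paper: the key step in both is the decomposition of an entourage into finitely many partial bijections (bounded geometry, i.e. \cite[Lemma 2.7]{sty}), over each of which $\Psi_r$ is the evident transport bijection along $r$ --- the paper's explicit preimage formula $f_i(\si)=h_i(\si,\si')$ for $(\si,\si')\in E$ is precisely the inverse of your transport map on a bisection. The only difference is presentational: the paper asserts well-definedness and the isometry property as clear and proves only surjectivity by that explicit construction, whereas you spell out the balancing over $C(\beta_\Si)$, the isometry on clopen bisections, and the gluing, which in particular makes the injectivity argument explicit.
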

\begin{proof}
 Is is clear that $\Psi_r$ and $\Psi_s$ are well and only defined by the formula above and are isometries.
Let us prove for instance that $\Psi_r$ is an isomorphism. Surjectivity of $\Psi_r$ amounts to prove that for any $(h_i)_{i\in\N}$ in 
$\prod_{i\in\N}C_0(\Si\times\Si,A_i)$ and any entourage $E$ then $h=(\chi_Eh_i)_{i\in\N}$ is in the range of $\Psi_r$.
According to  \cite[Lemma 2.7]{sty}, we can assume that the restrictions $s:E\to\Si$ and 
$r:E\to \Si$ are one-to-one. For any integer $i$, then define $f_i:\Si\to A_i$ by 
\begin{itemize}
 \item $f_i(\si)=h_i (\si,\si')$ if there exists $\si'$ such that $(\si,\si')$ is in $E$;
\item $f_i(\si)=0$ otherwise.
\end{itemize}
 Then $f_i$ is in $C_0(\Si,A_i)$ for every integer $i$ and if we set $f=(f_i)_{i\in\N}$, then $f_r^E=h$ and hence $h$ is in the range of $\Psi_r$.
\end{proof}

Let us define $V_\Si=\Psi_r\circ \Psi_s^{-1}$. Then $V_\Si:s^*\ac\to r^*\ac$ is an isomorphism of $C(G_\Si)$-algebras that can be describe on
elementary tensors as follows.
For an  entourage $E$ such that the restrictions  $s:E\to \Si$ and  $r:E\to \Si$ are one-to-one,
then for every $\si$ in $r(E)$ there exists a unique
$\si'$ in $s(E)$ such that $(\si,\si')$ is in $E$.  For  any $f=(f_i)_{i\in\N}$ in $\ac$, we define
 $E\circ f=(E\circ f_i)_{i\in\N}$ in $\ac$, where  for any integer $i$,
\begin{itemize}
 \item  $E\circ  f_i (\si)= f_i (\si')$ if $\si$ is in $r(E)$ and $(\si,\si')$ is in $E$;
\item  $E\circ  f_i(\si)=0$ otherwise.
\end{itemize}
Then under above assumptions, we have  $V_\Si( \chi_E\ts_r f)= \chi_E\ts_r E\circ f$.

 \begin{lemma} For every  family $\A=(A_i)_{i\in\N}$ of $C^*$-algebras, then 
$$V_\Si:s^*\ac\to r^*\ac$$  is an action of the  groupoid $G_\Si$ on $\ac$.
\end{lemma}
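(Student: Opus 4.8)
The plan is to verify the two defining axioms of a groupoid action on the $C(\beta_\Si)$-algebra $\ac$: the normalization over the unit space $\beta_\Si$, and the cocycle identity over the set $G_\Si^{(2)}$ of composable pairs. That $V_\Si$ is an isomorphism of $C(G_\Si)$-algebras is already supplied by the two preceding lemmas, since $V_\Si=\Psi_r\circ\Psi_s^{-1}$; so only these two relations remain, and both will be read off from the explicit formula $V_\Si(\chi_E\ts_r f)=\chi_E\ts_r E\circ f$.

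For the unit axiom, recall that $\beta_\Si$ sits inside $G_\Si$ as the closure of the diagonal entourage $\Delta=\{(\si,\si);\,\si\in\Si\}$, on which $r$ and $s$ both restrict to the identity. For $\Delta$ the unique $\si'$ with $(\si,\si')\in\Delta$ is $\si'=\si$, whence $\Delta\circ f=f$ for every $f$ in $\ac$, and therefore $V_\Si(\chi_\Delta\ts_r f)=\chi_\Delta\ts_r f$; thus $V_\Si$ restricts to the identity over the unit space. The cocycle axiom is the heart of the matter, and I would first establish it on the dense subgroupoid of pairs $\Si\times\Si\subset G_\Si$. Reducing, as in the proof of the previous lemma via \cite[Lemma 2.7]{sty}, to entourages $E_1,E_2$ on which $r$ and $s$ are one-to-one, their product $E_1\circ E_2=\{(\si,\si'');\,\exists\,\si',\,(\si,\si')\in E_1\text{ and }(\si',\si'')\in E_2\}$ is again such an entourage, and tracing through the definitions yields, for every $f=(f_i)_{i\in\N}$ in $\ac$, the associativity
$$(E_1\circ E_2)\circ f=E_1\circ(E_2\circ f).$$
Indeed, if $\si\mapsto\si'\mapsto\si''$ is the chain determined by the two entourages, both sides take the value $f_i(\si'')$ at $\si$. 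Since the product of pairs $(\si,\si')(\si',\si'')=(\si,\si'')$ matches the composition $E_1\circ E_2$ of entourages, and since the fiberwise composite $(V_\Si)_{g_1}\circ(V_\Si)_{g_2}$ corresponds to $E_1\circ(E_2\circ-)$ while $(V_\Si)_{g_1g_2}$ corresponds to $(E_1\circ E_2)\circ-$, this associativity is exactly the cocycle relation $p_1^*V_\Si\circ p_2^*V_\Si=m^*V_\Si$ restricted to the composable pairs of $\Si\times\Si$.

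It then remains to promote the cocycle identity from the dense pairs groupoid to all of $G_\Si^{(2)}$ by continuity, which is the step I expect to be the main obstacle. Both $p_1^*V_\Si\circ p_2^*V_\Si$ and $m^*V_\Si$ are isomorphisms of $C(G_\Si^{(2)})$-algebras that agree on the dense set of genuinely composable pairs of $\Si\times\Si$; since $G_\Si^{(2)}$ is assembled from closures of entourage pairs inside $\beta_{\Si\times\Si}\times\beta_{\Si\times\Si}$, with the pairs groupoid dense in it, two such isomorphisms determined by continuous fields must coincide. Making this precise requires setting up concrete models for the pullback algebras over $G_\Si^{(2)}$, analogous to the model $C_0(G_\Si,\A)$ used above for $r^*\ac$ and $s^*\ac$, so that both sides of the cocycle relation become manifestly continuous in the groupoid variable; the identity then reduces to its evaluation on the dense pairs groupoid, where it is the transparent associativity computed above.
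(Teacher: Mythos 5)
Your reduction to the pairs groupoid and the associativity $(E\circ E')\circ f=E\circ(E'\circ f)$ are exactly the right computation---it is the same one the paper uses. The gap is in your last step, which is where the actual content of the lemma lies: extending the cocycle identity from composable pairs of $\Si\times\Si$ to all composable elements of $G_\Si$, i.e.\ to elements of the corona $G_\Si\setminus(\Si\times\Si)$. You defer this to a density-plus-continuity principle that you do not establish, and as stated it is not available for free; two unproven assertions are hiding in it. First, the density of the composable pairs of $\Si\times\Si$ in $G_\Si^{(2)}$ itself requires an argument: given composable $\ga\in\bar E$ and $\ga'\in\bar{E'}$ with $s,r$ injective on $E,E'$, one must use the Stone--\v Cech identity $\overline{s(E)}\cap\overline{r(E')}=\overline{s(E)\cap r(E')}$ in $\beta_\Si$ to produce composable pairs converging to $(\ga,\ga')$. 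Second, and more seriously, for $C(X)$-algebras the fibers vary only upper semi-continuously in general, so for two $C(X)$-linear morphisms $\Phi,\Psi$ the function $x\mapsto\|(\Phi(a)-\Psi(a))_x\|$ is merely upper semi-continuous, and its vanishing on a dense set does not force it to vanish everywhere; "two such isomorphisms determined by continuous fields must coincide" is thus precisely the statement that would have to be proved, via concrete models of the pullbacks over $G_\Si^{(2)}$, none of which you construct.

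The paper's proof shows that this machinery is unnecessary, because the explicit formula you already have performs the extension to the corona by itself. Since $\bar E$ is clopen in $G_\Si$ and $V_\Si(\chi_E\ts_r f)=\chi_E\ts_r(E\circ f)$ is an identity of sections, evaluating fibers at an \emph{arbitrary} $\ga\in\bar E$ (corona points included, not just $\ga\in E$) gives $V_{\Si,\ga}(f_{s(\ga)})=(E\circ f)_{r(\ga)}$. Moreover, if $\ga\in\bar E$ and $\ga'\in\bar{E'}$ are composable, then $\ga\cdot\ga'$ lies in $\overline{E\circ E'}$, and $s,r$ remain injective on $E\circ E'$, so the same formula computes $V_{\Si,\ga\cdot\ga'}$ from the entourage $E\circ E'$. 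The cocycle identity $V_{\Si,\ga\cdot\ga'}=V_{\Si,\ga}\circ V_{\Si,\ga'}$ is then an immediate fiberwise consequence of the very associativity $(E\circ E')\circ f=E\circ(E'\circ f)$ that you verified on pairs---no density or continuity argument enters. If you repair your final step along these lines you will have reproduced the paper's proof; as written, the proposal proves multiplicativity only over the pairs groupoid, which is the easy part of the statement.
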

\begin{proof}
For an element $\ga$ in $G_\Si$, let $V_{\Si,\ga}:\ac_{s(\ga)}\to\ac_{r(\ga)}$ be the map induced by $V_\Si$ on the fiber of $\ac$ at 
$s(\ga)$.
Let   $\ga$ and $\ga'$ be elements
in $G_\Si^\infty$ such that $s(\ga)=r(\ga')$. Let $E$ and $E'$ be entourages such that the restrictions
of $s$ and $r$ to $E$ and $E'$ are one-to-one and such that $\ga\in\bar{E}$ and $\ga'\in\bar{E'}$. 
Let us set $$E\circ E'=\{ (\si,\si'')\in\Si\times\Si ;\exists\,\si'\in\Si ;
(\si,\si')\in E\text{ and } (\si',\si'')\in E'\}.$$Then
$\ga\cdot\ga'$ is in $\overline{E\circ E'}$ and the restrictions of $s$ and $r$ to $E\circ E'$ is one-to-one. Moreover,  we clearly have
$(E\circ E')\circ f= E\circ (E'\circ f)$  for  all $f$  in 
$\A_{C_0(\Si)}$. Hence, we get
\begin{eqnarray*}
 V_{\Si,\ga\cdot\ga'}(f_{s(\gamma')})&=&(E\circ E'\circ f)_{r(\gamma)}\\
&=&V_{\Si,\ga}((E'\circ f)_{s(\gamma)})\\
&=&V_{\Si,\ga}((E'\circ f)_{r(\gamma')})\\
&=&V_{\Si,\ga}\circ V_{\Si,\ga'}(f_{s(\gamma)})
\end{eqnarray*}
\end{proof}

\begin{proposition} \label{prop-iso-cross-product}
 Let $\Si$ be a discrete metric space with bounded geometry  and let  $\A=(A_i)_{i\in\N}$ be a family of
  $C^*$-algebras.  Then 
we have a natural isomorphism  
 $$\II_{\Si,\A}:\ac\rtr\gsi\stackrel{\cong}{\longrightarrow}\A_{\Si}.$$\end{proposition}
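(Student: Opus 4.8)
The plan is to produce $\II_{\Si,\A}$ first on a common dense $*$-subalgebra and then to check that it is isometric for the reduced norm. Recall that $\ac\rtr\gsi$ is the completion of the convolution $*$-algebra $C_c(\gsi,r^*\ac)$ of compactly supported continuous sections, where ``compact support'' means support in the closure $\bar{E}$ of some entourage $E$. Using the trivialisation $\Psi_r:r^*\ac\iso C_0(\gsi,\A)$ of the preceding lemma, such a section is the same datum as a finite-propagation kernel, i.e. a family $k=(k_i)_{i\in\N}$ with $k_i:\Si\times\Si\to A_i$ supported in an entourage. On the other hand, the finite-propagation elements of $\A_\Si$ are precisely such families $(k_i)_{i\in\N}$, now regarded as $\Si\times\Si$-indexed matrices with entries $k_i(\si,\si')\in A_i$; they form a dense $*$-subalgebra of $\A_\Si$. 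I would therefore define $\II_{\Si,\A}$ on $C_c(\gsi,r^*\ac)$ by sending a section to the corresponding kernel viewed as an element of $\A_\Si$.

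Next I would verify that this identification is multiplicative and $*$-preserving. Writing the groupoid convolution product through $\Psi_r$ and the action $V_\Si$ (whose effect on an entourage $E$ with $s,r$ injective is $f\mapsto E\circ f$), one unwinds it to the composition of kernels $(k\ast k')_i(\si,\si'')=\sum_{\si'}k_i(\si,\si')k'_i(\si',\si'')$, which is exactly matrix multiplication in $A_i\ts\K(\ell^2(\Si))$; the bounded-geometry hypothesis guarantees that the sum is finite. Likewise the groupoid involution becomes the matrix adjoint $k_i^*(\si,\si')=k_i(\si',\si)^*$. Hence $\II_{\Si,\A}$ is a $*$-algebra homomorphism between the two dense subalgebras, and it is visibly a bijection there.

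The heart of the argument, following \cite{sty}, is to show that $\II_{\Si,\A}$ is isometric, i.e. that the reduced crossed-product norm of a finite-propagation kernel $k$ equals its norm as an operator in $\A_\Si\subset\prod_i A_i\ts\K(\ell^2(\Si))$. The reduced norm is the supremum of $\|\pi_\w(k)\|$ over the regular representations $\pi_\w$ indexed by the units $\w\in\beta_\Si$. Exactly as in \cite{sty}, each source fibre of $\gsi$ over $\w$ is identified, using the pair-groupoid structure over $\Si$ and bounded geometry, with a copy of $\Si$, so $\pi_\w(k)$ is $k$ acting by matrix multiplication on the associated Hilbert module; this gives $\|\pi_\w(k)\|\lq\|k\|_{\A_\Si}$ for every $\w$. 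Conversely, for a point $\si_0$ of $\Si\subset\beta_\Si$ the source fibre is all of $\Si$ and $\pi_{\si_0}$ is precisely the defining representation of $\A_\Si$, whence $\|\pi_{\si_0}(k)\|=\|k\|_{\A_\Si}$. Taking the supremum over $\w$ yields equality of the two norms.

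It then follows that the $*$-isomorphism of dense subalgebras extends to a $C^*$-isomorphism $\II_{\Si,\A}:\ac\rtr\gsi\iso\A_\Si$. Naturality in $\A$ is immediate: for a family $f=(f_i:A_i\to B_i)$ of homomorphisms, both the induced map on $\A_\Si$ and the functoriality of the reduced crossed product act by applying $f_i$ to the entries of the kernels, so the relevant square commutes on the dense subalgebras and hence everywhere. I expect the main obstacle to be the norm identification of the third paragraph: one must control the regular representations of $\gsi$ at the boundary points $\w\in\beta_\Si\setminus\Si$ and check that the coarse-groupoid structure renders each of them a compression of the defining representation of $\A_\Si$, which is exactly where the bounded-geometry hypothesis and the groupoid analysis of \cite{sty} are essential.
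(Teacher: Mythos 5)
Your reduction to dense subalgebras and your analysis at the units lying in $\Si$ are correct: for $\si_0\in\Si$ one has $s^{-1}(\si_0)=\Si\times\{\si_0\}$, the fibre of $\ac$ at $\si_0$ is $\prod_{i\in\N}A_i$, the regular representation there is matrix multiplication, and its norm is $\sup_i\|k_i\|=\|k\|_{\A_\Si}$. The genuine gap is the boundary estimate in your third paragraph, and the justification you offer for it would fail. For $\w\in\beta_\Si\setminus\Si$ the source fibre $s^{-1}(\w)$ is \emph{not} a copy of $\Si$: it is the countable set of germs at $\w$ of partial translations, and the fibre of $\ac$ at $\w$ is not $\prod_i A_i$ but the quotient of $\ac$ by the sections vanishing at $\w$ (an ultraproduct-type algebra). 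Consequently $\pi_\w(k)$ is not ``$k$ acting by matrix multiplication''; it is a matrix over this quotient whose entries are images of $\gsi$-translates of the entries of $k$, and it is not visibly a compression of the defining representation of $\A_\Si$. The inequality $\|\pi_\w(k)\|\lq\|k\|_{\A_\Si}$ is true, but it is essentially the whole content of the proposition: it says that the regular representations over the dense invariant set $\Si$ already compute the reduced norm, and this is \emph{false} for a general ($\gsi$-equivariant, upper semicontinuous) coefficient bundle over $\beta_\Si$ --- e.g.\ for a coefficient algebra concentrated on a boundary orbit, all the $\pi_\si$, $\si\in\Si$, vanish while the reduced norm does not. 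So some specific property of $\ac$ must enter, and ``exactly as in \cite{sty}'' does not supply it in the form you state.

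The property that closes the gap, and the object missing from your proposal, is the $\gsi$-invariant ideal $J_{\Si,A}=\oplus_{i\in\N}C_0(\Si,A_i)$ of $\ac$: it is essential (a section of $\ac$ vanishing at every point of $\Si$ is zero), and \cite[Lemma 4.3]{sty} --- this is what that lemma actually provides, rather than an analysis of individual boundary fibres --- then gives that the representation of $\ac\rtr\gsi$ on $L^2(\gsi,J_{\Si,A})\cong C_0\big(\Si,(\oplus_{i\in\N}A_i)\ts\ell^2(\Si)\big)$ is faithful. This is how the paper argues: that representation is a pointwise action on $(\oplus_{i\in\N}A_i)\ts\ell^2(\Si)$, on which $\A_\Si$ also acts faithfully, so no unit-by-unit estimate is needed; one only checks the two faithful images coincide, sending $f\chi_E\mapsto fT_E$ in one direction and, conversely, cutting a finite-propagation element of $\A_\Si$ into partial translations via \cite[Lemma 2.7]{sty}. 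To repair your argument you should either introduce this ideal and the faithfulness statement, or genuinely prove the boundary estimate (for instance by showing that every finite corner of $\pi_\w(k)$ is a limit, as $\si\to\w$, of quotient images of finite corners of translates of $k$, hence of norm at most $\|k\|_{\A_\Si}$); as written, the key inequality rests on a false identification.
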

\begin{proof}
Following the proof of \cite{sty}, we obviously have that  
$J_{\Si,A}=\oplus_{i\in\N}C_0(\Si,A_i)$ is a $\gsi$-invariant ideal of $\A_{C_0(\Si)}$.
 For any $\si'$ in $\Si$, we have at any element of $\Si$ a canonical  identification of the fibre of $J_{\Si,A}$   with 
$\oplus_{i\in\N}A_i$ and under this identification, the action of $\Si\times\Si\subset \gsi$ on  $J_{\Si,A}$ is trivial.
According to \cite[lemma 4.3]{sty}, the reduced crossed product  $\A_{C_0(\Si)}\rtimes_r\gsi$ is faithfully represented
in the right $J_{\Si,A}$-Hilbert module 
$$L^2(\gsi,J_{\Si,A})\cong L^2(\gsi,\A_{C_0(\Si)})\ts_{\A_{C_0(\Si)}}J_{\Si,A}.$$ But we have
a natural identification of $J_{\Si,A}$-right Hilbert modules
 $$L^2(\gsi,J_{\Si,A})\cong C_0\left(\Si,\left(\oplus_{i\in\N} A_i \right)\otimes\ell^2(\Si)\right).$$
Under this identification, the representation of  $\A_{C_0(\Si)}\rtimes_r\gsi$  indeed arise from 
a pointwise action   on  $\left(\oplus_{i\in\N} A_i \right)\otimes\ell^2(\Si)$. As such, the underlying   representation  of 
$\A_{C_0(\Si)}\rtimes_r\gsi$ on $\left(\oplus_{i\in\N} A_i \right)\otimes\ell^2(\Si)$ is faithfull.
Let us describe this action.
\begin{itemize}
 \item an element $f=(f_i)_{i\in\N}$ in  $\A_{C_0(\Si)}\cong \prod_{i\in\N}A_i\ts C_0(\Si)$ acts on
 $\left(\oplus_{i\in\N} A_i \right)\otimes\ell^2(\Si)$ in the obvious way.
\item If $E$ is an entourage, then the action of $\chi_E$ on  $\left(\oplus_{i\in\N} A_i \right)\otimes\ell^2(\Si)$  is by pointwise multiplication
by $Id_{\oplus_{i\in\N} A_i }\ts T_E$, where   the operator $T_E$ is defined by 
$T_{E,\si,\si'}=\chi_E(\si,\si')$ for any $\si$ and $\si'$ in $\Si$.
\end{itemize}
The algebra $\A_\Si$ acts also faithfully on   $\left(\oplus_{i\in\N} A_i \right)\otimes\ell^2(\Si)$ by pointwise action at each integer $i$ of 
$A_i\ts\K( \ell^2(\Si))$ on  $A_i\ts\ell^2(\Si)$. It is then clear that if $f$ is in $\A_{C_0(\Si)}$ and $E$ is an 
entourage, then $fT_E$ is in $\A_\Si$. Conversely,  let us show any element in $A_\Si$  acts on $\left(\oplus_{i\in\N} A_i \right)\otimes\ell^2(\Si)$
as an element of $\A_{C_0(\Si)}\rtimes_r\gsi$. Let $(T_i)_{i\in\N}$ be an element of $A_{\Si,r}$. We can assume that
for every integer $i$, there exists a finite subset $X_i$ of $\Si$ such that, $T_i=(T_{i,\si,\si'})_{(\si,\si')\in\Si^2}$ lies indeed in
$A_i\ts \K(\ell^2(X_i))$. Applying  \cite[Lemma 2.7]{sty} to the union of the support of the $T_i$ when $i$ runs through integers,
we can actually assume 
without loss of generality that there exists an entourage $E$ such that
\begin{itemize}
 \item the restrictions
of $s$ and $r$ to $E$ are one-to-one;
\item for any integer $i$ and any $\si$ and $\si'$ in $\Si$, then $T_{i,\si,\si'}\neq 0$ implies that $(\si,\si')$ is in $E$.
\end{itemize}
Define then for any integer $i$
\begin{itemize}
 \item $f_i(\si)=T_{i,\si,\si'}$ if there  exists $\si'$ in $X_i$ such that $(\si,\si')$ is in $E\cap (X_i\times X_i)$.
 \item $f_i(\si)=0$ otherwise.
\end{itemize}
Then $f_i$ is in $C_0(\Si,A_i)$ for every integer $i$ and if we set $f=(f_i)_{i\in\N}$, then $fT_E$ acts on  $\left(\oplus_{i\in\N} A_i\right) \otimes\ell^2(\Si)$
as  $(T_i)_{i\in\N}$.
\end{proof}

If $\Si$ is equipped with an action of a finite group $F$ by isometries, then the diagonal action of $F$ on $\Si$ induces an 
action of $F$ on $\gsi$ by automorphisms of groupoids. Moreover, for any family 
$\A=(A_i)_{i\in\N}$ of $F$-$C^*$-algebra, the action of $\gsi$
 on  $\ac=\prod_{i\in\N} C_0(\Si,A_i)$ is covariant with respect the pointwise diagonal action of $F$.
Hence, we end up in this way with an action of $F$ on $\ac\rtimes_r\gsi $ by automorphisms. 
Namely, let us  consider the semi-direct product groupoid $$\gsif=\gsi\rtimes F=\{(\ga,x)\in \gsi \times F\}$$ provided
with the source map $$\gsif \to \beta_\Si;\,(\ga,x)\mapsto s(x^{-1}(\ga))$$ and range map
$$\gsif\to \beta_\Si;\,(\ga,x)\mapsto r(\ga)$$ and composition rule 
$(\ga,x)\cdot (\ga',x')=(\ga\cdot x(\ga'),xx')$ if $s(x^{-1}(\ga))=r(\ga')$. Then $\ac$ is actually a $\gsif$-$C^*$-algebra 
and we have a natural identification 
\begin{equation}\label{equ-identification-crossed-product}(\ac\rtimes_r\gsi)\rtimes F\cong \ac\rtimes_r\gsif.\end{equation}
                                                                                                                       
On the other hand,
$F$ also acts for each integer $i$ on $\K(\l^2(\Si))\ts A_i$ and hence  pointwisely on $\A_\Si$.
 The isomorphism 
of proposition \ref{prop-iso-cross-product}
is   then clearly $F$-equivariant and hence gives rise under then identification of equation
 (\ref{equ-identification-crossed-product}) to an isomorphism
\begin{equation}
 \label{equ-iso-cross-product}
\II_{F,\Si,\A}: \ac\rtimes_r\gsif\stackrel{\cong}{\longrightarrow} \A_{\Si}\rtimes F\end{equation}

Since $C(\beta_{\N\times\Si})\cong\ell^\infty(\N\times\Si)$, then $\ac$ is  for any family $\A$   a $C(\beta_{\N\times\Si})$-algebra. Let us show that 
$\beta_{\N\times\Si}$ is actually provided with an  action of $\gsi$ on the right that makes $\ac$ into a $\bns\rt\gsi$-algebra. 

Let $p:\bns\to \bs$ be the (only) map  extending  the projection $\N\times\Si\to \Si$ by continuity. Let $x$ be an element of $\bs$, let $\ga$ be an
element of $\gsi$ such that $r(\ga)=x$ and let $E\subset \Si\times\Si$ be an entourage such that 
\begin{itemize}
 \item $\ga$ belongs to $\bar{E}$.
\item  the restrictions
of $s$ and $r$ to $E$ are one-to-one.
\end{itemize}
Let $(n_k,\si_k)_{k\in\N}$ be a sequence in $\N\times\Si$ converging to $z$ in $\bns$ and such that $\si_k$ is in $r(E)$ for every integer $k$.
For any  integer $k$, let $\si'_k$ be the unique element of $s(E)$  such that $(\si_k,\si'_k)$ is in $E$. 
Then the sequence $(n_k,\si'_k)_{k\in\N}$ converge in $\bns$ to
an element $z'$ such that $p(z')=s(\ga)$. This limit does not depend on the choice of $E$ and $(n_k,\si_k)_{k\in\N}$  that satisfy the 
conditions above
and if we set $z\cdot\ga=z'$, we obtain an action of $\gsi$ on $\bns$ on the left. Obviously, the restriction of $\bns\rt\gsi$ to the 
saturated open subset $\N\times\Si$ of $\bns$ is the union of groupoid of pair on $\{n\}\times\Si$.
If $\A$ is a family of $C^*$-algebras, the multiplier action of  $C(\beta_{\N\times\Si})$ is $\gsi$-equivariant and hence we end up with an action of
$\bns\rt\gsi$ on $\ac$.

If $\Si$ is endowed with an action of a finite group $F$ by isometries, then the diagonal action of $F$ on $\N\times\Si$ (trivial on $\N$) gives rise
to an action of $F$ on $\bns$ by homeomorphisms which makes the action of $\gsi$ covariant. Hence  $\bns$ is  provided with an action of 
$\gsif=\gsi\rt F$.
Moreover, if $\A$ is a family of $F$-$C^*$-algebras,  then $\ac$ is a $\bns\rtimes\gsif$-algebra.

Consider now the spectrum $\bnsz$ of the ideal $\l^\infty(\N,C_0(\Si))$ of  $C(\beta_{\N\times\Si})\cong\ell^\infty(\N\times\Si)$.
Then $\bnsz$ is a saturated open subset of $\bns$, the pointwise multiplication of $\ell^\infty(\N,C_0(\Si))$ on $\ac=\prod_{i\in\N}C_0(\Si,A_i)$ provides
$\ac$ with a structure of $C(\bnsz)$-algebra  and thus we see that $\ac$ is indeed a $\bnsz\rt\gsi$-algebra.
The three crossed products $\ac\rtr\gsi$,  $\ac\rtr(\bns\rt\gsi)$ and $\ac\rtr(\bnsz\rt\gsi)$ coincide. 
If  $\Si$ is equipped  with an action of a finite group $F$ by isometries, then $\bnsz$ is $F$-invariant and hence endowed with an action of 
$\gsif$. Moreover, for any
 family $\A$ of $F$-$C^*$-algebras,  then $\ac$ is $\bnsz\rt\gsif$-algebra.
 Let us set then $\gsin$
(resp. $\gsinz$) 
for the groupoid $\bns\rt\gsi$.
 (resp.  $\bnsz\rt\gsi$), and if 
 $\Si$ is provided with an action of a finite group $F$ by
isometries, set then $\gsinf=\gsin\rt F$.
\begin{lemma}\label{lemma-partition-Nentourage}
Let $E$ be a subset of $\N\times\Si\times\Si$ and assume that there exists $r>0$ such that 
for all integer $i$ and all $\si$ and $\si'$ in $\Si$, then $(i,\si,\si')$ in $E$ implies that $d(\si,\si')<r$. Then there exists
\begin{itemize}
 \item $f_1,\ldots,f_k$ in $\ell^\infty(\N\times\Si)$;
 \item $E_1,\ldots,E_k$ entourages of $\Si$  included in $\bigcup_{i\in\N}\{(\si,\si')\in\Si^2;(i,\si,\si')\in E\}$,
\end{itemize}
such that
$\chi_E(i,\si,\si')=\sum_{j=1}^k f_j(i,\si)\chi_{E_j}(\si,\si')$ for all integer $i$ and all $\si$ and $\si'$ in $\Si$.
 \end{lemma}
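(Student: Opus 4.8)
The plan is to reduce everything to a single matching-type decomposition of one entourage, the only substantive input being \cite[Lemma 2.7]{sty}, which is where the bounded geometry of $\Si$ is used. First I would collapse the $\N$-direction by forming the total slice
$$E_0=\bigcup_{i\in\N}\{(\si,\si')\in\Si^2;\,(i,\si,\si')\in E\}.$$
By the hypothesis on $E$, every pair occurring in $E_0$ satisfies $d(\si,\si')<r$, so $E_0$ is an entourage for $\Si$. Applying \cite[Lemma 2.7]{sty} to $E_0$, I write $E_0=E_1\cup\cdots\cup E_k$ as a finite union of entourages on each of which the restrictions of the source map $(\si,\si')\mapsto\si'$ and the range map $(\si,\si')\mapsto\si$ are one-to-one. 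Replacing each $E_j$ by $E_j\setminus(E_1\cup\cdots\cup E_{j-1})$, I may assume the $E_j$ are pairwise disjoint and partition $E_0$; they remain entourages contained in $E_0$, hence in the union required by the statement.

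Next I would exploit injectivity of the range map: on $E_j$, for each $\si$ there is at most one $\si'$, which I denote $\si'_j(\si)$, with $(\si,\si'_j(\si))\in E_j$. I then define $f_j\colon\N\times\Si\to\{0,1\}$ by setting $f_j(i,\si)=\chi_E(i,\si,\si'_j(\si))$ when such a partner $\si'_j(\si)$ exists and $f_j(i,\si)=0$ otherwise. Since $f_j$ is $\{0,1\}$-valued it automatically lies in $\ell^\infty(\N\times\Si)$.

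Finally I would check the identity $\chi_E(i,\si,\si')=\sum_{j=1}^k f_j(i,\si)\chi_{E_j}(\si,\si')$ by a two-case verification. If $(\si,\si')\in E_0$, then by disjointness it lies in exactly one $E_{j_0}$, so the sum collapses to $f_{j_0}(i,\si)$; as $\si'_{j_0}(\si)=\si'$, this equals $\chi_E(i,\si,\si')$, matching the left-hand side. If $(\si,\si')\notin E_0$, then all $\chi_{E_j}(\si,\si')$ vanish, so the right-hand side is $0$; and the left-hand side is also $0$, since $(i,\si,\si')\in E$ would force $(\si,\si')\in E_0$ by definition of $E_0$.

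The only real content is the passage from $E_0$ to the partial matchings $E_j$; all the rest is bookkeeping. This is precisely the step that requires bounded geometry (which bounds the degree of the relation $E_0$ and thereby makes the finite colouring possible), and it is furnished verbatim by \cite[Lemma 2.7]{sty}, so I do not anticipate any genuine obstacle beyond carefully tracking the disjointness and the definition of the partners $\si'_j(\si)$.
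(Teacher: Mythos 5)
Your proof is correct and follows essentially the same route as the paper: form the slice $E_0=\bigcup_i\{(\si,\si');(i,\si,\si')\in E\}$, invoke \cite[Lemma 2.7]{sty} to decompose it into finitely many entourages with injective coordinate projections, and define the $f_j$ as indicator-type functions recording which fibers of $E$ contain the matched pair. The only difference is cosmetic: the paper compresses the decomposition into a ``without loss of generality'' reduction to the case $k=1$, whereas you carry out the disjointification and the sum over $j$ explicitly, which if anything makes the bookkeeping more transparent.
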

\begin{proof}
 Let us set $E_1=\cup_{i\in\N}\{(\si,\si')\in\Si^2;(i,\si,\si')\in E\}$. 
 Using \cite[Lemma 2.7]{sty}, we can assume without loss of generality that 
 the restrictions
of $s$ and $r$ to $E_1$ are one-to-one. Define then $f_1:\N\times\Si\to\C$ by
 \begin{itemize}
  \item $f_1(i,\si)=1$ if there exists $\si'$ in $\Si$ such that $(i,\si,\si')$ in in $E$;
  \item $f_1(i,\si)=0$ otherwise.
 \end{itemize}
Then $\chi_E(i,\si,\si')= f_1(i,\si)\chi_{E_1}(\si,\si')$ for all integer $i$ and all $\si$ and $\si'$ in $\Si$.
\end{proof}

\section{The Baum-Connes assembly map for $(\gsif,\,\ac)$}\label{section-BC-for-GSI}
Recall that the definition of the  Baum-Connes assembly map has been extended to the setting of groupoids in \cite{tumoy}.
Let  $G$ be  a locally compact  groupoid equipped with a Haar system  and let $B$ be a $C^*$-algebra acted upon by $G$. Then there is an 
assembly map $$\mu_{G,B,*}:K^{top}_*(G,B)\to K_*(B\rtimes_r G),$$ where $K^{top}_*(G,B)$ is the topological $K$-theory for the
groupoid $G$ with coefficients in $B$. Our aim in this section is to describe the left hand side of this assembly map
 for the action of $\gsi$ on $\aci$ and then  to show that  the Baum-Connes conjecture is equivalent to the bijectivity of 
the geometric assembly map 
 $$\nu_{F,\Si,\A,*}^{\infty}:K_*^{{top},\infty}(F,\Si,\A){\longrightarrow}
K_*(\A^{\infty}_{\Si}\rtimes F)$$ defined in \ref{subsection-another-assembly-map}. Using   result of \cite{sty} on the Baum-Connes conjecture for groupoid affiliated to coarse structures, 
we get examples of coarse spaces that satistifies the permanence approximation property.
Notice that  $\gsinf$ is clearly a $\si$-compact and \'etale groupoid and that  according to  \cite[Lemma 4.1]{sty}, 
the Baum-Connes conjectures for the action of $\gsif$ on $\aci$ and for the action of 
of $\gsinf$ on $\aci$ are indeed equivalent.
\subsection{The classifying space for proper actions of the  groupoid  $\gsin$}
 For  a $\si$-compact and \'etale groupoid $G$, the following description for the left hand side of
the assembly map was given in  \cite[Section 3]{tucoarsegroupoid}.  Let $K$ be a compact subset of $G$ and let us  consider
   the space $P_K(G)$ of probability measures $\eta$  on
$G$ such that for all $\ga$ and $\ga'$  in the support of $\eta$,
\begin{itemize}
 \item   $\ga$ and $\ga'$ have same range;
\item   $\ga^{-1}\cdot\ga'$ is in $K$.
\end{itemize}
We endowed
 $P_K(G)$ with the
 weak-$*$ topology, and equip  it with the natural left action of $G$. 
Then according to \cite[Proposition 3.1]{tucoarsegroupoid}, the  action of $G$ on $P_K(G)$ is proper and cocompact. If $K\subseteq K'$ is an inclusion 
of  
compact subsets of $G$, then for any $G$-algebra $B$,  the inclusion $P_K(G)\hookrightarrow P_{K'}(G)$ induces a  morphism 
$KK^G_*(C_0(P_K(G)),B)\longrightarrow KK^G_*(C_0(P_{K'}(G)),B)$ and we have
$$K^{top}_*(G,B)= \lim_K KK^G_*(C_0(P_K(G)),B),$$ where in the inductive  limit,
 $K$ runs though compact subsets of $G$. 
% If $\Si$ is provided with an action of a finite group $F$ by isometries, then $P_r(\gsin)$
% is for any $r>0$ provided with a action of $F$ by homeomorphisms.
%  Moreover, if $B$ is a $\gsinf$-algebra, then taking the inductive limit under the morphisms induced by the inclusions 
% $P_r(\gsin)\hookrightarrow P_{r'}(\gsin)$, we obtain 
% $$ K_*^{top}(\gsinf,B)=\lim_{r}KK_*^{\gsinf}(C_0(P_r(\gsin)),B).$$
% where $r$ runs through positive numbers. 
If the groupoid $G$ is provided with an action of a finite group $F$ by automorphisms, then for any 
$F$-invariant subset of $G$, the space $P_K(G)$ is $F$-invariant and for any $G\rt F$-algebra $B$, we get
$$K^{top}_*(G\rt F,B)= \lim_K KK^{G\rt F}_*(C_0(P_K(G)),B),$$ where in the inductive limit,
 $K$ runs though compact and $F$-invariant subsets of $G$. 
% 
% \item  up to this identification and up to the isomorphism of equation 
%  (\ref{equ-iso-cross-product}), the two assembly maps 
% 
% \begin{equation*}\mu_{\gsinf,\aci,*}:
% K^{top}_*(\gsinf,\aci)
% \longrightarrow 
% K_*(\aci\rtimes_r \gsinf)=K_*(\aci\rtimes_r \gsif)\end{equation*}
% 
% 
%                                                                           and
% \begin{equation*}
%  \mu_{F,\Si,\A,*}^{\infty}:K_*^{{top},\infty}(F,\Si,\A){\longrightarrow}
% K_*(\A^{\infty}_{\Si}\rtimes F)
% \end{equation*}
%   coincide.
% \end{itemize}
% Notice that 
If $\Si$ is a proper discrete metric space  and if $r$ is a non positive negative  number, let us set $$E_r=\{(\si,\si')\in \Si\times\Si\text{ such that }d(\si,\si')\lq r\},$$ and then
 consider the element   $\chi_r=1\ts_{C(\beta_\Si)}\chi_{E_r}$ of $C_c(\gsin)$. Then we have $\chi_r^2=\chi_r$ and hence
$$\supp\chi_r=\{\ga\in \gsin \text{ such that }\chi_r(\ga)=1\}$$ is a compact subset of $\gsin$.
Let us set then 
 $P_r(\gsin)=P_{\supp\chi_r}(\gsin)$.
If $\Si$ is provided with an action of a finite group $F$ by isometries, $\chi_r$ being $F$-invariant, we see that  $P_r(\gsin)$
is for any $r>0$ provided with a action of $F$ by homeomorphisms.
%  Moreover, if $B$ is a $\gsinf$-algebra, then taking the inductive limit under the morphisms induced by the inclusions 
% $P_r(\gsin)\hookrightarrow P_{r'}(\gsin)$, we obtain 
% $$ K_*^{top}(\gsinf,B)=\lim_{r}KK_*^{\gsinf}(C_0(P_r(\gsin)),B).$$
% where $r$ runs through positive numbers.

For any $\w$ in  $\bns$ and any subset $Y$ of  some $P_r(\gsin)$, let us set $Y_\w$ for the fiber of $Y$ at $\w$, i.e the set of probability measures
of $Y$ supported in the set of elements of $\gsin$ with range $\w$. If $W$ is a subset of   $\bns$ then define $Y_{/W}=\cup_{\w\in W}Y_\w$. 
Let us define $\displaystyle P_r(\gsinz)=P_r(\gsin)_{/\bnsz}$.
For a fix $r>0$, every  element $(n,\si,x)$ of $\N\times\Si\times P_r(\Si)$ can be viewed as a element in $P_r(\gsinz)$.
For any family $\X=(X_i)_{i\in\N}$ of compact subsets of
$P_r(\Si)$, let us set $Z_{\X}$ for the closure of $$\{(n,\si,x)\in\N\times\Si\times P_r(\Si);\,n\in\N\,, \si\in X_n,\, x\in X_n\}$$ 
in $P_r(\gsin)$(we view an element  $\si$ of $\Si$ 
 as an element
of $P_r(\Si)$, the Dirac measure at $\si$).
\begin{lemma}\label{lemma-compact}
 Let $r$ be a positive number and let  $\X=(X_i)_{i\in\N}$  be family of compact subsets of $P_r(\Si)$. Then 
${Z}_{\X}$ is a compact subset of  $P_r(\gsinz)$.
\end{lemma}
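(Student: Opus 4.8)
The plan is to prove the two assertions hidden in the statement separately: that $Z_\X$ is compact, and that it is contained in $P_r(\gsinz)$, i.e.\ that every point of $Z_\X$ has (common) range in the open saturated subset $\bnsz$ of $\bns$. Throughout I would use two consequences of the hypotheses: first, bounded geometry of $\Si$, so that $N_r:=\sup_{\si\in\Si}\#\{\si'\in\Si;\ d(\si,\si')\lq r\}$ is finite and every measure in $P_r(\gsin)$ has at most $N_r$ atoms; second, that each $X_i$, being compact in the locally finite simplicial complex $P_r(\Si)$, meets only finitely many simplices, so that the set $V_i\subseteq\Si$ of vertices of the simplices met by $X_i$ is \emph{finite}.

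For compactness, I would argue that $P_r(\gsin)$ is itself compact and that $Z_\X$, being by definition closed in it, is then compact. Indeed $\supp\chi_r$ is compact (it is clopen, since $\chi_r^2=\chi_r$ and $\chi_r\in C_c(\gsin)$), the base $\bns$ is compact, and the range map $P_r(\gsin)=P_{\supp\chi_r}(\gsin)\lto\bns$ has fibres that are finite-dimensional simplices of dimension bounded by $N_r$; as in \cite[Proposition 3.1]{tucoarsegroupoid} this forces $P_r(\gsin)$ to be compact. (Alternatively one could argue relative compactness directly: the image measures have at most $N_r$ atoms, all lying in the compact set of arrows with range in $\bns$ and $\gsi$-component at distance $\lq r$, so a subnet converges weak-$*$ to a probability measure, and the defining conditions of $P_r(\gsin)$ pass to the limit by continuity of range, inversion and multiplication together with closedness of $\supp\chi_r$.)

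For the containment $Z_\X\subseteq P_r(\gsinz)$, fix $\eta\in Z_\X$ and a net $(n_\alpha,\si_\alpha,x_\alpha)$ in the defining set of $Z_\X$ whose image converges to $\eta$. By continuity of the range map the units $(n_\alpha,\si_\alpha)$ converge to the common range $\w\in\bns$ of $\eta$, and it suffices to show $\w\in\bnsz$. Recall that $\bnsz$ is the spectrum of the ideal $\l^\infty(\N,C_0(\Si))$ of $C(\bns)\cong\l^\infty(\N\times\Si)$, so $\w\in\bnsz$ exactly when some element of this ideal does not vanish at $\w$. Here the finiteness of the $V_i$ is decisive: the family $f=(f_n)_{n\in\N}$ with $f_n=\chi_{V_n}$ defines an element of $\l^\infty(\N,C_0(\Si))$, since $f_n\in C_c(\Si)\subseteq C_0(\Si)$ and $\|f_n\|\lq 1$ for every $n$. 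Because $\si_\alpha\in V_{n_\alpha}$ (as $\de_{\si_\alpha}\in X_{n_\alpha}$), we have $f(n_\alpha,\si_\alpha)=1$ for all $\alpha$, and hence the value $f(\w)$ of the continuous function $f\in C(\bns)$ equals $\lim_\alpha f(n_\alpha,\si_\alpha)=1\neq 0$. Thus $\w\in\bnsz$, so $\eta\in P_r(\gsin)_{/\bnsz}=P_r(\gsinz)$.

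The compactness assertion is the routine part, a weak-$*$ compactness argument made available by bounded geometry and by the compactness of the unit space $\bns$. The main obstacle is the second assertion: one must rule out that the closure acquires points whose range lies in the ``bad'' boundary $\bns\setminus\bnsz$ — concretely, configurations in which the spatial coordinate $\si_\alpha$ escapes to infinity in $\Si$ while $n_\alpha$ stays bounded, which would produce a limiting range annihilating the whole ideal $\l^\infty(\N,C_0(\Si))$. It is precisely the compactness of each $X_i$, through the finiteness of the vertex sets $V_i$, that produces the single level-wise function $f\in\l^\infty(\N,C_0(\Si))$ certifying uniformly that the limiting range remains in $\bnsz$.
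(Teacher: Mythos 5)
Your proof of the second assertion (that $Z_\X$ lands in $P_r(\gsinz)$) is correct, and it is a genuinely different device from the paper's: you certify that limiting ranges stay in $\bnsz$ with the unit-space function $f=(\chi_{V_n})_{n\in\N}\in\ell^\infty(\N,C_0(\Si))$, whereas the paper gets this as a byproduct of a function on the groupoid itself. But the first assertion, which you call routine, contains a fatal gap: $P_r(\gsin)$ is \emph{not} compact, so compactness of $Z_\X$ cannot be deduced from closedness. In the definition of $P_{\supp\chi_r}(\gsin)$ only the \emph{pairwise differences} $\ga^{-1}\ga'$ of support points are required to lie in $\supp\chi_r$; the atoms themselves are unconstrained. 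In particular every Dirac mass $\de_\ga$, $\ga\in\gsin$, belongs to $P_r(\gsin)$ (its only pairwise difference is the unit $s(\ga)$, which lies in $\supp\chi_r$ since $E_r$ contains the diagonal), and as soon as $\Si$ is infinite a sequence $\de_{(n,\si,\si'_k)}$ with $d(\si,\si'_k)\to\infty$ escapes every compact subset of $\gsin$ (compact subsets of $\gsin$ have uniformly bounded propagation), hence converges weak-$*$ against $C_c(\gsin)$ to the zero functional and admits no subnet converging in $P_r(\gsin)$. Equivalently: the fibre of the range map $P_r(\gsin)\to\bns$ over a unit $(n,\si)\in\N\times\Si$ is not a finite-dimensional simplex, as you claim, but a copy of the whole Rips complex $P_r(\Si)$ --- finite-dimensional and locally finite, yet non-compact; since fibres of a continuous map on a compact space are compact, this alone rules out compactness of $P_r(\gsin)$. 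Note that \cite[Proposition 3.1]{tucoarsegroupoid} asserts properness and cocompactness of the \emph{action} on $P_K(G)$, not compactness of $P_K(G)$: already for an infinite discrete group (whose unit space is a point, hence compact) $P_K(G)$ is the usual Rips complex. Your parenthetical alternative fails for the same reason: it is false that the atoms of elements of $P_r(\gsin)$ have $\gsi$-component at distance $\lq r$.

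The tell-tale sign is that your compactness argument never uses the hypothesis that the $X_i$ are compact: it would equally ``prove'' that the closure of all of $\N\times\Si\times P_r(\Si)$ is compact, which is false. Compactness of $Z_\X$ is precisely where finiteness of the supports must intervene, to prevent mass from escaping to infinity in $\gsin$; the phenomenon you relegate to the second half (coordinates running off while staying at a bounded level) is in fact the obstruction to the first half, since a weak-$*$ limit against $C_c(\gsin)$ of probability measures need not be a probability measure at all. This is what the paper's proof accomplishes: from the finite supports $Y_i$ it builds, via Lemma \ref{lemma-partition-Nentourage}, a $\{0,1\}$-valued function $\tilde\chi_E\in C_c(\gsinz)$ with $\eta(\tilde\chi_E)=1$ for every $\eta$ in the defining set of $Z_\X$; the set $\{\eta\in C_c(\gsin)'\ ;\ \|\eta\|\lq 1,\ \eta(\tilde\chi_E)=1\}$ is weak-$*$ closed in the unit ball, hence compact, and the constraint $\eta(\tilde\chi_E)=1$ with $\tilde\chi_E$ compactly supported pins the whole mass inside a fixed compact set, so no escape can occur; the same function then gives the inclusion into $P_r(\gsinz)$. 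Your unit-space function $f$ controls where the \emph{ranges} of limit points go, but it cannot do this second job, because it does not dominate the measures themselves: it certifies nothing about whether the mass survives in the limit.
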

\begin{proof}
 Since $X_i$ is  a compact subset of the locally finite simplicial complex $P_r(\Si)$, there exists a finite
set $Y_i$ of $\Si$ such that every element of $X_i$ is supported in $Y_i$.  Applying  lemma \ref{lemma-partition-Nentourage} to
$$E=\{(n,\si,\si')\in\N\times\Si\times\Si;\,\si\in Y_n,\,\si'\in Y_n\,,d(\si,\si')\lq r\},$$
we see  that
there exist $f_1,\ldots,f_k$ in $\ell^\infty(\N\times\Si)\cong C(\beta_{\N\times\Si})$ and $E_1,\ldots,E_k$ entourages of $\Si$ of diameter less than $r$ 
such  that $\chi_E(i,\si,\si')=\sum_{j=1}^k f_j(i,\si)\chi_{E_j}(\si,\si')$ for all integer $j$ and all $\si$ and $\si'$ in $\Si$.
Set then 
$\tilde{\chi}_E=\sum_{j=1}^k f_j\otimes_{C(\beta_{\Si})}\chi_{E_j}\in C_c(\gsinz)$. Then $\tilde{\chi}_E$ is valued in $\{0,1\}$.
The set  of probabity measures $\eta$ such that $\eta(\tilde{\chi}_E)=1$ is closed in the unit ball of
the dual of  $C_c(\gsin)$ equipped with the weak 
topology and hence is compact. Since $\eta(\tilde{\chi}_E)=1$ for any $\eta$ in $Z_{\X}$, we get that $Z_{\X}$ is compact in $P_r(\gsin)$.
But since we also have $\eta(\tilde{\chi}_Ef)= \eta(f)$ for any $\eta$ in $Z_{\X}$ and any $f$ in  $C_c(\gsin)$ and since $\tilde{\chi}_E$ is 
in $C_c(\gsinz)$, we deduce that $Z_{\X}$ is included in 
 $P_r(\gsinz)$.
\end{proof}
\begin{corollary}
 Let $r$ be a positive number and let  $\X=(X_i)_{i\in\N}$  be family of compact subsets of $P_r(\Si)$. Then the closure of
$\bigcup_{i\in\N}\{n\}\times\Si\times X_i$ in  $P_r(\gsin)$ is a $\gsin$-invariant and $\gsin$-compact subset of $P_r(\gsinz)$.
\end{corollary}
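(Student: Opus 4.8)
The plan is to exhibit the closure in question as the $\gsin$-saturation of the compact set furnished by Lemma \ref{lemma-compact}, and then to invoke the general principle that, for a proper groupoid action, the saturation of a compact set is invariant and $\gsin$-compact. Throughout, recall that the action of $\gsin$ on $P_r(\gsin)$ is proper by \cite[Proposition 3.1]{tucoarsegroupoid}, and that $\gsin$ is \'etale. Set $S=\bigcup_{n\in\N}\{n\}\times\Si\times X_n$ and let $W$ be its closure in $P_r(\gsin)$. First I would observe that $S$ is $\gsin$-invariant. Every point of $S$ lies over $\N\times\Si$, a saturated open subset of $\bns$ on which $\gsin$ restricts to the union over $n\in\N$ of the pair groupoids of $\{n\}\times\Si$. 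Viewing $(n,\si,x)$ as the probability measure $\sum_{\si'}\lambda_{\si'}(x)\,\de_{[(n,\si),(n,\si')]}$ on $\gsin$, the pair element carrying $(n,\si)$ to $(n,\tilde\si)$ sends $(n,\si,x)$ to $(n,\tilde\si,x)$, moving the base point while fixing $x\in X_n$; as $\tilde\si$ runs over $\Si$ this yields $\gsin\cdot S=S$.

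Next I would promote invariance from $S$ to its closure $W$, which is the step I expect to require the most care, since in general the closure of an invariant set need not be invariant. Here the key is that $\gsin$ is \'etale, so its range and source maps are local homeomorphisms. Given $\xi\in W$ and $\ga\in\gsin$ with $s(\ga)=r(\xi)$, I would choose $\xi_k\in S$ with $\xi_k\to\xi$, lift along the local homeomorphism $s$ to produce $\ga_k\to\ga$ with $s(\ga_k)=r(\xi_k)$, and pass to the limit: by continuity of the action and $\ga_k\cdot\xi_k\in\gsin\cdot S=S$, we obtain $\ga\cdot\xi=\lim_k\ga_k\cdot\xi_k\in W$. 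Hence $W$ is $\gsin$-invariant.

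Finally I would establish $\gsin$-compactness together with the inclusion $W\subseteq P_r(\gsinz)$. For each $n$ I would pick a finite set $Y_n\subseteq\Si$ containing the support of every $x\in X_n$ (available because $X_n$ is compact in the locally finite complex $P_r(\Si)$); the argument of Lemma \ref{lemma-compact} then shows that the closure $K$ of $\{(n,\si,x)\,;\,\si\in Y_n,\,x\in X_n\}$ is a compact subset of $P_r(\gsinz)$. Any point $(n,\si,x)$ of $S$ is carried into $K$ by the pair element moving its base point to some $\si_0\in\supp(x)\subseteq Y_n$, so $S\subseteq\gsin\cdot K$ and therefore $W\subseteq\gsin\cdot K$. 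By properness $\gsin\cdot K$ is closed; since $W$ is closed, invariant and contained in $\gsin\cdot K$, the compact set $K\cap W$ surjects onto $W/\gsin$, so $W$ is $\gsin$-compact. As $K\subseteq P_r(\gsinz)$ and $\bnsz$ is $\gsin$-invariant, the saturation $\gsin\cdot K$, and a fortiori $W$, is contained in $P_r(\gsinz)$.
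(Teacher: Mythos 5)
Your overall strategy is the same as the paper's: the paper's proof consists of the single observation that the closure in question is the $\gsin$-orbit (saturation) of the compact set $Z_{\X}$ furnished by Lemma \ref{lemma-compact}, whence invariance and $\gsin$-compactness; you supply the details this leaves implicit (invariance of the closure via \'etale lifting of arrows along convergent nets, closedness of the saturation of a compact set via properness), and those details are fine. The genuine gap is in your choice of compact set. You replace $Z_{\X}$ by the closure $K$ of $\{(n,\si,x)\,;\,\si\in Y_n,\,x\in X_n\}$ and assert that the argument of Lemma \ref{lemma-compact} shows $K$ is compact. It does not: the function $\tilde{\chi}_E$ built there comes from the entourage $E=\{(n,\si,\si')\,;\,\si\in Y_n,\,\si'\in Y_n,\,d(\si,\si')\lq r\}$, and the constraint $d(\si,\si')\lq r$ is essential --- it is what allows Lemma \ref{lemma-partition-Nentourage} to apply and what makes $\supp\tilde{\chi}_E$ compact. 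For a point $(n,\si,x)$ of your $K$ the base point $\si\in Y_n$ may lie at distance greater than $r$ from $\supp(x)$, in which case $\eta(\tilde{\chi}_E)\neq 1$, so $K$ is not contained in the compact set $\{\eta\,;\,\eta(\tilde{\chi}_E)=1\}$. Indeed $K$ is non-compact in general: take $X_n=\{\de_{a_n},\de_{b_n}\}$ with $d(a_n,b_n)=n$. Then $(n,a_n,\de_{b_n})$ is the Dirac measure at the arrow $((n,a_n),(n,b_n))$ of $\gsin$; since any $f$ in $C_c(\gsin)$ is supported in the closure of an entourage of uniformly bounded propagation, $f$ vanishes at these arrows for $n$ large, so the only weak-$*$ cluster point of this sequence is $0$, which is not a probability measure. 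Hence these points form an infinite closed discrete subset of $K$, and $K$ cannot be compact. This breaks your proof at both places where compactness of $K$ is invoked: properness yields closedness of $\gsin\cdot K$ only for $K$ compact, and the surjection of $K\cap W$ onto $W/\gsin$ yields $\gsin$-compactness only when $K\cap W$ is compact.

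The repair is immediate and brings you back exactly to the paper's argument: constrain the base point to lie in the support of the measure, i.e.\ take $K$ to be the closure of $\{(n,\si,x)\,;\,x\in X_n,\,\si\in\supp(x)\}$. This is precisely the set $Z_{\X}$ of Lemma \ref{lemma-compact} (whose defining condition must indeed be read as tying $\si$ to $x$; this is also what its proof uses, since for $\si,\si'$ in $\supp(x)$ one automatically has $d(\si,\si')\lq r$, so that $\eta(\tilde{\chi}_E)=1$). Your reduction of $S$ into $K$ already only uses base points $\si_0\in\supp(x)$, so with this corrected $K$ every other step of your argument goes through verbatim.
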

\begin{proof}
The closure of $\displaystyle\bigcup_{i\in\N}\{n\}\times\Si\times X_i$ in  
$P_r(\gsin)$ is the $\gsin$-orbit of 
$Z_{\X}$  and hence is  $\gsin$-invariant and $\gsin$-compact in  $P_r(\gsinz)$.
\end{proof}

\subsection{Topological $K$-theory for the groupoid $\gsif$ with coefficients in $\ac$}
The aim of this subsection
is to show that for any free action of a finite group
$F$ by isometries on $\Si$ and any family $\A=(A_i)_{i\in\N}$ of $F$-$C^*$-algebras,
we have a  natural identification between $K^{top}_*(\gsinf,\aci)$ and 
$K^{top,\infty}_*(F,\Si,\A)$.

\smallskip

Let $\Si$ be a discrete metric space with bounded geometry. 
For any $\gsin$-invariant and $\gsin$-compact subset $Y$ of  $P_r(\gsinz)$, then  $Y_{/\{i\}\times\Si\times P_r(\Si)}$
is an invariant and cocompact subset of $\{i\}\times\Si\times P_r(\Si)$ for the action of the groupoid of pairs $\Si\times\Si$. 
Hence, there exists a family $\X^Y=(X_i^Y)_{i\in\N}$ of compact subset of $P_r(\Si)$ such that  
\begin{equation}\label{equ-fiber}Y_{/\{i\}\times\Si\times P_r(\Si)}=\{i\}\times\Si\times X_i^Y
 \end{equation}
for every integer $i$. Notice that if $\Si$ is provided with an action of a finite 
group $F$ by isometries and if $Y$ as above is moreover 
$F$-invariant, then $X_i^Y$ is $F$-invariant for every integer $i$.
For any $\gsinf$-invariant and
$\gsin$-compact subset $Y$ of $P_r(\gsinz)$ and any family $\A=(A_i)_{i\in\N}$ of $F$-$C^*$-algebras,  consider the following composition (recall that 
$\aci=\prod_{i\in\N}C_0(\Si,A_i\ts \K(\H))$)
\begin{equation}\label{equation-iY}
 \begin{split} i_Y:KK^\gsinf_*(&C_0(Y),\aci)\longrightarrow \\ &\prod_{i\in\N}KK^F_*(C(X_i^Y), \K(\H)\ts A_i)
  \longrightarrow\prod_{i\in\N}KK^F_*(C(X_i^Y), A_i),\end{split}\end{equation}
where
\begin{itemize}
 \item $X_i^Y$ is  for any   integer $i$ defined by equation (\ref{equ-fiber});
\item the first map  is induced by groupoid functoriality with respect to the bunch of groupoid morphisms
$$F \hookrightarrow (\N\times\Si\times\Si)\rt F;\, x\mapsto (i,\si,x(\si),x),$$ where $i$ runs through integers and $\si$ is a fixed 
element of $\Si$ (recall that $\N\times\Si\times\Si$ is a $F$-invariant subgroupoid of $\gsin$.)
\item the second map is given for every integer $i$ by the Morita equivalence between $\K(\H)\ts A_i$ and $A_i$.
\end{itemize}

% Taking the inductive limit of Ripps complex of degre $r$ and  $\gsif$-invariant and $\gsi$-compact subset $Y$ of
% $P_r(\Si)$, we end up with a maps
% $$i_{\Si,*}^\infty: K^{top}_*(\gsif,\lainf)\longrightarrow K_*^{top,\infty}(\Si,F,A)$$
% \begin{proposition}\label{prop-iso-left}
%  The map $i_{\Si,*}^\infty: K^{top}_*(\gsif,\lainf)\longrightarrow K_*^{top,\infty}(\Si,F,A)$ is an isomorphism.
% \end{proposition}
%\begin{proof}
 Let $\A=(A_i)_{i\in\N}$ and $\B=(B_i)_{i\in\N}$ be families  of $F$-algebras and let $z=(z_i)_{i\in\N}$ be 	a family in  
$\prod_{i\in\N}KK^F_*(A_i,  B_i)$.
We can assume without loss of generality that for every integer $i$, then $z_i$  is
 represented by a $F$-equivariant $K$-cycle
$(\pi_i,T_i,\ell^2(F)\otimes \H\otimes B_i)$ where
\begin{itemize}
\item $\H$ is a separable Hilbert
space;
\item $F$ acts diagonally on $\ell^2(F)\otimes \H\otimes B_i$ by the right regular 
representation on $\ell^2(F)$ and  trivially on $\H$.
\item  $\pi_i$ is a $F$-equivariant
representation of $A_i$ in the algebra $\L_{B_i}(\ell^2(F)\otimes \H\otimes B_i)$ of adjointable operators of
$\ell^2(F)\otimes \H\otimes B_i$;
\item $T_i$ is a $F$-equivariant self-adjoint operator of $\L_{B_i}(\ell^2(F)\otimes\H\otimes B_i)$ 
satisfying  the $K$-cycle conditions, i.e. $[T_i,\pi_i(a)]$ and 
  $\pi_i(a)(T_i^2-\Id_{\ell^2(F)\otimes\H\otimes B_i})$  belongs to
  $\K(\ell^2(F))\otimes \H\otimes B_i$, for every $a$ in $A_i$.
\end{itemize}
Noticing that we have an identification between the algebras $\L_{B_i}(\ell^2(F)\otimes \H\otimes B_i)$
and $\L_{\K(\H)\otimes B_i}(\ell^2(F)\ts \K(\H)\otimes B_i)$. Indeed  these two $C^*$-algebras can be viewed as the mutiplier algebra of
$\K(\ell^2(F)\otimes\H)\otimes B_i$.  We see that the pointwise diagonal multiplication by 
$$\N\to \L_{\K(\H)\otimes B_i}(\ell^2(F)\otimes\K(\H)\ts  B_i);\,i\mapsto T_i$$ gives rise to a $F$-equivariant adjointable
operator $T_{C_0(\Si)}^\infty$ of the right $\bci$-Hilbert module
$\prod_{i\in\N}C_0(\Si,\ell^2(F)\ts\K(\H)\ts B_i )\cong\ell^2(F)\ts \bci$. 
The family of representation $(\pi_i)_{i\in\N}$ gives rise to a representation $\pi^\infty_{C_0(\Si)}$ of $\ac$ on the algebra 
of adjointable operators of
$\prod_{i\in\N}C_0(\Si,\ell^2(F)\ts \K(\H)\ts B_i)$. 
% Let $Y_\X$ be the closure of $\bigcup_{n\in\N}\{n\}\times\Si\times P_r(\Si)$. 
% Then according to lemma \ref{lemma-compact}, $Y_\X$  is a $\gsif$-invariant and $\gsi$-compact subset of $P_r(\gsi)$ and 
% we obviously have $Y_{\X}\cap\left(\{n\}\times\Si\times P_r(\Si)\right)=\{n\}\times\Si\times X_i$.
% For every integer $i$, the representation 
%  $\pi_i:C(X_i)\longrightarrow\to \L_A(\H\otimes\ell^2(F)\otimes A)\cong \L_{\K(\H)\otimes A}(\K(\H)\otimes A\ell^2(F))$ 
% gives rise to a representation by pointwise multiplication of
% $C_0(\Si\times X_i)\cong C_0(\Si,C(X_i)$ on the right $C_0(\Si,\K(\H)\ts A)$-Hilbert module $C_0(\Si,\K(\H)\ts A)\ts \ell^2(F))$.
% Hence the bunch of restriction $C_0(Y_\X)\to C_0(\Si\times X_i)$ yields  a $F$-equivariant representation
% $\pi_\Si^\infty$ of $C_0(Y_\X)$ on the right $\lainf$-Hilbert module
% $\ell^\infty(\N,C_0(\Si,A\ts \K(\H)\ts A))\ts \ell^2(F)$.
It is then straightforward to check that $\pi^\infty_{C_0(\Si)}$  and   $T_{C_0(\Si)}^\infty$ are indeed $\gsinf$-equivariant and that
$T_{C_0(\Si)}^\infty$  satisfies the $K$-cycle conditions. Therefore, we obtain
in this way a $K$-cycle for $KK_* ^\gsinf(\ac,\bci)$ and we can define in this way a morphism
$$\tau_{C_0(\Si)}^\infty:\prod_{i\in\N}KK^F_*(A_i,  B_i)\to KK_* ^\gsinf(\ac,\bci)$$
which  is moreover bifunctorial, i.e if $\A=(A_i)$  and $\B=(B_i)$ are families  of $F$-algebras, then 
\begin{itemize}
 \item for any family   $\A'=(A'_i)$  of $F$-algebras and any family  $f=(f_i)_{i\in\N}$  of $F$-equivariant homomorphisms
$f_i:A_i\to A'_i$, then $\tau_{C_0(\Si)}^\infty(f^*(z))=f_\Si^*(\tau_{C_0(\Si)}^\infty(z))$ for any $z$ in
$ \prod_{i\in\N}KK^F_*(A'_i,  B_i)$;
 \item for any family  $\B'=(B'_i)$ of $F$-algebras and any family $g=(g_i)_{i\in\N}$  of  $F$-equivariant homomorphisms
$g_i:B_i\to B'_i$, then $\tau_{C_0(\Si)}^\infty(g_*(z))=g^{\infty}_{\Si,*}(\tau_{C_0(\Si)}^\infty(z))$ for any $z$ in
$\prod_{i\in\N}KK^F_*(A_i,  B_i)$.
\end{itemize}

For a family  $\X=(X_i)_{i\in\N}$ of compact subsets in some $P_r(\Si)$, we set $\CC_\X=(C(X_i))_{i\in\N}$. If  $\X'=(X'_i)_{i\in\N}$ is another
such a family such that $X_i\subset X'_i$ for any integer $i$ (we say that $(\X,\X')$ is a relative pair of families), let us set
$\CC_{\X,\X'}=(C_0(X'_i\setminus X_i))_{i\in\N}$. Let $Z$ be a $\gsin$-compact subset of some $P_K(\gsin)$ for $K$ a given
 compact subset of $\gsin$. Let us fix $r>0$ such that  $P_K(\gsin)\subset P_r(\gsin)$ and let  $\X=(X_i)_{i\in\N}$ be a family of compact subsets in $P_r(\Si)$ such that
$Z_\X\subset Z$. Define then the $\gsin$-equivariant homomorphism
$$\Lambda_\X^Z:C_0(Z)\to \CC_{\X,C_0(\Si)};\,f\mapsto (f_i)_{i\in\N},$$ with $f_i$ in $C_0(\Si\times X_i)$ defined  by $f_i(\si,x)=f(i,\si,x)$
for any integer $i$, any $\si$ in $\Si$ and any $x$ in $X_i$.
In the same way, if $(Z,Z')$ is a relative pair of $\gsin$-compact subsets of   $P_K(\gsin)$ and if 
 $(\X,\X')$ is a relative pair of families of compact subsets in  $P_r(\Si)$ such that
$Z_\X\subset Z$ and $Z_{\X'}\subset Z'$, the restriction of $\Lambda_{\X'}^{Z'}$ to  $C_0(Z'\setminus Z)$ gives rise to 
a   $\gsin$-equivariant homomorphism
$$\Lambda_{\X,\X'}^{Z,Z'}:C_0(Z'\setminus Z)\to \CC_{\X,\X',C_0(\Si)}.$$
If $(Z,Z')$ is a relative pair of $\gsin$-compact subsets of  $P_K(\gsin)$ and if 
$\X'$ is a family of compact subsets in  $P_r(\Si)$ such that 
$Z_{\X'}\subset Z'$, then there exists a unique family $\X'_{/Z}=(X'_{i,/Z})_{i\in\N}$ of compact subsets in  $P_r(\Si)$ such that 
$(\X'_{/Z},\X')$ a relative pair of families and $Z_{\X'_{/Z}}= Z_{\X'} \cap Z$.
If the relative paire $(Z,Z')$ is moreover $F$-invariant, then  $(\X'_{/Z},\X')$ is a relative pair of families of $F$-invariant compact spaces and 
the map
\begin{eqnarray*}
 \prod_{i\in\N} KK^F_*(C_0(X'_i\setminus {X'_{i,/Z}}),A_i)&\longrightarrow &
KK^\gsinf_*(C_0(Z'\setminus Z),\aci)\\
z=(z_i)_{i\in\N}&\mapsto& \Lambda_{\X,\X'}^{Z,Z',*}(\tau^\infty_{C_0(\Si)}(z)))
\end{eqnarray*}
is 
compatible with family of inclusions
$(X_i\hookrightarrow X'_i)_{i\in\N}$ of $F$-invariant compact subsets. Hence, taking the inductive limite and setting
$$K^{F,\infty}_*(Z,Z',\A)=\lim_{\X'}\prod_{i\in\N} KK^F_*(C_0(X'_i\setminus {X'_{i,/Z}}),A_i),$$ where
$\X'= (X'_i)_{i\in\N}$ runs through family of  compact $F$-invariant subsets in $P_r(\Si)$ such that
$Z_{\X'}\subset Z'$, we end up with a morphism
\begin{equation}\label{equation-upsilon}\upsilon^{Z,Z'}_{F,\Si,\A,*}: K^{F,\infty}_*(Z,Z',\A)\to KK^\gsinf_*(C_0(Z'\setminus Z),\aci).\end{equation}  We set 
$K^{F,\infty}_*(Z,\A)$ for $K^{F,\infty}_*(\emptyset,Z,\A)$ and $\upsilon^Z_{F,\Si,\A,*}$  for $\upsilon^{\emptyset,Z}_{F,\Si,,\A,*}$.
\begin{lemma}\label{lemma-loc-inj}
 Let $Z$ be a $\gsinf$-invariant closed subset of some $P_K(\gsin)$ for $K$ a compact subset of $\gsin$. Assume that the restriction to $Z$ of
the anchor map for the action of $\gsin$ on $P_K(\gsin)$ is locally injective, i.e there exists a covering of $Z$ by open subsets  for which  the restriction 
of the anchor map is one-to-one. Then for any family $\A=(A_i)_{i\in\N}$ of $F$-algebras, 
$$\upsilon^{Z}_{F,\Si,\A,*}: K^{F,\infty}_*(Z,\A)\to KK^\gsinf_*(C_0(Z),\aci)$$ is an isomorphism.
\end{lemma}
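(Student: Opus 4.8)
The plan is to prove that $\upsilon^{Z}_{F,\Si,\A,*}$ is an isomorphism by a Mayer--Vietoris induction over a finite cover of $Z$ by saturations of sets on which the anchor map is injective, the heart of the argument being an explicit identification in the single-slice case. Throughout I would treat $Z\mapsto K^{F,\infty}_*(Z,Z',\A)$ and $Z\mapsto KK^\gsinf_*(C_0(Z'\setminus Z),\aci)$ as functors on $\gsinf$-invariant locally closed subsets of $P_K(\gsin)$, and first record that both carry six-term exact sequences. For an invariant open subset $U$ of an invariant closed $Z'$ with complement $Z'\setminus U$, the $\gsinf$-equivariant semi-split extension $0\to C_0(U)\to C_0(Z')\to C_0(Z'\setminus U)\to 0$ gives the exact sequence on the target side, while on the source side the fibrewise extensions $0\to C_0(X'_i\setminus X'_{i,/Z})\to\cdots$, together with exactness of products and of the filtered colimits defining $K^{F,\infty}_*$, give the matching sequence. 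By the bifunctoriality of $\tau^\infty_{C_0(\Si)}$ and the naturality of the maps $\Lambda^{Z,Z'}_{\X,\X'}$, the transformation $\upsilon$ is natural and commutes with these sequences; this compatibility is the routine bookkeeping that lets the five lemma operate.

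\textbf{Base case.} The core step is $Z=\gsin\cdot O$, the saturation of a locally closed $O\subset P_r(\gsinz)$ on which the restriction of the anchor map $\rho$ is injective. Here I would invoke the étale slice structure of the proper action of $\gsin$: an injective-anchor slice induces an imprimitivity between $C_0(\gsin\cdot O)\rtr\gsin$ and the fibrewise algebra supported on $O$, so that $KK^\gsinf_*(C_0(\gsin\cdot O),\aci)$ is computed by the restriction-to-a-fibre maps attached to the subgroupoid $F\hookrightarrow(\N\times\Si\times\Si)\rt F$ appearing in (\ref{equation-iY}). Decomposing $O$ fibrewise over $\N$ by means of \cite[Lemma 2.7]{sty} (one-to-one $s,r$ on entourages) identifies this group with $\prod_{i\in\N}KK^F_*(C(X^{Z}_i),A_i)$, and a direct check that $\upsilon^{Z}$ realises exactly this identification --- the projection $P^\infty_\X$ being what implements the slice --- shows $\upsilon^{Z}$ is an isomorphism in this case.

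\textbf{Induction.} For general $Z$ with locally injective anchor, local injectivity furnishes an open cover of $Z$ by injective-anchor sets; since $Z$ is closed and invariant in the proper cocompact $\gsin$-space $P_K(\gsin)$ it is $\gsin$-compact (cf. \cite[Proposition 3.1]{tucoarsegroupoid} and the corollary preceding this lemma), so finitely many $\gsin$-translates already cover $Z$, giving $Z=U_1\cup\cdots\cup U_n$ with each $U_j=\gsin\cdot O_j$ a saturated injective-anchor open set. The key point is that any intersection $U_{j_1}\cap\cdots\cap U_{j_k}$ is again the saturation of the injective-anchor open set $O_{j_1}\cap U_{j_2}\cap\cdots\cap U_{j_k}$, so every set entering the induction remains within the base case. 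Writing $Z=(U_1\cup\cdots\cup U_{n-1})\cup U_n$ and feeding the two six-term sequences into the five lemma through $\upsilon$ --- using the base case for $U_n$ and for the intersection, and the induction hypothesis for $U_1\cup\cdots\cup U_{n-1}$ --- yields that $\upsilon^{Z}$ is an isomorphism.

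The main obstacle I expect is the base case: producing the slice/imprimitivity identification for the proper étale groupoid $\gsinf$ and verifying that $\upsilon^{\gsin\cdot O}$ is precisely the resulting comparison map. This requires carefully threading the Morita equivalences between $\K(\H)\ts A_i$ and $A_i$, the residual $F$-action, and the functoriality along $F\hookrightarrow(\N\times\Si\times\Si)\rt F$, and it is exactly where the hypothesis of local injectivity of the anchor map is genuinely used; the remainder is the exactness-and-five-lemma scaffolding.
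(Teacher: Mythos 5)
There is a genuine gap, and it sits exactly where you located the ``main obstacle'': your base case cannot be established by slice/imprimitivity arguments alone, because the missing ingredient is not about the action at all but about the coefficients. A saturated slice $\gsin\cdot O$ (indeed any invariant closed $Z\subset P_K(\gsin)$) may contain points lying over the Stone--\v Cech remainder $\bns\setminus\bnsz$, where there are no fibres of the form $\{i\}\times\Si\times X_i$ and where the source functor sees nothing: by Lemma \ref{lemma-compact} the sets $Z_\X$ entering the definition of $K^{F,\infty}_*(Z,\A)$ always lie in $P_r(\gsinz)$. What makes these boundary points invisible on the \emph{target} side is that $\aci$ is a $C(\bnsz)$-algebra (its fibres over $\bns\setminus\bnsz$ vanish), whence the localization theorem of \cite{tucoarsegroupoid}: the inclusions of $\gsin$-compact $F$-invariant subsets $Z'\subset Z_{/\bnsz}$ induce an isomorphism $\lim_{Z'}KK^\gsinf_*(C_0(Z'),\aci)\to KK^\gsinf_*(C_0(Z),\aci)$ --- equation (\ref{iso-open}), which is the actual engine of the paper's proof. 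Your proposal never invokes this property, and threading Morita equivalences through an \'etale slice will not substitute for it. As a test, take $K=\supp\chi_0$, so that $P_K(\gsin)\cong\gsin$ with anchor the range map (a local homeomorphism, hence locally injective), and let $Z$ be the part of $P_K(\gsin)$ over $\bns\setminus\bnsz$: the source is $0$, and the vanishing of $KK^\gsinf_*(C_0(Z),\aci)$ is precisely a support/localization statement, not an orbit-space computation --- a fibrewise ``imprimitivity'' description of the target over $Z/\gsin$ would have to explain why all those orbits contribute nothing.

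Once that ingredient is available, your entire Mayer--Vietoris scaffolding becomes unnecessary for this lemma: the paper's proof consists of the identification (\ref{iso-open}) followed by the observation that the fibre-restriction maps $i_{Z'}$ of equation (\ref{equation-iY}) (functoriality along $F\hookrightarrow(\N\times\Si\times\Si)\rt F$, then Morita equivalence) invert $\upsilon^{Z}_{F,\Si,\A,*}$ on each $\gsin$-compact piece --- this is where local injectivity of the anchor map is used, and it applies directly to an arbitrary such $Z$, not merely to slices. The cover-and-five-lemma induction you describe is essentially the architecture of the \emph{next} statement (Lemma \ref{lemma-upsilon-iso}, induction over the skeleta of a $\gsifp$-simplicial complex), for which the present lemma is the dimension-$0$ base case; importing that architecture here misplaces the difficulty, and the step it rests on is left unproved.
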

\begin{proof}
According to \cite{tucoarsegroupoid}, since $\aci$ is indeed a $C(\beta_{\N\times\Si}^{0})$-algebra, there is an isomorphism
\begin{equation}\label{iso-open}
\lim_{Z'}:KK^\gsinf(C_0(Z'),\aci)\longrightarrow KK^\gsinf(C_0(Z),\aci)\end{equation}
where
\begin{itemize}
 \item in the inductive limit of the left hand side, $Z'$ runs through $\gsin$-compact and $F$-invariant subsets of $Z_{/\beta_{\N\times\Si}^{0}}$.
\item the map is then induced by the inclusion $Z'\hookrightarrow Z$.
\end{itemize}
Under the identification of equation (\ref{iso-open}), the bunch of maps defined by equation (\ref{equation-iY})
 $$i_{Z'}:KK^\gsinf_*(C_0(Z'),\aci)\longrightarrow \prod_{i\in\N}KK^F_*(C(X_i^{Z'}), A_i),$$ where
$Z'$ runs through $\gsin$-compact and $F$-invariant subsets of $Z_{/\beta_{\N\times\Si}^{0}}$ provides an inverse for
$\upsilon^{Z}_{F,\Si,\A,*}$. 
% If $z$ is an element in some $KK^\gsinf(C_0(Z'),\aci)$ for $Z'$ 
% a $\gsin$-compact and $F$-invariant subsets of $Z_{/\beta_{\N\times\Si}^{0}}$, then 
% $ \Lambda_{\X^{Z'}}^{Z',*}(\tau^\infty_{C_0(\Si)}(i_{Z',*}(z)))\in KK^\gsinf(C_0(Z_{\X^{Z'}}),\aci)$ is mapped to $z$ under the map
% $$KK^\gsinf(C_0(Z_{\X^{Z'}}),\aci)\to KK^\gsinf(C_0(Z'),\aci)$$ induced by the inclusion $Z_{\X^{Z'}}\hookrightarrow Z'$ and hence gives
% rise to the same element in the inductive limit of the left hand side of equation (\ref{iso-open}). Hence $\upsilon^{Z}_{F,\Si,\A,*}$ is also right 
% invertible.
\end{proof}

Since for any compact subset $K$ of $\gsin$, there exists $r>0$ such that  $P_K(\gsin)\subset P_r(\gsin)$, we get that
\begin{equation}\label{equation-lhs}
 K^{top,\infty}_*(F,\Si,\A)=\lim_KK^{F,\infty}_*(P_K(\gsin),\A),\end{equation}
 where in the right hand side, $K$ runs through compact $F$-invariant subsets of $\gsi$, and 
the inductive limit is taken under the maps  induced by  inclusions  $P_K(\gsin)\hookrightarrow  P_{K'}(\gsin)$  corresponding to 
relative pairs  $(K,K')$ of
$F$-invariant compact subset of $\gsin$.
The maps  $$\upsilon^{P_K(\gsin)}_{F,\Si,\A,*}: K^{F,\infty}_*(P_K(\gsin),\A)\to K^\gsinf_*(C_0(P_K(\gsin)),\aci)$$ are then obviously 
compatible with the inductive limit of
equation (\ref{equation-lhs}) and hence gives rise to a morphism
 \begin{equation*}\upsilon_{F,\Si,\A,*}:  K^{top,\infty}_*(F,\Si,\A) \to K^{top}_*(\gsinf,\aci).\end{equation*}

We end this subsection by proving that $\upsilon_{F,\Si,\A,*}$ is an isomorphism. The idea is to use the simplicial structure  of $P_K(\gsin)$
to carry out a Mayer-Vietoris argument. In order to do that, we need first to reduce to the case of a 
second-countable and \'etale groupoid. 
Recall from \cite[Lemma 4.1]{sty} that there exists 	a second countable {\'e}tale groupoid $G'_\Si$ with compact base space $\bsp$ and an action 
of $G'_\Si$ on $\beta_\Si$ such that $\gsi=\beta_\Si\rt G'_\Si$. The groupoid $G'_\Si$ then acts on  $\beta_{\N\times\Si}$ through the action of 
$\gsi$ and $\beta_{\N\times\Si}\rt \gsi=\beta_{\N\times\Si}\rt G'_\Si$.  For any subset $X$ of a $\gsip$-space, 
let us set $X^\N_\Si=\bns\times_{\bsp} X$. 
If $\Si$ is provided with an action by isometries of  a finite group $F$, then
$G'_\Si$ can be choosen provided with an action of $F$ by automorphisms that make the action on $\beta_{\Si}$ and hence on 
 $\beta_{\N\times\Si}$  equivariant. If we set then $\gsifp=G'_\Si\rt F$,   then 
for any family $\A=(A_i)_{i\in\N}$ of $F$-algebra, 
 $\ac$ is a $\gsifp$-algebra. Let $Y$ be a locally compact space equipped
with a proper and cocompact action of $\gsifp$. Then the map $$KK_*^\gsinf(C_0(\bnsy),\ac)\to KK^\gsifp_*(C_0(Y),\ac)$$ obtained by forgetting
the $C(\bns)$-action is an isomorphism.  Moreover, up to the identification 
$\ac\rtr\gsinf\cong \ac\rtr\gsifp$, then
the Baum-Connes conjecture for $\gsinf$ and for $\gsifp$  are  for the coefficient $\ac$ equivalent \cite{sty}. 
For any compact subset $K$ of $\gsip$, we have a natural identification 
$$P_{K^\N_\Si}(\gsi)\cong  P_{ K}(\gsip)_\Si^\N.$$
Fix for a compact subset $K$ of $\gsip$ a positive number $r$ such that 
$P_{K^\N_\Si}(\gsi)\subset P_r(\gsin)$. Let $Y$ be a $\gsifp$-invariant closed subset of $P_{ K}(\gsip)$ and let
$\X=(X_i)_{i\in\N}$ be a family of $F$-invariant compact subset of $P_r(\gsin)$ such that
$Z_\X\subset \bnsy$ and let us consider the composition
$${\Lambda'}_Y^\X:C_0(Y)\longrightarrow C_0(\bnsy)\stackrel{\Lambda_\bnsy^\X}{\longrightarrow}\CC_{\X,C_0(\Si)},$$ where the first 
map of the composition is induced by the projection
$\bnsy\to Y$. Let us also consider the relative version: let  $(Y,Y')$ be   a relative pair of $\gsifp$-invariant closed subsets
of  $P_{ K}(\gsip)$  and let $(\X,\X')$ be a relative family of compact $F$-invariant subsets of 
$P_r(\gsin)$ such that
$Z_\X\subset \bnsy$ and $Z_{\X}'\subset \bnsyp$, define
$${\Lambda'}_{Y,Y'}^\X:C_0(Y'\setminus Y)\longrightarrow \CC_{\X,\X',C_0(\Si)}$$ as the restiction of
${\Lambda'}_Y^\X$ to $C_0(Y'\setminus Y')$. Let us then proceed as we did to define $\upsilon^{Y,Y'}_{F,\Si,\A,*}$ in equation (\ref{equation-iY}).

If $\tau'^\infty_{C_0(\Si)}(\bullet)$ stands for the restriction of  
$\tau^\infty_{C_0(\Si)}(\bullet)$ to $KK^\gsifp_*(\bullet,\bullet)$, then the map
\begin{eqnarray*}
 \prod_{i\in\N} KK^F_*(C_0(X'_i\setminus {X'_{i,/\bnsy}}),A_i)&\longrightarrow &
KK^\gsifp_*(C_0(Y'\setminus Y),\aci)\\
z=(z_i)_{i\in\N}&\mapsto&{ \Lambda'}_{\X,\X'}^{Y,Y',*}(\tau'^\infty_{C_0(\Si)}(z)))
\end{eqnarray*}
is 
compatible with family of inclusions
$(X_i\hookrightarrow X'_i)_{i\in\N}$ of $F$-invariant compact subset. Hence, taking the inductive limit under family of $F$-invariant compact subset of $Y'$,
we end up as in equation (\ref{equation-iY}) with a morphism
$$\upsilon'^{Y,Y'}_{F,\Si,\A,*}: K^{F,\infty}_*(\bnsy,\bnsyp,\A)\longrightarrow  KK^\gsifp_*(C_0(Y'\setminus Y),\aci).$$  
 which is indeed the composition
\begin{equation*}\begin{split}K^{F,\infty}_*(\bnsy,\bnsyp,\A)\stackrel{\upsilon^{Y,Y'}_{F,\Si,\A,*}}{\longrightarrow}
 KK^\gsinf_*(C_0({Y'_\Si}^\N\setminus Y^\N_\Si),&\aci)\\&\longrightarrow  KK^\gsifp_*(C_0(Y'\setminus Y),\aci),\end{split}\end{equation*}
where the second map is induced 
by the projection
$\bnsyp\to Y'$.                                   We set 
also  $\upsilon'^Y_{F,\Si,\A,*}$  for $\upsilon'^{\emptyset,Y}_{F,\Si,,\A,*}$
\begin{lemma}\label{lemma-upsilon-iso}
 Let $Y$ be a $\gsifp$-simplicial complexe in sense of   \cite[Definition 3.7]{tucoarsegroupoid} lying in some 
$P_K(\gsifp)$ for $K$ a compact subset of $\gsifp$. Then for any family $\A=(A_i)_{i\in\N}$ of $F$-algebras, 
$$\upsilon^{\bnsy}_{F,\Si,\A,*}: K^{F,\infty}_*(\bnsy,\A)\longrightarrow KK^\gsinf_*(C_0(\bnsy),\aci)$$ is an isomorphism.
\end{lemma}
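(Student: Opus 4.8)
The plan is to prove the equivalent statement that ${\upsilon'}^{Y}_{F,\Si,\A,*}$ is an isomorphism, since, as recalled just above, $\upsilon^{\bnsy}_{F,\Si,\A,*}$ differs from it only by the forgetful isomorphism $KK^\gsinf_*(C_0(\bnsy),\aci)\cong KK^\gsifp_*(C_0(Y),\aci)$. Working with $\gsifp$, which is second countable, \'etale and has compact unit space $\bsp$, is what lets us exploit the $\gsifp$-simplicial structure of $Y$ in the sense of \cite[Definition 3.7]{tucoarsegroupoid}.

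First I would set up the skeletal filtration $Y^{(0)}\subset Y^{(1)}\subset\cdots\subset Y^{(n)}=Y$ of $Y$, which terminates since $Y\subset P_K(\gsifp)$ is finite dimensional, and run a Mayer--Vietoris induction on the dimension. The two functors in play,
$$Y'\longmapsto K^{F,\infty}_*((Y')^\N_\Si,\A)\quad\text{and}\quad Y'\longmapsto KK^\gsinf_*(C_0((Y')^\N_\Si),\aci),$$
each carry a relative pair $(Y^{(k-1)},Y^{(k)})$ into a six-term exact sequence, and the relative comparison maps ${\upsilon'}^{Y^{(k-1)},Y^{(k)}}_{F,\Si,\A,*}$ together with ${\upsilon'}^{Y^{(k-1)}}_{F,\Si,\A,*}$ and ${\upsilon'}^{Y^{(k)}}_{F,\Si,\A,*}$ assemble into a morphism of such sequences. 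For the second functor the six-term sequence is the usual half-exactness of $KK$-theory applied to the semi-split extension $0\to C_0(Y^{(k)}\setminus Y^{(k-1)})\to C_0(Y^{(k)})\to C_0(Y^{(k-1)})\to 0$; for the first functor the analogous sequence comes from half-exactness of $KK^F$ together with the exactness of the inductive limits defining $K^{F,\infty}_*$.

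The key input is that the comparison map is an isomorphism on each relative stratum. Here I would observe that $Y^{(k)}\setminus Y^{(k-1)}$ is a disjoint union of open $k$-simplices, on whose interiors the restriction of the anchor map for the action of $\gsin$ on $P_K(\gsin)$ is locally injective; indeed within a single simplex the range map separates the supporting points in the sense required by Lemma \ref{lemma-loc-inj}. Applying the (locally compact, relative) form of Lemma \ref{lemma-loc-inj} to $(Y^{(k)}\setminus Y^{(k-1)})^\N_\Si$ then shows that ${\upsilon'}^{Y^{(k-1)},Y^{(k)}}_{F,\Si,\A,*}$ is an isomorphism. The base case $k=0$ is covered directly by Lemma \ref{lemma-loc-inj}, the anchor being locally injective on the discrete $0$-skeleton.

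With these pieces in hand the induction closes: assuming ${\upsilon'}^{Y^{(k-1)}}_{F,\Si,\A,*}$ is an isomorphism, the five lemma applied to the morphism of six-term sequences, in which the relative map is an isomorphism by the previous step, gives that ${\upsilon'}^{Y^{(k)}}_{F,\Si,\A,*}$ is an isomorphism; after $n$ steps we reach $Y=Y^{(n)}$. The step I expect to be the main obstacle is producing the six-term exact sequence for the left-hand functor $K^{F,\infty}_*$ and checking that ${\upsilon'}$ is genuinely natural with respect to the connecting maps on both sides, since once this naturality is secured the five-lemma argument is routine. A secondary technical point, which must be handled with care, is verifying that the open-simplex strata really meet the local injectivity hypothesis of Lemma \ref{lemma-loc-inj}, which amounts to unwinding the $\gsifp$-simplicial structure of $Y$ inside $P_K(\gsifp)$.
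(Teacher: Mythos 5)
Your overall architecture --- passing to ${\upsilon'}^{Y}_{F,\Si,\A,*}$ via the forgetful isomorphism, inducting on the skeleta $Y_0\subset Y_1\subset\cdots\subset Y_n=Y$, intertwining the two long exact sequences by naturality and closing with the five lemma, with the base case handled by Lemma \ref{lemma-loc-inj} --- is the same as the paper's. But the step you treat as nearly automatic, the isomorphism on the relative strata, contains a genuine error. You assert that on the open strata $Y^{(k)}\setminus Y^{(k-1)}$ ``the range map separates the supporting points,'' so that a (relative, locally compact) form of Lemma \ref{lemma-loc-inj} applies. This is exactly backwards. A point of $P_K(\gsin)$ is a probability measure whose support consists of elements of $\gsin$ with a \emph{common} range, and the anchor map sends the measure to that common range; consequently the anchor map is \emph{constant} on each open simplex, since all points of an open $k$-simplex are convex combinations supported on the same vertex set. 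For $k\gq 1$ the anchor map is therefore as far from locally injective as possible on $Y^{(k)}\setminus Y^{(k-1)}$, and no variant of Lemma \ref{lemma-loc-inj} can be invoked there. (A secondary point: that lemma is stated for closed invariant subsets, so an ``open/relative'' version would in any case have to be formulated and proved; but the hypothesis itself is what fails.)

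This failure is precisely what the paper's proof is organized around. After reducing to the case where $Y$ is typed and the action is type-preserving, one has a $\gsifp$-equivariant identification $Y_n\setminus Y_{n-1}\cong Y'\times \intdt$, where $Y'$ is the set of barycenters of the $n$-simplices --- a $\gsifp$-simplicial complex of dimension $0$, to which Lemma \ref{lemma-loc-inj} \emph{does} apply --- and $\intdt$ is the open standard simplex. The relative stratum is then handled by Bott periodicity rather than by local injectivity: one takes the element $[\partial_{Y_{n-1},Y_n}]$ of $KK^\gsifp_*(C_0(Y'),C_0(Y_n\setminus Y_{n-1}))$ implementing the Bott isomorphism, its familywise analogue $[\tilde{\partial}]=([\partial_i])_{i\in\N}$ on the left-hand side, and proves --- this is the bulk of the paper's argument, resting on the equality (\ref{equ-boundaries-commutes}) and a computation with Kasparov products --- that these Bott isomorphisms intertwine ${\upsilon'}^{Y_{n-1},Y_n}_{F,\Si,\A,*}$ with ${\upsilon'}^{Y'}_{F,\Si,\A,*}$. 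Only then does the five lemma close the induction. So the naturality of the six-term sequences, which you single out as the main obstacle, is in fact the comparatively routine part; the essential missing idea in your proposal is the reduction of the open strata to the $0$-dimensional barycenter complex by an equivariant Bott periodicity argument compatible with $\upsilon'$ on both sides.
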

\begin{proof}Notice first that as we have already mentionned, this is equivalent to prove that
 $\upsilon'^{Y}_{F,\Si,\A,*}: K^{F,\infty}_*(\bnsy,\A)\longrightarrow  KK^\gsifp_*(C_0( Y),\aci)$ is an isomorphism.
Let  us prove the result by induction on the dimension of the $\gsifp$-simplicial complexe $Y$.
If $Y$ has dimension $0$,  the anchor map for the action of $\gsinf$ is locally injective and hence, the result is consequence of 
lemma \ref{lemma-loc-inj}.
We can assume without loss of generality that $Y$ is typed and that the action of $\gsifp$ is typed preserving.
Let $Y_0\subset Y_1\subset \ldots Y_n=Y$ be the skeleton of $Y$, and assume that we have proved that
$$\upsilon^{\bnsynm}_{F,\Si,\A,*}: K^{F,\infty}_*(\bnsynm,\A)\longrightarrow K^\gsinf_*(C_0(\bnsynm),\aci)$$ is an isomorphism.
Since $Y$ is second countable, the inclusion $Y_{n-1}\hookrightarrow Y_n$ gives rise to a long exact sequence
\begin{equation*}\begin{split}
  \ldots \longrightarrow KK_i^\gsifp(&C_0(Y_{n-1}),\aci)\longrightarrow KK_i^\gsifp(C_0(Y_{n}),\aci)\longrightarrow \\
 & KK_i^\gsifp(C_0(Y_{n}\setminus Y_{n-1}),\aci)\longrightarrow KK_{i-1}^\gsifp(C_0(Y_{n-1}),\aci)
\longrightarrow\ldots\end{split}\end{equation*}
In the same way, we have a long exact sequence
\begin{equation*}\begin{split}
  \ldots \longrightarrow K_i^{F,\infty}(&\bnsynm,\A)\longrightarrow K_i^{F,\infty}(\bnsyn,\A)\longrightarrow \\
 & K_i^{F,\infty}(\bnsynm,\bnsyn,\A)\longrightarrow K_{i-1}^{F,\infty}(\bnsynm,\A)
\longrightarrow\ldots\end{split}\end{equation*}
  By naturality of the  morphisms ${\Lambda'}^{\bullet,\bullet}_ {\bullet,\bullet}$ and $\tau'^\infty_{C_0(\Si)}(\bullet)$, these 
 two long exact sequences 
are intertwinned by the maps $\upsilon^{\bullet,\bullet}_{F,\Si,\A,*}$. Using a five lemma argument, the proof of the result
amouts to show that 
$$ {\upsilon'}^{\bnsynm,\bnsyn}_{F,\Si,\A,*}:    
K_*^{F,\infty}(\bnsynm,\bnsyn,\A) \longrightarrow KK_*^\gsifp(C_0(Y_{n}\setminus Y_{n-1}),\aci)$$
is an isomorphism. Let $Y'$ be the the set of centers of $n$ simplices of $Y$. Since the action of $\gsifp$ is type preserving,
we have a $\gsifp$-equivariant identification \begin{equation}\label{equ-identification-interior}
  Y_n\setminus Y_{n-1}\cong Y'\times \intdt,   \end{equation} where $\intdt$ is the interior of the standard simplex, and
 where the action of $\gsifp$ on $Y'\times \intdt$ is diagonal through $Y'$.  Let then $[\partial_{Y_{n-1},Y_n}]$ be the element
of $KK^\gsifp_*(C_0(Y'), C_0(Y_{n}\setminus Y_{n-1}))$ that implements up to the identification of equation (\ref{equ-identification-interior}) the Bott 
periodicity isomorphism. We can assume without loss of generality that in the definition of
 $KK_*^{F,\infty}(\bnsynm,\bnsyn,\A)$, the inductive limit is taken over families $\X=(X_i)_{i\in\N}$ of $F$-invariant compact subsets of some  $P_r(\Si)$
such that
\begin{itemize}
  \item $X_i$ is  for every integer $i$ a finite union of $n$-simplices with respect to the simplicial structure inherited from $Y$.
 \item  $Z_\X\subset \bnsyn$.
\end{itemize}
Let $\X$ be such a family and let $X'_i$ be  for every integer $i$ the set of centers of $n$-simplices of $X_i$.
Let us set then $\X'=(X'_i)_{i\in\N}$. Since the action of  $F$ is type preserving, we have a $F$-equivariant identification
\begin{equation}\label{equ-identifiaction-simplex-X}
 X_i\setminus {X_i}_{/\bnsynm}\cong X'_i\times\intdt, \end{equation}the action of $F$ on $\intdt$ being trivial.
Let $[\partial_i]$ be the element of $KK^F_*(C(X'_i), C_0(X_i\setminus {X_i}_{/\bnsynm}))$ that implements up to this identification the Bott
periodicity isomorphism and set then 
$$[\tilde{\partial}]=([\partial_i])_{i\in\N}\in \prod_{i\in\N}KK^F_*(C(X'_i), C_0(X_i\setminus {X_i}_{/\bnsynm})).$$
The Bott generator of $KK_*(\C,C_0(\intdt)))$ can be represented by a $K$-cycle $(C_0(\intdt,\C^l),\phi,T)$ for some integer $l$,
 where $m$ is the obvious
representation of $\C$ on $C_0(\intdt,\C^l)$ by scalar multiplication, and $T$ is an adjointable operator on $C_0(\intdt,\C^l)$ that satisfies the
$K$-cycle conditions. Then for any integer $i$, the element $[\partial_i]$ of $KK^F_*(C(X'_i), C_0(X_i\setminus {X_i}_{/\bnsynm}))$ can be
represented by the $K$-cycle $$(C_0(X'_i\times \intdt,\C^l), \phi_i,Id_{C(X'_i)}\ts T),$$ where $C_0(X'_i\times \intdt,\C^l)$ is viewed 
as a right $C_0(X_i\setminus {X_i}_{/\bnsynm})$-Hilbert module by using the identification of equation (\ref{equ-identifiaction-simplex-X}) 
and $\phi_i$ is the obvious diagonal representation of $C_0(X'_i)$ on $C_0(X'_i\times \intdt,\C^l)$.
Let $[\partial_{\X,C_0(\Si)}]$ be the class of the $K$-cycle 
$(\prod_{i\in\N}C_0(\Si\times X'_i\times\intdt,\C^l), \prod_{i\in\N}   Id_{C_0(\Si)}\ts \phi_i,\prod_{i\in\N} Id_{C_0(\Si\times X'_i)}\ts T)$ in 
$KK^\gsifp_*(\CC_{\X',C_0(\Si)}, \CC_{\X_{/\bnsynm},\X,C_0(\Si)})$. Then we have
\begin{equation}\label{equ-boundaries-commutes}
{\Lambda'}_{Y'}^{\X',*}([\partial_{\X,C_0(\Si)}])={\Lambda'}_{Y_{n-1},Y_n,*}^{\X_{/\bnsynm},\X}[\partial_{Y_{n-1},Y_n}].
\end{equation}
Let $z=(z_i)_{i\in\N}$ be a family in $\prod_{i\in\N}KK^F(C_0(X_i\setminus {X_i}_{/\bnsynm}),A_i)$.
Then using the characterisation of the Kasparov product (see \cite{kas} and \cite{legall} for the groupoid case), we get that
$${\Lambda'}_{Y'}^{\X',*}([\partial_{\X,C_0(\Si)}])\ts \tau^\infty_{C_0(\Si)}(z)={\Lambda'}_{Y'}^{\X',*}( \tau^\infty_{C_0(\Si)}( [\tilde{\partial}]\ts z)).$$
This in turn implies that
\begin{eqnarray*}
 [\partial_{Y_{n-1},Y_n}]\ts {\Lambda'}_{Y_{n-1},Y_n}^{\X_{/\bnsynm},\X,*}( \tau^\infty_{C_0(\Si)}(z))
&=& {\Lambda'}_{Y_{n-1},Y_n,*}^{\X_{/\bnsynm},\X}([\partial_{Y_{n-1},Y_n}])\ts  \tau^\infty_{C_0(\Si)}(z)\\
&=& {\Lambda'}_{Y'}^{\X',*}([\partial_{\X,C_0(\Si)}])\ts  \tau^\infty_{C_0(\Si)}(z)\\
&=&{\Lambda'}_{Y'}^{\X',*}( \tau^\infty_{C_0(\Si)}( [\tilde{\partial}]\ts z)),
\end{eqnarray*}
where the first equality is a consequence of bifunctoriality of Kasparov product and the second equality holds by equation
 (\ref{equ-boundaries-commutes}).
From this, we get the existence of   a commutative diagram
$$\begin{CD}
K_*^{F,\infty}(\bnsynm,\bnsyn,\A)@> \cong >> K_*^{F,\infty}(\bnsyp,\A) \\
         @V   {\upsilon'}^{Y_{n-1},Y_n}_{F,\Si,\A,*} VV
         @VV {\upsilon'}^{Y'}_{F,\Si,\A,*} V\\
 KK^{\gsifp}_*(C_0(Y_n\setminus Y_{n-1}),\aci)@>[\partial_{Y_{n-1},Y_n}]\ts>> KK^{\gsifp}_*(C_0(Y'),\aci) 
\end{CD},$$
where the top row is obtained  by taking inductive limit over morphisms
 $$([\partial_i]\ts: KK^F_*(C_0(X_i\setminus {X_i}_{/\bnsynm}),A_i)\stackrel{\cong}{\longrightarrow} KK^F_*(C(X'_i), A_i))_{i\in\N}$$
relative to families  $\X=(X_i)$ of $F$-invariant  compact subset of some $P_r(\Si)$ such that
\begin{itemize}
  \item $X_i$ is  for every integer $i$ a finite union of $n$-simplices.
 \item  $Z_\X\subset \bnsyn$.
\end{itemize}
Since  $Y'$ is a $\gsifp$-simplicial complex of degree $0$ and as we have already seen,   $\upsilon'^{Y'}_{F,\Si,\A,*}$ is an isomorphism, and hence
$\upsilon'^{Y_{n-1},Y_n}_{F,\Si,\A,*}$ is an isomorphism. From this we deduce that  $\upsilon'^{Y_n}_{F,\Si,\A,*}$ 
is an isomorphism and hence that  $\upsilon^{\bnsyn}_{F,\Si,\A,*}$ is an isomorphism.

%           and let $(A_i\ts\H,\psi_i,T_i)$ be 
%for each integer $i$  a $K$-cycle that represents $z_i$, with $\psi_i$ a $F$-equivariant represensation of $C_0(X_i\setminus {X_i}_{/\bnsynm})$ 
%on the right 
%$A_i$-Hilbert module $A_i\ts\H$ (where $\H$ is a separable Hilbert space) and $T-i$ is a $F$-equivariant adjointable
%operator on $A_i\ts\H$ that satisfies the $K$-cycle conditions. Let $T'_i$ 
 %be each integer $i$ a  $F$-equivariant adjointable
%operator on $A_i\ts\H\ts C_0(\intdt)^l$ such that 
%\begin{itemize}
 %\item $(A_i\ts\H\ts C_0(\intdt)^l, \psi_i\circ (Id_{C(X'_i)}\ts \phi)
%\end{itemize}

\end{proof}
\begin{corollary}\label{cor-upsilon-iso}
 Let $\Si$ be a discrete metric space provided with an  action of a finite group $F$ by isometries and let $\A=(A_i)_{i\in\N}$ be a family 
of $F$-algebras. Then,
$$ \upsilon_{F,\Si,\A,*}: K_*^{{top},\infty}(F,\Si,A)\longrightarrow K^{top}_*(\gsinf,\aci)$$ is an isomorphism.
\end{corollary}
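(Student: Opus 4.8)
The plan is to deduce this corollary from Lemma \ref{lemma-upsilon-iso} by a cofinality argument at the level of inductive limits, after reducing everything to the second countable {\'e}tale groupoid $\gsifp$. First I would observe that both sides of $\upsilon_{F,\Si,\A,*}$ are expressed as inductive limits over the directed set of compact $F$-invariant subsets $K$ of $\gsin$: the source is $\lim_K K^{F,\infty}_*(P_K(\gsin),\A)$ by equation (\ref{equation-lhs}), and the target is $\lim_K KK^\gsinf_*(C_0(P_K(\gsin)),\aci)$ by the very definition of topological $K$-theory for the $\si$-compact {\'e}tale groupoid $\gsinf$. Since $\upsilon_{F,\Si,\A,*}$ is by construction the morphism induced on these limits by the compatible family $\upsilon^{P_K(\gsin)}_{F,\Si,\A,*}$, it suffices to show that $\upsilon^{P_K(\gsin)}_{F,\Si,\A,*}$ is an isomorphism for $K$ ranging over a cofinal subfamily.

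Next I would exhibit such a cofinal family using the reduction of \cite[Lemma 4.1]{sty}, namely $\gsin=\bns\rt G'_\Si$ with $\gsinf$ corresponding to $\gsifp=G'_\Si\rt F$. For a compact $F$-invariant subset $K$ of $\gsip$ we have the natural identification $P_{K^\N_\Si}(\gsin)\cong\bnsy$ with $Y=P_K(\gsifp)$. Because every compact subset of $\gsin$ is contained in some $\supp\chi_r$, and the latter is of the form $K^\N_\Si$, the complexes $P_{K^\N_\Si}(\gsin)$ are cofinal among all the $P_{K_0}(\gsin)$, so it is enough to treat these. For each such $K$, the space $Y=P_K(\gsifp)$ is a finite-dimensional $\gsifp$-simplicial complex lying in $P_K(\gsifp)$, so Lemma \ref{lemma-upsilon-iso} applies and $\upsilon^{\bnsy}_{F,\Si,\A,*}$ is an isomorphism. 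Under the identification just recalled this map is exactly $\upsilon^{P_{K^\N_\Si}(\gsin)}_{F,\Si,\A,*}$, and passing to the inductive limit over the cofinal family then yields that $\upsilon_{F,\Si,\A,*}$ is an isomorphism.

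The main obstacle I expect lies in the second step: carefully checking that the subsets of the form $K^\N_\Si$ really are cofinal among compact $F$-invariant subsets of $\gsin$, and that the identification $P_{K^\N_\Si}(\gsin)\cong\bnsy$ is compatible with the structure maps of both directed systems so that it genuinely intertwines $\upsilon^{P_{K^\N_\Si}(\gsin)}_{F,\Si,\A,*}$ with $\upsilon^{\bnsy}_{F,\Si,\A,*}$. One must also confirm that $P_K(\gsifp)$ meets the hypotheses of Lemma \ref{lemma-upsilon-iso}, i.e.\ that it is a finite-dimensional $\gsifp$-simplicial complex in the sense of \cite[Definition 3.7]{tucoarsegroupoid}; this uses the bounded geometry of $\Si$ together with the properness and cocompactness of the action recorded in \cite[Proposition 3.1]{tucoarsegroupoid}. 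Once these compatibilities are in place, the remainder is a routine passage to the limit.
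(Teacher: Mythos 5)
Your proposal coincides with the paper's (implicit) argument: the paper states this corollary as an immediate consequence of Lemma \ref{lemma-upsilon-iso}, the unwritten step being exactly the cofinality-and-limits reasoning you give — both sides are inductive limits over compact $F$-invariant subsets of $\gsin$, the subsets of the form $K^\N_\Si$ with $K$ compact and $F$-invariant in $\gsip$ are cofinal, and on these the map $\upsilon^{P_{K^\N_\Si}(\gsin)}_{F,\Si,\A,*}$ is identified, via $P_{K^\N_\Si}(\gsin)\cong P_K(\gsip)^\N_\Si$, with the map $\upsilon^{\bnsy}_{F,\Si,\A,*}$ that the lemma shows to be an isomorphism. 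Your write-up is correct and simply makes explicit what the paper leaves to the reader.
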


\subsection{The assembly map for the action of $\gsif$ on $\aci$}
The aim of this subsection is to show that up to the identifications provided on the left hand side by corollary \ref{cor-upsilon-iso} and
on the right hand side by equation (\ref{equ-identification-crossed-product}), then the maps 
 $$\mu_{\gsinf,\aci,*}:K^{top}_*(\gsinf,\aci)\to K_*(\aci \rtimes_r \gsinf)$$ and 
 $$\nu_{F,\Si,\A,*}^{\infty}:K_*^{{top},\infty}(F,\Si,\A){\longrightarrow}
K_*(\A^{\infty}_{\Si}\rtimes F)$$  coincide. 

\smallskip

Fix a rank one projection $e$ in $\K(\H)$ and let us define $\jmath:\C\to\K(H);\lambda\mapsto \lambda e$. For any family of $C^*$-algebras
$\A=(A_i)_{i\in\N}$, let us consider the family of homomorphisms $(\jmath_\A=\jmath\ts Id_{A_i}: A_i\to A_i\ts\K(\H))_{i\in\N}$.
\begin{proposition}\label{prop-funct-tau}
For any families of $F$-algebras  $\A=(A_i)_{i\in\N}$ and $\B=(B_i)_{i\in\N}$ and any element $z=(z_i)_{i\in\N}$ in
$\prod_{i\in\N}KK^F_*(A_i,  B_i)$, we have a commutative diagram
$$\begin{CD}
 K_*(\ac\rtr\gsinf)@>\ts J_{\gsinf}(\tau_{C_0(\Si)}^\infty(z))>>K_*(\bci\rtr\gsinf)\\
         @V \jmath_{\A,F,\Si,*}\circ  \II_{F,\Si,\A,*} VV
         @VV  \II_{F,\Si,\B^\infty,*}V\\
 K_*(\A^\infty_\Si\rtr F) @>\tau^\infty_{F,\Si}(z)>> K_*(\B^\infty_\Si\rtr F)
\end{CD}$$
where  up to the identifications $K_*(\A^\infty_\Si\rtr F)\cong K_*(\A^\infty_{F,\Si})$ and 
$K_*(\B^\infty_\Si\rtr F)\cong K_*(\B^\infty_{F,\Si})$,  the morphism $\tau^\infty_{F,\Si}(z)$ is induced in $K$-theory by the controlled morphism
 $\TT^\infty_{F,\Si}(z):\K(\A^\infty_{F,\Si})\to \K(\B^\infty_{F,\Si})$.
 \end{proposition}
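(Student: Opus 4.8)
The plan is to reduce the commutativity of the square to a single identity in $KK$-theory and then to verify that identity at the level of representing Kasparov cycles, just as the group-theoretic analogue was settled in Lemma \ref{lem-comp-tau-J}. Every arrow of the diagram is, on $K$-theory, a right Kasparov product: the top arrow is the product with $J_{\gsinf}(\tau_{C_0(\Si)}^\infty(z))\in KK_*(\ac\rtr\gsinf,\bci\rtr\gsinf)$; the vertical isomorphisms $\II_{F,\Si,\A,*}$ and $\II_{F,\Si,\B^\infty,*}$ are push-forwards along the $*$-isomorphisms of equation (\ref{equ-iso-cross-product}), hence products with their $KK$-classes; the map $\jmath_{\A,F,\Si,*}$ is the product with the class of the corner inclusion; and by Theorem \ref{thm-tensor-F}(i), applied factorwise and combined through Lemma \ref{lemma-prod}, the controlled morphism $\TT_{F,\Si}^\infty(z)$ induces in $K$-theory the right product with a class $\tau_{F,\Si}^\infty(z)\in KK_*(\A^\infty_\Si\rtr F,\B^\infty_\Si\rtr F)$ (transported along the Morita identification of Lemma \ref{lem-morita-inf}). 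By associativity and functoriality of the Kasparov product it therefore suffices to prove
$$J_{\gsinf}(\tau_{C_0(\Si)}^\infty(z))\ts[\II_{F,\Si,\B^\infty}]=[\II_{F,\Si,\A}]\ts[\jmath_{\A,F,\Si}]\ts\tau_{F,\Si}^\infty(z)$$
in $KK_*(\ac\rtr\gsinf,\B^\infty_\Si\rtr F)$. The extra factor $[\jmath_{\A,F,\Si}]$ is forced by the fact that the source coefficient algebra $\ac$ carries no copy of $\K(\H)$, while $\A^\infty_\Si$ does, so that $\II_{F,\Si,\A}$ identifies $\ac\rtr\gsinf$ with $\A_\Si\rtr F$ rather than with $\A^\infty_\Si\rtr F$; this is the only discrepancy with the formally identical Lemma \ref{lem-comp-tau-J}.

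To prove this identity I would start from a common representing cycle. Fix, as in the definitions of $\tau_{C_0(\Si)}^\infty$ and of $\TT_{F,\Si}^\infty$, a family of $F$-equivariant cycles $(\pi_i,T_i,\ell^2(F)\ts\H\ts B_i)$ representing the $z_i$. Both $\tau_{C_0(\Si)}^\infty(z)$ and the class $\tau_{F,\Si}^\infty(z)$ underlying $\TT_{F,\Si}^\infty(z)$ are manufactured tautologically from this one family: the former as the $\gsinf$-equivariant cycle $(\pi_{C_0(\Si)}^\infty,T_{C_0(\Si)}^\infty,\prod_{i\in\N}C_0(\Si,\ell^2(F)\ts\K(\H)\ts B_i))$, the latter from the restriction of $\prod_iT_i$ to the module of $F$-invariant vectors in $\H\ts\ell^2(F)\ts\B^\infty\ts\K(\ell^2(\Si))$. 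Since $\gsinf$ is $\sigma$-compact and \'etale, and since by \cite[Lemma 4.1]{sty} it may be replaced by the second countable \'etale model $\gsifp=G'_\Si\rt F$ without altering either the crossed products or the relevant $KK$-classes, I may invoke Le Gall's groupoid descent \cite{legall} and describe $J_{\gsinf}(\tau_{C_0(\Si)}^\infty(z))$ concretely as the Kasparov product implemented on a completion of the compactly supported sections, over $\gsinf$, of the above $\gsinf$-equivariant module.

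The comparison is then performed through the concrete faithful representation of Proposition \ref{prop-iso-cross-product}. Recall that $\II_{\Si,\A}$ realizes $\ac\rtr\gsi$ as $\A_\Si$ acting on $(\oplus_{i\in\N}A_i)\ts\ell^2(\Si)$, with $f\in\ac$ acting pointwise and each $\chi_E$ acting by the partial translation operator $T_E$; taking $F$-fixed points gives the isomorphism of equation (\ref{equ-iso-cross-product}). I would transport the descended cycle through $\II_{F,\Si,\A}$ and $\II_{F,\Si,\B^\infty}$ and check term by term that, after pre-composing the source with $\jmath_{\A,F,\Si}$, the resulting module, representation and operator coincide with those defining $\tau_{F,\Si}^\infty(z)$: under the partial translations $V_\Si$ the sections over $\gsinf$ are carried exactly onto the $F$-invariant vectors of $\H\ts\ell^2(F)\ts\B^\infty\ts\K(\ell^2(\Si))$, the representation $\pi_{C_0(\Si)}^\infty$ descends to the pointwise representation of $\A^\infty_\Si$ precomposed with the corner inclusion, and $T_{C_0(\Si)}^\infty$ descends to the invariant-vectors operator. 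This establishes the displayed $KK$-identity.

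The hard part will be precisely this last cycle-level bookkeeping: simultaneously tracking the three tensor legs, namely $\ell^2(F)$ carrying the residual $F$-action by the regular representation, $\K(\H)$ which is present in $\bci$ and $\B^\infty_\Si$ but absent from $\ac$ and hence is responsible for $\jmath_{\A,F,\Si}$, and $\ell^2(\Si)$ which encodes the coarse groupoid $\gsi$; and then verifying that the identification of the section module of $J_{\gsinf}$ with the invariant-vectors module furnished by the $V_\Si$ intertwines the descended operator with the one built from $\prod_iT_i$ and is equivariant for the $F$-action. Granting the $KK$-identity, naturality and associativity of the Kasparov product together with Theorem \ref{thm-tensor-F}(i) identifying the $K$-theoretic effect of $\TT_{F,\Si}^\infty(z)$ yield the commutativity of the square.
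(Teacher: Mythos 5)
There is a genuine gap, and it occurs at the very first step of your reduction. You assert that the bottom arrow of the square is right Kasparov product with a class $\tau^\infty_{F,\Si}(z)\in KK_*(\A^\infty_\Si\rt F,\B^\infty_\Si\rt F)$, obtained ``by Theorem \ref{thm-tensor-F}(i), applied factorwise and combined through Lemma \ref{lemma-prod}''. No such class is constructed in the paper, and it cannot be manufactured this way: Lemma \ref{lemma-prod} is a \emph{controlled isomorphism of quantitative $K$-theories}, and conjugating the factorwise maps $\cdot\ts\tau_{F,\Si}(z_i)$ by it produces a group homomorphism on $K$-theory, not a $KK$-element. This is precisely why Corollary \ref{cor-tensor-infty}, unlike Theorem \ref{thm-tensor-F}, contains no item asserting that $\TT^\infty_{F,\Si}(z)$ induces right multiplication by a $KK$-class. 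The obstruction is real, not cosmetic: the natural candidate cycle (the $F$-invariant part of $\prod_i\H\ts\ell^2(F)\ts\K(\H)\ts B_i\ts\K(\ell^2(\Si))$) retains the Hilbert-space leg $\H$, so it is not finitely generated over the coefficient algebra, and for infinite products one has $\prod_i\K(\E_i)\neq\K\big(\prod_i\E_i\big)$; hence the $K$-cycle compactness axioms fail. (The top arrow survives only because $\ell^2(F)\ts\bci$ \emph{is} finitely generated free over $\bci$ --- the copy of $\K(\H)$ has been absorbed into the coefficients, which is exactly the discrepancy that $\jmath_{\A,F,\Si}$ records.) For the same reason --- non-separability of $\ac\rtr\gsinf$ and of the product algebras --- the associativity of Kasparov products that you invoke to collapse the square into ``a single $KK$-identity'' is not available. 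So both the identity you propose to verify and the machinery by which you reduce to it are missing, and the cycle-level bookkeeping you defer to the end is a comparison against an object that does not exist.

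The paper's proof is structured so as never to need such a product. In the even case it applies Lafforgue's lemma (\cite[Lemma 1.6.9]{laff-inv}) \emph{factorwise}, writing $z_i=\alpha_{i,*}([\beta_i]^{-1})$, and then uses only the established left/right functoriality of $\tau^\infty_{C_0(\Si)}$, $J_{\gsinf}$, $\TT^\infty_{F,\Si}$, $\II_{F,\Si,\bullet}$ and $\jmath_{\bullet,F,\Si}$ to reduce to the case where $z$ is a family of (inverses of) honest homomorphisms; there the square degenerates to the identity $\II_{F,\Si,\A^\infty,*}\circ J_{\gsinf}(\tau^\infty_{C_0(\Si)}([Id_\A]))=\jmath_{\A,F,\Si,*}$. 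In the odd case it realizes each $z_i$, up to Morita equivalence, as the boundary class of a semi-split extension $0\to\K(\H)\ts B_i\to E_i\to A_i\to 0$, and concludes by naturality of the $K$-theoretic boundary map in the diagram of crossed-product extensions intertwined by $\II_{F,\Si,\bullet}$, using the boundary-compatibility items of Theorem \ref{thm-kas} and Corollary \ref{cor-tensor-infty}. If you wish to keep a more computational route, the viable repair is not a cycle comparison against a phantom class but a \emph{factorwise reduction}: since Lemma \ref{lemma-prod} makes $K_*(\B^\infty_{F,\Si})\to\prod_i K_*(B_{i,F,\Si})$ injective, it suffices to check the square after evaluation at each factor $i$, where everything is governed by the separable datum $z_i$ and a single-algebra comparison of descent with $\TT_{F,\Si}(z_i)$ (the groupoid analogue of Lemma \ref{lem-comp-tau-J}) can legitimately be carried out on representing cycles. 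Your proposal, as written, does neither.
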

\begin{proof} Assume first that the family $z=(z_i)_{i\in\N}$ is of even degree.
 According to \cite[Lemma 1.6.9]{laff-inv}, there exists for any integer $i$
\begin{itemize}
 \item a $F$-algebra $A'_i$;
\item two $F$-equivariant homomorphisms $\alpha_i:A'_i\to B_i$ and $\beta_i:A'_i\to A_i$  such that the induced element $[\beta_i]\in KK^F_*(A'_i,A_i)$ 
is invertible and such that $z_i=\alpha_{i,*}([\beta_i]^{-1})$.
\end{itemize}
By naturality of $\jmath_{\bullet,F,\Si}$ and  $\II_{F,\Si,\bullet,*}$ and by left functoriality  of $\tau_{C_0(\Si)}^\infty$, 
$J_{\gsin}$ and $\tau^\infty_{F,\Si}$, we can actually assume that for any integer $i$, then $z_i=[\beta_i]^{-1}$ for a homomorphism 
$\beta_i:B_i\to A_i$ such that the induced element $[\beta_i]\in KK^F_*(B_i,A_i)$  is $KK$-invertible. Let us consider the family of
 homomorphisms $\beta=(\beta_i)_{i\in\N}$. Using the bifunctoriality of 
$\tau^\infty_{F,\Si}$, we see that $\tau^\infty_{F,\Si}(z)$ is an isomorphism  with inverse $\beta^\infty_{F,\Si,*}$.
But then, if we set $[Id_\A]=([Id_{A_i}])_{i\in\N}$, using once again the naturality of $\jmath_{\bullet,F,\Si}$ and  $\II_{F,\Si,\bullet,*}$ and 
right functoriality  of 
$J_{\gsinf}$ and $\tau^\infty_{F,\Si}$, we have 
\begin{eqnarray*}
 \beta^\infty_{F,\Si,*}\circ  \II_{F,\Si,\B^\infty,*}( J_{\gsinf}(\tau_{C_0(\Si)}^\infty(z)))&=
&\II_{F,\Si,\A^\infty,*}( J_{\gsinf}(\tau_{C_0(\Si)}^\infty(\beta_*(z))))\\
&=&\II_{F,\Si,\A^\infty,*}( J_{\gsinf}(\tau_{C_0(\Si)}^\infty([Id_\A]))).
\end{eqnarray*}But up to the identifications provided by $\II_{F,\Si,\bullet}$, then $ J_{\gsinf}(\tau_{C_0(\Si)}^\infty([Id_\A]))$ coincides with
$ \jmath_{\A,F,\Si,*}$ and hence we get the result in the even case. 

\medskip

If $z=(z_i)_{i\in\N}$ is a family of odd  degree.
Then,  for every integer $i$, the element $z_i$ of $KK_1^F(A_i,B_i)$ can be viewed up to Morita equivalence as implementing the boundary element
of a semi-split extension of $F$-algebras
$$0\lto \K(\H)\ts B_i\lto E_i\lto A_i\lto 0.$$If we set $\E=(E_i)_{i\in\N}$, then the induced extension 
$$0\lto  \B^\infty_{C_0(\Si)}\lto \E_{C_0(\Si)}\lto \A_{C_0(\Si)}\lto 0$$ is a semisplit extension of $\gsinf$-algebra and hence gives rise to an extension of $C^*$-algebras
$$0\lto  \B^\infty_{C_0(\Si)}\rtr\gsinf \lto \E_{C_0(\Si)}\rtr\gsinf\lto \A_{C_0(\Si)}\rtr\gsinf \lto 0$$Moreover, by naturality of 
$\II_{F,\Si,\bullet}$, have a commutative diagram with exact rows 
$$\begin{CD}
 0@>>>  \B^\infty_{C_0(\Si)}\rtr\gsinf@>>>\E_{C_0(\Si)}\rtr\gsinf@>>>\A_{C_0(\Si)}\rtr\gsinf@>>>0\\
 @.        @V\II_{F,\Si,\B^\infty}VV @V\II_{F,\Si,\E}VV          @V\II_{F,\Si,\A}VV      @.   \\
  0@>>>  \B^\infty_{\Si}\rtr F@>>>\E_{\Si}\rtr F@>>>\A_{\Si}\rtr F@>>>0\end{CD}.$$ By using  naturally of the boundary map in $K$-theory, the result in the odd case is a consequence of the two following observations:
  \begin{itemize}
  \item $J_{\gsinf}(\tau_{C_0(\Si)}^\infty(z))$ implements the boundary map of the top extension;
  \item if $\partial_{\B^\infty_{\Si}\rtr F,\E_{\Si}\rtr F}$ stands for the boundary map of the bottom extension, then 
  $$\partial_{\B^\infty_{\Si}\rtr F,\E_{\Si}\rtr F}=  \tau^\infty_{F,\Si}(z)\circ \jmath_{\A,F,\Si,*}.$$    \end{itemize}
  
    \end{proof}

\begin{proposition}\label{proposition-intertwinned-assembly-map}
 Let $\Si$ be a discrete metric space provided with a free action of a finite group $F$ by isometries and let $\A=(A_i)_{i\in\N}$ be a family 
of $F$-algebras. Then we have a commutative diagram
$$\begin{CD}
 K_*^{{top},\infty}(F,\Si,A)@> \upsilon_{F,\Si,\A,*}>>K^{top}_*(\gsinf,\aci) \\
         @V   \nu^\infty_{F,\Si,\A,*} VV
         @VV\mu_{\gsinf,\A_{C_0(\Si)},*}V \\
K_*(\A^{\infty}_{\Si}\rtimes F) @>\II^{-1}_{F,\Si,\A^\infty,*}>> K_*(\aci\rtimes_r \gsif)
\end{CD}$$
\end{proposition}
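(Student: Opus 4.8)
The plan is to collapse all the inductive limits to a single stage and then to match the two composites by unfolding the groupoid assembly map and applying Proposition~\ref{prop-funct-tau}. Since $\upsilon_{F,\Si,\A,*}$, $\nu^\infty_{F,\Si,\A,*}$, $\mu_{\gsinf,\aci,*}$ and $\II_{F,\Si,\A^\infty,*}$ are all built compatibly with the inductive limits over the degree $s$ of the Rips complex, over families $\X=(X_i)_{i\in\N}$ of $F$-invariant compact subsets of $P_s(\Si)$, and over compact $F$-invariant subsets $K$ of $\gsin$, it suffices to check commutativity after fixing $Z=P_K(\gsin)$ and a family $\X$ with $Z_\X\subset Z$, evaluated on a class $z=(z_i)_{i\in\N}\in\prod_{i\in\N}KK^F_*(C(X_i),A_i)$. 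At this stage $\upsilon_{F,\Si,\A,*}(z)=\Lambda_\X^{Z,*}(\tau^\infty_{C_0(\Si)}(z))$ by the construction in equation~(\ref{equation-upsilon}), where $\tau^\infty_{C_0(\Si)}(z)\in KK^\gsinf_*(\CC_{\X,C_0(\Si)},\aci)$.

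First I would write the groupoid assembly map in the form of \cite{tumoy}: for the proper cocompact $\gsinf$-space $Z$ one has $\mu_{\gsinf,\aci,*}(x)=[\lambda_Z]\otimes_{C_0(Z)\rtr\gsinf} J_{\gsinf}(x)$, where $[\lambda_Z]\in K_0(C_0(Z)\rtr\gsinf)$ is the class of the cutoff (Mishchenko) projection attached to $Z$. Applying this to $x=\upsilon_{F,\Si,\A,*}(z)$, using naturality of the Kasparov descent with respect to the $\gsinf$-equivariant homomorphism $\Lambda_\X^Z$ and the associativity identity $a\otimes f^*(b)=f_*(a)\otimes b$, and writing $\Lambda^Z_{\X,\gsinf}$ for the homomorphism induced by $\Lambda_\X^Z$ on crossed products, I would obtain
\begin{equation*}
\mu_{\gsinf,\aci,*}(\upsilon_{F,\Si,\A,*}(z))=(\Lambda^Z_{\X,\gsinf})_*([\lambda_Z])\otimes_{\CC_{\X,C_0(\Si)}\rtr\gsinf} J_{\gsinf}(\tau^\infty_{C_0(\Si)}(z)),
\end{equation*}
so the problem is transported to a Kasparov product with $J_{\gsinf}(\tau^\infty_{C_0(\Si)}(z))$ starting from a single projection class in $K_0(\CC_{\X,C_0(\Si)}\rtr\gsinf)$.

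Next I would invoke Proposition~\ref{prop-funct-tau} with its family $\A$ taken to be $\CC_\X=(C(X_i))_{i\in\N}$ and its family $\B$ taken to be $\A=(A_i)_{i\in\N}$, so that the controlled tensorisation appearing there is exactly the $\TT^\infty_{F,\Si}(z)$ used to build $\nu^\infty_{F,\Si,\A,*}$. Applying $\II_{F,\Si,\A^\infty,*}$ to the displayed product and reading off the proposition gives
\begin{equation*}
\II_{F,\Si,\A^\infty,*}\big(\mu_{\gsinf,\aci,*}(\upsilon_{F,\Si,\A,*}(z))\big)=\tau^\infty_{F,\Si}(z)\Big(\jmath_{\CC_\X,F,\Si,*}\circ\II_{F,\Si,\CC_\X,*}\big((\Lambda^Z_{\X,\gsinf})_*([\lambda_Z])\big)\Big).
\end{equation*}
Comparing this with $\nu^\infty_{F,\Si,\A,*}(z)=\tau^\infty_{F,\Si}(z)([P^\infty_\X])$ — where the compatibility of $\TT^\infty_{F,\Si}$ with the Morita equivalences $\MM^\infty_{\CC_\X,F}$ and $\MM^\infty_{\A,F}$ from Proposition~\ref{prop-product-tensor-infty} is used to identify the fixed-point and crossed-product pictures of $\tau^\infty_{F,\Si}(z)$ on the relevant class — it remains only to establish the geometric identity $\II_{F,\Si,\CC_\X,*}((\Lambda^Z_{\X,\gsinf})_*([\lambda_Z]))=[P_\X]$ in $K_0((\CC_\X)_{F,\Si})$, with $P_\X=(P_{X_i})_{i\in\N}$, the extra map $\jmath$ supplying precisely the factor $e$ so that $\jmath_{\CC_\X,F,\Si,*}[P_\X]=[P^\infty_\X]$.

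This last identity is the main obstacle; it is the exact geometric analogue of the equality $\Phi_{C(X),F,\Ga}\circ\Upsilon^\Ga_{F,X,\Ga}(p_{\Ga,s})=P_X$ used at the end of the preceding proposition. To prove it I would trace the cutoff projection through the explicit formulas: the cutoff function for $Z=P_K(\gsin)$ is assembled from the simplicial coordinate functions $(\lambda_\si)_{\si\in\Si}$ on $P_s(\Si)$, and pushing $[\lambda_Z]$ forward through $\Lambda_\X^Z$ and then through the isomorphism $\II$ of Proposition~\ref{prop-iso-cross-product} realises it fibrewise as the operator whose $(\si,\si')$-entry over $x\in X_i$ is $\lambda_\si(x)^{1/2}\lambda_{\si'}(x)^{1/2}$, which is exactly $P_{X_i}$. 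The computation is routine once the correspondence, encoded in $V_\Si$ and $\II_{\Si,\A}$, between groupoid elements and pairs $(\si,\si')$ is made explicit; the only delicate points are checking $F$-equivariance and that the support conditions defining $Z_\X\subset Z$ match the propagation bound $\le s$ of $P_X$, which is controlled exactly as in Lemma~\ref{lemma-compact}.
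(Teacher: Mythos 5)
Your proposal is correct and follows essentially the same route as the paper's proof: fix a stage $Z=P_K(\gsin)$, $\X$, $z$, write the groupoid assembly map as pairing the Mishchenko projection $[P_{Z,\gsin}]$ with $J_{\gsinf}(\Lambda_\X^{Z,*}(\tau^\infty_{C_0(\Si)}(z)))$, transfer $\Lambda_\X^Z$ to the projection class by naturality of descent and bifunctoriality of the Kasparov product, apply Proposition~\ref{prop-funct-tau}, and reduce to the geometric identity $\II_{F,\Si,\CC_\X}\circ\Lambda^Z_{\X,\gsinf}(P_{Z,\gsin})=P_\X$, which the paper likewise verifies by tracing the cutoff function $\phi_Z$ through $\Lambda_\X^Z$ to the coordinate functions $(\lambda_\si)$.
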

\begin{proof}
 Let $Z=P_K(\gsin)$, for $K$ a $F$-invariant subset in $\gsin$ an let us fix $r>0$ such that $Z\subset P_r(\gsin)$. 
Let us define $\phi_{Z}:Z\to\C;\eta\mapsto\eta(\chi_0)$, where $\chi_0$ is the characteristic function of the diagonal of $\Si\times\Si$. Then $\phi_{Z}$ is a cut-off function for the proper action of $\gsin$ on $Z$.
Let $$P_{Z,\gsin}:Z\times_{\beta_{\N\times\Si}}\gsin\to \C;(\eta,\ga)\mapsto \phi_Z(\eta)^{1/2} \phi_Z(\eta\cdot \ga)^{1/2}$$ be
 the Mishchenko projection of
$C_0(Z)\rtr\gsi$ associated to $\phi_{Z}$. For a family  $\X=(X_i)_{i\in\N}$  of $F$-invariant compact subsets of $P_r(\gsin)$ such that
$Z_{\X}\subset Z$, let us consider $P_\X=(P_{X_i})_{i\in\N}$ in $\CC_{\X,F,\Si}$, where $P_{X_i}$ is for each integer $i$ the projection  defined by 
equation (\ref{equ-def-PX}).
Recall that then, $P_\X^\infty=(P_{X_i}\ts e)_{i\in\N}$ in $\CC^\infty_{\X,\Si}$ for $e$ a fixed rank one projection in $\K(H)$.
Noticing that $[P_\X^\infty]=\jmath^\infty_{\CC_{\X,F,\Si,*}}[P_\X]$ in 
$K_0(\CC^\infty_{\X,F,\Si}))\cong K_0(\CC^\infty_{\X,\Si}\rt F)$, then the commutativity of the diagram amounts to show that
$$\II_{F,\Si,\A^\infty,*}([P_{Z,\gsin}]\ts J_{\gsin}(\Lambda_Z^{\X,*}(\tau^\infty_{C_0(\Si)})(z)))=\tau_{F,\Si}^\infty(z)([\jmath_{\CC_{\X,F,\Si}}(P_\X)])$$ up to the identification
$K_*(A_\Si^\infty\rt F)\cong K_*(A_{F,\Si}^\infty)$.
But it is straitforward to check that
$$\Lambda_\X^Z(\phi_Z)=(\phi_{\Si,i})_{i\in\N}$$ with
$\phi_{\Si,i}:\Si\times X_i\to \C:(\si,x)\mapsto \lambda_{\si}(x)$.
Hence, if $$\Lambda_{\X,\gsinf}^Z:C_0(Z)\rt \gsinf\to \CC_{\X,C_0(\Si)}\rtr\gsinf$$ stands for the map induced by $\Lambda_\X^Z$ on the reduced 
crossed-products, we have
\begin{equation}\label{equ-misch-proj}\II_{F,\Si,\CC_\X}\circ \Lambda_{\X,\gsinf}^Z(P_{Z,\gsin})=P_\X.\end{equation}From this, we deduce
\begin{eqnarray*}
  \II_{F,\Si,\A^\infty,*}([P_{Z,\gsin}]\ts J_{\gsinf}(\Lambda_{Z,*}^{\X}(\tau^\infty_{C_0(\Si)})(z)))&=&
 \II_{F,\Si,\A^\infty,*}([\Lambda_{Z,\gsinf}^{\X,}(P_{Z,\gsinf})]\ts J_{\gsinf}(\tau^\infty_{C_0(\Si)})(z)))\\
&=&
 [\Lambda_{Z,\gsinf}^{\X}(P_{Z,\gsin})]\ts \II_{F,\Si,\A^\infty,*}( J_{\gsinf}(\tau^\infty_{C_0(\Si)})(z))\\
&=&
 \tau_{F,\Si}^\infty(z)\circ \jmath^\infty_{\CC_\X,F,\Si,*}\circ \II_{F,\Si,\CC_\X,*}\circ \Lambda_{Z,\gsinf,*}^{\X}([P_{Z,\gsin}])\\
&=&
 \tau_{F,\Si}^\infty(z)\circ  \jmath^\infty_{\CC_\X,F,\Si,*}([P_{\X}]),
\end{eqnarray*}
where there first equality holds by  naturality of $J_\gsinf$ and  left functoriality of Kasparov product, the second equality holds 
by right functoriality of Kasparov product, the third equality is a consequence of proposition \ref{prop-funct-tau} and the fourth equality 
holds by equation (\ref{equ-misch-proj}).
\end{proof}

As a consequence of corollary  \ref{cor-upsilon-iso} and  proposition \ref{proposition-intertwinned-assembly-map}, we obtain 
\begin{theorem}\label{thm-equ-assembly}
Let $F$ be a finite group acting freely on a discrete metric space $\Si$ with bounded geometry and let $\A=(A_i)_{i\in\N}$ be a family of 
$C^*$-algebras. 
Then the two following assertions are equivalent:
\begin{enumerate}
\item   $\nu_{F,\Si,A,*}^{\infty}:K^{top,\infty}_*(F,\Si,\A) \longrightarrow K_*(\A^{\infty}_{\Si}\rtimes F)$ is an isomorphism.
\item the groupoid $\gsif$ satisfies the Baum-Connes conjecture with coefficients in $\aci$.
\item the groupoid $\gsinf$ satisfies the Baum-Connes conjecture with coefficients in $\aci$.
\end{enumerate}
\end{theorem}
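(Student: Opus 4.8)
The plan is to read off the three-fold equivalence as a formal consequence of Corollary~\ref{cor-upsilon-iso} and Proposition~\ref{proposition-intertwinned-assembly-map}, together with the comparison between $\gsif$ and $\gsinf$ recorded at the beginning of this section. The equivalence of assertions (ii) and (iii) is already in hand: it is exactly \cite[Lemma~4.1]{sty}, which presents the $\si$-compact \'etale groupoid $\gsinf$ in a way compatible with $\gsif$ and intertwines the two Baum--Connes assembly maps with coefficients in $\aci$. Thus the only genuine task is to establish the equivalence of (i) with (iii).

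For this I would invoke the commutative square of Proposition~\ref{proposition-intertwinned-assembly-map},
$$
\begin{CD}
K_*^{{top},\infty}(F,\Si,\A) @>\upsilon_{F,\Si,\A,*}>> K^{top}_*(\gsinf,\aci) \\
@V\nu^\infty_{F,\Si,\A,*}VV @VV\mu_{\gsinf,\aci,*}V \\
K_*(\A^{\infty}_{\Si}\rtimes F) @>\II^{-1}_{F,\Si,\A^\infty,*}>> K_*(\aci\rtimes_r \gsif)
\end{CD}
$$
and argue that both horizontal arrows are isomorphisms. The top arrow $\upsilon_{F,\Si,\A,*}$ is an isomorphism by Corollary~\ref{cor-upsilon-iso}, while the bottom arrow is induced by the $C^*$-algebra isomorphism $\II_{F,\Si,\A^\infty}\colon \aci\rtimes_r\gsif\iso\A^\infty_\Si\rtimes F$ of~(\ref{equ-iso-cross-product}) and is therefore bijective as well. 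A two-out-of-three diagram chase then shows that the left vertical map $\nu^\infty_{F,\Si,\A,*}$ is an isomorphism exactly when the right vertical map $\mu_{\gsinf,\aci,*}$ is, and bijectivity of the latter is by definition assertion (iii). This yields (i)$\Leftrightarrow$(iii); combining with (ii)$\Leftrightarrow$(iii) gives the full equivalence.

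Since the substance of the argument has been relocated into the two cited results, there is no real analytic obstacle here; the single point demanding care is the bookkeeping of the target algebras. Concretely, one must verify that the group $K_*(\aci\rtimes_r\gsif)$ sitting at the lower right of the square is \emph{canonically} the target $K_*(\aci\rtimes_r\gsinf)$ of the assembly map $\mu_{\gsinf,\aci,*}$ appearing on the right-hand column. This is precisely guaranteed by the coincidence of the crossed products $\ac\rtr\gsi$, $\ac\rtr\gsin$ and $\ac\rtr\gsinz$ noted above (applied to the family $\A^\infty$) together with the identification~(\ref{equ-identification-crossed-product}) of the corresponding $F$-crossed products. Once this matching is made explicit, the right-hand column of the square is indeed the $\gsinf$-assembly map of (iii), and the proof concludes.
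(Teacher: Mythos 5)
Your proposal is correct and follows exactly the route the paper takes: the paper states the theorem as an immediate consequence of Corollary \ref{cor-upsilon-iso} and Proposition \ref{proposition-intertwinned-assembly-map} (with the equivalence of (ii) and (iii) disposed of at the start of Section \ref{section-BC-for-GSI} via \cite[Lemma 4.1]{sty}), and your two-out-of-three argument in the commutative square, including the bookkeeping identification of $\aci\rtimes_r\gsif$ with $\aci\rtimes_r\gsinf$ via the coincidence of the crossed products and equation (\ref{equ-identification-crossed-product}), is precisely the intended filling-in of that deduction.
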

\
\subsection{Quantitative statements}\label{subsection-quantitative-statements}
We are now in position to state the analoque of the quantitative statements of
 \cite[Section 6.2]{oy2} in the setting of discrete metric spaces  with  bounded geometry.

Let  $F$ be a finite group, let $\Si$ a be discrete metric space with bounded geometry
 provided with an action of $F$ by isometries and let $A$ be  a $F$-algebra. Let us consider
for 
$d,d',r,r',\eps$ and $\eps'$ positive numbers with $d\lq d'$, $\eps'\lq\eps< 1/4$,  
$r_{d,\eps}\lq
r$ and  $ r'\lq
r$ the following statements:
\begin{description}
\item[$QI_{F,\Si,A,*}(d,d',r,\eps)$]  for any element $x$ in
  $K_*^F(P_d(\Si),A)$, then 
  $\nu_{F,\Si,A,*}^{\eps,r,d}(x)=0$
  in $K_*^{\eps,r}(A_{F,\Si})$ implies that
  $q_{d,d'}^*(x)=0$ in    $K_*^F(P_{d'}(\Si),A)$.
\item[$QS_{F,\Si,A,*}(d,r,r',\eps,\eps')$] for every $y$
  in  $K_*^{\eps',r'}(A_{F,\Si})$, there exists an element $x$ in $K_*^F(P_d(\Si),A)$
   such that
$$\nu_{F,\Si,A,*}^{\eps,r,d}(x)=\iota_*^{\eps',\eps,r',r}(y).$$
\end{description}

The following results provide  numerous
examples  that satisfy these quantitative statements.

\begin{theorem}\label{thm-quantBC-injectivity} Let  $F$ be a finite group, let $\Si$ a
 be discrete metric space with bounded geometry
 provided with a free action of $F$ by isometries and let $A$ be  a $F$-algebra
Then the following assertions are equivalent:
\begin{enumerate}
\item  For any positive numbers $d$,
  $\eps$ and $r$ with  $\eps<1/4$ and  $r\gq r_{d,\eps}$, there
exists   a positive number $d'$ with $d'\gq d$ for which
$QI_{F,\Si,A,*}(d,d',r,\eps)$ is satisfied.
\item   $\nu_{F,\Si,A,*}^{\infty}:K^{top,\infty}_*(F,\Si,A^\N) \longrightarrow K_*(A^{\N,\infty}_{\Si}\rtimes F)$ is 
one-to-one.
\item   $\mu_{\gsif,A^{\N,\infty}_{C_0(\Si)},*}^{\infty}:K^{top}_*(\gsif,A^{\N,\infty}_{C_0(\Si)}) 
\longrightarrow K_*(A^{\N,\infty}_{C_0(\Si)}\rtr\gsif)$ is 
one-to-one.
\end{enumerate}
\end{theorem}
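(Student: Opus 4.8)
The plan is to prove $(1)\Leftrightarrow(2)$ by transporting the quantitative-to-$K$-theory comparison of \cite[Section 6.2]{oy2} (exactly as exploited in the proof of Theorem \ref{thm-persistence-crossed-products}) into the geometric setting, and then to read off $(2)\Leftrightarrow(3)$ from the commutative square of Proposition \ref{proposition-intertwinned-assembly-map}. The decisive structural input I would isolate first is a factorwise compatibility for the constant family $A^\N$: composing the quantitative geometric assembly map $\nu^{\infty,\eps,r,d}_{F,\Si,A^\N,*}$ with the Morita isomorphism $\MM^\infty_{A^\N,F}$ of Lemma \ref{lem-morita-inf} and the component controlled isomorphism $\mathcal{G}_{F,\Si,A^\N,*}$ of Lemma \ref{lemma-prod} recovers, on the $i$\,th factor and up to the universal control pairs, the single-algebra map $\nu^{\eps,r,d}_{F,\Si,A,*}$ applied to the $i$\,th coordinate. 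This follows directly from the definition $P^\infty_\X=(P_{X_i}\ts e)_{i\in\N}$ and from the functoriality of $\TT^\infty_{F,\Si}$ recorded in Corollary \ref{cor-tensor-infty}.

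For $(2)\Rightarrow(1)$ I would argue by contradiction in the style of Theorem \ref{thm-persistence-crossed-products}. If $(1)$ fails I fix $d,\eps,r$ with $r\gq r_{d,\eps}$ and extract an unbounded sequence $(d_i)$ together with $x_i\in K_*^F(P_d(\Si),A)$ satisfying $\nu^{\eps,r,d}_{F,\Si,A,*}(x_i)=0$ but $q^*_{d,d_i}(x_i)\neq0$. Assembling $x=(x_i)_{i\in\N}\in\prod_i K_*^F(P_d(\Si),A)$, the factorwise vanishing together with the fact that $\mathcal{G}_{F,\Si,A^\N,*}$ is a controlled isomorphism forces the relevant class in $K_*(A^{\N,\infty}_{F,\Si})$ to be zero in genuine $K$-theory, whence $\nu^{\infty,d}_{F,\Si,A^\N,*}(x)=0$ in $K_*(A^{\N,\infty}_\Si\rtimes F)$ after applying $\MM^\infty_{A^\N,F}$. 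Injectivity of $\nu^{\infty}$ then makes the class of $x$ vanish in $K_*^{top,\infty}(F,\Si,A^\N)$, so there is a single $d'\gq d$ with $q^{\infty}_{d,d',*}(x)=0$, i.e. $q^*_{d,d'}(x_i)=0$ for every $i$; choosing $i$ with $d_i\gq d'$ contradicts $q^*_{d,d_i}(x_i)\neq0$.

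For $(1)\Rightarrow(2)$, let $\bar x\in K_*^{top,\infty}(F,\Si,A^\N)$ with $\nu^{\infty}(\bar x)=0$, represented by $x=(x_i)_i$ at some Rips degree $d$ and parameters $(\eps,r)$. Since $\nu^{\infty,d}_{F,\Si,A^\N,*}(x)=\iota^{\eps,r}_*\circ\nu^{\infty,\eps,r,d}_{F,\Si,A^\N,*}(x)=0$ in genuine $K$-theory, Proposition \ref{proposition-approximation}(ii) furnishes $r'\gq r$ with $\nu^{\infty,\la\eps,r',d}_{F,\Si,A^\N,*}(x)=0$; pushing this vanishing through $\MM^\infty_{A^\N,F}$ and $\mathcal{G}_{F,\Si,A^\N,*}$ yields universal $\eps''\gq\eps$ and $r''\gq r'$, \emph{independent of $i$}, with $\nu^{\eps'',r'',d}_{F,\Si,A,*}(x_i)=0$ for all $i$. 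Hypothesis $(1)$, in the form $QI_{F,\Si,A,*}(d,d',r'',\eps'')$, then provides a uniform $d'\gq d$ with $q^*_{d,d'}(x_i)=0$ for all $i$, so $q^{\infty}_{d,d',*}(x)=0$ and $\bar x=0$.

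Finally, $(2)\Leftrightarrow(3)$ is formal. Proposition \ref{proposition-intertwinned-assembly-map} intertwines $\nu^{\infty}_{F,\Si,A^\N,*}$ with $\mu_{\gsinf,\aci,*}$ through the isomorphisms $\upsilon_{F,\Si,A^\N,*}$ of Corollary \ref{cor-upsilon-iso} and $\II^{-1}_{F,\Si,A^{\N,\infty},*}$, so $\nu^{\infty}$ is one-to-one if and only if $\mu_{\gsinf,\aci,*}$ is; and by \cite[Lemma 4.1]{sty} the assembly maps for $\gsinf$ and for $\gsif$ with coefficients in $\aci$ are intertwined by isomorphisms, so injectivity of the former is equivalent to injectivity of the latter, which is $(3)$. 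The main obstacle I anticipate is the control-pair bookkeeping in the factorwise comparison underlying $(1)\Leftrightarrow(2)$: one must check that the constants $\eps'',r'',d'$ produced by the successive use of Proposition \ref{proposition-approximation}, Lemma \ref{lem-morita-inf} and Lemma \ref{lemma-prod} can indeed be chosen uniformly in the index $i\in\N$, which is precisely what allows the single-algebra statement $QI_{F,\Si,A,*}$ to be applied simultaneously to the whole sequence.
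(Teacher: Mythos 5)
Your proposal is correct and follows essentially the same route as the paper: the equivalence of (ii) and (iii) via Theorem \ref{thm-equ-assembly} (i.e.\ Corollary \ref{cor-upsilon-iso} together with Proposition \ref{proposition-intertwinned-assembly-map} and \cite[Lemma 4.1]{sty}), the implication (i)$\Rightarrow$(ii) by applying Proposition \ref{proposition-approximation} and then the factorwise controlled isomorphisms of Lemmas \ref{lemma-prod} and \ref{lem-morita-inf} to reduce to a uniform application of $QI_{F,\Si,A,*}$, and the converse by assembling a sequence $(x_i)$ witnessing the failure of $QI$ along an unbounded sequence $(d_i)$ into a nonzero kernel element of $\nu^{\infty}_{F,\Si,A,*}$. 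Your explicit isolation of the factorwise compatibility (and the uniformity in $i$ of the rescaled control parameters) is exactly the point the paper invokes when it says the quantitative class "coincides with $\prod_i \nu^{\eps,r,d}_{F,\Si,A,*}(x_i)$ up to a universal control pair."
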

\begin{proof}
The equivalence between points (ii) and (iii) is a consequence of theorem \ref{thm-equ-assembly}. Let us prove that points 
(i) and (ii) are equivalent.
 Assume that condition (i) holds.
Let $x=(x_i)_{i\in\N}$ be a family of elements in some  $K_*^F(P_{d'}(\Si),A)$ such that
$\nu_{F,\Si,A,*}^{\infty,d}(x)=0$.  By definition of 
 $\nu_{F,\Si,A,*}^{\infty,d}(x)$, we have that
$\iota^{\eps',r'}_*(\nu_{F,\Si,A,*}^{\infty,\eps',r',d}(x))=0$ for any $\eps'$ in $(0,1/4)$ and 
$r'\gq r_{d,\eps'}$ and hence, by proposition \ref{proposition-approximation},
 we can find $\eps$ in $(0,1/4)$  and
 $r\gq r_{d,\eps}$ such that $\mu_{F,\Si,A,*}^{\infty,\eps,r,d}(x)=0$. But up to the controlled morphisms
of proposition \ref{prop-product-tensor-infty}    and of lemma \ref{lem-morita-inf}, 
  $\mu_{F,\Si,A,*}^{\infty,\eps,r,d}(x)$ coincides with $\prod_{i\in\N}\mu_{F,\Si,A,*}^{\eps,r,d}(x_i)$, 
so up to rescale 
$\eps$ and $r$ by a (universal) control pair, we can assume that   $\mu_{F,\Si,A,*}^{\eps,r,d}(x_i)=0$
for all integer $i$. Let $d'\gq d$ be a number such that  $QI_{F,\Si,A,*}(d,d',r,\eps)$ is satisfied. Then we get that
$q_{d,d',*}(x_i)=0$ for all integer $i$ such that $d_i\gq d$ and hence $q_{d,d',*}(x)=0$.
  
\medskip

 Let us prove the converse.
Assume first that there exists  positive numbers $d,\,\eps$ and $r$
  with
  $\eps<1/4$  and  $r\gq
  r_{d,\eps}$ and such that for all $d'\gq d$, the condition
$QI_{\Si,F,A}(d,d',r,\eps)$ does not hold. Let us prove that
$\nu_{F,\Si,A,*}^{\infty,d}$ is not one-to-one.
Let $(d_i)_{i\in\N}$ be an increasing and  unbounded sequence of
positive numbers such that $d_i\gq d$ for all integer $i$.  For all
integer $i$, let $x_i$ be an element in  $K_*^F(P_{d}(\Si),A)$ such  that $\nu_{F,\Si,A,*}^{\eps,r,d}(x_i)=0$
in $K^{\eps,r}_*(A_{F,\Si})$  and
$q_{d,d_i,*}(x_i)\neq 0$ in $K_*^F(P_{d_i}(\Si),A)$ and set $x=(x_i)_{i\in\N}$. Then we have 
$\nu_{F,\Si,A,*}^{\infty,d}(x)=0$ and $q_{d,d_i,*}(x)\neq 0$ for all $i$. Since the sequence $(d_i)_{i\in\N}$ is unbounded,
we deduce that the kernel of $\nu_{F,\Si,A,*}^{\infty}$ is non trivial.
\end{proof}
\begin{theorem}\label{thm-quantBC-surjectivity} There exists $\lambda>1$ such that for any   finite group $F$, any  
 discrete metric space $\Si$  with bounded geometry,
 provided with a free  action of $F$ by isometries and any  $F$-algebra $A$, then the following
 assertions are equivalent:
\begin{enumerate}
\item For any  positive numbers $\eps$ and $r'$  with $\eps<\frac{1}{4\lambda}$, there exist
positive numbers $d$ and  $r$ with  $
r_{d,\eps}\lq r$ and $r'\lq r$ for which  $QS_{F,\Si,A,*}(d,r,r',\lambda\eps,\eps)$
is satisfied.
\item    $\nu_{F,\Si,A,*}^{\infty}:K^{top,\infty}_*(F,\Si,A^\N) \longrightarrow K_*(A^{\N,\infty}_{\Si}\rtimes F)$ is 
onto.
\item  $\mu_{\gsif,A^{\N,\infty}_{C_0(\Si)},*}:K^{top}_*(\gsif,A^{\N,\infty}_{C_0(\Si)}) 
\longrightarrow K_*(A^{\N,\infty}_{C_0(\Si)}\rtr\gsif)$  is 
onto.
\end{enumerate}
\end{theorem}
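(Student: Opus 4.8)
The plan is to follow the scheme of Theorem \ref{thm-quantBC-injectivity}, replacing the injectivity bookkeeping by its surjective counterpart. First I would dispose of (ii)$\Leftrightarrow$(iii) exactly as in the injectivity case: Proposition \ref{proposition-intertwinned-assembly-map} together with Corollary \ref{cor-upsilon-iso} and the identification \eqref{equ-iso-cross-product} produces a commutative square intertwining $\nu_{F,\Si,A,*}^{\infty}$ with $\mu_{\gsinf,\aci,*}$ through the isomorphisms $\upsilon_{F,\Si,A^\N,*}$ and $\II_{F,\Si,A^{\N,\infty},*}^{-1}$ (using the equivalence of the Baum--Connes conjectures for $\gsif$ and $\gsinf$ recalled at the start of Section \ref{section-BC-for-GSI}); since the horizontal maps are isomorphisms, one vertical map is onto precisely when the other is. It then remains to prove (i)$\Leftrightarrow$(ii). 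Throughout I would exploit the two controlled isomorphisms that convert the geometric picture into a fibered one, namely the product isomorphism $\mathcal{G}_{F,\Si,A^\N,*}$ of Lemma \ref{lemma-prod} and the Morita isomorphism $\MM^\infty_{A^\N,F}=\T^\infty_{F,\Si}(M_{A^\N})$ of Lemma \ref{lem-morita-inf}, whose composite carries $\nu_{F,\Si,A^\N,*}^{\infty,\eps,r,s}$ (for the constant family) to $\prod_{i\in\N}\nu_{F,\Si,A,*}^{\eps,r,s}$ up to a universal control pair, by Proposition \ref{prop-product-tensor-infty}.

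For (i)$\Rightarrow$(ii) I would fix $z$ in $K_*(A^{\N,\infty}_{\Si}\rtimes F)$, lift it via Proposition \ref{proposition-approximation}(i) to a quantitative class $y$ in $K_*^{\eps,r'}(A^{\N,\infty}_{\Si}\rtimes F)$ with $\eps$ as small as desired, transport $y$ through $(\MM^\infty_{A^\N,F})^{-1}$ and $\mathcal{G}_{F,\Si,A^\N,*}$, and read off a family $(y_i)_{i\in\N}$ with $y_i$ in $K_*^{\eps'',r''}(A_{F,\Si})$, the parameters rescaled only by the universal control pairs. The decisive point is uniformity: since $A$, $F$ and $\Si$ are fixed, the statement $QS_{F,\Si,A,*}(d,r,r'',\lambda\eps'',\eps'')$ from (i) produces a \emph{single} pair $(d,r)$ valid for every $y_i$ simultaneously, giving $x_i$ in $K_*^F(P_d(\Si),A)$ with $\nu_{F,\Si,A,*}^{\lambda\eps'',r,d}(x_i)=\iota_*^{\eps'',\lambda\eps'',r'',r}(y_i)$. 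Assembling $x=(x_i)_{i\in\N}$ at the single level $d$ and running it back through the same controlled isomorphisms yields $\nu_{F,\Si,A^\N,*}^{\infty}(x)=z$.

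For the converse (ii)$\Rightarrow$(i) I would argue by contradiction. If (i) fails there are $\eps<\frac{1}{4\lambda}$ and $r'$ so that for every admissible $(d,r)$ the statement $QS_{F,\Si,A,*}(d,r,r',\lambda\eps,\eps)$ breaks; choosing unbounded increasing sequences $d_i,r_i$ with $r_{d_i,\lambda\eps}\lq r_i$ I would pick witnesses $y_i$ in $K_*^{\eps,r'}(A_{F,\Si})$ with $\iota_*^{\eps,\lambda\eps,r',r_i}(y_i)$ outside the image of $\nu_{F,\Si,A,*}^{\lambda\eps,r_i,d_i}$. Assembling $(y_i)$ into a class over $A^{\N,\infty}_{\Si}\rtimes F$ and applying $\iota_*$ gives $z$; by (ii), $z=\nu_{F,\Si,A^\N,*}^{\infty}(x)$ for some $x$ in $K^{top,\infty}_*(F,\Si,A^\N)$, and because this group is an inductive limit over the degree, $x$ is represented at a single finite level $d_0$ by a family $(x_i)$. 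The crucial move is to apply the approximation property at the product level, before projecting to fibers: $\nu_{F,\Si,A^\N,*}^{\infty,\eps_1,r_1,d_0}(x)$ and the $\iota_*$-image of $y$ are two quantitative classes over the one algebra $A^{\N,\infty}_{\Si}\rtimes F$ with the same image $z$ in $K$-theory, so Proposition \ref{proposition-approximation}(ii) delivers one \emph{uniform} radius $R$ and precision $\lambda_0\eps_2$ at which they coincide. Projecting this single identity back through $\mathcal{G}_{F,\Si,A^\N,*}$ gives, uniformly in $i$, an equality $\nu_{F,\Si,A,*}^{\eps_4,R',d_0}(x_i)=\iota_*^{\eps,\eps_4,r',R'}(y_i)$.

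The main obstacle is precisely this radius uniformity, and the choice of the universal $\lambda$ to absorb all the control-pair and approximation constants: I would fix $\lambda$ so large that $\eps_4\lq\lambda\eps$ for every small $\eps$, where $\eps_4$ collects the constants of Lemmas \ref{lemma-prod} and \ref{lem-morita-inf}, Proposition \ref{prop-product-tensor-infty} and Proposition \ref{proposition-approximation}(ii). Granting $\eps_4\lq\lambda\eps$ and choosing $i$ with $d_i\gq d_0$, $r_i\gq R'$ and $r_i\gq r_{d_0,\lambda\eps}$, I would raise the last identity by $\iota$ to precision $\lambda\eps$ and radius $r_i$, use the compatibility $\iota_*^{\eps_4,\lambda\eps,R',r_i}\circ\nu_{F,\Si,A,*}^{\eps_4,R',d_0}=\nu_{F,\Si,A,*}^{\lambda\eps,r_i,d_0}$, and then the degree relation $\nu_{F,\Si,A,*}^{\lambda\eps,r_i,d_0}=\nu_{F,\Si,A,*}^{\lambda\eps,r_i,d_i}\circ q_{d_0,d_i,*}$ (legitimate exactly because $r_i\gq r_{d_i,\lambda\eps}$) to conclude that $\iota_*^{\eps,\lambda\eps,r',r_i}(y_i)$ does lie in the image of $\nu_{F,\Si,A,*}^{\lambda\eps,r_i,d_i}$, contradicting the choice of $y_i$. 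The subtle interplay to watch is that raising the Rips degree forces the radius up through $r_{d,\eps}$, so the finite-level representation of $x$ and the coordinated growth of $(d_i,r_i)$ under the constraint $r_{d_i,\lambda\eps}\lq r_i$ are exactly what make the contradiction go through.
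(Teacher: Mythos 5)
Your proposal is correct and follows essentially the same route as the paper's own proof: (ii)$\Leftrightarrow$(iii) via the ingredients of theorem \ref{thm-equ-assembly}, (i)$\Rightarrow$(ii) by lifting $z$ to a quantitative class, reading off fiberwise components $y_i$ through evaluation-plus-Morita, and applying $QS$ with a single pair $(d,r)$ to assemble a preimage, and (ii)$\Rightarrow$(i) by the identical contradiction scheme, where witnesses $y_i$ are assembled into a product class, Proposition \ref{proposition-approximation}(ii) is applied once at the product level to get a uniform radius, and the compatibility $\nu^{\lambda\eps,r_i,d_0}_{F,\Si,A,*}=\nu^{\lambda\eps,r_i,d_i}_{F,\Si,A,*}\circ q_{d_0,d_i,*}$ (valid since $r_i\gq r_{d_i,\lambda\eps}$) yields the contradiction. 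Your explicit tracking of the control-pair constants absorbed into $\lambda$ is simply a more detailed rendering of the paper's choice of $\lambda$ from Proposition \ref{proposition-approximation} together with the rescaling by the control pair of lemma \ref{lem-prod-filtered}.
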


\begin{proof} The equivalence between points (ii) and (iii) is a consequence of theorem \ref{thm-equ-assembly}.
Choose $\lambda$ as in  proposition \ref{proposition-approximation} and 
 assume that condition (i) holds. Let $z$ be an element in $K_*(A^{\N,\infty}_{\Si}\rtimes F)$
 and let $y$ be an element in
 $K^{\eps,r'}_*(A^{\N,\infty}_{F,\Si}\rtimes F)$ such that $\iota^{\eps,r'}_*(y)$ corresponds to $z$ up to the identification
$K_*(A^{\N,\infty}_{\Si}\rtimes F)\cong K_*(A^{\N,\infty}_{F,\Si})$.
Let $y_i$ be the image of $y$  under the composition
\begin{equation}\label{equ-composition-surj}
 K^{\eps,r'}_*(A^{\N,\infty}_{F,\Si})\to
K^{\eps,r'}_*(\K(\H)\ts A_{F,\Si})\stackrel{\cong}{\to}
K^{\eps,r'}_*(A_{F,\Si}), \end{equation} where the first map is induced by the
evaluation $A^{\N,\infty}_{F,\Si}\longrightarrow
A_{F,\Si}\ts \K(\H)$ at the $i$ th coordinate  and the second map is the Morita equivalence. Let $d$ and $r$ be numbers with
$r\gq r'$ and  $r\gq r_{d,\eps}$ and such that $QS_{F,\Si,A}(d,r,r',\lambda\eps,\eps)$ holds. Then  for any integer $i$, 
there exists
a $x_i$ in
$K^F_*(P_{d}(\Si),A)$ such that $\nu_{F,\Si,A,*}^{\lambda\eps,r,d}(x_i)=\iota_*^{\eps,\lambda\eps,r',r}(y_i)$ 
in $K_*^{\lambda\eps,r}(A_{F,\Si})$.
Consider  then  $x=(x_i)_{i\in\N}$ in $K^{top,\infty}_*(F,\Si,A^\N)$.
By construction of the map  $\nu_{F,\Si,A,*}^{\infty}$, we clearly have  $\nu_{F,\Si,A,*}^{\infty}(x)=z$.
\medskip

Conversely,
 assume that there exist positive numbers  $\eps$ and $r'$ with  $\eps<\frac{1}{4\lambda}$ such that for 
all 
  positive numbers $r$ and $d$ with $r\gq r'$ and  $r\gq r_{d,\eps}$,
  then $QS_{F,\Si,,A,*}(d,r,r',\lambda\eps,\eps)$ does not hold. Let us prove
  then that $\nu_{F,\Si,A,*}^{\infty}$ is not onto. Assume first for sake of simplicity that $A$ is unital. Let $(d_i)_{i\in\N}$ and $(r_i)_{i\in\N}$
  be increasing and unbounded sequences of positive numbers such that
  $r_i\gq r_{d_i,\lambda\eps}$ and $r_i\gq r'$. Let  $y_i$ be an element in
  $K^{\eps,r'}_*(A_{F,\Si})$ such that $\iota_*^{\eps,\lambda\eps,r',r_i}(y_i)$
  is not in the range of $\nu_{F,\Si,A,*}^{\lambda\eps,r_i,d_i}$. There
  exists an element $y$ in
  $K^{\eps,r'}_*(A^{\N,\infty}_{F,\Si})$ such
  that for every integer $i$, the image of $y$ under the composition of equation (\ref{equ-composition-surj})
is $y_i$.
Assume that for some $d'$, there is an $x$ in  $K^{top,\infty}_*(F,\Si,A^\N)$ such that up to the identification 
$K_*(A^{\N,\infty}_{\Si}\rtimes F)\cong K_*(A^{\N,\infty}_{F,\Si})$, then
$\iota_*^{\eps,r'}(y)=\mu_{F,\Si,A,*}^{\infty,d'}(x)$. Using   proposition \ref{proposition-approximation},
  we see that  there exists a positive number $r$ with  $r'\lq r$ and
 $r_{d',\lambda\eps}\lq r$ and
  such that $$\nu_{F,\Si,A,*}^{\infty,\lambda\eps,r,d'}(x)=\iota_*^{\eps,\lambda\eps,r',r}(y).$$ But then, if
  we choose $i$ such that $r_i\gq r$ and $d_i\gq d'$, we get by using
  the definition of the geometric assembly map  $\nu_{F,\Si,\bullet,*}^{\infty,\cdot,\cdot,\cdot}$ and by equation (\ref{equ-composition-surj}) that $\iota_*^{\eps,\lambda\eps,r',r_i}(y_i)$ belongs to the
  image of $\nu_{F,\Si,A,*}^{\lambda\eps,r_i,d_i}$, which contradicts our assumption. If $A$ is not unital, then we use the control pair of
lemma \ref{lem-prod-filtered} to rescal $\lambda$.

\end{proof}

Replacing in the proof of (ii) implies (i)  of theorems  \ref{thm-quantBC-injectivity}  and   \ref{thm-quantBC-surjectivity} the
constant  family $A^\N$ 
by a family $\A=(A_i)_{i\in\N}$ of $F$-algebras, 
we can prove  indeed the
following result.
\begin{theorem}\label{thm-quant-surj}Let $\Si$ be a discrete metric space with bounded geometry provided
 with a free action of a finite group $F$ by isometries.
\begin{enumerate}
\item Assume that   for any  family $\A=(A_i)_{i\in\N}$ of $F$-algebras,
  then the  assembly map   $$\mu_{\gsif,\aci,*}:K^{top}_*(\gsif,\A^{\infty}_{C_0(\Si)}) 
\longrightarrow K_*(\A^{\infty}_{C_0(\Si)}\rtr\gsif)$$ is one-to-one.
    Then for any positive numbers $d,\,\eps,r$ with $\eps<1/4$
  and $r\gq r_{d,\eps}$, there
exists   a positive number $d'$ with $d'\gq d$ such that
$QI_{\Si,F,,A}(d,d',r,\eps)$ is satisfied  for every $F$-algebra
  $A$;
\item Assume that   for any   family $\A=(A_i)_{i\in\N}$ of $F$-algebras, the assembly map 
 $$\mu_{\gsif,\aci,*}:K^{top}_*(\gsif,\A^\infty_{C_0(\Si)}) 
\longrightarrow K_*(\A^{\infty}_{C_0(\Si)}\rtr\gsif)$$ is onto.  Then for some  $\lambda>1$ and for any  positive numbers $\eps$
and $r'$ with  $\eps<\frac{1}{4\lambda}$, there exist
positive numbers $d$ and  $r$ with $
r_{d,\eps}\lq r$ and $r'\lq r$ such that  $QS_{\Si,F,A}(d,r,r',\lambda\eps,\eps)$
is satisfied for every  $F$-algebra
  $A$.
\end{enumerate}
\end{theorem}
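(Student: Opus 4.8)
The plan is to reuse, essentially word for word, the contrapositive arguments establishing ``(ii) implies (i)'' in Theorems \ref{thm-quantBC-injectivity} and \ref{thm-quantBC-surjectivity}, the sole change being that the auxiliary sequence of coefficient algebras produced there is now allowed to be non-constant. In those proofs the constant family $A^\N$ served only to package the witnesses $(x_i)$ (resp. $(y_i)$) into a single element over a product algebra; since the controlled isomorphism $\mathcal{G}_{F,\Si,\A,*}$ of Lemma \ref{lemma-prod}, the controlled Morita isomorphism $\MM^\infty_{\A,F}$ of Lemma \ref{lem-morita-inf}, and the controlled morphism $\TT^\infty_{F,\Si}$ of Corollary \ref{cor-tensor-infty} all carry control pairs independent of the family, nothing in the argument forces the $A_i$ to coincide. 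By the intertwining square of Proposition \ref{proposition-intertwinned-assembly-map}, whose vertical arrows are isomorphisms (Corollary \ref{cor-upsilon-iso} and the isomorphism (\ref{equ-iso-cross-product})), together with the equivalence of the Baum--Connes conjectures for $\gsif$ and $\gsinf$ on $\aci$, the injectivity (resp. surjectivity) of $\mu_{\gsif,\aci,*}$ for every family $\A$ is equivalent to the injectivity (resp. surjectivity) of $\nu_{F,\Si,\A,*}^{\infty}$ for every family $\A$; I shall therefore argue entirely with the geometric assembly maps.

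\textbf{Part (i).} First I would argue by contraposition. If the conclusion fails, there are $d,\eps,r$ with $\eps<1/4$ and $r\gq r_{d,\eps}$ so that for every $d'\gq d$ some $F$-algebra violates $QI_{F,\Si,A,*}(d,d',r,\eps)$. Choosing an increasing unbounded sequence $(d_i)_{i\in\N}$ with $d_i\gq d$ yields $F$-algebras $A_i$ and elements $x_i\in K_*^F(P_d(\Si),A_i)$ with $\nu_{F,\Si,A_i,*}^{\eps,r,d}(x_i)=0$ but $q_{d,d_i,*}(x_i)\neq 0$. Put $\A=(A_i)_{i\in\N}$ and $x=(x_i)_{i\in\N}$. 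Under the $K$-theory isomorphisms induced by $\mathcal{G}_{F,\Si,\A,*}$ and $\MM^\infty_{\A,F}$ the element $\nu_{F,\Si,\A,*}^{\infty,d}(x)$ corresponds to the family $\big(\nu_{F,\Si,A_i,*}^{d}(x_i)\big)_{i\in\N}=\big(\iota_*^{\eps,r}\nu_{F,\Si,A_i,*}^{\eps,r,d}(x_i)\big)_{i\in\N}=0$, so $x$ lies in $\ker\nu_{F,\Si,\A,*}^{\infty}$. On the other hand, since $q_{d,d_i,*}=q_{d',d_i,*}\circ q_{d,d',*}$ whenever $d\lq d'\lq d_i$ and $(d_i)$ is unbounded, no $d'$ kills $x$, so $x\neq 0$ in $K_*^{top,\infty}(F,\Si,\A)=\lim_{s>0}\prod_i K_*^F(P_s(\Si),A_i)$. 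This contradicts injectivity and proves (i).

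\textbf{Part (ii).} Next I would fix $\lambda$ as in Proposition \ref{proposition-approximation} and again argue by contraposition. If the conclusion fails there are $\eps,r'$ with $\eps<\frac{1}{4\lambda}$ so that every pair $(d,r)$ with $r\gq r'$ and $r\gq r_{d,\eps}$ admits an $F$-algebra violating $QS_{F,\Si,A,*}(d,r,r',\lambda\eps,\eps)$. Picking increasing unbounded sequences $(d_i),(r_i)$ with $r_i\gq r'$ and $r_i\gq r_{d_i,\lambda\eps}$ produces $F$-algebras $A_i$ and elements $y_i\in K_*^{\eps,r'}((A_i)_{F,\Si})$ with $\iota_*^{\eps,\lambda\eps,r',r_i}(y_i)$ outside the range of $\nu_{F,\Si,A_i,*}^{\lambda\eps,r_i,d_i}$. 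Using the controlled isomorphism $\mathcal{G}_{F,\Si,\A,*}$ of Lemma \ref{lemma-prod} (the family analogue of the evaluation-and-Morita composition (\ref{equ-composition-surj})) I lift $(y_i)$ to a single $y\in K_*^{\eps,r'}(\A^{\infty}_{F,\Si})$ whose $i$-th component is $y_i$, and set $z=\iota_*^{\eps,r'}(y)$. Were $z$ in the range of $\nu_{F,\Si,\A,*}^{\infty}$, say $\nu_{F,\Si,\A,*}^{\infty,d'}(x)=z$, then Proposition \ref{proposition-approximation} would supply $r\gq\max\{r',r_{d',\lambda\eps}\}$ with $\nu_{F,\Si,\A,*}^{\infty,\lambda\eps,r,d'}(x)=\iota_*^{\eps,\lambda\eps,r',r}(y)$; evaluating at an index $i$ with $r_i\gq r$ and $d_i\gq d'$ would place $\iota_*^{\eps,\lambda\eps,r',r_i}(y_i)$ in the range of $\nu_{F,\Si,A_i,*}^{\lambda\eps,r_i,d_i}$, a contradiction. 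Hence $\nu_{F,\Si,\A,*}^{\infty}$ is not onto, contradicting surjectivity. The non-unital case is absorbed, exactly as in Theorem \ref{thm-quantBC-surjectivity}, by rescaling $\lambda$ with the control pair of Lemma \ref{lem-prod-filtered}.

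\textbf{Main obstacle.} The one genuinely technical point, and the step I expect to demand the most care, is the componentwise disassembly used in both parts: one must verify that $\nu_{F,\Si,\A,*}^{\infty,\eps,r,d}$ evaluated on a family $(x_i)_i$ agrees, after composition with $\mathcal{G}_{F,\Si,\A,*}$ and $\MM^\infty_{\A,F}$ and a universal rescaling of $(\eps,r)$, with the product of the $\nu_{F,\Si,A_i,*}^{\eps,r,d}(x_i)$. This is precisely the compatibility already exploited for the constant family in the two prior theorems, traced back through the definition of $\nu^{\infty}$ in terms of $\TT^\infty_{F,\Si}$, the projection $P_\X^\infty=(P_{X_i}\ts e)_i$, and $\MM^\infty_{\A,F}$. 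It survives the passage to a variable family exactly because every control pair entering the construction is independent of $\A$; checking that independence carefully for a non-constant family is routine bookkeeping but is where the real work sits.
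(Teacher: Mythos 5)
Your proposal is correct and is essentially the paper's own proof: the paper establishes this theorem in a single sentence by replacing the constant family $A^\N$ with a general family $\A=(A_i)_{i\in\N}$ in the contrapositive ``(ii) implies (i)'' arguments of Theorems \ref{thm-quantBC-injectivity} and \ref{thm-quantBC-surjectivity}, which is exactly the substitution you carry out. The points you single out---the family-independence of the control pairs entering $\mathcal{G}_{F,\Si,\A,*}$, $\MM^\infty_{\A,F}$ and $\TT^\infty_{F,\Si}$, and the transfer between $\mu_{\gsif,\aci,*}$ and $\nu_{F,\Si,\A,*}^{\infty}$ via the intertwining diagram---are precisely what makes that substitution legitimate.
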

Recall from \cite{sty,y1} that if $\Si$
 coarsely embeds
 in a Hilbert space, then the groupoid $\gsif$ satisfies the Baum-Connes conjecture for any coefficients.

\subsection{Application to the persistence approximation property}
Let $F$ be a finite group, let $\Si$  be a discrete metric space with bounded geometry 
 provided with a free  action of $F$ by isometries and let $A$ be a $F$-algebra. We apply  the results of the previous section to the persistence approximation for $A_{F,\Si}$:  for any $\eps$  small enough
and any $r>0$ there exists  $\eps'$ in $(\eps,1/4)$ and $r'\gq r$  such that $\sta(A_{F,\Si},\eps,\eps',r,r')$ is satisfied.

Notice that the approximation property is coarse invariant.
%\begin{proposition}
% Let $A$ be a $C^*$-algebra filtered by $(A_r)_{r>0}$ and $W$ be  an isometry lying in the multiplier algebra of $A$.
%Assume that there exists $R$ such that $W\cdot A_r\subset A_{r+R}$ and  $W^*\cdot A_r\subset A_{r+R}$ for all $r>0$.
%Then there exists $R'>R$ such that $\iota_*^{\eps,r+R,r+R'}\circ \Ad_*^{\eps,r}=\iota_*^{\eps,r+R'}$.
%\end{proposition}
To apply quantitative statements of last subsection to our persistence approximation property, we have to define the analogue in the setting of discrete proper metric space
of the existence of a cocompact universal example for proper action of a discrete group.
\begin{definition}
 A discrete metric space $\Si$ provided with a free  action of a finite group is coarsely uniformly 
$F$-contractible if for every $d>0$ there exists $d'>d$ such  that any invariant compact substet of $P_d(\Si)$ lies in a $F$-equivariantly contractible
invariant compact subset of  of  $P_{d'}(\Si)$.
\end{definition}
\begin{example}
 Any (discrete) Gromov hyperbolic metric space provided with a free  action of a finite group $F$ by isometries is coarsely uniformly 
$F$-contractible \cite{ms}.
\end{example}
\begin{lemma}\label{lemma-inj-geom-quant} $\Si$ be a proper discrete metric space provided with a free  action of finite group $F$ by isometries. Assume that 
$\Si$ is coarsely uniformly 
$F$-contractible. Then for any positive numbers $\eps,\,d$ and $r$ with $\eps<1/4$ and $r\gq r_{d,\eps}$, there exists a positive number $d'$ with $d'\gq d$ such that $QI_{F,\Si,A,*}(d,d',r,\eps)$ is satisfied for any $F$-algebra $A$.
\end{lemma}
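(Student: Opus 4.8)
The plan is to use the compatibility of the quantitative index map with the $K$-theoretic one, together with the $F$-equivariant contractibility, to reduce the required vanishing to an injectivity statement for the index map of a contractible space. Since $\nu_{F,\Si,A,*}^{s}=\iota_*^{\eps,r}\circ\nu_{F,\Si,A,*}^{\eps,r,s}$, the hypothesis $\nu_{F,\Si,A,*}^{\eps,r,d}(x)=0$ in $K_*^{\eps,r}(A_{F,\Si})$ already yields $\nu_{F,\Si,A,*}^{d}(x)=0$ in $K_*(A_{F,\Si})$, so the quantitative data plays no further role. Represent $x$ by an element $\tilde x\in KK_*^F(C(X),A)$ for some $F$-invariant compact subset $X$ of $P_d(\Si)$. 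Applying coarse uniform $F$-contractibility to $d$ produces a number $d'\gq d$, \emph{independent of $A$, $x$ and $X$}, such that $X$ is contained in an $F$-equivariantly contractible $F$-invariant compact subset $Y$ of $P_{d'}(\Si)$. Then $q_{d,d'}^*(x)$ is represented by the image $r_{X,Y}^*(\tilde x)$ of $\tilde x$ in $KK_*^F(C(Y),A)$ under the restriction $C(Y)\to C(X)$, and the identity $\nu_{F,\Si,A,*}^{d}=\nu_{F,\Si,A,*}^{d'}\circ q_{d,d',*}$ shows that $\nu_{F,\Si,A,*}^{d'}$ sends $r_{X,Y}^*(\tilde x)$ to $0$.

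The whole statement thus reduces to the claim that, for an $F$-equivariantly contractible $F$-invariant compact subset $Y$ of $P_{d'}(\Si)$, the map $z\mapsto [P_Y]\ts_{C(Y)_{F,\Si}}\tau_{F,\Si}(z)$ from $KK_*^F(C(Y),A)$ to $K_*(A_{F,\Si})$ is injective; indeed $\nu_{F,\Si,A,*}^{d'}$ restricted to the $Y$-stage of the inductive limit is exactly this map, so injectivity forces $r_{X,Y}^*(\tilde x)=0$ and hence $q_{d,d'}^*(x)=0$. To prove the claim I would write $\iota_Y\colon\C\to C(Y)$ for the unital inclusion and $\pi_Y\colon C(Y)\to\C$ for evaluation at an $F$-fixed point $y_0\in Y$ (which exists because the contraction is to a fixed point). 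Equivariant homotopy invariance of $KK^F$ makes $\iota_Y^*$ an isomorphism with inverse $\pi_Y^*$. Writing $z=\pi_Y^*(w)$ with $w=\iota_Y^*(z)$ and using the functoriality $\tau_{F,\Si}(\pi_Y^*(w))=\pi_{Y,F,\Si}^*(\tau_{F,\Si}(w))$ of (\ref{equ-tau-F-Sigma}) together with associativity of the Kasparov product, one obtains $[P_Y]\ts_{C(Y)_{F,\Si}}\tau_{F,\Si}(z)=\pi_{Y,F,\Si,*}([P_Y])\ts_{\C_{F,\Si}}\tau_{F,\Si}(w)$. As $\pi_{Y,F,\Si}$ is evaluation at $y_0$, the projection $\pi_{Y,F,\Si,*}([P_Y])=[P_Y(y_0)]$ is the class of the $F$-invariant rank one projection onto the vector $(\la_\si^{1/2}(y_0))_{\si\in\Si}\in\ell^2(\Si)$. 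The map therefore factors as $\Phi\circ\iota_Y^*$, where $\Phi(w)=[P_Y(y_0)]\ts_{\C_{F,\Si}}\tau_{F,\Si}(w)$ no longer depends on $Y$.

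It then remains to show that $\Phi\colon KK_*^F(\C,A)\to K_*(A_{F,\Si})$ is an isomorphism, and this is the step I expect to be the main obstacle. The point is that $\Phi$ is the index map attached to the trivial (hence proper and cocompact) action of $F$ on a point, and should coincide, through the Morita equivalence between $A_{F,\Si}$ and $A\rtimes F$ arising from a choice of fundamental domain for the free action of $F$ on $\Si$, with the Green--Julg isomorphism $KK_*^F(\C,A)\cong K_*(A\rtimes F)$. Since $y_0$ is $F$-fixed, the invariant vector decomposes as $u\ts\mathbf 1_F$ under $\ell^2(\Si)\cong\ell^2(\Si/F)\ts\ell^2(F)$, so that $[P_Y(y_0)]$ corresponds, after the Morita equivalence $\K(\ell^2(\Si))^F\sim C^*(F)$, to the class of the trivial representation, i.e. to the unit of the ring $R(F)=K_0(C^*(F))$; compatibility of $\tau_{F,\Si}$ with Kasparov products (Theorem \ref{thm-product-tensor-F}) then identifies $\Phi$ with multiplication by this unit followed by the Morita isomorphism, which is bijective. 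The delicate bookkeeping will be to verify that the functoriality and product identities are applied to the honest transformation $\tau_{F,\Si}$ of (\ref{equ-tau-F-Sigma}) and that the Green--Julg/Morita identification is natural in $A$. Granting the claim, injectivity of $\nu_{F,\Si,A,*}^{d'}$ on $KK_*^F(C(Y),A)$ gives $r_{X,Y}^*(\tilde x)=0$, hence $q_{d,d'}^*(x)=0$ in $K_*^F(P_{d'}(\Si),A)$, which is precisely $QI_{F,\Si,A,*}(d,d',r,\eps)$.
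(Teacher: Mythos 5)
Your proposal is correct and follows essentially the same route as the paper: reduce to the non-quantitative vanishing via $\nu_{F,\Si,A,*}^{d}=\iota_*^{\eps,r}\circ\nu_{F,\Si,A,*}^{\eps,r,d}$, invoke coarse uniform $F$-contractibility to land in an $F$-equivariantly contractible compact subset of $P_{d'}(\Si)$, and conclude by identifying the local assembly map at an $F$-fixed point with the Green--Julg isomorphism $KK_*^F(\C,A)\cong K_*(A\rtimes F)$ (which the paper simply cites from \cite{julg}, so the step you flag as the main obstacle is exactly the ingredient the paper takes as known). The only cosmetic difference is that the paper factors the element $q_{d,d'}^*(x)$ through the point stage, while you factor the $Y$-stage of the assembly map through the point via equivariant homotopy invariance; these are the same argument.
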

\begin{proof}
Let $A$ be a $F$-algebra and let $x$ be an element of $K_*(\P_d(\Si),A)$ such that, $\nu_{F,\Si,A,*}^{\eps,r,d}(x)=0$ in $K_*^{\eps,r}(A_{F,\Si})$. Let $d'\gq d$ be a positive number such that every  invariant compact subset of $P_d(\Si)$ lies in a $F$-equivariantly contractible
invariant compact subset of    $P_{d'}(\Si)$. Then $q^{d,d'}_*(x)\in K_*(\P_{d'}(\Si),A)$ comes indeed from an element
of $KK^F_*(C(\{p\}),A)\cong KK^F_*(\C,A)$ for $p$ a $F$-invariant element in  $P_{d'}(\Si)$. But under the identification between $K_*(A_{F,\Si})$ and $K_*(A\rtimes F)$ given by Morita equivalence (see section \ref{sec-indexmap}),
the map $$KK_*^F(\C,A)\longrightarrow K_*(A_{F,\Si});\, 
x\mapsto [P_{\{p\}}]\otimes_{C(\{p\})_{F,\Si}}\tau_{F,\Si}(x)$$ is the Green-Julg duality isomorphism for finite groups \cite{julg}. Since
\begin{eqnarray*}
\nu_{F,\Si,A,*}^{d'}\circ q_*^{d,d'}(x)&=&\nu_{F,\Si,A,*}^{d}(x)\\
&=& \iota_*^{\eps,r}\circ \nu_{F,\Si,A,*}^{\eps,r,d}(x)\\
&=&0.
\end{eqnarray*}
We deduce that $q_*^{d,d'}(x)=0$
\end{proof}

\begin{theorem}
  There exists $\lambda>1$ such that for any   finite group $F$  and any $F$-algebra $A$ the following holds:

 \medskip

Let $\Si$  be a discrete metric space with bounded geometry,
 provided with a free  action of $F$ by isometries. Assume that
\begin{itemize}
\item $\mu_{\gsif,A^{\N,\infty}_{C_0(\Si)},*}^{\infty}:K^{top}_*(\gsif,A^{\N,\infty}_{C_0(\Si)}) 
\longrightarrow K_*(A^{\N,\infty}_{C_0(\Si)}\rtr\gsif)$  is 
onto.
\item $\Si$  is uniformly 
$F$-contractible.
\end{itemize}
Then for any $\eps$ in $(0,\frac{1}{4\lambda})$ and any $r>0$, there exists $r'>0$ such that $\sta_{F,\Si,A,*}(\eps,\lambda\eps,r,r')$ holds.
\end{theorem}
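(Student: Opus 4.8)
The plan is to combine the quantitative surjectivity and injectivity statements $QS$ and $QI$ — which are exactly what the two hypotheses deliver — in the same way that Baum--Connes surjectivity and injectivity were combined in the proof of Theorem \ref{thm-persistence-crossed-products}. The essential simplification is that \emph{no} product/contradiction argument is needed here: the uniformity in the element $x$ that the product trick was designed to extract is already built into $QS$ and $QI$. First I would unwind the hypotheses. Since the action of $F$ is free and $\Si$ has bounded geometry, the surjectivity of $\mu_{\gsif,*}^{\infty}$ with the stated coefficients is, by Theorem \ref{thm-quantBC-surjectivity}, equivalent to the quantitative surjectivity statement $QS_{F,\Si,A,*}$; and the coarse uniform $F$-contractibility of $\Si$ gives, by Lemma \ref{lemma-inj-geom-quant}, the quantitative injectivity statement $QI_{F,\Si,A,*}$. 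Let $\lambda_0>1$ be the universal constant of Proposition \ref{proposition-approximation} and $\lambda_1>1$ the constant of Theorem \ref{thm-quantBC-surjectivity}; I take $\lambda=\lambda_0\lambda_1$, so that $\eps<\frac1{4\lambda}$ forces $\lambda_0\lambda_1\eps<\frac14$ and a fortiori $\eps<\frac1{4\lambda_1}$.

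Now fix $\eps\in(0,\frac1{4\lambda})$ and $r>0$, and let $x\in K_*^{\eps,r}(A_{F,\Si})$ satisfy $\iota_*^{\eps,r}(x)=0$ in $K_*(A_{F,\Si})$. \emph{Lift.} Applying $QS_{F,\Si,A,*}$ I obtain parameters $d$ and $R\gq r$ depending only on $\eps$ and $r$ (the statement is quantified over all elements), together with $w\in K_*^F(P_d(\Si),A)$ satisfying
$$\nu_{F,\Si,A,*}^{\lambda_1\eps,R,d}(w)=\iota_*^{\eps,\lambda_1\eps,r,R}(x).$$
\emph{Vanishing in $K$-theory.} Composing with $\iota_*^{\lambda_1\eps,R}$ and using $\nu_{F,\Si,A,*}^{d}=\iota_*^{\lambda_1\eps,R}\circ\nu_{F,\Si,A,*}^{\lambda_1\eps,R,d}$ together with $\iota_*^{\lambda_1\eps,R}\circ\iota_*^{\eps,\lambda_1\eps,r,R}=\iota_*^{\eps,r}$, I get $\nu_{F,\Si,A,*}^{d}(w)=\iota_*^{\eps,r}(x)=0$ in $K_*(A_{F,\Si})$.

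\emph{Promote and descend.} Since $\nu_{F,\Si,A,*}^{\lambda_1\eps,R,d}(w)$ then lies in the kernel of $\iota_*^{\lambda_1\eps,R}$, Proposition \ref{proposition-approximation}(ii), applied to this element and to $0$, yields $R'\gq R$ with
$$\nu_{F,\Si,A,*}^{\lambda_0\lambda_1\eps,R',d}(w)=\iota_*^{\lambda_1\eps,\lambda_0\lambda_1\eps,R,R'}\big(\nu_{F,\Si,A,*}^{\lambda_1\eps,R,d}(w)\big)=0.$$
I then invoke $QI_{F,\Si,A,*}(d,d',R',\lambda_0\lambda_1\eps)$ from Lemma \ref{lemma-inj-geom-quant}, choosing $d'\gq d$ to be the enlargement furnished by its proof, which depends only on $d$; this forces $q_{d,d',*}(w)=0$ in $K_*^F(P_{d'}(\Si),A)$. \emph{Conclude.} Put $r'=\max(R,\,r_{d',\lambda_1\eps})$. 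From $\nu_{F,\Si,A,*}^{\lambda_1\eps,r',d}=\nu_{F,\Si,A,*}^{\lambda_1\eps,r',d'}\circ q_{d,d',*}$ we obtain $\nu_{F,\Si,A,*}^{\lambda_1\eps,r',d}(w)=0$, whence
$$\iota_*^{\eps,\lambda_1\eps,r,r'}(x)=\iota_*^{\lambda_1\eps,R,r'}\big(\nu_{F,\Si,A,*}^{\lambda_1\eps,R,d}(w)\big)=\nu_{F,\Si,A,*}^{\lambda_1\eps,r',d}(w)=0.$$
This is $\sta_{F,\Si,A,*}(\eps,\lambda_1\eps,r,r')$, and one further inclusion upgrades it to $\sta_{F,\Si,A,*}(\eps,\lambda\eps,r,r')$.

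The main obstacle is exactly to guarantee that $r'$ is independent of $x$. The lift data $d,R$ are uniform because $QS$ is quantified over all elements of $K_*^{\eps,r}(A_{F,\Si})$; the delicate point is that the descent radius must not inherit the $x$-dependence of the radius $R'$ produced by Proposition \ref{proposition-approximation}. This is precisely where coarse uniform $F$-contractibility enters: in Lemma \ref{lemma-inj-geom-quant} the passage $d\mapsto d'$ is governed solely by the enlargement of compact subsets of the Rips complex $P_d(\Si)$ into $F$-equivariantly contractible ones, and is insensitive both to the propagation bound and to the control parameter, so a single $d'$ serves for every admissible $R'$ (and every $x$). Once the uniformity of $d$, $d'$ and $R$ is secured, all remaining steps are routine compatibilities of the controlled structure maps $\iota_*^{\eps,\eps',r,r'}$ with the local assembly maps $\nu_{F,\Si,A,*}^{\eps,r,d}$, together with the bookkeeping of the two control constants $\lambda_0,\lambda_1$.
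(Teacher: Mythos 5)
Your proposal is correct, and at the top level it follows the same strategy as the paper: Theorem \ref{thm-quantBC-surjectivity} turns the surjectivity hypothesis into the statements $QS_{F,\Si,A,*}$, Lemma \ref{lemma-inj-geom-quant} turns coarse uniform $F$-contractibility into the statements $QI_{F,\Si,A,*}$ with an enlargement $d\mapsto d'$, and the persistence statement follows by lifting the given class through $QS$ and killing the lift after passing from $P_d(\Si)$ to $P_{d'}(\Si)$. The genuine difference is the descent step. At that point all one has is $\nu^d_{F,\Si,A,*}(w)=0$ in $K_*(A_{F,\Si})$, whereas the stated hypothesis of $QI_{F,\Si,A,*}(d,d',\cdot,\cdot)$ is vanishing of a \emph{quantitative} assembly class; the paper's proof nevertheless concludes $q_{d,d',*}(w)=0$ directly, which in effect re-runs the proof of Lemma \ref{lemma-inj-geom-quant} rather than quoting its statement (under uniform contractibility $q_{d,d',*}(w)$ factors through an $F$-fixed point, where the assembly map is the injective Green--Julg map, so vanishing in $K_*$ already suffices). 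You instead first promote the vanishing in $K_*$ to quantitative vanishing at an element-dependent scale $R'$ using Proposition \ref{proposition-approximation}(ii), and then apply $QI$ exactly as stated at that scale; the uniformity of the final $r'$ is rescued by your observation that the $d'$ produced by Lemma \ref{lemma-inj-geom-quant} depends only on $d$ and not on the radius or the control parameter, which is indeed what its proof yields. So both arguments must look inside the lemma's proof --- the paper to weaken its hypothesis, you to extract the parameter-independence of $d'$ --- and the trade-off is that your route uses $QI$ as a black box and repairs the imprecision in the paper's literal appeal to it, at the mild cost of a larger universal constant $\lambda=\lambda_0\lambda_1$, where the paper gets by with the constant of Theorem \ref{thm-quantBC-surjectivity} alone since it never needs the promotion step.
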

\begin{proof}
In view of  corollary  \ref{cor-upsilon-iso} and  proposition \ref{proposition-intertwinned-assembly-map}, we get that  under he assumptions of the theorem, 
$$\nu_{F,\Si,A,*}^{\infty}:K^{top,\infty}_*(F,\Si,\A) \longrightarrow K_*(\A^{\infty}_{\Si}\rtimes F)$$ is onto for any $F$-algebra $A$. Consider $\lambda$ as in theorem 
\ref{thm-quantBC-surjectivity}. Let $\eps$ be a positive number with $\eps<\frac{1}{4\lambda}$ and let $d$ and $r'$ be positive number with $r'\gq r_{d,\eps}$ such that   $QS_{\Si,F,A}(d,r,r',\lambda\eps,\eps)$
is satisfied for every  $F$-algebra
  $A$.  Choose $d'$ as in lemma \ref{lemma-inj-geom-quant} such that   $QI_{F,\Si,A,*}(d,d',r',\lambda\eps)$ is satisfies. We can assume without loss of generality that $r'\gq r_{d',\lambda\eps}$.  Let $y$ be an element of $K_{\eps,r}^*(A_{F,\Si})$ such that $\iota_*^{\eps,r}(y)=0$ in $K_*(A_{F,\Si})$. Then there exists $x$ in 
$K_*(\P_d(\Si),A)$ such that $\nu_{F,\Si,A,*}^{\lambda\eps,r',d}(x)=\iota_*^{\eps,\lambda\eps,r,r'}(y)$.
Then we have
\begin{eqnarray*}
\iota_*{\eps,\lambda\eps,r,r'}(x)&=&\nu_{F,\Si,A,*}^d(x)\\
&=& \nu_{F,\Si,A,*}^{d'}\circ q_*^{d,d'}(x)\\
&=&0
\end{eqnarray*}
\end{proof}

Similarly, using theorem \ref{thm-quant-surj}, we get:

\begin{theorem}\label{thm-persistence-family}
  There exists $\lambda>1$ such that for any   finite group $F$  the following holds:

 \medskip

Let $\Si$  be a discrete metric space with bounded geometry,
 provided with a free  action of $F$ by isometries. Assume that
\begin{itemize}
\item for any family $\A=(A_i)_{i\in\N}$ of $F$-algebras,
  then the  assembly map   $$\mu_{\gsif,\aci,*}:K^{top}_*(\gsif,\A^{\infty}_{C_0(\Si)}) 
\longrightarrow K_*(\A^{\infty}_{C_0(\Si)}\rtr\gsif)$$  is 
onto.
\item $\Si$  is coarsely  uniformly 
$F$-contractible.
\end{itemize}
Then  for  any $\eps$ in $(0,\frac{1}{4\lambda})$ and any $r>0$, there exists 
$r'>0$ such that $\sta_{*}(A_{F,\Si},\eps,\lambda\eps,r,r')$ holds for any $F$-algebra $A$.
\end{theorem}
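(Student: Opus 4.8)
The plan is to reproduce, almost verbatim, the proof of the preceding theorem, using the \emph{family} surjectivity input of theorem \ref{thm-quant-surj} in place of the single-coefficient one (theorem \ref{thm-quantBC-surjectivity}); the only gain is that every quantitative constant can now be chosen uniformly in the $F$-algebra $A$.

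First I would take $\lambda>1$ as in theorem \ref{thm-quant-surj} and fix $\eps\in(0,\frac{1}{4\lambda})$ and $r>0$. The surjectivity hypothesis on the family assembly maps $\mu_{\gsif,\aci,*}$ lets me invoke item (ii) of theorem \ref{thm-quant-surj} with regularity $\eps$ and source radius $r$; this produces a degree $d$ and an assembly radius $r'$ (with $r_{d,\eps}\lq r'$ and $r\lq r'$), \emph{independent of} $A$, such that $QS_{F,\Si,A,*}(d,r',r,\lambda\eps,\eps)$ holds for every $F$-algebra $A$. Next, since $\Si$ is coarsely uniformly $F$-contractible, I would feed $d$, the regularity $\lambda\eps$ and the radius $r'$ into lemma \ref{lemma-inj-geom-quant} (note $r_{d,\lambda\eps}\lq r_{d,\eps}\lq r'$, as $r_{d,\cdot}$ is non-increasing) to obtain $d'\gq d$ with $QI_{F,\Si,A,*}(d,d',r',\lambda\eps)$ for every $F$-algebra $A$, enlarging $r'$ if needed so that $r'\gq r_{d',\lambda\eps}$.

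With these uniform constants in hand, I would verify $\sta_*(A_{F,\Si},\eps,\lambda\eps,r,r')$ directly: given an arbitrary $F$-algebra $A$ and $y\in K_*^{\eps,r}(A_{F,\Si})$ with $\iota_*^{\eps,r}(y)=0$ in $K_*(A_{F,\Si})$, the statement $QS_{F,\Si,A,*}(d,r',r,\lambda\eps,\eps)$ yields $x\in K_*^F(P_d(\Si),A)$ with $\nu_{F,\Si,A,*}^{\lambda\eps,r',d}(x)=\iota_*^{\eps,\lambda\eps,r,r'}(y)$. Passing to $K$-theory via $\nu_{F,\Si,A,*}^{d}=\iota_*^{\lambda\eps,r'}\circ\nu_{F,\Si,A,*}^{\lambda\eps,r',d}$ gives $\nu_{F,\Si,A,*}^{d}(x)=\iota_*^{\eps,r}(y)=0$; the injectivity mechanism of lemma \ref{lemma-inj-geom-quant} (coarse uniform contractibility together with the Green--Julg isomorphism, which only needs the classical vanishing of $\nu_{F,\Si,A,*}^{d}(x)$) then forces $q_{d,d',*}(x)=0$. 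Finally $\nu_{F,\Si,A,*}^{\lambda\eps,r',d}=\nu_{F,\Si,A,*}^{\lambda\eps,r',d'}\circ q_{d,d',*}$ gives
$$\iota_*^{\eps,\lambda\eps,r,r'}(y)=\nu_{F,\Si,A,*}^{\lambda\eps,r',d'}\big(q_{d,d',*}(x)\big)=0$$
in $K_*^{\lambda\eps,r'}(A_{F,\Si})$, as required.

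I expect no serious obstacle: this is essentially bookkeeping around the identities $\nu^{d}=\iota_*\circ\nu^{\lambda\eps,r',d}$ and $\nu^{\lambda\eps,r',d}=\nu^{\lambda\eps,r',d'}\circ q_{d,d',*}$. The one point demanding care is \emph{uniformity in} $A$: both $QS$ (via item (ii) of theorem \ref{thm-quant-surj}) and $QI$ (via lemma \ref{lemma-inj-geom-quant}) must deliver $d$, $d'$ and $r'$ that do not depend on $A$, which is precisely what distinguishes the family formulation from the single-coefficient theorem and what makes the resulting $r'$ in $\sta_*(A_{F,\Si},\eps,\lambda\eps,r,r')$ independent of the chosen $F$-algebra.
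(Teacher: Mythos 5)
Your proposal is correct and takes essentially the same approach as the paper: the paper's own proof of this theorem is exactly the remark ``Similarly, using theorem \ref{thm-quant-surj}, we get'', i.e.\ the proof of the preceding theorem repeated with the family surjectivity statement of theorem \ref{thm-quant-surj} supplying constants $d$, $r'$ uniform in $A$, and with lemma \ref{lemma-inj-geom-quant} supplying the uniform $QI$ step. Your write-up matches this line for line, including the key identities $\nu_{F,\Si,A,*}^{d}=\iota_*^{\lambda\eps,r'}\circ\nu_{F,\Si,A,*}^{\lambda\eps,r',d}$ and $\nu_{F,\Si,A,*}^{\lambda\eps,r',d}=\nu_{F,\Si,A,*}^{\lambda\eps,r',d'}\circ q_{d,d',*}$ and the observation that the Green--Julg mechanism only needs the vanishing of $\nu_{F,\Si,A,*}^{d}(x)$ in $K$-theory.
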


\begin{corollary}
  There exists $\lambda>1$ such that for any   finite group $F$ and  any   discrete Gromov hyperbolic metric space $\Si$ 
provided with a free  action of  $F$ by isometries, then the following holds:
  for  any $\eps$ in $(0,\frac{1}{4\lambda})$ and any $r>0$, there exists $r'>0$ such that $\sta_{*}(A_{F,\Si},\eps,\lambda\eps,r,r')$ holds for 
any $F$-algebra $A$.
\end{corollary}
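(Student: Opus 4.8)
The plan is to obtain this corollary as an immediate specialization of Theorem~\ref{thm-persistence-family}. That theorem already provides the universal constant $\lambda>1$ and produces, for a discrete metric space $\Si$ of bounded geometry carrying a free isometric $F$-action, the desired statement $\sta_*(A_{F,\Si},\eps,\lambda\eps,r,r')$ for all $F$-algebras $A$, provided two hypotheses hold: surjectivity of the assembly map $\mu_{\gsif,\aci,*}$ for every family $\A=(A_i)_{i\in\N}$ of $F$-algebras, and coarse uniform $F$-contractibility of $\Si$. So the entire task reduces to checking that both hypotheses are automatic for a Gromov hyperbolic $\Si$; no new analytic or controlled-$K$-theoretic work is needed.

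For the first hypothesis, I would invoke coarse embeddability into Hilbert space. A discrete Gromov hyperbolic metric space of bounded geometry coarsely embeds into a Hilbert space (indeed it has property A), and by the result recalled at the end of Subsection~\ref{subsection-quantitative-statements} (from \cite{sty,y1}), coarse embeddability into Hilbert space forces the groupoid $\gsif$ to satisfy the Baum--Connes conjecture with arbitrary coefficients. Applying this with coefficients in $\aci$ shows that $\mu_{\gsif,\aci,*}$ is an isomorphism, and in particular onto, for every family $\A=(A_i)_{i\in\N}$; this is precisely the surjectivity hypothesis of Theorem~\ref{thm-persistence-family}. Here I am using that $\Si$ has bounded geometry, which is the standing assumption throughout this section and under which the groupoid $\gsif$ is defined.

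For the second hypothesis, I would quote the Example preceding the theorem: any discrete Gromov hyperbolic metric space equipped with a free isometric action of a finite group $F$ is coarsely uniformly $F$-contractible (after \cite{ms}). With both hypotheses of Theorem~\ref{thm-persistence-family} verified, its conclusion gives exactly the assertion of the corollary, with the same universal $\lambda$. I do not anticipate a genuine obstacle in this deduction; the only point requiring a little care is packaging the two structural inputs correctly, namely confirming that the bounded-geometry Gromov hyperbolic hypothesis supplies both coarse embeddability into Hilbert space (hence the Baum--Connes surjectivity feeding the first bullet of the theorem) and coarse uniform $F$-contractibility (the second bullet), since all of the quantitative content has already been absorbed into Theorem~\ref{thm-persistence-family}.
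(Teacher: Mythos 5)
Your proposal is correct and is exactly the argument the paper intends: the corollary is stated without proof as the immediate specialization of Theorem~\ref{thm-persistence-family}, using the Example (after \cite{ms}) for coarse uniform $F$-contractibility and the remark from \cite{sty,y1} that coarse embeddability into Hilbert space (which bounded-geometry hyperbolic spaces enjoy) gives the Baum--Connes conjecture for $\gsif$ with arbitrary coefficients, hence the required surjectivity for all families $\A$.
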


\section{Applications to Novikov conjecture}\label{section-Novikov}
In this section, we  investigate the connection between the quantitative statements of  \ref{subsection-quantitative-statements} and the Novikov conjecture. 
Indeed, we show that when this statements are satisfied  uniformly for the family of finite subsets of a discrete metric space $\Si$ with bounded geometry, then
$\Si$ satisfies the coarse Baum-Connes conjecture.
\subsection{The coarse Baum-Connes conjecture}
Let us first  briefly recall the statement of the coarse Baum-Connes conjecture. Let $\Si$ be a discrete metric space with bounded geometry and let $H$ be  a separable
Hilbert space. Set  $C[\Sigma]_r$ for  the  space of locally compact operators  on
  $\ell^2(\Sigma)\ts H$ with
 propagation  less than $r$,  i.e  operators  that can be written as blocks  {$T=(T_{x,y})_{(x,y)\in \Sigma^2}$}  of compact operators of $H$ such that   $T_{x,y}=0$  if $d(x,y)>r$.
 The { Roe algebra} of $\Si$ is  then 
$$C^*(\Si)=\overline{\cup_{r>0} C[\Sigma]_r}\subseteq \mathcal{L}(\ell^2(\Sigma)\otimes H)$$  and is by definition
filtered by $(C[\Sigma]_r)_{r>0}$.
In \cite{hr} was defined a bunch of coarse assembly maps (we shall recall later on the definition of these maps)
$$\mu_{\Si,*}^s:K_*(P_s(\Si))\to K_*(C^*(\Si))$$ compatible with the maps $K_*(P_s(\Si))\to K_*(P_{s'}(\Si))$ induced by the inclusions of Rips complexes $P_s(\Si)\hookrightarrow P_{s'}(\Si)$ for $s\lq s'$. Taking the inductive limit, we end up with the so-called coarse Baum-Connes assembly map
$$\mu_{\Si,*}:\lim_{s>0}K_*(P_s(\Si))\to K_*(C^*(\Si)).$$ We say that $\Si$ satisfies the coarse Baum-Connes assembly map if 
$\mu_{\Si,*}$ is an isomorphism.

\medskip
The coarse Baum-Connes conjecture is then related to the quantitative statements of \ref{subsection-quantitative-statements} in the following way.
From now on, if $\Si$ is a discrete metric space, then $QI_{\Si,*}(d,d',r,\eps)$ and $QS_{\Si,*}(d,r,r',\eps,\eps')$ respectively stand for 
$QI_{\{e\},\Si,\C,*}(d,d',r,\eps)$ and $QS_{\{e\},\Si,\C,*}(d,r,r',\eps,\eps')$.
\begin{theorem}\label{thm-CBC}
Let $\Si$ be  a discrete metric space with bounded geometry. Assume that the following assertions hold:
\begin{enumerate}
\item For any positive number $d,\,\eps$ and $r$ with $\eps<1/4$ and $r\geq r_{d,\eps}$, there exists a positive number $d'$ with $d'\geq d$ such that 
$QI_{F,*}(d,d',r,\eps)$  holds for any finite subset $F$ of $\Si$;
\item For any positive number $\eps'$ and $r'$ with  $\eps<1/4$, there exists positive numbers $d,\,\eps$  and  $r$ with 
 $r\geq r_{d,\eps}\,,r\geq r'$ and $\eps$ in $[\eps',1/4)$ such that $QS_{F,*}(d,r,r',\eps,\eps')$ holds for any finite subset $F$ of $\Si$.
\end{enumerate}Then $\Si$ satisfies the coarse Baum-Connes conjecture.
\end{theorem}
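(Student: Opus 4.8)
The plan is to prove that the coarse assembly map $\mu_{\Si,*}$ is both injective and surjective, feeding hypothesis (i) into the injectivity part and hypothesis (ii) into the surjectivity part. First I would record the identifications that make the hypotheses usable. Writing $\C_\Si=\C_{\{e\},\Si}=\C\ts\K(\ell^2(\Si))$, the Morita equivalence of Proposition \ref{prop-morita} gives $K_*^{\eps,r}(\C_\Si)\cong K_*^{\eps,r}(C^*(\Si))$ compatibly with all the structure maps, and under this identification the coarse assembly maps $\mu_{\Si,*}^s$ of \cite{hr} coincide with the local coarse assembly maps $\nu_{\{e\},\Si,\C,*}^s$ of Section \ref{sec-indexmap} (both being the pairing with the same Mishchenko-type projection $P_X$), together with their quantitative refinements. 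Abbreviate $\nu^{\eps,r,d}_{\Si}:=\nu_{\{e\},\Si,\C,*}^{\eps,r,d}$ and $\nu^{\eps,r,d}_{F}:=\nu_{\{e\},F,\C,*}^{\eps,r,d}$, so that $\mu_{\Si,*}^d=\iota_*^{\eps,r}\circ\nu^{\eps,r,d}_\Si$; the hypotheses $QI_{F,*}$ and $QS_{F,*}$ are then precisely the injectivity and surjectivity statements for the maps $\nu^{\eps,r,d}_F\colon K_*(P_d(F))\to K_*^{\eps,r}(\C_F)$ attached to the finite subsets $F\subseteq\Si$.

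The crux is a continuity lemma. Fix an exhaustion $F_1\subseteq F_2\subseteq\cdots$ of $\Si$ by finite subsets; I claim the filtered inclusions $\C_{F_n}\hookrightarrow\C_\Si$ realise $\K_*(\C_\Si)$ as a controlled inductive limit of the $\K_*(\C_{F_n})$. Concretely: every class in $K_*^{\eps,r}(\C_\Si)$ is the image of a class in $K_*^{\eps'',r}(\C_{F_n})$ for $n$ large and $\eps''\gq\eps$ controlled; and if a class in $K_*^{\eps,r}(\C_{F_m})$ dies in $K_*^{\eps,r}(\C_\Si)$ then it already dies in $K_*^{\eps',r'}(\C_{F_n})$ for some $n\gq m$ and controlled $\eps'\gq\eps$, $r'\gq r$. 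The proof is spatial compression: since $\C_\Si=\K(\ell^2(\Si))$ is an algebra of \emph{compact} operators, any $\eps$-$r$-projection or $\eps$-$r$-unitary $p$ over $\C_\Si^+$ satisfies $\chi_{F_n}p\chi_{F_n}+(\text{unit correction})\to p$ in norm, and compression by the coordinate projection $\chi_{F_n}$ preserves propagation; the same applies uniformly on $[0,1]$ to the homotopies over $\C_\Si[0,1]$ witnessing vanishing, these too being norm-continuous paths of compact operators of bounded propagation. This is exactly why I pass through the Morita equivalence: the Roe algebra $(\K(\H))_\Si=C^*(\Si)$ itself is \emph{not} an algebra of compact operators and does not localise, whereas $\C_\Si$ does. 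I expect this lemma to be the main obstacle, since one must verify that compression keeps the quantitative relations within a controlled tolerance, both for the generators and for the equivalence-implementing homotopies.

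Granting the lemma, injectivity of $\mu_{\Si,*}$ proceeds as follows. Let $x\in\lim_s K_*(P_s(\Si))$ with $\mu_{\Si,*}(x)=0$, represented by $x_d\in K_*(P_d(\Si))$; since a compact subset of the locally finite complex $P_d(\Si)$ lies in a finite subcomplex, $x_d=\iota_{F,\Si,*}(x_F)$ for a finite $F\subseteq\Si$ and some $x_F\in K_*(P_d(F))$. From $\iota_*^{\eps,r}\circ\nu^{\eps,r,d}_{\Si}(x_d)=\mu_{\Si,*}^d(x_d)=0$ and Proposition \ref{proposition-approximation}(ii) I obtain $\nu^{\lambda\eps,r',d}_{\Si}(x_d)=0$ in $K_*^{\lambda\eps,r'}(\C_\Si)$ for some $r'\gq r$; by naturality in the space this is the image of $\nu^{\lambda\eps,r',d}_{F}(x_F)$ under $\C_F\hookrightarrow\C_\Si$. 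The continuity lemma pushes this vanishing into some finite $F'\supseteq F$, giving $\nu^{\eps',r'',d}_{F'}(x_{F'})=0$ with $x_{F'}=\iota_{F,F',*}(x_F)$ and controlled $\eps',r''$. Applying hypothesis (i), namely $QI_{F',*}(d,d',r'',\eps')$ with the \emph{uniform} $d'$, yields $q_{d,d'}^*(x_{F'})=0$, whence $q_{d,d'}^*(x_d)=\iota_{F',\Si,*}(q_{d,d'}^*(x_{F'}))=0$ and $x=0$ in the limit.

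Surjectivity is the symmetric argument through hypothesis (ii). Given $z\in K_*(C^*(\Si))\cong K_*(\C_\Si)$, Proposition \ref{proposition-approximation}(i) produces $y\in K_*^{\eps',r'}(\C_\Si)$ with $\iota_*^{\eps',r'}(y)=z$; the continuity lemma realises $y$ as the image of some $y_F\in K_*^{\eps'',r'}(\C_F)$ for a finite $F$. Feeding $y_F$ into the uniform statement $QS_{F,*}(d,r,r',\eps,\eps'')$ of hypothesis (ii) gives $x_F\in K_*(P_d(F))$ with $\nu^{\eps,r,d}_{F}(x_F)=\iota_*^{\eps'',\eps,r',r}(y_F)$; setting $x=\iota_{F,\Si,*}(x_F)$ and using naturality, $\nu^{\eps,r,d}_{\Si}(x)=\iota_*^{\eps',\eps,r',r}(y)$, so that $\mu_{\Si,*}^d(x)=\iota_*^{\eps,r}\nu^{\eps,r,d}_{\Si}(x)=z$. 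The uniformity of the constants over all finite subsets in (i) and (ii) is essential here: it is what lets a single $d'$ (respectively $d,r$) serve the finite sets $F'$ (respectively $F$) produced by the localisation, which vary with the class at hand. Bounded geometry enters as the standing hypothesis guaranteeing that the Rips complexes are locally finite and that the Roe algebra and its filtration are well defined, while the compactness of $\C_\Si$ is the feature of the \emph{coefficient} $\C$ that drives the localisation lemma.
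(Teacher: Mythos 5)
Your proposal collapses at its very first identification, and the damage propagates through both halves of the argument. You claim that Proposition \ref{prop-morita} yields $K_*^{\eps,r}(\C_\Si)\cong K_*^{\eps,r}(C^*(\Si))$ for $\C_\Si=\K(\ell^2(\Si))$. What that proposition actually gives is $K_*^{\eps,r}(\K(\ell^2(\Si)))\cong K_*^{\eps,r}(\K(\H)\ts\K(\ell^2(\Si)))=K_*^{\eps,r}(\K(\H\ts \ell^2(\Si)))$, and for infinite $\Si$ the Roe algebra strictly contains $\K(\ell^2(\Si)\ts\H)$ as a proper ideal: an operator such as $\diag(e,e,e,\dots)$, with a fixed rank-one projection $e\in\K(\H)$ in every diagonal block, is locally compact of propagation $0$, hence lies in $C^*(\Si)$, but is not compact. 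Consequently $K_0(\K(\ell^2(\Si)))=\Z$ while $K_*(C^*(\Si))$ is in general enormous; there is no Morita equivalence, filtered or otherwise, between the two, and your parenthetical ``$(\K(\H))_\Si=C^*(\Si)$'' is likewise false in the paper's notation ($(\K(\H))_\Si=\K(\H)\ts\K(\ell^2(\Si))$ is again the compacts). The companion identification of domains fails too: the left-hand side of the coarse Baum--Connes conjecture is the locally finite $K$-homology $KK_*(C_0(P_d(\Si)),\C)$, whereas the domain of $\nu^{s}_{\{e\},\Si,\C,*}$ from Section \ref{sec-indexmap} is the compactly supported limit $\lim_X KK_*(C(X),\C)$; for a graph space $\coprod_i X_i$ the former is the full product $\prod_i K_*(P_d(X_i))$ and the latter the direct sum. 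Hence your injectivity step ``$x_d=\iota_{F,\Si,*}(x_F)$ for a finite $F$'' is unavailable: a locally finite class has components over infinitely many pieces and is pushed forward from no compact subset. Your own remark that you route through $\C_\Si$ precisely because $C^*(\Si)$ ``does not localise'' names the true obstruction, but the detour is illusory: your continuity lemma is correct for $\K(\ell^2(\Si))$, yet it computes the wrong group, and the map you would end up proving bijective (into $\Z$) is not the coarse assembly map. A telling symptom is that your argument never really uses the uniformity of the hypotheses --- constants depending on the finite set produced by your localisation would serve just as well --- whereas uniformity is exactly what the genuine proof cannot do without.

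What the paper does instead is localize not to single finite subsets but to \emph{uniform families} of finite subsets. By a coarse Mayer--Vietoris argument (citing \cite{wy}) one reduces to a graph space $\Si=\coprod_{i\in\N}X_i$ with the $X_i$ finite and increasingly separated; then $K_*(P_d(\Si))\cong\prod_i K_*(P_d(X_i))$, and the correct receptacle on the analytic side is the family Roe algebra $C^*(\X_\Si)$ of uniformly bounded propagation blocks in $\prod_i\K(\ell^2(X_i)\ts\H)$, mapping to $C^*(\Si)$ by $\jmath_\Si$. When the pieces are $r$-separated, every propagation-$r$ element of $C^*(\Si)$ is block diagonal, which is what lets quantitative classes be pulled back into $C^*(\X_\Si)$ --- the legitimate substitute for your compression step. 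The controlled isomorphism $\K_*(C^*(\X_\Si))\cong\prod_i\K_*(\K(\ell^2(X_i)))$ of Lemma \ref{lem-prod-filtered} then disassembles such a class into components to which the hypotheses $QI_{X_i,*}$ and $QS_{X_i,*}$ apply with one and the same set of constants for all $i$ (this is where uniformity is essential, since a single $d'$ must kill infinitely many components simultaneously), and Proposition \ref{proposition-assembly-map-graph-space}, proved via the rather delicate Lemma \ref{lemma-preliminary}, is what ties the family assembly map to $\mu^s_{\Si,*}$. None of this structure is recoverable from your proposal once the initial identification is withdrawn, so the gap is not a matter of adjusting tolerances: the argument requires product-type localization, not an exhaustion by finite subsets.
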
 
Let us recall now the definition of the Coarse Baum-Connes assembly maps given in \cite[Section 2.3]{sty}. Indeed, the definition of the coarse Baum-Connes assembly map was extended to Roe algebras with coefficients in a $C^*$-algebra.  
Let $\H$ be a separable Hilbert space,  let $\Si$ be a proper discrete metric space with bounded geometry and let $B$ be a $C^*$-algebra. Define $C^*(\Si,B)$ the Roe algebra of $\Si$ with coefficient in $B$ as the closure of operators of the Hilbertian right $B$-module $\ell^2(\Si)\ts \H\ts B$ which are locally compact with finite  propagation. Then $C^*(\Si,B)$ is sub-$C^*$-algebra of $\L_B(\ell^2(\Si)\ts\H\ts B)$. This construction is moreover functorial. Any morphism $f:A\to B$ induces in the obvious way a $C^*$-algebra morphism $f_\Si:C^*(\Si,A)\to C^*(\Si,B)$. In  \cite[Section 2.3]{sty} was defined in this setting a natural transformation
$$\si_\Si:KK_*(A,B)\lto KK_*(C^*(\Si,A),C^*(\Si,B)),$$ for any $C^*$-algebras $A$ and $B$, which can be viewed as the geometrical analogue of the Kasparov transformation for crossed products. Let us give now the description of this map.

\medskip

Let $(\H\ts B,\pi, F)$ be a non-degenerated $K$-cycle for $KK_*(A,B)$. Define then $\widetilde{F}=Id_{\ell^2(\Si)\ts \H}\ts F$ acting on 
the Hilbertian right $B$-module $\ell^2(\Si)\ts \H\ts \H\ts B$.
The map $$\L_A(\ell^2(\Si)\ts \H\ts A)\lto \L_B(\ell^2(\Si)\ts \H\ts  \H \ts B);\,T \mapsto T\ts_\pi Id_{\H\ts B}$$ induces by restriction and under the identification between $\L_B(\ell^2(\Si)\ts \H\ts  \H \ts B)$ and $\L_{\K(\H)\ts B}(\ell^2(\Si)\ts \H\ts  \K(\H)\ts B)$ a morphism
$$\widetilde{\pi}:C^*(\Si,A)\to \M(C^*(\Si,B\ts \K(\H))),$$ where $\M(C^*(\Si,B\ts \K(\H)))$ stands for the multiplier algebra of 
$C^*(\Si,B\ts \K(\H))$. Then $(\M(C^*(\Si,B\ts \K(\H))),\widetilde{\pi},\widetilde{F})$ is a $K$-cycle for $KK_*(C^*(\Si,A),C^*(\Si,B\ts \K(\H)))$ and hence, under the identification between $C^*(\Si,B\ts \K(\H))$ and $C^*(\Si,B)$ we end up with an element in $KK_*(C^*(\Si,A),C^*(\Si,B))$. We obtain in this way a
natural transformation 
$$\si_\Si: KK_*(A,B)\lto KK_*(C^*(\Si,A),C^*(\Si,B)).$$  This transformation is also bifunctorial, i.e for any $C^*$-algebra morphisms 
$f:A_1\to A_2$ and $g:B_1\to B_2$ and any element $z$ in $KK_*(A_2,B_1)$, then we have $\si_\Si(f^*(z))=f_\Si^*(\si_\Si(z))$ and 
$\si_\Si(g_*(z))=g_{\Si,*}(\si_\Si(z))$. If $z$ is an element of $KK_*(A,B)$, we denote by
$$\SS_\Si(z):K_*(C^*(\Si,A))\lto K_*(C^*(\Si,B));\, x\mapsto x\ts_{C^*(\Si,A)} \si_\Si(z)$$ induced by right multiplication by   $\si_\Si(z)$.  

Notice that if 
\begin{equation}\label{semi-split-extension}
0\to J\to A\to A/J\to 0
\end{equation} is  a semi-split extension of $C^*$-algebra, then $C^*(\Si,J)$ can be viewed as an ideal of $C^*(\Si,A)$ and we get then a semi-split extension of $C^*$-algebras 
\begin{equation}\label{semi-split-extension-Roe}
0\lto C^*(\Si,J)\lto C^*(\Si,A)\lto C^*(\Si,A/J)\lto 0.\end{equation} If $z$ is the element of $KK_1(A,B)$ corresponding to the boundary element of the extension
 (\ref{semi-split-extension}), then $\SS_\Si(z):K_*( C^*(\Si,A/J))\lto K_{*+1}( C^*(\Si,J))$ is the boundary morphism  associated to the extension 
 (\ref{semi-split-extension-Roe}). 
 
 For a $C^*$-algebra $A$, let us denote by $SA$ its suspension, i.e $SA=C_0((0,1),A)$, by $CA$ its cone, i.e $CA=\{f\in C_0([0,1],A)\text{ such that} f(1)=0\}$ and by 
 $ev_0:CA\to A$   the evaluation map at zero.
 Let us consider for any $C^*$-algebra $A$ the Bott extension
 $$0\lto SA\lto CA\stackrel{ev_0}{\lto}A\lto 0,$$ with associated  boundary map 
 $\partial_A:K_*(A)\to K_{*+1}(SA)$. It is well known that the  corresponding  element $[\partial_A]$ of $KK_1(A,SA)$  is invertible with inverse up to Morita equivalence the element of $KK_1(\K\ts A,SA)$ corresponding to  the Toeplitz extension
 $$0\lto \K\ts A\lto \T_0\ts A\lto SA\lto 0.$$ 
 \begin{lemma}\label{lem-bott-coarse}
 For any $C^*$-algebra $A$, then $\SS_\Si([\partial_A]^{-1})$ is a left inverse for   $\SS_\Si([\partial_A])$.
 \end{lemma}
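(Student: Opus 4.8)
The plan is to avoid invoking any (unproven) multiplicativity of $\si_\Si$ with respect to Kasparov products, and instead to recognize both $\SS_\Si([\partial_A])$ and $\SS_\Si([\partial_A]^{-1})$ as boundary maps of familiar extensions of the \emph{single} algebra $D:=C^*(\Si,A)$, using the compatibility of $\SS_\Si$ with boundary maps recorded just before the statement. First I would observe that the Roe construction passes through the commutative coefficient directions: for any locally compact $Y$ there is a natural identification $C^*(\Si,C_0(Y)\ts A)\cong C_0(Y,C^*(\Si,A))$, so that $C^*(\Si,SA)\cong SD$ and $C^*(\Si,CA)\cong CD$, and similarly $C^*(\Si,\K\ts A)\cong D$ by stabilization and $C^*(\Si,\T_0\ts A)\cong\T_0\ts D$. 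Under these identifications the Roe-ification of the Bott extension $0\lto SA\lto CA\lto A\lto 0$ is exactly the Bott extension $0\lto SD\lto CD\lto D\lto 0$ of $D$, and the Roe-ification of the Toeplitz extension $0\lto\K\ts A\lto\T_0\ts A\lto SA\lto 0$ is, up to the stabilization $\K\ts A\simeq A$, the Toeplitz extension of $D$.

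Next, by the boundary-compatibility property stated before the lemma, $\SS_\Si([\partial_A])$ is the boundary map of the Roe-ified Bott extension, hence equals the Bott boundary map $\partial_D\colon K_*(D)\lto K_{*+1}(SD)$, which is an isomorphism. For the inverse, I would use the bifunctoriality $\si_\Si(g_*(z))=g_{\Si,*}(\si_\Si(z))$ to transport the Morita equivalence $\K\ts A\simeq A$ into the stabilization isomorphism $C^*(\Si,\K\ts A)\cong D$, and again invoke the boundary-compatibility to identify $\SS_\Si([\partial_A]^{-1})$ with the boundary map of the Roe-ified Toeplitz extension, that is, with the Toeplitz boundary of $D$. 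By the classical Bott--Toeplitz relation, this boundary map is the inverse $\partial_D^{-1}$ of the Bott map. Composing yields $\SS_\Si([\partial_A]^{-1})\circ\SS_\Si([\partial_A])=\partial_D^{-1}\circ\partial_D=\mathrm{Id}$ on $K_*(D)$, which is precisely the asserted left inverse; the same identifications in fact produce a two-sided inverse, but only the left inverse is needed below.

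The \emph{main obstacle} is the routine-but-essential verification that the natural identifications above are genuinely compatible with the transformation $\si_\Si$ and with the boundary maps. Concretely, one must check that $C^*(\Si,-)$ commutes naturally with suspension, cone and stabilization; that the Roe-ified Bott and Toeplitz sequences are semi-split extensions (so that the boundary-compatibility statement actually applies); and that $\si_\Si$ intertwines the Morita equivalence $\K\ts A\simeq A$ on the coefficient side with the stabilization isomorphism $C^*(\Si,\K\ts A)\cong C^*(\Si,A)$ on the Roe side. Pinning down the identification $C^*(\Si,\T_0\ts A)\cong\T_0\ts C^*(\Si,A)$ (using nuclearity of $\T_0$) and the semi-splitness of the stabilized Toeplitz sequence is where the real care lies; once these are in place, the whole statement collapses to Bott periodicity for the single algebra $D=C^*(\Si,A)$.
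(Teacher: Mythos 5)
Your overall architecture (recognize $\SS_\Si([\partial_A])$ and $\SS_\Si([\partial_A]^{-1})$ as boundary maps of Roe-ified Bott and Toeplitz extensions, then compose using naturality of boundary maps) is exactly the paper's strategy, but the step you flag as ``routine-but-essential'' is not routine: it is false, and this is a genuine gap. The Roe construction does \emph{not} commute with suspension or cone: there is no identification $C^*(\Si,C_0(Y)\ts A)\cong C_0(Y,C^*(\Si,A))$, only a natural inclusion $SC^*(\Si,A)\hookrightarrow C^*(\Si,SA)$ (resp. $CC^*(\Si,A)\hookrightarrow C^*(\Si,CA)$, $C^*(\Si,A)\ts\T_0\hookrightarrow C^*(\Si,A\ts\T_0)$), which is proper whenever $\Si$ is infinite. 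Concretely, take $A=\C$, index an infinite bounded-geometry $\Si$ by $\N$, fix a rank-one projection $e\in\K(\H)$ and bump functions $f_n\in C_0((0,1))$ with $\|f_n\|_\infty=1$ supported in intervals of width $1/n$ around $1/2$: the diagonal operator $T=\diag(f_n\ts e)_{n\in\N}$ has propagation zero and compact entries, hence lies in $C^*(\Si,S\C)$, but its evaluation path $t\mapsto T(t)$ satisfies $\|T(1/2)-T(s)\|=1$ for all $s\neq 1/2$, so $T$ does not come from $SC^*(\Si)=C_0((0,1),C^*(\Si))$. (Only the stabilization $C^*(\Si,\K\ts A)\cong C^*(\Si,A)$ survives.) Consequently your conclusion that $\SS_\Si([\partial_A])$ ``equals the Bott boundary $\partial_D$, which is an isomorphism'' cannot stand: its target is $K_{*+1}(C^*(\Si,SA))$, not $K_{*+1}(SD)$. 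Indeed your remark that your identifications would yield a \emph{two-sided} inverse is the tell-tale sign something is wrong — the lemma deliberately claims only a left inverse.

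The repair is what the paper actually does: keep your two extensions but replace the purported isomorphisms by the natural inclusions. One forms a commutative diagram whose top row is the Bott extension of $D=C^*(\Si,A)$, whose bottom row is the Roe-ification of the Bott extension of $A$, and whose vertical maps are $\jmath:SC^*(\Si,A)\hookrightarrow C^*(\Si,SA)$, the analogous cone inclusion, and the identity; naturality of the $K$-theory boundary gives $\SS_\Si([\partial_A])=\jmath_*\circ\partial_{D}$. A second diagram compares the Toeplitz extension $0\lto C^*(\Si,A)\ts\K\lto C^*(\Si,A)\ts\T_0\lto SC^*(\Si,A)\lto 0$ of $D$ with the Roe-ification of the Toeplitz extension of $A$ via the corresponding inclusions; naturality there shows that, up to Morita equivalence, $\SS_\Si([\partial_A]^{-1})\circ\jmath_*$ is the Toeplitz boundary of $D$, i.e. equals $\partial_D^{-1}$. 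Composing, $\SS_\Si([\partial_A]^{-1})\circ\SS_\Si([\partial_A])=\partial_D^{-1}\circ\partial_D=\mathrm{Id}$. The inclusion $\jmath_*$ sits in the middle and need not be surjective on $K$-theory, which is precisely why this argument yields a left inverse and nothing more.
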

\begin{proof}
Consider the following commutative diagram with exact rows
$$\begin{CD}
0@>>> SC^*(\Si,A)@>>> CC^*(\Si,A)@>>>C^*(\Si,A)@>>>0\\
    @.     @V\jmath VV @VVV @VV=V \\
0@>>> C^*(\Si,SA)@>>> C^*(\Si,CA)@>>>C^*(\Si,A)@>>>0
\end{CD},$$
where
\begin{itemize}
\item the top row is the Bott extension for $C^*(\Si,A)$ with boundary map $$\partial_{C^*(\Si,A)}:K_*(C^*(\Si,A))\to K_{*+1}(SC^*(\Si,A));$$
\item the bottom row is the extension induced for Roe algebras by the Bott extension for $A$ with boundary map $$\SS_\Si([\partial_{A}]):K_*(C^*(\Si,A))\to K_{*+1}(C^*(\Si,SA));$$
\item the left and the middle vertical arrows are the obvious inclusions.
\end{itemize}
Consider similarly the commutative diagram
$$\begin{CD}
0@>>> C^*(\Si,A\ts\K)@>>> CC^*(\Si,A\ts \T_0)@>>>C^*(\Si,SA)@>>>0\\
    @.     @AAA @AAA @AA\jmath A \\
0@>>> C^*(\Si,A)\ts \K@>>> C^*(\Si,A)\ts \T_0@>>>SC^*(\Si,A)@>>>0
\end{CD},$$
\begin{itemize}
\item the bottom  row is the Toeplitz extension for $C^*(\Si,A)$.
\item the top row is the extension induced for Roe algebras by the Toeplitz  extension for $A$;
\item the left and the middle vertical arrows are the obvious inclusions.
\end{itemize}
By naturally of the boundary map in the first commutative diagram, we see that
$$\SS_\Si([\partial_{A}])=\jmath_*\circ \partial_{C^*(\Si,A)},$$ where $\jmath_*: K_{*}(SC^*(\Si,A) \to K_{*}(C^*(\Si,SA))$ is the map induced in $K$-theory by the inclusion $\jmath: SC^*(\Si,A) \hookrightarrow C^*(\Si,SA)$.
Using now the  naturally of the boundary map in the second  commutative diagram, we see that up to Morita equivalence,  
$\SS_\Si([\partial_{A}]^{-1})\circ\jmath_*$ is the boundary map for the Toeplitz  extension associated to $C^*(\Si,A)$ and hence is an inverse for $\partial_{C^*(\Si,A)}$. Therefore, $\SS_\Si([\partial_{A}]^{-1})$ is a left  inverse for $\SS_\Si([\partial_{A}])$.
\end{proof}
 The transformation $\SS_\Si$ is compatible with the Kasparov product in the following sense.
 \begin{proposition}\label{prop-prod-coarse}
 If $A,\,B$ and $D$ are separable $C^*$-algebras, let $z$ be an element in $KK_*(A,B)$ and let  $z'$ be an element in $KK_*(B,D)$. Then we have
 $$\SS_\Si(z\ts_B z')=\SS_\Si(z')\circ\SS_\Si(z).$$
 \end{proposition}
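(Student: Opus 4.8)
The plan is to deduce the statement from the stronger, purely $KK$-theoretic multiplicativity
$$\si_\Si(z\ts_B z')=\si_\Si(z)\ts_{C^*(\Si,B)}\si_\Si(z')$$
in $KK_*(C^*(\Si,A),C^*(\Si,D))$. Granting this, the proposition is immediate: for $x\in K_*(C^*(\Si,A))$, associativity of the Kasparov product gives
$$\SS_\Si(z\ts_B z')(x)=x\ts_{C^*(\Si,A)}\big(\si_\Si(z)\ts_{C^*(\Si,B)}\si_\Si(z')\big)=\big(x\ts_{C^*(\Si,A)}\si_\Si(z)\big)\ts_{C^*(\Si,B)}\si_\Si(z')=\SS_\Si(z')\circ\SS_\Si(z)(x).$$

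First I would record two elementary facts about $\si_\Si$. Using the bifunctoriality already stated ($\si_\Si(f^*(w))=f_\Si^*(\si_\Si(w))$ and $\si_\Si(g_*(w))=g_{\Si,*}(\si_\Si(w))$), and by inspecting the construction of $\si_\Si$ applied to the degenerate $K$-cycle $(B,f,0)$ representing the class $[f]$ of a homomorphism $f:A\to B$, one checks that $\si_\Si([f])=[f_\Si]$, the class of the induced homomorphism $f_\Si:C^*(\Si,A)\to C^*(\Si,B)$. Combined with bifunctoriality, this already yields multiplicativity whenever one factor is a homomorphism class: if $z=[f]$ then $z\ts_B z'=f^*(z')$ and $\si_\Si(f^*(z'))=f_\Si^*(\si_\Si(z'))=[f_\Si]\ts_{C^*(\Si,B)}\si_\Si(z')$, and symmetrically for $z'=[g]$. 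Applying this to $[\beta]\ts_A[\beta]^{-1}=[\mathrm{id}]$ for a homomorphism $\beta$ with $KK$-invertible class shows $\si_\Si([\beta]^{-1})=[\beta_\Si]^{-1}$, so $\si_\Si$ preserves invertibility.

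For the general even-degree case I would use the factorization of \cite[Lemma 1.6.9]{laff-inv}, already exploited in the proof of Proposition \ref{prop-funct-tau}: write $z=[\beta]^{-1}\ts_{A'}[\alpha]$ with $\alpha\colon A'\to B$ a homomorphism and $\beta\colon A'\to A$ a homomorphism whose class is $KK$-invertible. Then $z\ts_B z'=[\beta]^{-1}\ts_{A'}\big([\alpha]\ts_B z'\big)$, and each step—multiplication by $[\alpha]$, then by $[\beta]^{-1}$—is a product with a homomorphism class or its inverse, which the previous paragraph handles. Chaining these equalities gives $\si_\Si(z\ts_B z')=\si_\Si([\beta]^{-1})\ts_{C^*(\Si,A')}[\alpha_\Si]\ts_{C^*(\Si,B)}\si_\Si(z')$, and the same factorization identifies the right-hand side with $\si_\Si(z)\ts_{C^*(\Si,B)}\si_\Si(z')$.

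Finally, the odd-degree case reduces to the even one by suspension. Since $\si_\Si$ is compatible with the boundary maps of semi-split extensions (as recorded just before Lemma \ref{lem-bott-coarse}) and with the Bott and Toeplitz extensions through Lemma \ref{lem-bott-coarse}, the suspension isomorphisms intertwine $\si_\Si$ with its analogue on the suspended algebras; writing any odd element as a Kasparov product with a Bott generator then transports the even-degree identity to odd degree. I expect the even case to be the main obstacle: its entire content is showing that $\si_\Si$ respects $KK$-equivalences and composition with homomorphism classes, since the two-step factorization reduces every product to precisely these operations, thereby replacing a direct and delicate computation with the Kasparov products of the associated cycles.
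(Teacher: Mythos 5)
Your overall plan (deduce the proposition from a $KK$-level multiplicativity of $\si_\Si$) works smoothly in even degree, and there your argument is essentially the paper's: the paper also invokes Lafforgue's factorization together with bifunctoriality, only it stays at the level of the $K$-theory maps $\SS_\Si$ (its implicit version of your step 1 is $\SS_\Si([\mathrm{id}_A])=\mathrm{id}$, where yours is $\si_\Si([f])=[f_\Si]$). Up to that cosmetic difference, the even case is fine.

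The genuine gap is the odd case, and it is not a matter of routine bookkeeping. Writing $z\ts_B z'=\bigl(z\ts_B[\partial_B]\bigr)\ts_{SB}\bigl([\partial_B]^{-1}\ts_B z'\bigr)$ does turn an odd-odd product into an even-even one, but to ``transport'' the even identity back to $z$ and $z'$ you need multiplicativity for the products $z\ts_B[\partial_B]$, $[\partial_B]^{-1}\ts_B z'$ and $[\partial_B]\ts_{SB}[\partial_B]^{-1}=1_B$, and each of these is a product of two \emph{odd} classes, about which your even case says nothing; without an independent input on the Bott classes the reduction is circular. The paper supplies exactly this input as Lemma \ref{lem-bott-coarse}, proved by a nontrivial computation with the Toeplitz extension and naturality of boundary maps. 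Crucially, that lemma is a statement about the $K$-theory maps $\SS_\Si([\partial_A]^{\pm 1})$, not about the classes $\si_\Si([\partial_A]^{\pm 1})$, so it cannot feed into the $KK$-level identity your plan requires. A $KK$-level analogue would assert that $\si_\Si([\partial_B])$ is $KK$-invertible with inverse $\si_\Si([\partial_B]^{-1})$; since $SC^*(\Si,B)\hookrightarrow C^*(\Si,SB)$ is a strict inclusion in general (the right-hand algebra contains families with no uniform behaviour in the suspension variable), nothing in the construction yields this, and indeed the paper only manages to prove a \emph{one-sided} $K$-theory inverse. The repair is to abandon the $KK$-level statement in odd degree and argue, as the paper does, with the maps $\SS_\Si$: insert $1_B=[\partial_B]\ts_{SB}[\partial_B]^{-1}$, use your even case to get $\SS_\Si(z\ts_B z')=\SS_\Si([\partial_B]^{-1}\ts_B z')\circ\SS_\Si(z\ts_B[\partial_B])$ as well as $\SS_\Si(z')=\SS_\Si([\partial_B]^{-1}\ts_B z')\circ\SS_\Si([\partial_B])$ and $\SS_\Si(z)=\SS_\Si([\partial_B]^{-1})\circ\SS_\Si(z\ts_B[\partial_B])$, and then cancel the Bott maps via Lemma \ref{lem-bott-coarse}. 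Even then, note which composite you need: the cancellation requires $\SS_\Si([\partial_B])\circ\SS_\Si([\partial_B]^{-1})=\mathrm{id}$ on $K_*(C^*(\Si,SB))$, whereas the lemma as stated controls the reverse composite $\SS_\Si([\partial_B]^{-1})\circ\SS_\Si([\partial_B])$; this one-sidedness (which the paper's own write-up passes over) is precisely where the remaining delicacy of the proposition sits, so any complete proof must address it rather than cite the lemma wholesale.
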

\begin{proof}
Assume first that $z$ is even.
Then according to  \cite[Theorem 1.6.11]{laff-inv}, there exist
\begin{itemize}
\item a $C^*$-algebra $A_1$;
\item  a morphism $\nu :A_1\to B$;
\item   a morphism $\theta :A_1\to A$ such that the associated element $[\theta]$ in $KK_*(A_1,A)$ is invertible,
\end{itemize}
such that
$z=\nu_*([\theta]^{-1})$ is invertible.
By bifunctoriality of the Kasparov product, we have,
\begin{eqnarray*}
z\ts_B z'&=&\nu_*([\theta]^{-1})\ts_B z'\\
&=& [\theta]^{-1}\ts_{A_1}\nu^*(z')
\end{eqnarray*}
Since $ \si_\Si$ and hence $\SS_\Si$ is natural, we see that $\SS_\Si([\theta]^{-1})$ is invertible, with inverse induced by 
$\theta_\Si:C^*(\Si,A_1)\to C^*(\Si,A)$.
Then using once again the naturally of $\SS_\Si$, we have 
\begin{eqnarray*}
\SS_\Si(z\ts_B z')\circ \theta_{\Si,*} &=& \SS_\Si(\nu^*(z'))\\
&=&\SS_\Si(z')\circ  \nu_{\Si,*}\\
&=&\SS_\Si(z') \circ \nu_{\Si,*}\circ \SS_\Si([\theta]^{-1})\circ \theta_{\Si,*}\\
&=&\SS_\Si(z') \circ  \SS_\Si(\nu_*([\theta]^{-1}))\circ \theta_{\Si,*}\\
&=&\SS_\Si(z')\circ \SS_\Si(z)\circ \theta_{\Si,*}
\end{eqnarray*}
Since  $\theta_{\Si,*}$ is invertible, we deduce that $\SS_\Si(z\ts_B z')=\SS_\Si(z')\circ\SS_\Si(z)$.
If $z'$ is even, we proceed similarly.
\smallskip

If $z$ and $z'$ are both odd. Let $[\partial_B]$ be the element of $KK_1(B,SB)$ corresponding to the boundary morphism $\partial_B:K_*(B)\to K_{*+1}(SB)$ associated to the Bott extension $0\to SB\to CB\to B\to 0$.
Then 
\begin{eqnarray*}
\SS_\Si(z\ts_B z')&=&\SS_\Si(z\ts_B [\partial_B] \ts_{SB} [\partial_B]^{-1}\ts_B z')\\
&=&\SS_\Si ([\partial_B]^{-1}\ts_B z')\circ \SS_\Si(z\ts_B [\partial_B])  \\
&=&\SS_\Si ([\partial_B]^{-1}\ts_B z')\circ \SS_\Si(  [\partial_B])\circ \SS_\Si(  [\partial_B]^{-1})\circ \SS_\Si(z\ts_B [\partial_B])  \\
&=&\SS_\Si(z')    \circ  \SS_\Si (z).
\end{eqnarray*}
where,
\begin{itemize}
\item the second  and the fourth equalities hold  by the even cases;
\item the  third equality is a consequence of lemma \ref{lem-bott-coarse}
\end{itemize}
\end{proof}

Now let $\Si$ be a discrete  metric space with bounded geometry. Let $\H$ be a separable Hilbert space and fix a unit vector $\xi_0$ in $\H$. For any positive number $s$, let
$Q_{s,\Si}$ be the operator of $\L_{C_0(P_s(\Si))}(C_0(P_s(\Si))\ts\ell^2(\Si)\ts\H)$ defined by
$$(Q_{s,\Si}\cdot h)(x,\si)=\lambda_{\si}^{1/2}(x)\sum_{\si'\in\Si}\lambda_{\si'}^{1/2}(x)\langle h(x,\si'),\xi_0\rangle\xi_0,$$where $h$ in 
$C_0(P_s(\Si))\ts\ell^2(\Si)\ts\H$ is viewed as function on $P_s(\Si)\times\Si$ with values in $\H$ (recall that $(\lambda_\si)_{\si\in\Si}$ is the family of coordinate functions in $P_s(\Si)$).Then $Q_{s,\Si}$ is a projection of $C^*(\Si,C_0(P_s(\Si)))$.
Let $B$ be a $C^*$-algebra. Then the bunch of maps   
$$\mu^{s}_{\Si,B,*}:KK_*(P_s(\Si),B)\lto K_*(C^*(\Si,B));\, z\mapsto [Q_{s,\Si}]\ts_{C^*(\Si,C_0(P_s(\Si)))}\si_\Si(z)$$
is  compatible with the maps $K_*(P_s(\Si))\to K_*(P_{s'}(\Si))$ induced by the inclusion of Rips complexes $P_s(\Si)\hookrightarrow P_{s'}(\Si)$. Taking the inductive limit, we end with the coarse Baum-Connes assembly map with coefficients in $B$
 $$\mu_{\Si,B,*}:\lim_s KK_*(P_s(\Si),B)\lto K_*(C^*(\Si,B)).$$
 If $\mu_{\Si,B,*}$ is an isomorphism, we say that $\Si$ satisfies the coarse Baum-Connes conjecture with coefficients in $B$. When  $B=\C$, we set $\mu^s_{\Si,*}$ for $\nu^s_{\Si,\C,*}$, $\nu_{\Si,*}$ for $\mu_{\Si,\C,*}$ and we say  that 
  $\Si$ satisfies the coarse Baum-Connes conjecture if   $$\mu_{\Si,*}:\lim_s K_*(P_s(\Si))\lto K_*(C^*(\Si))$$ is an isomorphism. Recall that if $\Ga$ is a finitely generated group, and if $|\Ga|$ stands for the metric space arising from any word metric, then the coarse Baum-Connes conjecture for $|\Ga|$ implies the Novikov conjecture on higher signatures for the group $\Ga$.

\subsection{A geometric assembly map  for families of finite metric spaces}
To prove  theorem \ref{thm-CBC}, we will need a slight modification  of the map  $\nu_{F,\Si,\A,*}^{\infty}$ defined by  equation (\ref{equ-Ainf}).
Let $\A=(A_i)_{i\in\N}$ be a family of $C^*$-algebras and let $\X=(X_n)_{n\in\N}$ be a family of discrete proper metric spaces.
Define $\A^\infty_\X$ as the closure of the set of 
$x=(x_n)_{n\in\N}\in \prod_{n\in\N} A_n \ts \K(\ell^2(X_n)\ts\H)$  such that  for some $r>0$ then $x_n$ has propagation less than $r$ for all integer $n$. Then $\A^\infty_\X$ is obviously a filtered $C^*$-algebra.  When $\A$ is the constant family $A_i=\C$, then we set $C^*(\X)$ for $\A^\infty_\X$. According to lemma
\ref{lem-prod-filtered}, there exists for a universal control pair $(\alpha,h)$, any family $\A=(A_i)_{i\in\N}$  of $C^*$-algebras and any family  $\X=(X_n)_{n\in\N}$  of discrete proper metric spaces a $(\alpha,h)$-controlled isomorphism
$$\K_*(\A^\infty_\X)\lto\prod_{i\in\N}\K_*( A_n \ts \K(\ell^2(X_n)))$$ induced on the $j$ factor and up to Morita equivalence by the restriction to 
$\A^\infty_\X$ of the evaluation  $$\prod_{n\in\N} A_i \ts \K(\ell^2(X_n)\ts\H)\lto A_n \ts \K(\ell^2(X_j)\ts\H).$$
Proceeding as un corollary \ref{cor-tensor-infty}, we see that there exists  a universal control pair $(\alpha,h)$ such that
\begin{itemize}
\item  For any family $\X=(X_n)_{n\in\N}$ of finite metric space;
\item for any families of $C^*$-algebras $\A=(A_i)_{i\in\N}$ and $\B=(B_i)_{i\in\N}$;
\item  for any $z=(z_i)_{i\in\N}$ in $\prod_{i\in\N}KK_*(A_i,B_i)$, \end{itemize}
there 
exists a $(\alpha,h)$-controlled morphism
 $$\TT^\infty_{\X}(z)=(\tau_{\X}^{\infty,\eps,r})_{0<\eps<\frac{1}{4\alpha},r>0}:\K_*(\A^\infty_{\X})\to 
\K_*(\B^\infty_{\X})$$ that satisfies in this setting the analogous properties as  those listed in corollary \ref{cor-tensor-infty} and proposition \ref{prop-product-tensor-infty} for $\TT^\infty_{F,\Si}(\bullet)$. 
Consider now for $Z$ a finite metric space and $s$ a positive number the projection $Q_{s,Z}$ of $C(P_s(Z))\ts \K(\ell^2(Z))$ defined by
 $$Q_{s,Z}(h)(y,z)=\lambda_z^{1/2}(y)\sum_{z'\in Z}h(y,z')\lambda_{z'}^{1/2}(y)$$ for any $h$ in 
 $C(P_s(Z))\ts \ell^2(Z)\cong C(P_s(Z),\ell^2(Z))$ where $(\lambda_z)_{z\in Z}$ is the family of coordinate functions  of 
$P_s(Z)$, i.e $y=\sum_{z\in Z}\lambda_z(y)$ for any $y$ in $P_s(Z)$. Then $Q_{s,Z}$ has propagation less than $s$. If we fix any rank one projection $e$ in $\K(H)$, for any family $\X=(X_i)_{i\in\N}$ of finite metric spaces, then $Q^\infty_{s,\X}=(Q_{s,X_i}\ts e)_{i\in\N}$ is a projection of propagation less than $s$ in
$\A^\infty_\X$, where $\A$ is the family $(C(P_s(X_i)))_{i\in\N}$.

Now we can proceed as in section \ref{subsection-another-assembly-map} to define a quantitative geometric  assembly map valued in $C^*(\X)$.
For any  
$\eps$ in  $(0,1/4)$, any positive numbers $s$ and $r$  such that  $r\gq  r_{d, \eps}$, define

$$\nu_{\X,*}^{\infty,\eps,r,s}: \prod_{i\in\N}K_*(P_s(\X)){\longrightarrow}
K_*^{\eps,r}(C^*(\X));\,  z\mapsto 
\tau_{\X}^{\infty,\eps/\alpha,r/h_{\eps/\alpha}}(z)([Q^\infty_{s,\X},0]_{\eps/\alpha,r/h_{\eps/\alpha}}).$$ The bunch of maps $(\mu_{\X,*}^{\infty,\eps,r,s})_{0<\eps<1/4, r> r_{s, \eps}}$ is obviously compatible with the structure maps of $\K_*(C^*(\X))$, i.e $\iota_*^{\eps,\eps',r,r'}\circ \nu_{\X,*}^{\infty,\eps,r,s}= \nu_{\X,*}^{\infty,\eps',r',s}$ for $0<\eps\lq\eps'<1/4$ and $r_{s, \eps}<r\lq r'$. This allows to define 
$$\nu_{\X,*}^{\infty,s}: \prod_{i\in\N}K_*(P_s(\X)){\longrightarrow}
K_*(C^*(\X))$$ as  $\nu_{\X,*}^{\infty,s}=\iota_*^{\eps,r}\circ \nu_{\X,*}^{\infty,\eps,r,s}$.
The quantitative assembly maps are also compatible with inclusion of Rips complexes. 
 Let \begin{equation}
  q_{s,s',*}^\infty: \prod_{i\in\N}K_*(P_s(X_i)){\longrightarrow} \prod_{i\in\N}K_*(P_s(X_i))
 \end{equation}
be the map induced by the bunch of inclusions
 $P_s(X_i)\hookrightarrow  P_{s'}(X_i)$, then we have
 $$\nu_{\X,*}^{\infty,\eps,r,s'}\circ q_{s,s',*}^\infty=\nu_{\X,*}^{\infty,\eps,r,s}$$ for any
 positive numbers $\eps,\,s,\,s,'$ and $r$  such that 
 $\eps\in(0,1/4),\,s\lq s',\, r\gq r_{s',\eps}$, and thus
 $$\nu_{\X,*}^{\infty,s'}\circ q_{s,s',*}^\infty=\nu_{\X,*}^{\infty,s}$$ for any
 positive numbers $s$ and $s'$  such that 
$s\lq s'$. 

Let $\Si$ be a graph space in the sense of \cite{wy} i.e $\Si=\coprod_{i\in\N} X_i$, where $(X_i)_{i\in\N}$ is a family of finite metric spaces such that
\begin{itemize}
\item For any $r>0$, there exists an integer $N_r$ such that for any integer $i$, any ball of radius $r$ in $X_i$ has at most  $N_r$ element;
\item The distance between $X_i$ and $X_j$ is at least  $i+j$ for any distinct integers $i$ and $j$;
\end{itemize}
If $\X_\Si$ stands for the family $(X_i)_{i\in\N}$, we obviously have an inclusion of filtered $C^*$-algebras $\jmath_\Si:C^*(\X_\Si)\hookrightarrow C^*(\Si)$.
\begin{proposition}\label{proposition-assembly-map-graph-space}
 Let $\Si$ be a graph space   $\Si=\coprod_{i\in\N} X_i$ as above and let $s$ a positive number such that $d(X_i,X_j)>s$ if $i\neq j$. Then we have a commutative diagram
$$\begin{CD}
 \prod_{i\in\N}K_*(P_s(X_i))@> \nu_{\X,*}^{\infty,s}>>K_*(C^*(\X_\Si))\\
         @VV\simeq V
         @VV\jmath_{\Si,*} V\\
K_*(P_s(\Si))@>\mu^s_{\Si,*}>> K_*(C^*(\Si))
\end{CD},$$ where
 in view of the equality  $P_s(\Si)=\coprod_{i\in\N}P_s(X_i)$, the   left vertical map is  the identification   between 
 $\prod_{i\in\N}K_*(P_s(X_i))$ and $K_*(\coprod_{i\in\N} P_s(X_i))$;
\end{proposition}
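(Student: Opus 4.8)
The plan is to exploit the decomposition $P_s(\Si)=\coprod_{i\in\N}P_s(X_i)$, which holds precisely because the separation condition $d(X_i,X_j)>s$ forces every probability measure with support of diameter at most $s$ to be supported in a single component $X_i$. First I would record the two consequences of this decomposition. On the coefficient side, $C_0(P_s(\Si))=\bigoplus_i^{c_0}C(P_s(X_i))$, so that the left vertical identification of the diagram is the canonical isomorphism $KK_*\big(\bigoplus_i^{c_0}C(P_s(X_i)),\C\big)\cong\prod_i KK_*(C(P_s(X_i)),\C)$; I will write $\tilde z\in K_*(P_s(\Si))$ for the image of a given $z=(z_i)_{i\in\N}$. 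On the operator side, the same separation bound controls propagation-$<s$ operators, so the block-diagonal assignment $(x_i)_i\mapsto\bigoplus_i x_i$ realizes the family coefficient algebra $\A^\infty_{\X_\Si}$, with $\A=(C(P_s(X_i)))_{i\in\N}$, as a filtered subalgebra of $C^*(\Si,C_0(P_s(\Si)))$ compatible with the inclusion $\jmath_\Si$ of the statement.

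Second, I would compare the projections. Under the block-diagonal inclusion, the Mishchenko-type projection $Q_{s,\Si}\in C^*(\Si,C_0(P_s(\Si)))$ has propagation $<s$, hence cannot connect distinct components; it is therefore block-diagonal with $i$-th block equal to $Q_{s,X_i}$. After the standard rank-one-amplification and Morita normalization built into $Q^\infty_{s,\X}=(Q_{s,X_i}\ts e)_{i\in\N}$, this shows that the induced map $\jmath_*$ on $K_0$ of the coefficient algebras sends $[Q^\infty_{s,\X}]$ to $[Q_{s,\Si}]$. This step is a direct inspection of the defining formulas for $Q_{s,Z}$ and $Q_{s,\Si}$ and is essentially bookkeeping.

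The heart of the argument is a compatibility lemma, the coarse analogue of Lemma~\ref{lem-comp-tau-J}: if $T$ denotes the $K$-theory map induced by the controlled morphism $\TT^\infty_{\X_\Si}(z)\colon\K_*(\A^\infty_{\X_\Si})\to\K_*(C^*(\X_\Si))$ (whose existence and formal properties are the family analogues of Theorem~\ref{thm-tensor-F} and Corollary~\ref{cor-tensor-infty}), then
$$\jmath_{\Si,*}\circ T=\SS_\Si(\tilde z)\circ\jmath_{\Si,*}.$$
I would prove this by noting that, since $\tilde z$ lies in the summand $\bigoplus_i^{c_0}C(P_s(X_i))$, it is represented by a $K$-cycle that is block-diagonal over the components; consequently its amplification over $\ell^2(\Si)=\bigoplus_i\ell^2(X_i)$ defining $\si_\Si$ is the direct sum of the amplifications over the $\ell^2(X_i)$ that assemble into $\TT^\infty_{\X_\Si}(z)$, while the separation/propagation bound guarantees that every intermediate Kasparov product stays inside the block-diagonal subalgebra. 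Here I would lean on bifunctoriality of the coarse transformation $\si_\bullet$ and of $\tau^\infty_{\X_\Si}$, together with the naturality of $\SS_\bullet$ with respect to the block-diagonal inclusions, and on Lemma~\ref{lem-prod-filtered} to identify $\K_*(\A^\infty_{\X_\Si})$ with the product of the componentwise $K$-theories.

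Combining these ingredients with associativity of the Kasparov product, and passing from the controlled morphism to ordinary $K$-theory via $\nu_{\X,*}^{\infty,s}=\iota_*^{\eps,r}\circ\nu_{\X,*}^{\infty,\eps,r,s}$, I obtain
$$\jmath_{\Si,*}\big(\nu_{\X,*}^{\infty,s}(z)\big)=\jmath_{\Si,*}\big(T([Q^\infty_{s,\X}])\big)=\SS_\Si(\tilde z)\big(\jmath_{\Si,*}[Q^\infty_{s,\X}]\big)=[Q_{s,\Si}]\ts\si_\Si(\tilde z)=\mu^s_{\Si,*}(\tilde z),$$
which is exactly the asserted commutativity. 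I expect the main obstacle to be the compatibility lemma of the third paragraph: one must check carefully that the amplified $K$-cycle defining $\si_\Si(\tilde z)$ genuinely block-decomposes, and that all the multiplier algebras and Kasparov products occurring in the coarse transformation respect the block-diagonal structure forced by $s<d(X_i,X_j)$. By contrast, the projection comparison and the decomposition of the $K$-homology are routine.
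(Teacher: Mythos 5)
Your reduction of the proposition is the same as the paper's: the separation hypothesis gives $\jmath_{\A,\Si}(Q^\infty_{s,\XS})=Q_{s,\Si}$ for the family $\A=(C(P_s(X_i)))_{i\in\N}$, and everything then rests on the intertwining identity $\jmath_{\Si,*}\circ\T^\infty_{\XS}(z)=\SS_\Si(\widetilde z)\circ\jmath_{\A,\Si,*}$, which is exactly the paper's Lemma \ref{lemma-preliminary}. The gap is in your proof of that identity. You treat $\TT^\infty_{\XS}(z)$ as if it were, by definition, right multiplication by the Kasparov class of the block-diagonal amplification of cycles representing the $z_i$; but $\TT^\infty_{\XS}(z)$ is constructed through the controlled $K$-theory machinery (the family analogue of Theorem \ref{thm-tensor-F} and Corollary \ref{cor-tensor-infty}), and the only access one has to the map it induces on ordinary $K$-theory is through its axiomatized properties: additivity, functoriality under families of homomorphisms, compatibility with controlled boundary maps of semi-split filtered extensions, and compatibility with Kasparov products. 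The assertion that the componentwise amplifications ``assemble into $\TT^\infty_{\X_\Si}(z)$'' is therefore not bookkeeping --- it is essentially the statement to be proven, and nothing in your sketch establishes it. Nor can one sidestep this by identifying $K_*(\A^\infty_{\XS})$ with $\prod_i K_*(A_i\ts\K(\ell^2(X_i)\ts\H))$: that identification (Lemma \ref{lem-prod-filtered}) is itself a nontrivial controlled statement, and the product of the componentwise right multiplications is not a priori the map induced on the $K$-theory of the product-type algebra by any single $KK$-class.

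A second flaw is the appeal to ``naturality of $\SS_\bullet$ with respect to the block-diagonal inclusions.'' The transformation $\si_\Si$, hence $\SS_\Si$, is natural only for morphisms of Roe algebras of the form $f_\Si:C^*(\Si,A)\to C^*(\Si,B)$ induced by a coefficient homomorphism $f:A\to B$; the inclusions $\jmath_{\A,\Si}:\A^\infty_{\XS}\hookrightarrow C^*(\Si,\Ap)$ and $\jmath_\Si:C^*(\XS)\hookrightarrow C^*(\Si)$ are not of this form, so there is no naturality to invoke --- the intertwining must be proved by hand. This is precisely what the paper does: in the odd case it realizes each $z_i$ as the boundary class of a semi-split extension $0\to\K(\H)\to E_i\to A_i\to 0$, assembles these into a semi-split filtered extension of the family algebras on one side and into the Roe-algebra extension induced by $0\to\K(\ell^2(\N)\ts\H)\to E\to\Ap\to 0$ on the other, checks that explicit inclusions intertwine the two extensions, and concludes by naturality of the $K$-theory boundary map (both $\T^\infty_{\XS}(z)$ and $\SS_\Si(\widetilde z)$ being boundary maps of the respective extensions); the even case is then reduced to the odd one by suspension and Bott periodicity, using compatibility with Kasparov products on both sides (Proposition \ref{prop-prod-coarse} and Lemma \ref{lem-bott-coarse}). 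Your outline becomes a proof only once an argument of this kind replaces the ``assembly'' claim.
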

The proof of this proposition will require some preliminary steps.
If $\A=(A_i)_{i\in\N}$ is a family of $C^*$-algebras, we set $\Ap=\oplus_{n\in\N}A_i$. The orthogonal family  $(A_i\ts\K(\ell^2(X_i)\ts\H))_{i\in\N}$ of  corners 
in $\Ap\ts\K(\ell^2(\Si)\ts\H)$ gives rise to a  one-to-one morphism $\jmath_{\A,\Si}:\A_{\XS}^\infty\lto C^*(\Si,\Ap)$. Let $z=(z_i)_{i\in\N}$ be a family in
 $\prod_{i\in\N}KK_*(A_i,\C)$. Recall that we have a canonical identification between $\prod_{i\in\N}KK_*(A_i,\C)$ and $KK_*(\Ap,\C)$. 
 Let $\widetilde{z}$ be the element of $KK_*(\Ap,\C)$ corresponding to $z$ under this identification.
 \begin{lemma}\label{lemma-preliminary}
 For any family $\A=(A_i)_{i\in\N}$ of $C^*$-algebras, any graph space $\Si=\coprod_{i\in\N} X_i$ and  any $z$ in $\prod_{i\in\N}KK_*(A_i,\C)$, then we have a commutative 
 diagram 
 $$\begin{CD}
 K_*(\A^\infty_\Si)@>\T_\XS^\infty(z)>>K_*(C^*(\X_\Si))\\
         @V\jmath_{\A,\Si}VV
         @VV\jmath_{\Si,*} V\\
K_*(C^*(\Si,\Ap))@>\SS_\Si>> K_*(C^*(\Si))
\end{CD},$$
\end{lemma}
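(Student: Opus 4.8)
The plan is to collapse the whole square to a single equality of Kasparov classes, and then to verify that equality by matching cycles block by block. First I would rewrite both composites as right Kasparov multiplications by classes in $KK_*(\A^\infty_\XS,C^*(\Si))$ (I use $\A^\infty_\XS$ for the algebra sitting at the top--left corner, which is the domain of $\jmath_{\A,\Si}$). Since $\jmath_{\A,\Si}\colon\A^\infty_\XS\lto C^*(\Si,\Ap)$ and $\jmath_\Si\colon C^*(\XS)\lto C^*(\Si)$ are honest filtered homomorphisms, and since the $K$-theory map induced by the controlled morphism $\T^\infty_\XS(z)$ is right multiplication by the geometric tensorization class $\tau^\infty_\XS(z)\in KK_*(\A^\infty_\XS,C^*(\XS))$ (the analogue, in this graph-space setting, of the property $(1)$ of theorem \ref{thm-tensor-F}), associativity of the Kasparov product turns the two routes around the square into right multiplication by
$w_1=\tau^\infty_\XS(z)\ts_{C^*(\XS)}[\jmath_\Si]$ and $w_2=[\jmath_{\A,\Si}]\ts_{C^*(\Si,\Ap)}\si_\Si(\widetilde{z})$ respectively. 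Thus the lemma reduces to the single identity $w_1=w_2$ in $KK_*(\A^\infty_\XS,C^*(\Si))$; this formulation sidesteps the fact that $K_*(\A^\infty_\XS)$ is a \emph{product} (so that checking on the factor inclusions would not suffice).

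Next I would choose a convenient representative for $\widetilde{z}$. Under the canonical isomorphism $KK_*(\Ap,\C)\cong\prod_{i\in\N}KK_*(A_i,\C)$, pick for each $i$ a Kasparov cycle $(\H,\pi_i,F_i)$ for $z_i$ and assemble them into a diagonal cycle for $\widetilde{z}$ on $\bigoplus_{i\in\N}\H$, with $\Ap=\bigoplus_i A_i$ acting diagonally (the summand $A_i$ acting through $\pi_i$ on the $i$-th copy of $\H$) and with $F=\diag(F_i)$. Applying the coarse transformation $\si_\Si$ to this cycle produces, by its very construction, the cycle over $(C^*(\Si,\Ap),C^*(\Si))$ whose operator is $Id_{\ell^2(\Si)\ts\H}\ts F$ and whose representation is the induced $\widetilde{\pi}$.

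The heart of the argument is then to identify the two cycles computing $w_1$ and $w_2$. Pulling the cycle of $\si_\Si(\widetilde{z})$ back along the block-corner homomorphism $\jmath_{\A,\Si}$, the operator $Id_{\ell^2(\Si)\ts\H}\ts\diag(F_i)$ restricts on the $X_i\times X_i$ block to $Id_{\ell^2(X_i)\ts\H}\ts F_i$, and the representation restricts on the $A_i$-coefficient block to $\pi_i$; this is exactly the block-tensorized cycle defining $\tau^\infty_\XS(z)$, pushed forward by $\jmath_\Si$. Hence the two cycles coincide, giving $w_1=w_2$ and the commutativity of the diagram. A pleasant feature of working with the diagonal cycle is that the description is insensitive to the degree of $z$, so no separate Bott or boundary-map manipulation (as in proposition \ref{prop-prod-coarse}) is needed for the odd case.

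The main obstacle I expect is the bookkeeping hidden in the first and last steps, namely establishing that the controlled morphism $\T^\infty_\XS(z)$ indeed induces in $K$-theory right multiplication by precisely the diagonal blockwise-tensorized class $\tau^\infty_\XS(z)$, and that this class agrees factorwise, after the $\K(\H)$-stabilization and the accompanying Morita identifications, with the coarse Kasparov transformation $\si_{X_i}$ of each finite space $X_i$ (the geometric tensorization of equation \eqref{equ-tau-F-Sigma} for the trivial group coincides with $\si_{X_i}$, both tensoring a cycle for $z_i$ with $\ell^2(X_i)\ts\H$). This is where the uniform control of propagation over the family $\X_\Si=(X_i)_{i\in\N}$ is genuinely used; once it is in place, the remaining manipulations are the formal properties of the Kasparov product together with the cycle matching described above.
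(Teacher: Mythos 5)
Your reduction of the square to a single identity $w_1=w_2$ in $KK_*(\A^\infty_\XS,C^*(\Si))$ rests on a step that is not available: you assume that the controlled morphism $\T^\infty_\XS(z)$ induces in $K$-theory right multiplication by a single Kasparov class $\tau^\infty_\XS(z)\in KK_*(\A^\infty_\XS,C^*(\XS))$. No such class exists in general, and none is constructed in the paper: corollary \ref{cor-tensor-infty} (whose graph-space analogue is what defines $\T^\infty_\XS(z)$, factorwise through the controlled isomorphism of lemma \ref{lem-prod-filtered}) deliberately omits the analogue of item (i) of theorem \ref{thm-tensor-F}, listing only additivity, functoriality, and compatibility with controlled boundary maps. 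The obstruction is exactly where you locate the ``bookkeeping'': for $a=(a_i)_{i\in\N}$ in $\A^\infty_\XS$ of uniformly finite propagation, the diagonal operator built from cycles $(\H,\pi_i,F_i)$ yields the family $\big(\pi_i(a_i)(F_i^2-1)\big)_{i\in\N}$, which is a bounded family of compact operators but need not be a \emph{compact} operator of the relevant Hilbert $C^*(\XS)$-module: compactness there requires approximability by finite-rank module operators uniformly in $i$, and the family $z=(z_i)_{i\in\N}$ carries no uniformity hypothesis whatsoever. This failure of $KK$-theory to commute with infinite products is precisely the phenomenon the quantitative/controlled machinery is designed to circumvent, so your argument quietly reintroduces the object whose nonexistence motivates it. (By contrast, your $w_2=[\jmath_{\A,\Si}]\ts_{C^*(\Si,\Ap)}\si_\Si(\widetilde{z})$ is unproblematic, since $\widetilde{z}$ lives over the direct sum $\Ap$, where the identification $KK_*(\Ap,\C)\cong\prod_{i\in\N}KK_*(A_i,\C)$ is valid.)

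Consequently your remark that no boundary-map or Bott manipulation is needed is a symptom of the gap rather than a simplification. Since the only structural facts known about $\T^\infty_\XS(z)$ are those of corollary \ref{cor-tensor-infty}, the paper's proof has to route through them: for odd $z$ it represents each $z_i$ as the boundary class of a semi-split extension $0\to\K(\H)\to E_i\to A_i\to 0$, assembles these into a semi-split extension of filtered algebras $0\to\CH^\infty_\XS\to\E^\infty_\XS\to\A^\infty_\XS\to 0$, intertwines it via the corner inclusions with the corresponding extension of Roe algebras over $\Ap$, and concludes by naturality of (controlled) boundary maps; the even case is then reduced to the odd one by Bott periodicity, using proposition \ref{prop-prod-coarse}, the product compatibility of $\T^\infty_\XS$, and the Connes--Skandalis characterization of the Kasparov product. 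To salvage your approach you would have to either impose uniformity on the representing cycles strong enough to make the diagonal cycle genuinely Kasparov (a hypothesis the lemma does not grant), or abandon the single-class formulation and argue through extensions and boundary maps as the paper does.
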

\begin{proof}
Assume first that $z$ is odd. Let us fix a separable Hilbert space $\H$. For each integer $i$, let $(\H,\pi_i,T_i)$ be the $K$-cycle for  $KK_*(A_i,\C)$ representing $z_i$ with $\pi_i:A_i\to \L(\H)$ a representation and $T_i$ in $\L(\H)$ representing the $K$-cycle conditions. Let us set $P_i=\frac{T_i+\Id_\H}{2}$ and 
$$E_i=\{(x,T)\in A_i\oplus \L(\H)\text{ such that } P_i\pi_i(x)P_i-T\in\K(\H)\}.$$
We have an inclusion $$\K(H)\hookrightarrow E_i;\,T\mapsto (0,T)$$ as an ideal  and a surjection $$E_i\to A_i;\, (x,T)\to x.$$
Up to Morita equivalence, $z_i$ induces by left multiplication the boundary morphism of the semi-split extension
$$0\lto \K(\H)\lto E_i\lto A_i\lto 0,$$
Let $\mathcal{E}$ be the family $(E_i)_{i\in N}$ and set $\CH$ for the constant family $\K(\H)$.
Then the extension

$$0\mapsto \prod_{i\in \N}\K(\H)\ts \K(\ell^2(X_i)\ts \H)\lto \prod_{i\in \N} E_i\ts \K(\ell^2(X_i)\ts \H)\lto   \prod_{i\in \N}A_i\ts \K(\ell^2(X_i)\ts \H)\lto 0$$ restrict to a semi-split  extension of filtered $C^*$-algebras
$$0\mapsto \CH^\infty_\XS\lto \E^\infty_\XS\lto  \A^\infty_\XS\lto 0.$$ Up to the identification between
$K_*( \CH^\infty_\XS)$ and $K_*( C^*(\XS))$ arising from   Morita equivalence between $\C$ and $\K(\H)$, the boundary morphism  associated to this extension is  $\T_\XS^\infty(z):K_*(\A^\infty_\XS)\lto K_{*+1}( C^*(\XS))$.
In the same way, let $$E=\{((x_i)_{i\in\N},T)\in \left(\oplus_{n\in\N}A_i\right)\oplus \L(\ell^2(\N,\H))\text{ such that }\left(\oplus_{i\in\N}p_i\pi_i(x_i)p_i\right)-T\in \K(\ell^2(\N,\H))\}.$$
As before we have a semi-split  extension
\begin{equation}\label{equation-coarse}0\lto  \K(\ell^2(\N)\ts \H)\lto E\lto \Ap\lto 0\end{equation} and 
$\SS_\Si(\widetilde{z}):K_*(C^*(\Si,\Ap))\to K_{*+1}(C^*(\Si))$ is up to the identification between $K_{*}(C^*(\Si))$ and 
$K_*(C^*(\Si,\K(\ell^2(\N)\ts \H)))$ arising from Morita equivalence is the boundary morphism for the extension
$$0\lto  C^*(\Si,\K(\ell^2(\N)\ts \H))\lto C^*(\Si,E)\lto C^*(\Si,\Ap)\lto 0$$ induced by the extension of equation (\ref{equation-coarse}). For  every integer $i$, there is an obvious representation of
$\K(\H\ts\ell^2(X_i))\ts E_i$ on the right $E$-Hilbert module $\H\ts\ell^2(\Si)\ts E$ as a corner which gives rise to a $C^*$-morphism
$\jmath'_{\E,\Si}:\E_{\XS}^\infty\to C^*(\Si,E)$ such that $\jmath'_{\E,\Si}( \CH^\infty_\XS)\subseteq C^*(\Si,\K(\ell^2(\N)\ts \H))$. We have then a commutative diagram
\begin{equation}\label{CD-coarse}\begin{CD}
0@>>> \CH^\infty_\XS@>>> \E^\infty_\XS@>>>\A^\infty_\XS@>>>0\\
    @.     @V\jmath'_{\E,\Si}VV @V\jmath'_{\E,\Si}VV @VV\jmath_{\A,\Si}V \\
0@>>>C^*(\Si,\K(\ell^2(\N)\ts \H))@>>>  C^*(\Si,E)@>>> C^*(\Si,\Ap) @>>>0
\end{CD},\end{equation}
The restriction  morphism $\CH^\infty_\XS \stackrel{\jmath'_{\E,\Si}}{\lto}C^*(\Si,\K(\ell^2(\N)\ts \H))$ is homotopic to the composition
\begin{equation}\label{equ-composition-jmath}
\CC\H^\infty_\XS {\lto}C^*(\Si,\K(\H)){\lto}C^*(\Si,\K(\ell^2(\N)\ts \H))
\end{equation}
where,
\begin{itemize}
\item the first map is induced by the obvious representation of $\CC\H^\infty_\XS=\prod_{i=1}^\infty \K(\H\ts\ell^2(X_i))\ts\K(\H)$ on the $\K(\H)$-right Hilbert 
module $\H\ts\ell^2(\Si)\ts\K(\H)$ (each factor acting as a corner);
\item the second map is induced by the morphism $$\K(\H)\to\K(\ell^2(\N)\ts\H);\, x\mapsto x\ts e,$$ where $e$ is any rang one projection in 
$\K(\ell^2(\N))$.
\end{itemize}
But up to the identification on one hand between $K_*(\CC\H^\infty_\XS)$ and $K_*(C^*(\XS))$, and on the other hand between $K_*(C^*(\Si,\K(\ell^2(\N)\ts \H)))$ and  $K_*(C^*(\Si))$,  the morphism of equation (\ref{equ-composition-jmath})  induces in $K$-theory $\jmath_{\Si,*}:K_*(C^*(\XS))\to K_*(C^*(\Si)$. Since in the commutative diagram
(\ref{CD-coarse}),  $\SS_\Si(\widetilde{z})$  is the boundary morphism associated to the top   row
and  $\T_\XS^\infty(z) $ is the boundary morphism associated to the bottom  row, the lemma in the odd case is then a consequence of the naturally of the boundary morphisms.

\smallskip
If $z$ is even, set $[\partial_\A]=([\partial_{A_i}])_{i\in\N}\in \prod_{i\in\N}KK_*(A_i,SA_i)$ and $[\partial_\A]^{-1}=([\partial_{A_i}]^{-1})_{i\in\N}\in \prod_{i\in\N}KK_*(SA_i,A_i)$.     Let us also define the families $\mathcal{S}\A=\prod_{i\in\N} SA_i$ and $\mathcal{C}\A=\prod_{i\in\N} CA_i$ and set $z'=([\partial_{A_i}]^{-1}\ts_{A_i}z_i)_{i\in\N}\in \prod_{i\in\N}KK_*(SA_i,\C)$.
Using the odd case  and the compatibility of the transformation $\T_\XS^\infty(\bullet)$ with Kasparov products, we get that
\begin{equation}\label{equ-proof-even-case}
\jmath_{\Si,*}\circ\T_\XS^\infty(z)=\jmath_{\Si,*}\circ\T_\XS^\infty(z')\circ \T_\XS^\infty([\partial_\A])=\SS_\Si(\widetilde{z'})\circ \jmath_{\Si,\mathcal{S}\A,*}\circ \T_\XS^\infty([\partial_\A])
\end{equation}

Under the canonical identifications $(\mathcal{S}\A)^\oplus \simeq  S\A^\oplus$ and  $(\mathcal{C}\A)^\oplus \simeq  C\A^\oplus$, we have a commutative diagram
$$\begin{CD}\label{commutative-diagram-coarse}
0@>>>\mathcal{S}\A^\infty_\XS  @>>>\mathcal{C}\A^\infty_\XS@>>>\A^\infty_\XS@>>>0\\
    @.     @V\jmath_{\mathcal{S}\A,\Si}VV @V\jmath_{\mathcal{C}\A,\Si}VV @VV\jmath_{\A,\Si}V \\
0@>>>C^*(\Si,S\Ap )@>>>  C^*(\Si
,C\Ap )@>>> C^*(\Si,\Ap) @>>>0
\end{CD},$$
where the row both arise from the family of Bott extensions
$$(0\to SA_i\to CA_i\to A_i\to 0)_{i\in\N}.$$
Then 
\begin{itemize}
\item $\T_\XS^\infty([\partial_\A]):K_*(\mathcal{S}\A^\infty_\XS)\to K_{*+1}( \A^\infty_\XS)$ is the boundary morphism for the top row;
\item $\SS_\Si([\partial_{\A^\oplus}]):K_*(C^*(\Si, \A^\oplus)\to K_{*+1}( C^*(\Si, S\A^\oplus)$ is the boundary morphism for the bottom  row;
\end{itemize}
By naturally of the boundary extension, we get that
$$\jmath_{\Si,\mathcal{S}\A,*}\circ \T_\XS^\infty([\partial_\A])=\SS_\Si([\partial_{\A^\oplus}])\circ \jmath_{\A,\Si,*}.$$
Hence, using proposition \ref{prop-prod-coarse}, we deduce from equation (\ref{equ-proof-even-case}) that 
$$\jmath_{\Si,*}\circ\T_\XS^\infty(z)=  \SS_\Si([\partial_\Ap]\ts_\Ap\widetilde{z'})\circ \jmath_{\A,\Si,*}.$$
But using the Connes-Skandalis characterization of Kasparov products, we get that 
$[\partial_\Ap]\ts_\Ap\widetilde{z'}=\widetilde{z}$ and hence $\jmath_{\Si,*}\circ\T_\XS^\infty(z)=   \SS_\Si(\widetilde{z})\circ \jmath_{\A,\Si,*}$.

\end{proof}

{\it Proof of proposition \ref{proposition-assembly-map-graph-space}:}
Let $z=(z_i)_{i\in\N}$ be a family in $\prod_{i\in\N}K_*(P_s(X_i))=\prod_{i\in\N}KK_*(C(P_s(X_i)),\C)$. Then under the identification between $\prod_{i\in\N}KK_*(C(P_s(X_i)),\C)$ and $KK_*(C_0(P_s(\Si)),\C)$ given by the equality 
$C_0(P_s(\Si))=\oplus_{i\in\N} C(P_s(X_i))$, we have a correspondence between $z$ and $\widetilde{z}$ and hence the commutativity of the diagram amounts to prove the equality 
\begin{equation}\label{equ-proof-proposition-assembly-map-graph-space}
\SS_\Si(\widetilde{z})([Q_{s,\Si},0])=\jmath_{\Si,*}\circ \T_{\XS}^\infty(z)([Q_{s,\X_\Si}^\infty,0]).\end{equation}
Let us consider the family $\A=(C(P_s(X_i)))_{n\in\N}$. Since $d(X_i,X_j)\geq s$ if $i\neq j$, we see that 
$\jmath_{\A,\Si}(Q_{s,\X_\Si}^\infty)=Q_{s,\Si}$ and hence 
$$\SS_\Si(\widetilde{z})([Q_{s,\Si},0])=\SS_\Si(\widetilde{z})\circ \jmath_{\A,\Si,*}([Q^\infty_{s,\XS},0]).$$
The equality (\ref{equ-proof-proposition-assembly-map-graph-space}) is then a consequence of lemma \ref{lemma-preliminary}.
\qed
\subsection{Proof of theorem \ref{thm-CBC}}
Let $\Si$ be a discrete metric space with bounded geometry that satisfies the assumptions of theorem \ref{lemma-preliminary}. According to \cite{wy}, we can assume by using a coarse Mayer-Vietoris argument  that $\Si$ is a graph space $\Si=\coprod_{i\in\N}X_i$.

\medskip

Let us show that $\mu_{\Si,*}$ is one-to-one. Let $d$ be a positive number and let $x$ be an element in $K_*(P_d(\Si))$ such that 
$\mu_{\Si,*}(x)=0$. Fix $\eps>0$ small enough and choose a positive number $\lambda$ as in \cite[Remark 1.18]{oy2}. We can assume without loss of generality that $d(X_i,X_j)\gq d$ if $i\neq j$. Then $P_d(\Si)=\coprod_{i\in\N} P_d(X_i)$  and  up to the corresponding  identification between  $K_*(P_d(\Si))$ and 
$\prod_{i\in\N}K_*(P_d(X_i))$, we can view $x$ as a family  $(x_i)_{i\in\N}$ in  $\prod_{i\in\N}K_*(P_d(X_i))$. According to proposition  \ref{proposition-assembly-map-graph-space}, we get that $$\jmath_{\Si,*}\circ \nu_{\XS,*}^\infty(x)=0.$$
If we fix $r\gq r_{d,\eps}$, then we have
\begin{eqnarray*}
\jmath_{\Si,*}\circ \nu_{\XS,*}^\infty&=&\jmath_{\Si,*}\circ \iota_*^{\eps,r}\circ \nu_{\XS,*}^{\infty,\eps,r}\\
&=& \iota_*^{\eps,r}\circ \jmath^{\eps,r}_{\Si,*}\circ\nu_{\XS,*}^{\infty,\eps,r},
\end{eqnarray*}
and hence according to proposition \ref{proposition-approximation},
 there exists $r'\gq r$ such that $$\jmath^{\lambda\eps,r'}_{\Si,*}\circ\nu_{\XS,*}^{\infty,\lambda\eps,r'}(x)=0.$$  Therefore, replacing $\lambda\eps$ by $\eps$ and $r'$ by $r$, we see that there exists $\eps$ in $(0,1/4)$ and $r$ a positive number such that $\jmath^{\eps,r}_{\Si,*}\circ\nu_{\XS,*}^{\infty,\eps,r}(x)=0.$
We can also assume without loss of generality that $d(X_i,X_j)\gq r$ if $i\neq j$ and hence $\mu_{\XS,*}^{\infty,\eps,r}(x)=0$
in $K_*^{\eps,r}(C^*(\XS))$. Using the control isomorphism between $\K_*(C^*(\XS))$ and $\prod_{i\in\N} \K_*(\K(\ell^2(X_i))$, we see that up to rescale $\eps$ and $r$, we can assume that $\mu_{X_i,*}^{\eps,r}(x_i)=0$ in $K_*^{\eps,r}(\K(\ell^2(X_i))$ for every integer $i$. Let then $d'\gq d$ such that $QI_{F,*}(d,d',r,\eps)$ is satisfied for every finite subset $F$ of $\Si$. We have then $q_{d,d',*}(x_i)=0$ in $K_*(P_{d'}(\X_i))$ for every integer $i$ and  therefore $q_{s,s',*}(x)=0$ in $K_*(P_{d'}(\Si))$. Hence $\mu_{\Si,*}$ is one-to-one.

\medskip

Let us prove that  $\nu_{\Si,*}$ is onto. Let $z$ be an element in $K_*(C^*(\Si))$ and fix $\eps'$ small enough. Then for
some positive number $r'$, there exists $y'$ in $K_*^{\eps',r'}(C^*(\Si))$ such that $z=\iota_*^{\eps',r'}(y')$. Pick $\eps$ in $[\eps',1/4)$, $d$ a positive number  and $r\gq r'$ such that $QS_{F,*}(d,r,r',\eps,\eps')$ holds for any finite subset $F$ of $\Si$. We can assume without loss of generality that $d(X_i,X_j)>r$ and $d(X_i,X_j)>d$ if $i\neq j$. Then there exist an element $y$ in $K^{\eps',r'}_*(C^*(\XS))$ such that $\jmath_{\Si,*}^{\eps',r'}(y)=y'$. For every integer $i$, let $y_i$ be the image of $y$ under the composition $$K_*^{\eps',r'}(C^*(\XS))\to K_*^{\eps',r'}(\K(\ell^2(X_i)\ts \H))\to K_*^{\eps',r'}(\K(\ell^2(X_i))),$$
where
\begin{itemize}
\item the first morphism is induced by the restriction to $C^*(\XS)$ of the $i$ th projection 
$\prod_{n\in\N}\K(\ell^2(X_n)\ts \H)\to \K(\ell^2(X_i)\ts \H)$;
\item the second morphism is the Morita equivalence.
\end{itemize}
For every integer $i$, there exists $x_i$ in $K_*(P_d(X_i))$ such that $\mu_{X_i,*}^{\eps,r,d}(x_i)=\iota_*^{\eps',\eps,r,r'}(y_i)$. Set then $x=(x_i)_{i\in\N}$ in $\prod_{i\in\N}K_*(P_d(X_i))$. Then 
$\nu_{\XS,*}^{\infty,d}(x)=\iota^{\eps,r}_*(y)$ and hence according to proposition \ref{proposition-assembly-map-graph-space} and under the identification between $K_*(P_d(\Si))$ and 
$\prod_{i\in\N}K_*(P_d(X_i))$, we get that $\mu_{\Si,*}^{d}(x)=\jmath_{\Si,*}(\iota^{\eps,r}_*(y))=\iota^{\eps,r}_*(y')=z$. 
Hence $\mu_{\Si,*}$ is onto.

\section*{Acknowledgements}
The first author would like to thank the Shanghai Center for Mathematical Sciences and Texas A\&M University 
for financial support and hospitality and the second author would like to thank University of Metz for financial support and hospitality. The two authors  would also like to thank  Dan Burghelea for suggesting the name of the persistence approximation property.

%Next lemma is straightforward to prove.
%\begin{lemma}
%For any $\eps$ small enough and any $r>0$, there exists positive numbers $\eps'$ and $r'$ with $0<\eps\lq \eps'\lq 1/4$ and $r\lq r'$ such that
%\begin{itemize}
%\item for any graph space $\Si=\coprod_{i\in\N}X_i$ with $d(X_i,X_j)\gq r'$ if $i\neq j$;
%\item for any $x$ in $K_*^{\eps,r}(C^*(\X_\Si))$ such that $\jmath_{\Si,*}\circ i^{\eps,r}_*(x)=0$ in  $K_*(C^*(\Si))$,
%\end{itemize}
%Then $\iota^{\eps,\eps',r,r'}_*(x)=0$ in  $K_*^{\eps',r'}(C^*(\X_\Si))$
%\end{lemma}
{\small
\bibliographystyle{plain}
\bibliography{persistence}
}
\end{document}